\def\ints{{\mathbb Z}}
\def\rats{{\mathbb Q}}
\def\reals{{\mathbb R}}
\def\complex{{\mathbb C}}
\def\proj{{\mathbb P}}
\def\aff{{\mathbb A}}
\def\FF{{\mathbb F}}
\def\im{{\text{im }}}
\def\ord{{\text{ord}}}
\DeclareMathOperator{\eff}{eff}
\def\cf{{\text{char}}}
\def\Frac{{\text{Frac}}}
\DeclareMathOperator{\Aut}{Aut}
\DeclareMathOperator{\Out}{Out}
\def\Gal{{\text{Gal}}}
\def\Spec{{\mbox{Spec }}}
\def\mc#1{\mathcal{#1}}
\def\ol#1{\overline{#1}}
\newtheorem{theorem}{Theorem}[section]
\newtheorem{prop}[theorem]{Proposition}
\newtheorem{lemma}[theorem]{Lemma}
\newtheorem{corollary}[theorem]{Corollary}
\newtheorem{predefinition}[theorem]{Definition}
\newenvironment{definition}{\begin{predefinition}\rm}{\end{predefinition}}
\newtheorem{preremark}[theorem]{Remark}
\newenvironment{remark}{\begin{preremark}\rm}{\end{preremark}}
\newtheorem{preconstruction}[theorem]{Construction}
\newtheorem{prenotation}[theorem]{Notation}
\newtheorem{preexample}[theorem]{Example}
\newtheorem{preclaim}[theorem]{Claim}
\newtheorem{prequestion}[theorem]{Question}
\numberwithin{equation}{section}
\begin{document}

\title{Fields of moduli of three-point $G$-covers with cyclic $p$-Sylow, I}

\author{Andrew Obus}
\address{Columbia University, Department of Mathematics, MC4403, 2990 Broadway, New York, NY 10027}
\email{obus@math.columbia.edu}
\thanks{The author was supported by a NDSEG Graduate Research Fellowship and an NSF Postdoctoral 
Research Fellowship in the Mathematical Sciences.  Final work on this paper was undertaken at the Max-Planck-Institut f\"{u}r Mathematik in Bonn.}
\subjclass[2000]{Primary 14G20, 14H30; Secondary 14H25, 14G25, 11G20, 11S20}

\date{\today}

\keywords{field of moduli, stable reduction, Galois cover}

\begin{abstract}
We examine in detail the stable reduction of $G$-Galois covers of the projective line over a 
complete discrete valuation field of mixed characteristic $(0, p)$, where $G$ has a \emph{cyclic} $p$-Sylow subgroup of order 
$p^n$.  If $G$ is further assumed to be \emph{$p$-solvable} 
(i.e., $G$ has no nonabelian simple composition factors with order divisible by $p$), we 
obtain the following consequence: Suppose $f: Y \to \proj^1$ is a three-point $G$-Galois cover defined over $\complex$. 
Then the $n$th higher ramification groups above $p$ 
for the upper numbering for the extension $K/\rats$ vanish, where $K$ is the field of moduli of $f$.  This extends work of 
Beckmann and Wewers.  Additionally, we completely describe the stable model of a general  three-point $\ints/p^n$-cover, 
where $p > 2$.  
\end{abstract}

\maketitle

\tableofcontents

\section{Introduction}\label{CHintro}

\subsection{Overview}\label{Soverview}
This paper focuses on understanding how primes of $\rats$ ramify in the field
of moduli of three-point Galois covers of the
Riemann sphere.  Our main result, Theorem \ref{Tmain}, generalizes results of Beckmann and Wewers (Theorems \ref{Tbeckmann} and \ref{Twewers}) about 
ramification of primes $p$ where $p$ divides the order of the Galois group and the $p$-Sylow subgroup of the Galois group is cyclic. 

Let $X$ be the Riemann sphere $\proj^1_{\complex}$, and let $f: Y \to X$ be
a finite branched cover of Riemann surfaces.  By GAGA (\cite{gaga}), $Y$ is isomorphic to an algebraic
variety, and $f$ is the analytification of an algebraic, regular map.  By a theorem of Weil, if the branch points of $f$ are
$\ol{\rats}$-rational (for example, if the cover is
branched at three points, which we can always take to be $0$, $1$, and
$\infty$---such a cover is called a
\emph{three-point cover}), then the equations of the cover $f$
can themselves be defined over $\ol{\rats}$ (in fact, over some number field).  
Let $\sigma \in \Gal(\ol{\rats}/\rats) = G_{\rats}$.  Since $X$ is defined
over $\rats$, we have that $\sigma$ acts on the set of branched covers of $X$ by acting on the coefficients of the defining
equations.  We write $f^{\sigma}: Y^{\sigma} \to X^{\sigma}$ for the cover thus obtained.  
If $f: Y \to X$ is a $G$-Galois cover, then so is $f^{\sigma}$.  Let
$\Gamma^{in} \subset G_{\rats}$ be the subgroup consisting of those $\sigma$
that preserve the isomorphism class of $f$ as
well as the $G$-action.  That is, $\Gamma^{in}$ consists of those elements $\sigma$ of $G_{\rats}$ such that there is an
isomorphism $\phi: Y \to Y^{\sigma},$ commuting with the action of $G$, that makes the following diagram commute:
\begin{equation}\label{Emoduli}
\xymatrix{
Y \ar[r]^{\phi} \ar[d]_f & Y^{\sigma} \ar[d]^{f^{\sigma}} \\
X \ar @{=}[r] & X^{\sigma}
}
\end{equation}

The fixed field $\ol{\rats}^{\Gamma^{in}}$ is known as
the \emph{field of moduli} of $f$ (as a
$G$-cover).  It is the intersection of all the fields of definition of $f$ as a
$G$-cover (i.e., those fields of
definition $K$ of $f$ such that the action of $G$ can also be written in terms
of polynomials with coefficients in $K$);
see \cite[Proposition 2.7]{CH:hu}.  

Now, since a branched $G$-Galois cover $f: Y \to X$ of the Riemann sphere is
given entirely in terms of combinatorial data 
(the branch locus $C$, the Galois group $G$, and the monodromy action of $\pi_1(X 
\backslash C)$ on $Y$),
it is reasonable to try to draw inferences about the field of moduli of $f$
based on these data.  However, not much is
known about this, and this is the goal toward which we work.  

The problem of determining the field of moduli of three-point covers has
applications toward analyzing the
\emph{fundamental exact sequence}
$$1 \to \pi_1(\proj^1_{\ol{\rats}} \ \backslash \ \{0, 1, \infty\}) \to
\pi_1(\proj^1_{\rats} \ \backslash \ \{0, 1, \infty\}) \to G_{\rats} \to 1,$$
where $\pi_1$ is the \'{e}tale fundamental group functor.  Our knowledge of this object is limited
(note that a complete understanding would yield a complete understanding of
$G_{\rats}$).  The exact sequence gives rise to an outer action of $G_{\rats}$
on $\Pi = \pi_1(\proj^1_{\ol{\rats}} \ \backslash \ \{0, 1,
\infty\})$.  This outer action would be 
particularly interesting to understand.  Knowing about fields of moduli sheds light as follows: Say the $G$-Galois
cover $f$ corresponds to the normal subgroup $N \subset \Pi$, so that $\Pi/N
\cong G$.  Then the group $\Gamma^{in}$
consists exactly of those elements of $G_{\rats}$ whose outer action on $\Pi$
both preserves $N$ and descends to an inner action on $\Pi/N \cong G$.


\subsection{Main result}

One of the first major results in this direction is due to Beckmann:

\begin{theorem}[\cite{Be:rp}]\label{Tbeckmann}
Let $f: Y \to X$ be a branched $G$-Galois cover of the Riemann sphere, with
branch points defined over $\ol{\rats}$.  
Then $p \in \rats$ can be ramified in the field of moduli of $f$ as a $G$-cover
only if $p$ is ramified in the field of
definition of a branch point, or $p \mid |G|$, or there is a collision of branch
points modulo some prime dividing $p$. In particular, if $f$ is
a three-point cover and if $p \nmid |G|$, then $p$ is unramified in the field of
moduli of $f$. 
\end{theorem}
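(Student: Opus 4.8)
The plan is to show that the three listed obstructions are precisely the obstructions to \emph{good reduction} of $f$ at $p$, and that good reduction forces the field of moduli to be unramified. Fix a number field $K$ over which $f$ is defined as a $G$-cover and which contains the branch points; we may take $K$ Galois over $\rats$. Fix a prime $\mathfrak{p}$ of $K$ above $p$, with residue field $k$ of characteristic $p$, and set $\mathcal{O} = \widehat{\mathcal{O}}_{K,\mathfrak{p}}$. I would first argue that, when none of the three obstructions occurs, $f$ extends to a model with good reduction over $\mathcal{O}$. Since the branch points do not collide modulo $\mathfrak{p}$, they extend to a relative divisor $\mathcal{C} \subset \proj^1_{\mathcal{O}}$ that is \'{e}tale over $\mathcal{O}$; and since $p \nmid |G|$, the cover $f$ is tamely ramified along $C$. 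By Grothendieck's theory of the tame fundamental group, the specialization map from the tame fundamental group of $\proj^1_{\mathcal{O}} \setminus \mathcal{C}$ (equivalently, of the generic fiber) to that of the special fiber $\proj^1_k \setminus \mathcal{C}_k$ is an isomorphism, and the category of tame $G$-covers lifts uniquely from the special fiber. Hence $f$ extends to a tame $G$-cover $\mathcal{Y} \to \proj^1_{\mathcal{O}}$ whose reduction $\bar f \colon \bar Y \to \proj^1_k$ is a tame $G$-cover, the reduction functor being an equivalence of categories.

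Having good reduction, the next step is the inertia argument. It suffices to prove that the inertia group $I_p \subset G_{\rats}$ at $\mathfrak{p}$ is contained in $\Gamma^{in}$: indeed, $I_p \subseteq \Gamma^{in}$ is equivalent to the field of moduli $\ol{\rats}^{\Gamma^{in}}$ being fixed pointwise by $I_p$, i.e.\ contained in the maximal extension of $\rats$ unramified at $\mathfrak{p}$, which is exactly the assertion that $p$ is unramified in the field of moduli (and by conjugacy of inertia it suffices to treat one $\mathfrak{p}$). So let $\sigma \in I_p$. As $K/\rats$ is Galois, $\sigma$ permutes the primes above $p$ and fixes $\mathfrak{p}$; formation of the conjugate cover commutes with reduction, so $f^{\sigma}$ again has good reduction over $\mathcal{O}$, with special fiber $(\bar f)^{\bar\sigma}$, where $\bar\sigma$ is the automorphism induced by $\sigma$ on $k$. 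Because $\sigma$ lies in the inertia group, $\bar\sigma = \mathrm{id}$, so the reductions of $f$ and of $f^{\sigma}$ coincide as $G$-covers of $\proj^1_k$.

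I would then invoke the unique-lifting half of the equivalence: two tame $G$-covers over $\mathcal{O}$ with isomorphic reductions are themselves isomorphic as $G$-covers. Applied to $f$ and $f^{\sigma}$, this yields a $G$-equivariant isomorphism $\phi \colon \mathcal{Y} \to \mathcal{Y}^{\sigma}$ over $\proj^1_{\mathcal{O}}$; passing to the generic fiber and viewing everything inside $\ol{\rats}$ gives an isomorphism $\phi \colon Y \to Y^{\sigma}$ over $\proj^1$ commuting with $G$, which is precisely the datum making the square \eqref{Emoduli} commute. Thus $\sigma \in \Gamma^{in}$, establishing $I_p \subseteq \Gamma^{in}$ and hence the main assertion. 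The ``in particular'' is then immediate: for a three-point cover the branch locus $\{0,1,\infty\}$ is defined over $\rats$ and its points never collide modulo any prime, so the first and third obstructions never occur, leaving only the condition $p \mid |G|$.

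I expect the main obstacle to be the careful bookkeeping in the good-reduction step: justifying that the tame specialization isomorphism delivers an integral model simultaneously compatible with the full $G$-action and with Galois conjugation, and in particular that the reduction of $f^{\sigma}$ is genuinely the $\bar\sigma$-conjugate of the reduction of $f$. A secondary subtlety is descending the isomorphism from the local ring $\mathcal{O}$ (or from $K_{\mathfrak{p}}$) back to $\ol{\rats}$; here one uses that $f$ and $f^{\sigma}$ are rigid objects, so a local $G$-equivariant isomorphism is already defined over $\ol{\rats}$. The tameness hypothesis $p \nmid |G|$ is essential throughout, since it is exactly what makes the specialization map an isomorphism; handling its failure (when $p \mid |G|$ with cyclic $p$-Sylow) via stable reduction is the business of the rest of the paper.
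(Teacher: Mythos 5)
The paper does not actually prove Theorem \ref{Tbeckmann}; it is quoted from Beckmann's paper, so the comparison is with her argument, whose skeleton (good reduction of tame covers, plus triviality of the inertia action on the special fiber) your proposal correctly reproduces. However, your execution of the good-reduction step has a genuine gap. You claim that, absent the three obstructions, the chosen model of $f$ over $K_{\mathfrak{p}}$ extends to a tame $G$-cover of $\proj^1_{\mathcal{O}}$, justified by the parenthetical assertion that the tame fundamental group of $\proj^1_{\mathcal{O}} \setminus \mathcal{C}$ ``equivalently'' equals that of the generic fiber. That identification is false: the map from the generic-fiber group to $\pi_1^{\mathrm{t}}(\proj^1_{\mathcal{O}} \setminus \mathcal{C})$ is surjective but has a large kernel, so the restriction functor from tame covers of $\proj^1_{\mathcal{O}} \setminus \mathcal{C}$ to tame covers of the generic fiber is fully faithful but \emph{not} essentially surjective; its essential image is precisely the covers with good reduction, and membership there is not automatic from tameness. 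Concretely, take $\ell$ an odd prime $\neq p$, $K = \rats(\zeta_\ell)$ (Galois over $\rats$, unramified at $p$, containing the branch points $0,1,\infty$), and $f$ the $\ints/\ell$-cover $y^\ell = p\,x(x-1)$ with action $y \mapsto \zeta_\ell y$. All hypotheses of the theorem hold and $p \nmid |G|$, yet the normalization of $\proj^1_{\mathcal{O}}$ in $K_{\mathfrak{p}}(Y)$ is ramified along the whole special fiber (the Gauss valuation of $p\,x(x-1)$ is $1$, so $y$ acquires valuation $1/\ell$); this model has no good reduction over $\mathcal{O}$, even though its field of moduli $\rats(\zeta_\ell)$ is unramified at $p$ exactly as the theorem asserts. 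The point is that good reduction is a statement about the \emph{geometric} cover, not about an arbitrary arithmetic model: the $\ol{\rats}$-isomorphic model $y^\ell = x(x-1)$ extends over $\mathcal{O}$, but the one you may have fixed does not, and your argument gives no way to pass from one to the other.

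The repair is to run the argument geometrically over the maximal unramified extension, which is what Beckmann does. Let $W$ be the ring of integers of (the completion of) $\rats_p^{ur}$, whose residue field $\ol{\FF}_p$ is algebraically closed, and let $\mathcal{C} \subset \proj^1_W$ be the (étale, by hypothesis) closure of the branch divisor. Grothendieck's specialization theorem for the prime-to-$p$ parts of the \emph{geometric} fundamental groups gives a bijection, compatible with the action of $G_{\rats_p^{ur}}$, between isomorphism classes of $G$-covers of $\proj^1_{\ol{\rats_p}}$ branched along $C$ with $p \nmid |G|$ and tame $G$-covers of $\proj^1_{\ol{\FF}_p}$ branched along $\ol{C}$. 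Since $G_{\rats_p^{ur}}$ acts on the latter set through its residue quotient, which is trivial, every inertia element fixes the isomorphism class of $f$; hence the local field of moduli is contained in $\rats_p^{ur}$. No integral model of the arithmetic cover over $\mathcal{O}$ is ever needed --- only the equivariance of this bijection of geometric isomorphism classes. One then descends to the global statement via the local-to-global comparison, which is exactly what this paper supplies as Proposition \ref{Plocal2global}. Your remaining steps (the inertia bookkeeping, rigidity to descend an isomorphism from $\ol{\rats_p}$ to $\ol{\rats}$, and the ``in particular'' for three-point covers) are fine once this correction is made.
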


This result was partially generalized by Wewers:

\begin{theorem}[\cite{We:br}]\label{Twewers}
Let $f: Y \to X$ be a three-point $G$-Galois cover of the Riemann sphere, and
suppose that $p$ \emph{exactly} divides
$|G|$.  Then $p$ is tamely ramified in the field of moduli of $f$ as a
$G$-cover.
\end{theorem}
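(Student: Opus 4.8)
The plan is to bound the ramification of the field of moduli at $p$ by analyzing the \emph{stable reduction} of $f$. First I would localize: since the branch points $0,1,\infty$ are $\rats$-rational and $f$ is defined over a number field, I complete at a prime above $p$ and pass to the completion $K$ of the maximal unramified extension $\rats_p^{\mathrm{ur}}$, so that $K$ has algebraically closed residue field and $G_K = \Gal(\ol{K}/K)$ is identified with the inertia group $I$ at $p$. Because the branch points are $K$-rational and do not collide modulo $p$, the stable reduction theorem for covers (Deligne--Mumford together with Raynaud's theory) yields, after a finite extension $K'/K$, a stable model $\mc{Y} \to \mc{X}$ over $\mc{O}_{K'}$ whose special fiber is a $G$-Galois cover $\ol{f}\colon \ol{Y} \to \ol{X}$ of semistable curves over the residue field.

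Next I would set up the dictionary between the field of moduli and the stable reduction. By the uniqueness of the stable model, $G_K$ acts on $\ol{f}$, and the field of moduli of $f$ (completed at $p$) is the fixed field of the subgroup $H \subseteq G_K$ consisting of those $\sigma$ that fix $\ol{f}$ compatibly with the $G$-action. Hence the ramification of the field of moduli at $p$ is measured by the image of $G_K = I$ in $\Aut(\ol{f})$, and it suffices to show that the \emph{wild} inertia $P \subseteq I$---the unique pro-$p$ Sylow subgroup of $I$---acts trivially on $\ol{f}$; this is precisely the assertion that the field of moduli is tamely ramified at $p$.

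Now I would use the hypothesis that the $p$-Sylow of $G$ is cyclic of order exactly $p$ to pin down the geometry of $\ol{f}$. By Raynaud's analysis, $\ol{X}$ is a tree of projective lines: a distinguished \emph{original component} $\ol{X}_0$ (the one dominating the generic fiber) together with \emph{tail} components. On $\ol{X}_0$ the prime-to-$p$ part of the cover reduces to a tame cover while the $p$-part degenerates to an inseparable map, whose infinitesimal structure is recorded by a meromorphic differential form $\omega$ (the \emph{deformation datum}) on a tame cover of $\ol{X}_0$; the tails attach to $\ol{X}_0$ at the specializations of $0,1,\infty$ and at the critical points of $\omega$. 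Over each new tail the cover restricts to a cyclic cover of degree $p$, i.e.\ an Artin--Schreier cover, governed by a single ramification break.

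The main obstacle---the technical heart of the argument---is to show that $P$ acts trivially on this entire configuration. Since $\ol{X}_0$ is canonical and carries the three marked points $0,1,\infty$, every element of $P$ restricts to an automorphism of $\ol{X}_0 \cong \proj^1$ fixing three points, hence acts trivially on $\ol{X}_0$ and fixes each attaching point. What remains is to show that $P$ fixes the deformation datum $\omega$ and acts trivially on the cover over each tail, which amounts to proving that $\omega$ and the Artin--Schreier data are already defined over a \emph{tamely} ramified extension of $K$. The key numerical input is that, because the $p$-Sylow has order exactly $p$, the ramification break on each tail is prime to $p$ (a single Artin--Schreier jump, constrained by the vanishing-cycles relations that stability imposes on the $\omega$-data). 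An order-$p$ cover and a differential form whose invariants are prime to $p$ can be normalized over a tamely ramified extension, so the pro-$p$ group $P$ must fix them; this gives $P \subseteq H$ and the tameness conclusion. The delicate part throughout is the explicit control of the positions of the new tails---the critical points of $\omega$---and the verification that the relevant break is coprime to $p$, and this is exactly where a careful computation of the reduction (as carried out by Wewers) is indispensable.
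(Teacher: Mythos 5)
First, a point of orientation: the paper does not actually prove this statement---Theorem \ref{Twewers} is quoted from Wewers \cite{We:br} as background, and the paper's machinery (stable reduction, deformation data, the vanishing cycles formula) is developed to prove its generalization, Theorem \ref{Tmain}. So your proposal has to be measured against Wewers's argument and against the analogous steps in \S\ref{Smain}. Your scaffolding is indeed the same as theirs: localize at $p$ (cf.\ Proposition \ref{Plocal2global}), pass to the stable model, note that the field of moduli is contained in $K^{st}$ (the fixed field of the group $\Gamma^{st}$ acting trivially on $\ol{Y}$, as recalled in \S\ref{Sstable}), and reduce tameness to showing that the wild inertia of $G_K$ acts trivially on $\ol{f}$. (One caveat: this reduction is sufficient but not ``precisely'' equivalent as you assert; the field of moduli is the fixed field of $\Gamma^{in} \supseteq \Gamma^{st}$, so a priori it could be tame even if the wild action on $\ol{Y}$ were nontrivial.)

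The genuine gap is at the decisive step: proving that the wild inertia acts trivially on $\ol{Y}$, in particular over the \emph{new} tails. Your justification---that ``the ramification break on each tail is prime to $p$,'' hence ``an order-$p$ cover and a differential form whose invariants are prime to $p$ can be normalized over a tamely ramified extension''---is not an argument. The first clause is vacuous: lower ramification jumps of Artin--Schreier covers are \emph{always} prime to $p$, whatever $|G|$ is, so no use has been made of the hypothesis that $p$ exactly divides $|G|$. The second clause is essentially the theorem itself, and nothing in the sketch addresses it. Note also that fixing the attaching point of a tail settles very little: in characteristic $p$, an order-$p$ automorphism of $\proj^1_k$ is unipotent and fixes exactly one point, so an order-$p$ inertia element could act nontrivially on a new tail while fixing where it meets $\ol{X}_0$. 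On a primitive tail one gets a second fixed point from the specialization of a rational branch point, but new tails contain no branch points---and they cannot be wished away: already for $G = \ints/p$, totally ramified at $0,1,\infty$, the vanishing cycles formula (\ref{Evancycles}) forces exactly one tail, and it is new. This is exactly where all the work is. Wewers resolves it through his classification and lifting theory of special deformation data; the present paper resolves it (for its generalization) by explicitly locating the new tail as a computable disk (Lemma \ref{Lonenewtail}), thereby producing Galois-fixed points on $\ol{X}$ and $\ol{Y}$ over an explicitly controlled extension, and then invoking Proposition \ref{Pstabletail} together with conductor estimates (Proposition \ref{Pm1stable}, Corollary \ref{Ccyclicstable}). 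Without a substitute for this step, your proposal records the correct strategy but not a proof.
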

In fact, Wewers shows somewhat more, in that he computes the index of tame
ramification of $p$ in the field of moduli in 
terms of some invariants of $f$.  

To state our main theorem, which is a further generalization, we will need some
group theory.  We call a finite group $G$ \emph{$p$-solvable} if its
only simple composition factors with order divisible by $p$ are isomorphic to
$\ints/p$.  Clearly, any solvable group is
$p$-solvable. 
Our main result is:

\begin{theorem}\label{Tmain}
Let $f: Y \to X$ be a three-point $G$-Galois cover of the Riemann sphere, and
suppose that a $p$-Sylow subgroup $P \subset G$ 
is cyclic of order $p^n$.  Let $K/\rats$ be the field of moduli of $f$.  
Then, if $G$ is $p$-solvable, the $n$th higher ramification groups for the upper numbering of 
(the Galois closure of) $K/\rats$ above $p$ vanish.
\end{theorem}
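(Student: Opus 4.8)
The plan is to pass from the complex-analytic cover $f\colon Y\to X$ to the arithmetic of its stable reduction at $p$, and to show that the field of moduli $K/\rats$ is controlled by the stable model in a way that forces the higher ramification to die off at level $n$. First I would base-change $f$ to a complete discrete valuation field of mixed characteristic $(0,p)$ containing the field of moduli, and invoke the theory of stable reduction of Galois covers. The key object is the stable model $\overline{f}\colon \overline{Y}\to \overline{X}$ over the ring of integers of a sufficiently large extension; the crucial point is that the minimal extension over which stable reduction exists—equivalently, the action of the inertia group $I_p\subset G_{\rats}$ on the stable model—measures exactly the wild ramification of $p$ in $K$. So the theorem reduces to a bound on how badly the inertia at $p$ must act in order to realize the stable model, and I expect the target estimate to be that the $n$th upper-numbering ramification groups act trivially.

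Next I would analyze the structure of the stable reduction under the cyclic $p$-Sylow hypothesis. Since $P\subset G$ is cyclic of order $p^n$, the stable reduction should have a specific tree-like combinatorial structure: an \emph{original component} (the reduction of $X$ itself) together with a collection of \emph{tails}, where the interesting wild ramification is concentrated. The inductive leverage comes from the order of $P$: a cyclic group of order $p^n$ filters through its subgroups $\ints/p^k$, and I would try to set up an induction on $n$, using the quotient cover by the unique subgroup of order $p$ to reduce to covers with smaller $p$-Sylow. The $p$-solvability hypothesis is exactly what makes this filtration usable: by definition the only simple composition factors of order divisible by $p$ are $\ints/p$, so after quotienting out the prime-to-$p$ part and the top cyclic layer one stays inside the class of groups to which the inductive hypothesis applies. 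Here I would lean on Wewers' result (Theorem \ref{Twewers}) as the base case $n=1$, where $p$ exactly divides $|G|$ and the ramification is tame, i.e., the first upper-numbering ramification group already vanishes.

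The heart of the argument is translating the combinatorial/geometric data of the stable model into the ramification filtration of $K/\rats$. I would compute the different (or conductor) of the extension generated by the stable model over the maximal tame extension, and relate the upper-numbering breaks to the \emph{ramification invariants} of the tails of the stable reduction—these are the jumps in the higher ramification filtration of the induced $\ints/p^n$-covers of the tails in characteristic $p$. The expected mechanism is that each tail contributes a break whose size is bounded because the associated cyclic cover of order dividing $p^n$ has ramification invariant bounded in terms of $n$; Herbrand's formula for the upper numbering under the quotient maps $\ints/p^n\to\ints/p^{n-1}$ should then collapse all the breaks into the range below $n$. Concretely, I would show that the action of $G_{\rats}$ on the stable model factors through a quotient whose wild inertia has depth at most $n$ in the upper numbering.

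The hard part will be the precise control of the ramification filtration: it is not enough to know that the stable model has good combinatorial structure, one must pin down exactly how the \emph{arithmetic} of the deformation (the different of the extension of $\rats$ needed to spread out the stable model) is governed by the characteristic-$p$ ramification data of the tails, and verify that the upper-numbering breaks land strictly below $n$ rather than merely being finite. This requires carefully tracking the interplay between the lower and upper numbering across the several quotient covers involved in the induction, and ensuring that the $p$-solvability hypothesis genuinely prevents any ``new'' wild ramification from entering beyond what the cyclic $\ints/p^n$-layers contribute. In particular the case analysis of the tails—distinguishing those where the residual cover is tame from those carrying genuine wild ramification—and the uniform bound on their ramification invariants is where the main technical work, and the essential use of the $p$-solvable and cyclic $p$-Sylow hypotheses, will reside.
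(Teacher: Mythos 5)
Your global framework---reduce to a local problem at $p$, pass to the stable model, use $p$-solvability to quotient out the maximal prime-to-$p$ normal subgroup, and study the tails of the stable reduction---matches the paper's (Proposition \ref{Plocal2global}, Proposition \ref{Ppsolvable}). But the engine you propose for the main case, an induction on $n$ via the quotient by the unique order-$p$ subgroup $Q_1 \subset P$ with Wewers' theorem as base case, has no mechanism behind its inductive step, and this is precisely where the real difficulty lies. The map $Y \to Y/Q_1$ is a degree-$p$ cover: in mixed characteristic there is no general principle bounding the (wildly ramified) extension needed to define $f$ once its quotient $f/Q_1$ is defined, and Remark \ref{Rmodulitowers} warns that even for \emph{prime-to-$p$} quotients the naive descent statement for fields of moduli is likely false. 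The paper avoids this in two ways your outline is missing: it only ever descends through prime-to-$p$ quotients, and it does so at the level of the minimal field of definition of the \emph{stable model} rather than the field of moduli (Proposition \ref{Pstr2aux}, whose proof hinges on the tameness of $Y \to Y/N$ so that Sa\"{i}di's lifting theorem applies); and it handles the entire $p$-part in one stroke rather than one $\ints/p$-layer at a time. For $G \cong \ints/p^n \rtimes \ints/m$ with $m>1$, the key is a Kummer-theoretic rigidity statement (Lemma \ref{Ldiffcov}: $f$ is determined as a mere cover by its $\ints/m$-quotient), which puts the field of moduli inside $K_n = K_0(\zeta_{p^n})$ and the stable model over a tame extension of $K_n$ (Proposition \ref{Psolvstabtame}); for $G \cong \ints/p^n$ the field of moduli is $K_n$ by Kummer theory, and the stable model is computed explicitly (Lemma \ref{Lonenewtail}, Proposition \ref{Pm1stable}).

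Separately, your proposed source of the numerical bound $n$---relating the upper-numbering breaks of $K/\rats_p$ to the ramification invariants of the tails and ``collapsing'' them by Herbrand's formula---conflates two different filtrations and would not produce the bound. The invariants $\sigma_b$ are invariants of characteristic-$p$ covers; by the vanishing cycles formula (Theorem \ref{Tvancycles}) they are bounded independently of $n$ (in the cyclic case the unique new tail has $\sigma_b = 2$), and in the paper they serve only to pin down the geometry of $\ol{X}$ (which disks are tails, their centers and radii), not as ramification breaks of $K/\rats_p$. The actual bound comes from computing conductors of the explicit extensions needed to define the stable model: $K_n/K_0$ has vanishing $n$th upper ramification groups by \cite[Corollary to IV, Proposition 18]{Se:lf}; the radical extensions $K_n(\sqrt[p^j]{u})$ with $v(u)=0$ and $j \leq n$ do as well, by \cite[Theorem 5.8]{Vi:ra}; and tame extensions do not change the conductor (Lemma \ref{Ltamenochange}), which is why it suffices that the stable model be defined over a \emph{tame} extension of these fields. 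Without identifying those explicit extensions---which is what the stable-model computations of \S\ref{Ssemidirect} and \S\ref{Smequals1} accomplish---there is no route from your outline to the vanishing statement.
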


\begin{remark}\label{Rmainaddenda}
\begin{enumerate}[(i)]
\item Note that Beckmann's and Wewers's theorems cover the cases $n = 0, 1$ in the
notation above (and Wewers does not need the assumption of $p$-solvability).  
\item The paper \cite{Ob:fm2} will show that the result of Theorem \ref{Tmain} holds in many cases, even when $G$ is not $p$-solvable, provided 
that the normalizer of $P$ acts on $P$ via a group of order $2$.
\item If the normalizer of $P$ in $G$ is equal to the centralizer, then $G$ is always $p$-solvable.  This follows from 
\cite[Theorem 4, p.\ 169]{Za:tg}.
\item We will show (Proposition \ref{Pgroups}) that if $G$ has a cyclic $p$-Sylow subgroup and is
\emph{not} $p$-solvable, it must have a simple composition factor with order divisible by $p^n$.  There seem to be
limited examples of simple groups with cyclic $p$-Sylow subgroups
of order greater than $p$.  Furthermore, many of the examples that do exist are in the form of part (ii) of this remark (for
instance, $PSL_2(q)$, where $p^n$ exactly divides $q^2 - 1$).  
\end{enumerate}
\end{remark}

Our main technique for proving Theorem \ref{Tmain} will be an analysis of the \emph{stable reduction} of the cover $f$ to characteristic $p$ (\S\ref{Sstable}).
This is also the main technique used by Wewers in \cite{We:br} to prove Theorem \ref{Twewers}.  The argument in \cite{We:br} relies on the fact that the 
stable reduction of a three-point $G$-Galois cover to characteristic $p$ is relatively simple when $p$ exactly divides $|G|$.  When higher powers of $p$ divide $|G|$, the 
stable reduction can be significantly more complicated.  Many of the technical results needed for dealing with this situation are proven in \cite{Ob:vc}, and we will
recall them as necessary.  In particular, our proof depends on an analysis of \emph{effective ramification invariants}, which are generalizations of the invariants
$\sigma_b$ of \cite{Ra:sp} and \cite{We:br}

In proving Theorem \ref{Tmain}, we will essentially be able to reduce to the case where $G \cong \ints/p^n \rtimes \ints/m$, at the cost of having to determine the 
minimal field of definition of the \emph{stable model} of $f$, rather than just the field of moduli.  In particular, if 
the normalizer and centralizer of $P$ are equal, the proof of Theorem \ref{Tmain} boils down to understanding the stable 
model of an arbitrary three-point $\ints/p^n$-cover.  A complete description of this stable model has been given 
when $p > 3$, and in certain cases when $p=3$, in \cite{CM:sr}.  We give a complete enough description for our purposes for arbitrary $p$ (Lemma \ref{Lonenewtail}).  
Additionally, our description for $p=2$ is used in \cite{Ob:pf} to complete the proof of a product formula due to Colmez for periods of CM-abelian varieties (\cite{Co:pv}).

We should remark that, when $G \cong \ints/p^n \rtimes \ints/m$, the cover $f$ is very much like an \emph{auxiliary cover} (see \cite{Ra:sp}, \cite{We:br}, and
\cite{Ob:fm2}).  Our assumption of $p$-solvability allows us to avoid the auxiliary cover construction.

For other work on understanding stable models of mixed characteristic 
$G$-covers where the residue characteristic divides $|G|$, see for instance \cite{LM:wm}, \cite{Ma:va}, \cite{Ra:pg}, \cite{Sa:gc}, \cite{Sa:pr1}, and \cite{Sa:pr2}.  
These papers focus mostly on the case where $G$ is a $p$-group, while allowing more than three branch points.  For an application to computing the
stable reduction of modular curves, see \cite{BW:sr}.

\subsection{Section-by-section summary and walkthrough}

In \S\ref{Sgroups}--\S\ref{Sdisks}, we recall well-known facts about group theory, fields of moduli, ramification, and models of $\proj^1$.   
In \S\ref{Sreduction}, we give some explicit results on the reduction of $\ints/p^n$-torsors.  
In \S\ref{Sstable}, we recall the relevant results about stable reduction from \cite{Ra:sp} and \cite{Ob:vc}.  The most important of these is the
\emph{vanishing cycles formula}, which we then apply in the specific case of a $p$-solvable three-point cover.
In \S\ref{Sdefdata}, we recall the construction of deformation data given in \cite{Ob:vc}, which is a generalization of 
that given in \cite{He:ht}.  We also recall the \emph{effective local vanishing cycles formula} from \cite{Ob:vc}.  
In \S\ref{Squotient}, we relate the field of moduli of a cover to that of its quotient covers.
In \S\ref{Smain}, we prove our main result, Theorem \ref{Tmain}.  After reducing to a local problem, we first assume that $G \cong \ints/p^n \rtimes \ints/m$,
with $p \nmid m$.  We deal separately with the cases $m>1$ (\S\ref{Ssemidirect}) and $m = 1$ (\S\ref{Smequals1}).  Then, it is an easy application of the results
of \S\ref{Squotient} to obtain the full statement of Theorem \ref{Tmain}.

Appendix \ref{Aexplicit} gives a full description of the stable model of a general three-point $\ints/p^n$-cover, when $p > 3$
and in certain cases when $p=3$.  It uses different techniques than \cite{CM:sr}.
Furthermore, the techniques there can be adapted to give a full description whenever $p=2$ or $p=3$.   
Appendix \ref{Agroups} examines what kinds of groups with 
cyclic $p$-Sylow subgroups are not $p$-solvable, and thus are not covered by Theorem \ref{Tmain}.  
Some technical calculations from \S\ref{Sreduction} and \S\ref{Smequals1} are postponed to Appendix \ref{Asmallprimes}.
Appendices \ref{Aexplicit} and \ref{Agroups} are not necessary for the proof of Theorem \ref{Tmain}, and Appendix \ref{Asmallprimes} is only necessary when $p \leq 3$.  

\subsection{Notation and conventions}\label{Snotations}
The letter $k$ will always represent an algebraically closed field of characteristic $p>0$.

If $H$ is a subgroup of a finite group $G$, then $N_G(H)$ is the normalizer of $H$ in $G$ and $Z_G(H)$ is the centralizer of $H$ in $G$.  
If $G$ has a cyclic $p$-Sylow subgroup $P$, and $p$ is understood, we write $m_G = |N_G(P)/Z_G(P)|$.  

If $K$ is a field, then $\ol{K}$ is its algebraic closure, and $G_K$ is its absolute Galois group.  If $H \leq
G_K$, we write $\ol{K}^H$ for the fixed field of $H$ in $\ol{K}$.  Similarly, if $\Gamma$ is a group of automorphisms of a
ring $A$, we write $A^{\Gamma}$ for the fixed ring under $\Gamma$.
If $K$ is discretely valued, then $K^{ur}$ is the \emph{completion} of the maximal unramified algebraic extension of $K$.  

If $K$ is any field, and $a \in K$, then $K(\sqrt[n]{a})$ denotes a minimal 
field extension of $K$ containing an $n$th root of $a$ (\emph{not} necessarily the ring $K[x]/(x^n - a)$).  For instance, $\rats(\sqrt{9}) \cong \rats$.  
In cases where $K$ does not contain the $n$th roots of unity, it will not matter which (conjugate) extension we take.

If $R$ is any local ring, then $\hat R$ is the completion of $R$ with respect to its maximal ideal. 
If $R$ is any ring with a non-archimedean absolute value $| \cdot |$, then $R\{T\}$ 
is the ring of power series $\sum_{i=0}^{\infty} c_i T^i$ such that $\lim_{i \to \infty} |c_i| = 0$.  
If $R$ is a discrete valuation ring with fraction field $K$ of characteristic 0 and residue field of
characteristic $p$, we normalize the absolute value on $K$ and on any subring of $K$ so that $|p| = 1/p$.  
We normalize the valuation on $R$ so that $p$ has valuation $1$.

A \emph{branched cover} $f: Y \to X$ is a finite, surjective, generically \'{e}tale morphism of geometrically connected, smooth, 
proper curves. 
If $f$ is of degree $d$ and $G$ is a finite group of order $d$ with $G \cong \Aut(Y/X)$, then $f$ is called a
\emph{Galois cover with (Galois) group $G$}.  If we choose an isomorphism $i: G \to \Aut(Y/X)$, then the datum $(f, i)$
is called a \emph{$G$-Galois cover} (or just a \emph{$G$-cover}, for short).  We will usually suppress the isomorphism
$i$, and speak of $f$ as a $G$-cover.  

Suppose $f: Y \to X$ is a $G$-cover of smooth curves, and $K$ is a field of definition for $X$.  Then the \emph{field of
moduli of $f$ relative to $K$ (as a $G$-cover)} is the fixed field in $\ol{K}/K$ of $\Gamma^{in} \subset G_K$, where
$\Gamma^{in} = \{\sigma \in G_K | f^{\sigma} \cong f \text{(as } G\text{-covers)}\}$ (see \S\ref{Soverview}).  
If $X$ is $\proj^1$, then the \emph{field of moduli of $f$} means the field of moduli of $f$ relative to $\rats$.
Unless otherwise stated, a field of definition (or moduli) means a field of definition (or moduli) \emph{as a $G$-cover} (see 
\S\ref{Soverview}).  
If we do not want to consider the $G$-action, we will always explicitly say the field of definition (or moduli) \emph{as a
mere cover}.  For two covers to be isomorphic as mere covers, the isomorphism $\phi$ of \S\ref{Soverview} does not need to commute with the $G$-action.
%

\section*{Acknowledgements}
This material is mostly adapted from my PhD thesis, and I would like to thank my advisor, David Harbater, for much help 
related to this work.  I thank the referee for helping me greatly improve the exposition.  Thanks also to Bob Guralnick for inspiring many of the ideas in Appendix 
\ref{Agroups}, and to Mohamed Sa\"{i}di for useful conversations about stable reduction and vanishing cycles.

\section{Background material}\label{CHbackground}

\subsection{Finite, $p$-solvable groups with cyclic $p$-Sylow subgroups}\label{Sgroups}
The following proposition is a structure theorem on $p$-solvable groups that is integral to the paper (recall that a group $G$ is 
\emph{$p$-solvable} if its only simple composition factors with order divisible by $p$ are isomorphic to $\ints/p$).  Note that for any finite group $G$, there is a 
unique maximal prime-to-$p$ normal subgroup (as the subgroup of $G$ generated by two normal prime-to-$p$ subgroups is also normal and prime to $p$).

\begin{prop}\label{Ppsolvable}
Suppose $G$ is a $p$-solvable finite group with cyclic $p$-Sylow subgroup of order $p^n$, $n \geq 1$.
Let $N$ be the maximal prime-to-$p$ normal subgroup of $G$.  
Then $G/N \cong \ints/p^n \rtimes \ints/m_G$, where the conjugation action of
$\ints/m_G$ on $\ints/p^n$ is faithful.
\end{prop}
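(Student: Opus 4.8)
The plan is to pass to the quotient $\bar G := G/N$ and show that its Sylow $p$-subgroup is already normal, so that the whole group splits as the desired semidirect product. First I would record that $\bar G$ inherits the relevant hypotheses: it is $p$-solvable, its maximal normal prime-to-$p$ subgroup $O_{p'}(\bar G)$ is trivial (by maximality of $N$), and since $N$ is prime to $p$ the map $P \to \bar P := PN/N$ is an isomorphism, so $\bar P \cong \ints/p^n$ is a cyclic Sylow $p$-subgroup of $\bar G$.

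Next I would locate $Q := O_p(\bar G)$, the maximal normal $p$-subgroup. The key structural input is the Hall--Higman lemma: for a $p$-solvable group with trivial $O_{p'}$, one has $C_{\bar G}(Q) \subseteq Q$; since $Q$ is cyclic, hence abelian, this forces $C_{\bar G}(Q) = Q$. Now comes the crucial observation that uses cyclicity of the Sylow: because $Q$ is a normal $p$-subgroup it lies in $\bar P$, and because $\bar P$ is abelian it centralizes $Q$, so $\bar P \subseteq C_{\bar G}(Q) = Q$. Hence $\bar P = Q$ is normal in $\bar G$. In particular $\bar P$ is the unique Sylow $p$-subgroup and $C_{\bar G}(\bar P) = \bar P$.

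Since $|\bar G/\bar P|$ is prime to $p$, Schur--Zassenhaus yields a complement $H$, giving $\bar G = \bar P \rtimes H$ with $H \cong \bar G/\bar P$. The conjugation action of $H$ on $\bar P$ has kernel $H \cap C_{\bar G}(\bar P) = H \cap \bar P = 1$, so the action is faithful and $H$ embeds into $\Aut(\bar P) = (\ints/p^n)^{\times}$. For $p$ odd this group is cyclic, so $H$ is cyclic; for $p = 2$ its prime-to-$p$ part is trivial, so $H = 1$ is also cyclic. Thus $H \cong \ints/m$ for $m = |H|$, acting faithfully on $\ints/p^n$.

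It remains to identify $m$ with $m_G = |N_G(P)/Z_G(P)|$. Because $\bar P$ is normal, $N_{\bar G}(\bar P) = \bar G$ and $Z_{\bar G}(\bar P) = \bar P$, so $m_{\bar G} = |\bar G/\bar P| = |H| = m$; hence it suffices to prove $m_G = m_{\bar G}$. I would do this by comparing the conjugation actions on $P$ and $\bar P$ through the isomorphism $\pi|_P \colon P \xrightarrow{\sim} \bar P$: for $g \in N_G(P)$ the automorphism $x \mapsto gxg^{-1}$ of $P$ corresponds to $\pi(x) \mapsto \pi(g)\pi(x)\pi(g)^{-1}$ of $\bar P$, so the image of $N_G(P)$ in $\Aut(P)$ matches the image of $\pi(N_G(P))$ in $\Aut(\bar P)$. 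A Sylow (Frattini-type) argument inside $PN = \pi^{-1}(\bar P)$ then shows $\pi(N_G(P)) = N_{\bar G}(\bar P)$: any lift of an element of $N_{\bar G}(\bar P)$ conjugates $P$ to another Sylow $p$-subgroup of $PN$, which can be corrected back to $P$ by an element of $PN$ lying in $N_G(P)$. Comparing kernels then gives $m_G = m_{\bar G} = m$, completing the proof. I expect this last identification $m_G = m_{\bar G}$ to be the main obstacle; the rest follows cleanly from the abelian-Sylow-centralizes-$Q$ argument of the second paragraph.
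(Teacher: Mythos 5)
Your proposal is correct, but it follows a genuinely different route from the paper's. Both proofs begin the same way, passing to $\bar G = G/N$ and observing that $\bar G$ is $p$-solvable with no nontrivial normal prime-to-$p$ subgroup and with cyclic $p$-Sylow $\bar P \cong P$. From there the paper argues that a \emph{minimal} normal subgroup of $\bar G$, being a product of isomorphic simple groups, must be $\ints/p$ (prime-to-$p$ simple factors are excluded by the triviality of $O_{p'}(\bar G)$, nonabelian ones by $p$-solvability, and a product of several copies of $\ints/p$ by cyclicity of $\bar P$), and then cites \cite[Lemma 2.3]{Ob:vc} to upgrade the existence of a normal $\ints/p$ to the full semidirect product structure, asserting $m_G = m_{G/N}$ as ``readily verified.'' You instead prove normality of the \emph{entire} Sylow subgroup directly: the Hall--Higman lemma gives $C_{\bar G}(O_p(\bar G)) \subseteq O_p(\bar G)$, and since the abelian group $\bar P$ centralizes $O_p(\bar G) \leq \bar P$, this forces $\bar P = O_p(\bar G) \trianglelefteq \bar G$; Schur--Zassenhaus then produces the complement, self-centralization gives faithfulness, and the embedding into $\Aut(\ints/p^n)$ gives cyclicity (with the $p=2$ case correctly handled by noting the complement must be trivial). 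Your approach buys self-containedness --- it avoids the external lemma from \cite{Ob:vc} at the cost of invoking Hall--Higman --- and it supplies the details of the identification $m_G = m_{G/N}$ (via the Frattini-type equality $N_{\bar G}(\bar P) = \pi(N_G(P))$ and transport of the conjugation action through $\pi|_P : P \xrightarrow{\sim} \bar P$) that the paper leaves to the reader. The paper's approach is shorter on the page and isolates the group theory in a reusable lemma, but the real structural work is hidden in the citation.
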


\begin{proof}
Clearly, $G/N$ has no nontrivial normal subgroups of prime-to-$p$ order.  Since $G$ is $p$-solvable, so is $G/N$.  Thus, a 
minimal normal subgroup of $G/N$, being the product of isomorphic simple groups (\cite[8.2, 8.3]{As:fg}), must be isomorphic to $\ints/p$.  It is readily verified that
$m_G = m_{G/N}$, so the proposition follows from 
\cite[Lemma 2.3]{Ob:vc}.
\end{proof}

\subsection{$G$-covers versus mere covers}\label{Smere}
Let $f: Y \to X$ be a $G$-cover of smooth, proper, geometrically connected curves.  Let $K$ be a field of definition for $X$, and let $L/K$ be a field containing the 
field of moduli of $f$ as a \emph{mere} cover (which is equivalent to $L$ being a field of definition of $f$ as a mere cover, see \cite[Proposition 2.5]{CH:hu}).
This gives rise to a homomorphism $h: G_L \to \Out(G)$ as follows: For $\sigma \in G_L$, consider the diagram (\ref{Emoduli}), which we reproduce here:
$$\xymatrix{
Y \ar[r]^{\phi} \ar[d]_f & Y^{\sigma} \ar[d]^{f^{\sigma}} \\
X \ar @{=}[r] & X^{\sigma}
}$$
The isomorphism $\phi$ is well-defined up to composition with an element of $G$ acting on $Y^{\sigma}$.  Thus, 
the map $h_{\sigma}$ given by $h_\sigma(g) := \phi \circ g \circ \phi^{-1}$ is well defined as an element of $\Out(G)$ 
(the input is thought of as an automorphism of $Y$, and the output is thought of as an automorphism of $Y^{\sigma}$).
Then $L$ contains the field of moduli of $f$ (as a $G$-cover) iff $h_{\sigma}$ is inner (because then there will be a choice of $\phi$ making the diagram
$G$-equivariant). 

\subsection{Wild ramification}\label{Sramification}
We state here some facts from \cite[IV]{Se:lf} and derive some consequences.
Let $K$ be a complete discrete valuation field with
residue field $k$.  If $L/K$ is a finite Galois
extension of fields with Galois group $G$, then 
$L$ is also a complete discrete valuation field with residue field $k$.  Here $G$ is
of the form $P \rtimes \ints/m$, where
$P$ is a $p$-group and $m$ is prime to $p$.  The group $G$ has a filtration $G  = G_0
\supseteq G_i$ ($i \in \reals_{\geq 0}$) for the lower numbering, and $G \supseteq G^i$ for the upper numbering 
($i \in \reals_{\geq 0}$).  If $i \leq j$, then $G_i \supseteq G_j$ and $G^i \supseteq G^j$ 
(see \cite[IV, \S1, \S3]{Se:lf}).   
The subgroup $G_i$ (resp.\ $G^i$) is known as the \emph{$i$th higher ramification group for
the lower numbering (resp.\ the upper 
numbering)}.  One knows that $G_0 = G^0 = G$, and that for sufficiently small $\epsilon > 0$, 
$G_{\epsilon} = G^{\epsilon} = P$.  For sufficiently large $i$, $G_i = G^i = \{id\}$.  
Any $i$ such that $G^i \supsetneq G^{i + \epsilon}$ for all $\epsilon > 0$ is
called an \emph{upper jump} of the extension $L/K$.  Likewise, if $G_i \supsetneq G_{i+\epsilon}$, then $i$ is called a
\emph{lower jump} of $L/K$.  The lower jumps are all prime-to-$p$ integers.
The greatest upper jump (i.e., the greatest $i$ such that $G^i \neq \{id\}$) is called the
\emph{conductor} of higher ramification of $L/K$.  The upper numbering is invariant under
quotients (\cite[IV, Proposition 14]{Se:lf}).  That is, if $H \leq G$ is normal, and $M = L^H$, then the $i$th higher
ramification group for the upper numbering for $M/K$ is $G^i/(G^i \cap H) \subseteq G/H$.

\begin{lemma}\label{Ltamenochange}
Let $K \subseteq L \subseteq L'$ be a tower of field extensions such that $L'/L$ is tame, $L/K$ is ramified, and
$L'/K, L/K$ are finite Galois.  Then the conductor of $L'/K$ is equal to the conductor of $L/K$.
\end{lemma}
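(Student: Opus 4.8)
The plan is to translate both conductors into statements about the single ramification filtration of $G := \Gal(L'/K)$, and then exploit that the upper numbering behaves well under the quotient $G \twoheadrightarrow G/H$ while the lower numbering behaves well under the inclusion $H \hookrightarrow G$, where $H := \Gal(L'/L)$ is normal in $G$ because $L/K$ is Galois. Recall from the setup that $L'$, $L$, $K$ all have residue field $k$, so all the extensions in sight are totally ramified and $G^0 = G_0 = G$. By definition the conductor $c'$ of $L'/K$ is the largest $i$ with $G^i \neq \{\mathrm{id}\}$. By quotient-invariance of the upper numbering (\cite[IV, Prop.\ 14]{Se:lf}), the upper-numbering filtration of $\Gal(L/K) = G/H$ is $(G/H)^i = G^i H/H$, which is trivial exactly when $G^i \subseteq H$; hence the conductor $c$ of $L/K$ is the largest $i$ with $G^i \not\subseteq H$. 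Since $G^i \not\subseteq H$ forces $G^i \neq \{\mathrm{id}\}$, the inequality $c \leq c'$ is immediate, and the whole problem reduces to showing $G^{c'} \not\subseteq H$.

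The key input is the tameness of $L'/L$. Compatibility of the lower numbering with the subgroup $H$ gives $H_i = G_i \cap H$ (\cite[IV, Prop.\ 2]{Se:lf}); since $L'/L$ is tame, its wild inertia $H_1$ is trivial, so $P \cap H = \{\mathrm{id}\}$, where $P = G_1$ is the wild inertia of $G$. On the other hand, because the upper numbering is decreasing in $i$ and $G^\epsilon = P$ for all sufficiently small $\epsilon > 0$, every $G^i$ with $i > 0$ satisfies $G^i \subseteq G^\epsilon = P$. Thus, assuming $c' > 0$, the group $G^{c'}$ is a nontrivial subgroup of $P$, whence $G^{c'} \cap H \subseteq P \cap H = \{\mathrm{id}\}$, so $G^{c'} \not\subseteq H$ and therefore $c \geq c'$. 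The remaining case $c' = 0$ means $P = \{\mathrm{id}\}$, i.e.\ $L'/K$ is tame, so the subextension $L/K$ is tame as well; here I would invoke the hypothesis that $L/K$ is ramified to guarantee $(G/H)^0 = G/H \neq \{\mathrm{id}\}$, so that $c = 0 = c'$ in this degenerate case too.

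I expect the only real subtlety to be the bookkeeping between the two numberings: once one knows that the top ramification group $G^{c'}$ lives inside the wild inertia $P$ and that tameness of $L'/L$ forces $H$ to meet $P$ trivially, the statement $G^{c'} \not\subseteq H$ is essentially automatic, and with it the equality of conductors. The hypothesis that $L/K$ is ramified is needed only to exclude the situation where $G/H$ has trivial inertia and the conductor of $L/K$ would not be defined; it plays no role in the wildly ramified case, which is the one of genuine interest.
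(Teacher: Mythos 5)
Your proof is correct and is essentially the paper's own argument in expanded form: the paper's proof is a one-line citation of \cite[IV, Proposition 14]{Se:lf} (quotient-invariance of the upper numbering), and your reduction of both conductors to the single filtration of $G = \Gal(L'/K)$, combined with the observation that tameness of $L'/L$ gives $H \cap P = H_1 = \{\mathrm{id}\}$ by lower-numbering compatibility with subgroups, is precisely how that citation is meant to be unpacked. Your handling of the degenerate tame case via the hypothesis that $L/K$ is ramified is also the right bookkeeping.
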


\begin{proof}  
This is an easy consequence of \cite[IV, Proposition 14]{Se:lf}.
\end{proof}

\begin{lemma}\label{Lcompositum}
Let $L_1, \ldots, L_{\ell}$ be finite Galois extensions of $K$ with compositum $L$ in some algebraic closure of $K$.
Denote by $h_i$ the conductor of $L_i/K$ and by $h$ the conductor of $L/K$.  Then $h = \max_i(h_i)$.
\end{lemma}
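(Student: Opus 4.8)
The plan is to pass to the compositum $L$ and exploit the quotient-invariance of the upper numbering recorded in \S\ref{Sramification} (i.e.\ \cite[IV, Proposition 14]{Se:lf}), which is exactly the tool that lets one read off each $h_i$ from the ramification filtration of the single group $G := \Gal(L/K)$. Since each $L_i/K$ is finite Galois, so is the compositum $L/K$, and $G$ is finite. For each $i$ I would set $H_i := \Gal(L/L_i) \trianglelefteq G$, so that $L_i = L^{H_i}$ and $\Gal(L_i/K) \cong G/H_i$. The key elementary fact is that $\bigcap_{i=1}^{\ell} H_i = \{id\}$: an element of $G$ fixing every $L_i$ fixes their compositum $L$, hence is trivial.

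By quotient-invariance of the upper numbering, the $j$th higher ramification group of $L_i/K$ is $(G/H_i)^j = G^j H_i/H_i$, which is trivial precisely when $G^j \subseteq H_i$. Consequently the conductor $h_i$ — the greatest $j$ with $(G/H_i)^j \neq \{id\}$ — equals the greatest $j$ with $G^j \not\subseteq H_i$. Both inequalities then drop out of this description. For $\max_i h_i \leq h$: if $j > h$ then $G^j = \{id\} \subseteq H_i$, so $h_i \leq h$ for every $i$. For the reverse: by the definition of the conductor we have $G^h \neq \{id\}$, and since $\bigcap_i H_i = \{id\}$ the subgroup $G^h$ cannot lie in every $H_i$; choosing $i_0$ with $G^h \not\subseteq H_{i_0}$ yields $h_{i_0} \geq h$. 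Combining the two gives $h = \max_i h_i$.

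The argument involves no genuine obstacle: once the problem is transported up to $G$ and its quotients it is purely formal. The two points deserving care are (i) checking that $L/K$ is again finite Galois, so that the ramification theory of \cite[IV]{Se:lf} applies to it, and (ii) correctly invoking Proposition 14 to identify the $j$th upper ramification group of $G/H_i$ with the image $G^j H_i/H_i$, and \emph{not} with $G^j \cap H_i$ — the latter governs the lower numbering and is exactly the quantity that fails to behave well under quotients. Finally, I would lean on the convention from \S\ref{Sramification} that the conductor is attained, i.e.\ $G^h \neq \{id\}$; this is what turns the second inequality into the sharp statement $h_{i_0} \geq h$ rather than a limiting inequality.
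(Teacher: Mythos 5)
Your proposal is correct and follows essentially the same route as the paper: pass to $G = \Gal(L/K)$, use the triviality of $\bigcap_i \Gal(L/L_i)$ together with quotient-invariance of the upper numbering to see that $G^j$ is trivial iff all the groups $(G/\Gal(L/L_i))^j$ are trivial. Your write-up merely makes explicit the two inequalities (and the fact that the conductor is attained) that the paper's terser argument leaves implicit.
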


\begin{proof}
Write $G = \Gal(L/K)$ and $N_i = \Gal(L/L_i)$.  
Suppose $g \in G^j \subseteq \Gal(L/K)$.  Since $L$ is the compositum of the $L_i$, the
intersection of the $N_i$ is trivial.  So $g$ is trivial iff its image in each $G/N_i$ is trivial.
Because the upper numbering is invariant under quotients, this shows that $G^j$ is trivial iff the $j$th higher
ramification group for the upper numbering for $L_i/K$ is trivial for all $i$.  This means that $h = \max_i(h_i)$.
\end{proof}

If $A, B$ are the valuation rings of $K, L$, respectively, sometimes we will refer to the conductor and
higher ramification groups of the extension $B/A$.  If $f:Y \to X$ is a branched cover of curves and $f(y) = x$, then we
refer to the higher ramification groups of $\hat{\mc{O}}_{Y, y}/\hat{\mc{O}}_{X, x}$ \emph{as the higher ramification groups
at $y$} (or, if $f$ is Galois, and we only care about groups up to isomorphism, as the \emph{higher ramification groups above $x$}).

We include two well-known lemmas.  The first follows easily from the Hurwitz formula (see also \cite[Propositions 3.7.8, 6.4.1]{St:af}).  For the second, 
see (\cite[Theorem 1.4.1 (i)]{Pr:fc}).

\begin{lemma}\label{Lartinschreiergenus}
Let $f:Y \to \proj^1$ be a $\ints/p$-cover of curves over an algebraically
closed field $k$ of characteristic $p$, ramified 
at exactly one point of order $p$.  If the conductor of higher ramification at this point is $h$, then the genus of $Y$ is 
$\frac{(h-1)(p-1)}{2}$.
\end{lemma}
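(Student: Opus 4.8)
The plan is to apply the Riemann--Hurwitz (Hurwitz) formula to $f$ and to convert the conductor $h$ into a different exponent at the unique point $y \in Y$ over the branch point $x$. Since the cover is ramified at exactly one point of order $p$ and $\deg f = p$, that point is totally ramified, so $y$ is the only point of $Y$ where $f$ ramifies. As $X = \proj^1$ has genus $0$, the formula reads $2g_Y - 2 = p(2\cdot 0 - 2) + d_y = -2p + d_y$, where $d_y$ is the exponent of the different of $\hat{\mc{O}}_{Y,y}/\hat{\mc{O}}_{X,x}$ and there are no further contributions.

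Next I would express $d_y$ through the higher ramification groups for the lower numbering via the standard formula $d_y = \sum_{i \geq 0}(|G_i| - 1)$ (\cite[IV, Proposition 4]{Se:lf}). Here $G \cong \ints/p$, so the ramification filtration has a single jump: $G_i = \ints/p$ for all $i$ up to the jump, and $G_i = \{id\}$ afterward.

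The only point needing care is that the hypothesis supplies the conductor $h$, i.e.\ the greatest \emph{upper} jump, whereas the different formula is phrased in the \emph{lower} numbering. For a cyclic group of order $p$ with a single jump, however, Herbrand's function $\phi$ is the identity up to that jump, so the upper and lower jumps coincide and both equal $h$. Hence $G_i = \ints/p$ precisely for $0 \leq i \leq h$, giving $|G_i| - 1 = p - 1$ for exactly $h+1$ values of $i$, and therefore $d_y = (h+1)(p-1)$.

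Substituting into Riemann--Hurwitz yields $2g_Y - 2 = -2p + (h+1)(p-1)$, and expanding $(h+1)(p-1) = hp + p - h - 1$ gives $2g_Y = hp - h - p + 1 = (h-1)(p-1)$, so $g_Y = \frac{(h-1)(p-1)}{2}$, as claimed. No step is a genuine obstacle; the only subtlety is the bookkeeping between the two numberings, which is immediate for $\ints/p$.
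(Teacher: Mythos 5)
Your proof is correct and follows essentially the same route as the paper, which simply notes that the lemma ``follows easily from the Hurwitz formula'' (citing Stichtenoth); you have filled in precisely those details, including the key bookkeeping point that for a degree-$p$ cyclic extension the unique upper and lower jumps coincide, so the different exponent is $(h+1)(p-1)$.
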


\begin{lemma}\label{Lexplicitconductor}
Let $f: Y \to \proj^1$ be a $\ints/p$-cover of $k$-curves,
branched at one point.  Then $f$ can be given birationally by an equation
$y^p - y = g(x)$, where the terms of $g(x) \in k[x]$ have prime-to-$p$ degree (the branch point is $x = \infty$).  If $h$ is the conductor of
higher ramification at $\infty$, then $h = \deg(g)$.
\end{lemma}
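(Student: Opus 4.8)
The plan is to first put $f$ in the stated normal form using Artin--Schreier theory, and then compute the ramification filtration at the point above $\infty$ by hand.

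Since $f : Y \to \proj^1$ is a $\ints/p$-cover over $k$ with $\cf(k) = p$, Artin--Schreier theory gives a birational model $y^p - y = g(x)$ with $g \in k(x)$, on which a generator $\sigma$ of $\ints/p$ acts by $\sigma(y) = y+1$. Two such equations define the same cover exactly when their right-hand sides differ by an element of the form $z^p - z$, $z \in k(x)$, and I would use this equivalence to normalize $g$. At a finite point $a$ where $g$ has a pole, one repeatedly subtracts a suitable $z^p - z$ to lower the pole order until it is prime to $p$ or zero; since $f$ is branched only at $\infty$, the reduced pole order at each finite point is forced to be $0$ (a nonzero prime-to-$p$ pole would produce ramification there by the computation below), so after normalization $g \in k[x]$ is a polynomial. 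A parallel reduction at $\infty$, using $x^{pm} \equiv x^m \pmod{z^p - z}$ with $z = x^m$, removes every monomial of $p$-divisible degree, so that all terms of $g$ have prime-to-$p$ degree; in particular $\deg g =: h$ is prime to $p$.

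It remains to show the conductor at $\infty$ equals $h$. I would work locally: let $t = 1/x$, a uniformizer of $K := \widehat{\mc{O}}_{\proj^1, \infty}$, let $L$ be the corresponding degree-$p$ extension, and normalize $v_L$ so that a uniformizer has valuation $1$; total wild ramification gives $v_L(t) = p$. From $y^p - y = g$ and $v_L(g) = -ph$ one gets $v_L(y) = -h$. Since $\gcd(h,p) = 1$, there is a unique $\alpha \in \{1, \dots, p-1\}$ with $-h\alpha \equiv 1 \pmod p$, and then an integer $\beta$ with $-h\alpha + p\beta = 1$; set $\pi := y^{\alpha} t^{\beta}$, a uniformizer of $L$. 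Then
$$\sigma(\pi) - \pi = t^{\beta}\bigl((y+1)^{\alpha} - y^{\alpha}\bigr),$$
and the term of lowest valuation in the expansion is $\alpha\,y^{\alpha-1}t^{\beta}$. Because $\alpha$ is prime to $p$ this term is nonzero, and a valuation count gives $i_G(\sigma) = v_L(\sigma(\pi) - \pi) = h+1$. Hence the lower-numbering filtration has a single break at $h$: $G_i = \ints/p$ for $i \le h$ and $G_i = \{1\}$ for $i > h$. Passing to the upper numbering, the Herbrand function satisfies $\phi(u) = u$ for $u \le h$, since $G_t = G_0$ throughout that range, so the upper break coincides with the lower break and the conductor is $h = \deg g$.

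The step requiring the most care is the choice of $\alpha$ prime to $p$: this is precisely what keeps the leading term $\alpha\,y^{\alpha-1}t^{\beta}$ of $(y+1)^{\alpha} - y^{\alpha}$ from vanishing, and hence what pins the unique ramification break at $h$ rather than higher. The normalization step is routine Artin--Schreier bookkeeping, and the transition from the lower to the upper numbering is immediate once the single break is located, so the valuation computation at $\infty$ is the crux of the argument.
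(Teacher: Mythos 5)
Your proof is correct, but note that the paper does not actually prove this lemma at all: it is stated as a well-known fact with a pointer to \cite[Theorem 1.4.1 (i)]{Pr:fc}, just as the companion genus formula (Lemma \ref{Lartinschreiergenus}) is outsourced to the Hurwitz formula and \cite[Propositions 3.7.8, 6.4.1]{St:af}. What you have written is the standard self-contained derivation sitting behind that citation: Artin--Schreier normalization of $g$ (killing finite poles and $p$-divisible-degree monomials by adding elements $z^p - z$, with the absence of finite branch points forcing $g$ to be a polynomial), followed by the classical break computation with the explicit uniformizer $\pi = y^{\alpha}t^{\beta}$. Your valuation bookkeeping checks out: $v_L(y) = -h$, $v_L(\sigma(\pi) - \pi) = h+1$, hence a single lower break at $h$, and $\phi(u) = u$ up to that break, so the upper jump --- the conductor in the paper's sense --- is also $h$. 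Two standard facts are used silently and would deserve a citation if this argument were inserted into the text: first, that $i_G(\sigma) = v_L(\sigma(\pi) - \pi)$ may be computed with an arbitrary uniformizer because $\mc{O}_L = \mc{O}_K[\pi]$ for a totally ramified extension of complete discrete valuation rings (\cite[I, \S 6, Proposition 18]{Se:lf}); second, that since $\ints/p$ has no proper nontrivial subgroups, the extension at $\infty$ is automatically totally ramified and the single computation for the generator $\sigma$ determines the entire filtration ($G_{h+1}$ is a subgroup not containing $\sigma$, hence trivial). The trade-off between the two routes is the usual one: the paper's citation keeps the exposition short, while your computation makes the lemma self-contained and shows exactly where the prime-to-$p$ normalization of $g$ enters --- it guarantees $\gcd(h,p) = 1$, which both legitimizes the uniformizer $y^{\alpha}t^{\beta}$ and, via the nonvanishing of $\alpha\, y^{\alpha-1}$ in characteristic $p$, pins the break at exactly $h$ rather than allowing it to drift.
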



\subsection{Semistable models of $\proj^1$}\label{Sdisks}
Let $R$ be a mixed characteristic $(0, p)$ complete discrete valuation ring with residue field $k$ and fraction field $K$.
If $X$ is a smooth curve over $K$, then a \emph{semistable}
model for $X$ is a relative flat curve $X_R \to \Spec R$ with $X_R \times_R K \cong X$ and semistable special fiber (i.e.,
the special fiber is reduced with only ordinary double points for singularities).  If $X_R$ is smooth, it is called a \emph{smooth model}.

\subsubsection{Models}
Let $X \cong \proj^1_K$.  Write $v$ for the valuation on $K$.
Let $X_R$ be a smooth model of $X$ over $R$.  
Then there is an element $T \in K(X)$ such that $K(T) \cong K(X)$ and the local
ring at the generic point of the special fiber of $X_R$ is the valuation ring of $K(T)$ corresponding to the
Gauss valuation (which restricts to $v$ on $K$).  We say that our model
corresponds to the Gauss valuation on $K(T)$, and we call $T$ a
\emph{coordinate} of $X_R$.  Conversely, if $T$ is
any rational function on $X$ such that $K(T) \cong K(X)$, there is a smooth
model $X_R$ of $X$ such that $T$ is a coordinate of $X_R$.
In simple terms, $T$ is a coordinate of $X_R$ iff, for all $a, b \in R$, the subvarieties of $X_R$ cut out by $T-a$ and $T-b$ intersect exactly when
$v(a-b) > 0$.  

Now, let $X_R'$ be a semistable model of $X$ over $R$.
The special fiber of $X_R'$ is a tree-like configuration of $\proj^1_k$'s.
Each irreducible component $\ol{W}$ of the special fiber $\ol{X}$ of $X_R'$ yields
a smooth model of $X$ by blowing down all other irreducible components of $\ol{X}$.  
If $T$ is a coordinate on the smooth model of $X$ with $\ol{W}$ as special fiber, we will say that $T$ corresponds to $\ol{W}$.

\subsubsection{Disks and annuli}

We give a brief overview here.  For more details, see \cite{He:dc}.

Let $X_R'$ be a semistable model for $X = \proj^1_K$.  Suppose $x$ is a smooth point
of the special fiber $\ol{X}$ of $X_R'$ on the 
irreducible component $\ol{W}$.  Let $T$ be a coordinate corresponding to $\ol{W}$ such
that $T = 0$ specializes to $x$.  Then the set of points of $X(\ol{K})$ which specialize
to $x$ is the \emph{open $p$-adic disk} $D$ given by  
$v(T) > 0$.  The ring of functions on the formal disk corresponding to $D$ is $\hat{\mc{O}}_{X, x} \cong R\{T\}$.

Now, let $x$ be an ordinary double point of $\ol{X}$, at the
intersection of components $\ol{W}$ and $\ol{W}'$.  Then the 
set of points of $X(\ol{K})$ which specialize to $x$ is an \emph{open annulus} $A$.  If $T$
is a coordinate corresponding to $\ol{W}$ such that $T=0$ specializes to
$\ol{W}'\backslash \ol{W}$, then $A$ is given by $0 <
v(T) < e$ for some $e \in v(K^{\times})$. The ring of functions on the formal annulus corresponding to $A$
is $\hat{\mc{O}}_{X, x} \cong R[[T,U]]/(TU - p^e)$.  Observe that $e$ is independent of the coordinate.  

Suppose we have a preferred coordinate $T$ on $X$ and a semistable model $X_R'$
of $X$ whose special fiber $\ol{X}$ contains an irreducible 
component $\ol{X}_0$ corresponding to the coordinate $T$.  
If $\ol{W}$ is any irreducible component of $\ol{X}$ other than $\ol{X}_0$, then
since $\ol{X}$ is a tree of $\proj^1$'s, 
there is a unique non-repeating sequence of consecutive, intersecting components $\ol{X}_0,
\ldots, \ol{W}$.  Let $\ol{W}'$ be the
component in this sequence that intersects $\ol{W}$.  Then the set of points in
$X(\ol{K})$ that specialize to the connected component of $\ol{W}$ in $\ol{X} \backslash \ol{W}'$
is a closed $p$-adic disk $D$.  If the established preferred coordinate (equivalently, the
preferred component $\ol{X}_0$) is clear, we will abuse language and refer to the component $\ol{W}$ as 
\emph{corresponding to the disk $D$}, and vice versa.  If $U$ is a coordinate corresponding to $\ol{W}$, 
and if $U = \infty$ does not specialize to the connected component of $\ol{W}$ in $\ol{X} \backslash \ol{W}'$, 
then the ring of functions on the formal disk corresponding to $D$ is $R\{U\}$.

\section{\'{E}tale reduction of torsors}\label{Sreduction}
Let $R$ be a mixed characteristic $(0, p)$ complete discrete valuation ring with residue field $k$ and fraction field $K$.
Let $\pi$ be a uniformizer of $R$.  Recall that we normalize the valuation of $p$ (not $\pi$) to be 1.
For any scheme or algebra $S$ over $R$, write $S_K$ and $S_k$ for its base
changes to $K$ and $k$, respectively.

The following lemma will be used in the proof of Lemma \ref{Lonenewtail} to analyze cyclic covers of closed $p$-adic disks given by explicit equations.

\begin{lemma}\label{Lartinschreier}
Assume that $R$ contains the $p^n$th roots of unity.  
Let $X = \Spec A$, where $A = R\{T\}$.  
Let $f: Y_K \to X_K$ be a $\mu_{p^n}$-torsor given by the equation $y^{p^n} = g$,
where $g = 1 + \sum_{i=1}^{\infty} c_iT^i$.  Suppose one of the following two conditions holds:\\

\begin{enumerate}[(i)]
\item  $\min_i v(c_i) = n + \frac{1}{p-1}$, and  
$v(c_i) > n + \frac{1}{p-1}$ for all $i$ divisible by $p$. 
\item $p$ is odd, $v(c_1) > n$, $v(c_p) > n$, and $\min_{i \ne 1, p} v(c_i) = n + \frac{1}{p-1}$.  
Also, $v(c_i) > n + \frac{1}{p-1}$ for all $i > p$ divisible by $p$.  Lastly, $v(c_p - \frac{c_1^p}{p^{(p-1)n+1}}) > n + \frac{1}{p-1}$.  
\end{enumerate}
Let $h$ be the largest $i$ $(\ne p)$ such that $v(c_i) = n + \frac{1}{p-1}$. Then $f: Y_K \to X_K$ splits into a union of $p^{n-1}$ disjoint 
$\mu_p$-torsors.  Let $Y$ be the normalization of $X$ in the total ring of fractions of $Y_K$.  Then the map $Y_k \to X_k$ is \'{e}tale, and is birationally 
equivalent to the union of $p^{n-1}$ disjoint Artin-Schreier covers of $\proj^1_k$, each with conductor $h$.
\end{lemma}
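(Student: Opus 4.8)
\emph{Overview.} The plan is to split off a $p^{n-1}$st root and thereby reduce everything to a single $\mu_p$-torsor, which one then reduces by the standard Kummer-to-Artin--Schreier passage on the disk. First I would show that $g$ is a $p^{n-1}$st power in $A_K$. Writing $u = g-1$, I would produce $g_0 := g^{1/p^{n-1}}$ as the binomial series $\sum_{k\ge 0}\binom{1/p^{n-1}}{k}u^k$ and verify convergence on the closed disk from the valuation hypotheses: in case (i) one has $v(u) = \min_i v(c_i) = n + \tfrac{1}{p-1}$, while in case (ii) (where $p$ is odd) one has only $v(u) > n$, which still suffices because $p \ge 3$. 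Granting $g_0 \in A_K$, and using $\mu_{p^n} \subseteq R$, the factorization $y^{p^n} - g = \prod_{\zeta \in \mu_{p^{n-1}}}(y^p - \zeta g_0)$ exhibits $Y_K$ as the disjoint union of the $p^{n-1}$ mutually isomorphic $\mu_p$-torsors $W_K\colon w^p = g_0$ (each $\zeta$ has a $p$th root in $R$). This gives the first assertion and reduces the reduction statement to the single torsor $W_K$, whose normalization $W$ reduces \'etale if and only if $Y$ does.

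\emph{Coefficients of $g_0$.} Next I would pin down the coefficients $c_i'$ of $g_0 = 1 + \sum_i c_i' T^i$. A term-by-term estimate, using $v(k!) = (k - s_p(k))/(p-1)$ and $v\!\big(\binom{1/p^{n-1}}{k}\big) = -k(n-1) - v(k!)$, shows that for $k \ge 2$ the contributions lie in valuation $> \tfrac{p}{p-1}$, with the sole exception of the term $\binom{1/p^{n-1}}{p}c_1^p$ feeding the coefficient of $T^p$. Hence for every $i \ne 1,p$ one gets $v(c_i') = \tfrac{p}{p-1}$ exactly when $v(c_i) = n + \tfrac1{p-1}$ (from the $k=1$ term $c_i/p^{n-1}$), all such $i$ are prime to $p$, and the largest is $h$. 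In case (i) the degree-one and degree-$p$ coefficients of $g_0$ automatically have valuation $\ge \tfrac{p}{p-1}$, so $g_0$ is already in ``case-(i) position'': $\min_i v(c_i') = \tfrac{p}{p-1}$, attained only at indices prime to $p$, the largest being $h$.

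\emph{Case (ii), the main obstacle.} Here $c_1$ and $c_p$ may have valuation as low as just above $n$, so $g_0$ can carry a degree-one term $c_1' = c_1/p^{n-1}$ and a degree-$p$ term of valuation below $\tfrac{p}{p-1}$, which would spoil the reduction; this bookkeeping is where I expect the difficulty to lie. I would remove the degree-one term by the change of variable $w \mapsto w(1+aT)$ with $a = c_1'/p$, replacing $g_0$ by $g_0(1+aT)^{-p}$ (an isomorphism of $\mu_p$-torsors). A direct computation shows this shifts the coefficient of $T^p$ by a unit times $c_1^p/p^{np}$, so that the new degree-$p$ coefficient equals, up to a unit and the factor $p^{-(n-1)}$, the quantity $c_p - c_1^p/p^{(p-1)n+1}$ (the exponent $(p-1)n+1 = np - (n-1)$ is exactly the $p$th power of $c_1'$ weighed against $1/p^{n-1}$). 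The hypothesis $v\!\big(c_p - c_1^p/p^{(p-1)n+1}\big) > n + \tfrac1{p-1}$ then says precisely that this coefficient becomes negligible; the same hypothesis forces $v(c_1) > n + \tfrac1p$, which in turn guarantees that the substitution creates no new low-valuation terms at the intermediate degrees $2,\dots,p-1$ (nor at higher degrees). Thus case (ii) is also brought into case-(i) position, with the same invariant $h$.

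\emph{Artin--Schreier reduction.} Finally I would reduce $W_K\colon w^p = g_0$ with $g_0$ in case-(i) position. Put $\lambda = \zeta_p - 1$, so $v(\lambda) = \tfrac1{p-1}$ and $p = \lambda^{p-1}\cdot(\text{unit})$, and substitute $w = 1 + \lambda z$; then $w^p - 1 = \lambda^p\big(z^p + (\text{unit})z + (\text{positive valuation})\big)$. Dividing $w^p - 1 = g_0 - 1$ by $\lambda^p$ and reducing modulo the maximal ideal gives, after an $\FF_p$-linear rescaling of $z$, an Artin--Schreier equation $\bar z^p - \bar z = \bar\gamma(T)$ with $\bar\gamma \in k[T]$ of degree $h$ prime to $p$. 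Such a cover is \'etale over $X_k \cong \aff^1_k$, so $W_k \to X_k$, hence $Y_k \to X_k$, is \'etale; and by Lemma \ref{Lexplicitconductor} its conductor at $T = \infty$ is $\deg\bar\gamma = h$. Since the $p^{n-1}$ components are mutually isomorphic, $Y_k \to X_k$ is birationally a disjoint union of $p^{n-1}$ Artin--Schreier covers of $\proj^1_k$, each of conductor $h$, as claimed.
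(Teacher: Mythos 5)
Your proposal is correct and takes essentially the same approach as the paper: binomial-series extraction of the $p^{n-1}$st root, splitting of the torsor into $p^{n-1}$ isomorphic $\mu_p$-torsors, and, in case (ii), the twist by $1 + (c_1/p^n)T$ so that the hypothesis on $v\bigl(c_p - c_1^p/p^{(p-1)n+1}\bigr)$ renders the degree-$p$ coefficient harmless; the only (cosmetic) differences are that the paper performs this twist on $g$ itself, multiplying by $(1+\eta T)^{p^n}$ with $\eta = -c_1/p^n$ \emph{before} extracting the root so that case (ii) literally reduces to case (i), and that it cites \cite[Ch.~5, Proposition~1.6]{He:ht} for the Kummer-to-Artin--Schreier reduction that you rework by hand. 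One harmless inaccuracy: since the hypotheses force $v(c_1) > n + \frac{1}{p}$, the term $\binom{1/p^{n-1}}{p}c_1^p$ you single out as the ``sole exception'' has valuation greater than $p$, hence is itself negligible; the genuinely non-negligible degree-$p$ correction is the $\binom{-p}{p}a^p \equiv -c_1^p/p^{pn}$ created by your substitution, which is exactly what your final formula records, so nothing in the argument breaks.
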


\begin{proof} 
Suppose (i) holds.  We claim that $g$ has a $p^{n-1}$st root $1 + au$ in $A$
such that $a \in R$, $v(a) = \frac{p}{p-1}$, and the reduction $\ol{u}$ of $u$ in $A_k = k[T]$ is of
degree $h$ with only prime-to-$p$ degree terms.  By \cite[Ch.\ 5, Proposition 1.6]{He:ht} (the \'{e}tale reduction case) and 
Lemma \ref{Lexplicitconductor}, this suffices to prove the lemma.
  
We prove the claim.  Write $g = 1 + bw$ with $b \in R$ and $v(b) = n + \frac{1}{p-1}$.  
Suppose $n > 1$.  Then, using the binomial theorem, a $p^{n-1}$st root of $g$ is given by 
$$\sqrt[p^{n-1}]{g} = 1 + \frac{1/p^{n-1}}{1!} bw + \frac{(1/p^{n-1})((1/p^{n-1}) - 1)}{2!} (bw)^2 +
\cdots.$$  Since $v(b) = n + \frac{1}{p-1}$, this series converges, and is in $A$.  Since the coefficients of all terms in this 
series of degree $\geq 2$ have valuation greater than  $\frac{p}{p-1}$, the series can be written as
$\sqrt[p^{n-1}]{g} = 1 + au$, where $a =\frac{b}{p^{n-1}} \in R$, 
$v(a) = \frac{p}{p-1}$, and $u$ congruent to $w$ (mod $\pi$).
By assumption, the reduction $\ol{w}$ of $w$ has degree $h$ and only prime-to-$p$ degree terms.  Thus $\ol{u}$ does as well.

Now assume (ii) holds.  It clearly suffices to show that there exists $a \in A$ such that $a^{p^n}g$ satisfies (i).
Let $a = 1 + \eta T$, where $\eta = -\frac{c_1}{p^n}$.  Now, by assumption, $v(c_1^p) - (p-1)n - 1 \geq \min(v(c_p), n + \frac{1}{p-1})$.  
Since $v(c_p) > n$, we derive that $v(c_1^p) > pn+1$.  Thus $v(\eta) > \frac{1}{p}$.  Then there exists $\epsilon \in \rats^{>0}$ such that
$$(1 + \eta T)^{p^n} \equiv 1 - c_1T - \binom{p^n}{p}\frac{c_1^p}{p^{pn}}T^p \pmod {p^{n + \frac{1}{p-1} + \epsilon}}.$$  
It is easy to show that $\binom{p^n}{p} \equiv p^{n-1} \pmod {p^n}$ for all $n \geq 1$.  Furthermore, the valuation of the
$T^i$ term ($1 \leq i \leq p^n$) in $(1 + \eta T)^{p^n}$ is greater than $\frac{i}{p} + n - v(i)$.  For any $i$ other than $1$ and $p$, this is greater than $n + \frac{1}{p-1}$
(here we use that $p$ is odd).
So $$(1 + \eta T)^{p^n} \equiv 1 - c_1T - \frac{c_1^p}{p^{(p-1)n + 1}}T^p \pmod {p^{n + \frac{1}{p-1} + \epsilon}}.$$
By the assumption that $v(c_p - \frac{c_1^p}{p^{(p-1)n+1}}) > n + \frac{1}{p-1}$, we now see that $(1+\eta T)^{p^n}g$ satisfies (i).
In particular, $(1 + \eta T)^{p^n}g \equiv 1 \pmod{p^{n+ \frac{1}{p-1}}}$, and, for any $i \ne 1, p$ such that $v(c_i) = n+\frac{1}{p-1}$, the
valuation of the coefficient of $T^i$ in $(1 + \eta T)^{p^n}g$ is $n+\frac{1}{p-1}$.
 \end{proof}

An analogous result, which is necessary to prove our main theorem in the case $p=2$, is in Appendix \ref{Asmallprimes}.

\section{Stable reduction of covers}\label{Sstable}

In \S\ref{Sstable}, $R$ is a mixed characteristic $(0, p)$ complete discrete valuation ring with residue field $k$ and fraction field $K$.
We set $X \cong \proj^1_K$, and we fix a \emph{smooth} model $X_R$ of $X$.
Let $f: Y \to X$ be a $G$-Galois cover defined over $K$, with $G$ any finite
group, such that the branch points of $f$ are defined over $K$ and their specializations 
do not collide on the special fiber of $X_R$.  Assume that $f$ is branched at at least $3$ points.
By a theorem of Deligne and Mumford (\cite[Corollary 2.7]{DM:ir}), combined with work
of Raynaud (\cite{Ra:pg}, \cite{Ra:sp}) and Liu (\cite{Li:sr}), there is a
minimal finite extension $K^{st}/K$ 
with ring of integers $R^{st}$, and a unique model $f^{st}: Y^{st} \to X^{st}$ of $f_{K^{st}} := f \times_K K^{st}$
(called the \emph{stable model} of $f$) such that

\begin{itemize}
\item The special fiber $\ol{Y}$ of $Y^{st}$ is semistable. 
\item The ramification points of $f_{K^{st}}$ specialize to
\emph{distinct} smooth points of $\ol{Y}$.
\item Any genus zero irreducible component of $\ol{Y}$ contains at least three
marked points (i.e., ramification points or points of intersection with the rest
of $\ol{Y}$).
\item $G$ acts on $Y^{st}$, and $X^{st} = Y^{st}/G$.
\end{itemize}
The field $K^{st}$ is called the minimal field of definition of the stable model of $f$. 
If we are working over a finite
extension $K'/K^{st}$ with ring of integers $R'$, we will sometimes abuse
language and call 
$f^{st} \times_{R^{st}} R'$ the stable model of $f$.  

\begin{remark}
Our definition of the stable model is the definition used in
\cite{We:br}.  This differs from the definition
in \cite{Ra:sp} in that \cite{Ra:sp} allows the ramification points to coalesce
on the special fiber.  
\end{remark}

\begin{remark}
Note that $X^{st}$ can be naturally identified with a blowup of $X \times_R R^{st}$
centered at closed points.  Furthermore, the nodes of $\ol{Y}$ lie above nodes of the special fiber $\ol{X}$
of $X^{st}$ (\cite[Lemme 6.3.5]{Ra:ab}), and $Y^{st}$ is the normalization of $X^{st}$ in $K^{st}(Y)$.
\end{remark}

If $\ol{Y}$ is smooth, the cover $f: Y \to X$ is said to have \emph{potentially
good reduction}.   If $f$ does not have
potentially good reduction, it is said to have \emph{bad reduction}.  In any case, the special fiber $\ol{f}:
\ol{Y} \to \ol{X}$ of the stable model is called the \emph{stable reduction} of $f$.  
The strict transform of the special fiber of $X_{R^{st}}$ in $\ol{X}$ is called the \emph{original
component}, and will be denoted $\ol{X}_0$.  

Each $\sigma \in G_K$ acts on $\ol{Y}$ (via its action on $Y$).  This action commutes with that of $G$ and is called the 
\emph{monodromy action}.  
Then it is known (see, for instance, \cite[Proposition 2.9]{Ob:vc}) that the extension $K^{st}/K$ is
the fixed field of the group $\Gamma^{st} \leq G_K$ consisting of those $\sigma
\in G_K$ such that $\sigma$ acts trivially on $\ol{Y}$.  Thus $K^{st}$ is clearly Galois over $K$.
Since $k$ is algebraically closed, the action of $G_K$ fixes $\ol{X}_0$ pointwise.

\begin{lemma}\label{Lgenusincluded}
Let $X_{R^{st}}$ be a smooth model for $X \times_K K^{st}$, and let $Y_{R^{st}}$ be its normalization in 
$K^{st}(Y)$.  
Suppose that the special fiber of $Y_{R^{st}}$ has irreducible components whose normalizations have genus greater than 0.  Then $X^{st}$ is a blow up of $X_{R^{st}}$
(in other words, the stable reduction $\ol{X}$ contains a component corresponding to the special fiber of
$X_{R^{st}}$).
\end{lemma}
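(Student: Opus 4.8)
The plan is to show that the Gauss valuation $v_T$ attached to a coordinate $T$ of $X_{R^{st}}$ is realized by a component of the stable reduction $\ol{X}$; this is precisely the assertion that $\ol{X}$ contains a component corresponding to the special fiber $\ol{W}$ of $X_{R^{st}}$, and hence (by blowing down all other components) that $X^{st}$ dominates, i.e.\ is a blowup of, $X_{R^{st}}$. The guiding principle is that a component of positive genus cannot be destroyed in passing to the semistable model $Y^{st}$, and that tracking such a component downstairs pins down the corresponding component of $\ol{X}$.

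First I would pass to a common refinement. Choose a model $\tilde{X}$ of $X \times_K K^{st}$ dominating both $X^{st}$ and $X_{R^{st}}$ by proper birational morphisms $\pi \colon \tilde{X} \to X^{st}$ and $\rho \colon \tilde{X} \to X_{R^{st}}$ (for instance, normalize a common blowup), and let $\tilde{Y}$ be the normalization of $\tilde{X}$ in $K^{st}(Y)$. Since $Y^{st}$ and $Y_{R^{st}}$ are the normalizations of $X^{st}$ and $X_{R^{st}}$ in $K^{st}(Y)$, the universal property of normalization yields proper birational morphisms $\tilde{\pi} \colon \tilde{Y} \to Y^{st}$ and $\tilde{\rho} \colon \tilde{Y} \to Y_{R^{st}}$ lying over $\pi$ and $\rho$. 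Let $\ol{V}$ be a component of the special fiber of $Y_{R^{st}}$ whose normalization has positive genus. The discrete valuation defined by $\ol{V}$ has a unique center $\tilde{V}$ on the proper scheme $\tilde{Y}$, and $\tilde{V} \to \ol{V}$ is birational, so the normalization of $\tilde{V}$ again has positive genus.

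The crux is that $\tilde{\pi}$ cannot contract $\tilde{V}$. Indeed, if $\tilde{\pi}(\tilde{V})$ were a closed point $q$, then the fiber of the birational morphism $\tilde{Y} \to Y^{st}$ over $q$ would contain the positive-genus curve $\tilde{V}$; but $Y^{st}$ is semistable, so $q$ is either smooth or an ordinary double point, and neither admits a resolution with a positive-genus exceptional component. Hence $\tilde{\pi}$ maps $\tilde{V}$ birationally onto a component $\ol{V}'$ of $\ol{Y}$ whose normalization has positive genus. This is the step I expect to be the main obstacle, since it is exactly where the semistability of $Y^{st}$ (and not mere normality) is essential.

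It remains to identify the image of $\ol{V}'$ in $\ol{X}$. Since $\tilde{V}$ dominates $\ol{V}$, and $\ol{V}$ lies over the single component $\ol{W} \cong \proj^1_k$ of the special fiber of $X_{R^{st}}$, the component $\tilde{W} \subset \tilde{X}$ underneath $\tilde{V}$ is the center on $\tilde{X}$ of $v_T = v_{\ol{W}}$, the unique component of $\tilde{X}_k$ dominating $\ol{W}$; in particular $\tilde{W} \to \ol{W}$ is birational and $\tilde{W}$ corresponds to $v_T$. Now $\ol{V}' = \tilde{\pi}(\tilde{V})$ is a genuine component of $\ol{Y}$, and the finite cover $\ol{Y} \to \ol{X}$ carries components onto components, so $\ol{V}'$ dominates a component $\ol{Z}$ of $\ol{X}$. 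Therefore $\pi$ does not contract $\tilde{W}$, and $\ol{Z} = \pi(\tilde{W})$ is the component of $\ol{X}$ corresponding to $v_T$, that is, to $\ol{W}$. Thus $\ol{X}$ contains a component corresponding to the special fiber of $X_{R^{st}}$, which is the desired conclusion.
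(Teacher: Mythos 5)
Your proof is correct and follows essentially the same route as the paper's: pass to a common modification dominating both $X^{st}$ and $X_{R^{st}}$, normalize it in $K^{st}(Y)$, and observe that a positive-genus component cannot be contracted under the induced map to the semistable model $Y^{st}$, so the corresponding component of $\ol{X}$ survives. The only difference is in how the key contraction step is justified---the paper appeals to minimality of the stable model, while you use the structure of semistable singularities (only rational exceptional curves over smooth points and ordinary double points)---which is a correct, and arguably more precise, way of filling in the same step.
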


\begin{proof}
Consider a modification $(X^{st})' \to X^{st}$, centered on the special fiber, such that $X_{R^{st}}$ is a
blow down of $(X^{st})'$.  Let $(Y^{st})'$ be the normalization of $(X^{st})'$ in $K^{st}(Y)$.
By the minimality of the stable model, we know that $X^{st}$ is obtained by blowing down components of $(X^{st})'$ such
that the components of $(Y^{st})'$ lying above them are curves of genus zero.  By our assumption, the component
corresponding to the special fiber of $X_{R^{st}}$ is not blown down in the map $(X^{st})' \to X^{st}$.  
Thus $X_{R^{st}}$ is a blow down of $X^{st}$. 
\end{proof}

\subsection{The graph of the stable reduction}
As in \cite{We:br}, we construct the (unordered) dual graph $\mc{G}$ of the stable reduction of $\ol{X}$. 
An \emph{unordered graph} $\mc{G}$ consists of a set of \emph{vertices} $V(\mc{G})$ and a set of \emph{edges} $E(\mc{G})$.  
Each edge has a \emph{source vertex} $s(e)$ and a \emph{target vertex} $t(e)$.  Each edge has an \emph{opposite
edge} $\ol{e}$, such that $s(e) = t(\ol{e})$ and $t(e) = s(\ol{e})$.  Also, $\ol{\ol{e}} = e$.

Given $f$, $\ol{f}$, $\ol{Y}$, and $\ol{X}$ as above, we construct two unordered graphs $\mc{G}$ and
$\mc{G}'$.  In our construction, $\mc{G}$ has a vertex $v$ for each irreducible
component of $\ol{X}$ and an edge $e$ for each ordered triple $(\ol{x}, \ol{W}', \ol{W}'')$, 
where $\ol{W}'$ and $\ol{W}''$ are irreducible components of $\ol{X}$ whose intersection is $\ol{x}$.  If $e$
corresponds to $(\ol{x}, \ol{W}', \ol{W}'')$, then
$s(e)$ is the vertex corresponding to $\ol{W}'$ and $t(e)$ is the vertex corresponding to
$\ol{W}''$.  The opposite edge of $e$ corresponds to $(\ol{x}, \ol{W}'', \ol{W}')$.
We denote by $\mc{G}'$ the \emph{augmented} graph of $\mc{G}$ constructed as follows: consider the set
$B_{\text{wild}}$ of branch points of $f$ with branching index divisible by $p$.  
For each $x \in B_{\text{wild}}$, we know that $x$ specializes to 
a unique irreducible component $\ol{W}_x$ of $\ol{X}$, corresponding to a vertex $A_x$ of $\mc{G}$.  
Then $V(\mc{G}')$ consists of the elements of $V(\mc{G})$ 
with an additional vertex $V_x$ for each $x \in B_{\text{wild}}$.  Also, $E(\mc{G}')$ consists of the elements of 
$E(\mc{G})$ with two additional opposite edges for each $x \in B_{\text{wild}}$, 
one with source $V_x$ and target $A_x$, and one with source $A_x$ and
target $V_x$.  We write $v_0$ for the vertex corresponding to the original component $\ol{X}_0$.  

We partially order the
vertices of $\mc{G}$ (and $\mc{G}'$) such that $v_1 \preceq v_2$ iff $v_1 = v_2$, $v_1 = v_0$, or $v_0$ and $v_2$ are in different 
connected components of $\mc{G}' \backslash v_1$.  The set of irreducible components of $\ol{X}$ inherits the partial order 
$\preceq$.  If $a \preceq b$, where $a$ and $b$ are vertices of $\mc{G}$ (or $\mc{G}'$) or irreducible components of $\ol{X}$, we say that 
$b$ lies \emph{outward} from $a$.

\subsection{Inertia groups of the stable reduction}

Maintain the notation from the beginning of \S\ref{Sstable}.

\begin{prop}[\cite{Ra:sp}, Proposition 2.4.11]\label{Pspecialram}
The inertia groups of $\ol{f}: \ol{Y} \to \ol{X}$ at points of $\ol{Y}$ are as
follows (note that points in the same
$G$-orbit have conjugate inertia groups):
\begin{enumerate}[(i)]
\item At the generic points of irreducible components, the inertia groups
are $p$-groups.  
\item At each node, the inertia group is an extension of a cyclic,
prime-to-$p$ order group, by a $p$-group generated by
the inertia groups of the generic points of the crossing components.
\item If a point $y \in Y$ above a branch point $x \in X$ specializes
to a smooth point $\ol{y}$ on a 
component $\ol{V}$ of 
$\ol{Y}$, then the inertia group at $\ol{y}$ is an extension of the
prime-to-$p$ part of the inertia group at $y$ by
the inertia group of the generic point of $\ol{V}$.
\item At all other points $q$ (automatically smooth, closed), the inertia
group is equal to the inertia group of the 
generic point of the irreducible component of $\ol{Y}$ containing $q$.
\end{enumerate}
\end{prop}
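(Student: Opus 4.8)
The plan is to reduce the whole proposition to a local study of the $G$-action on the semistable $k$-curve $\ol{Y}$, using two features of the stable model: that $\ol{X} = \ol{Y}/G$ is the special fibre of $X^{st} = Y^{st}/G$, on which $G$ fixes the base $R^{st}$ pointwise (hence fixes a uniformizer $\pi$), and that $\ol{X}, \ol{Y}$ are reduced with only ordinary double points. At each closed point I will use the standard fact that a finite subgroup of $\Aut(k[[t]])$ has the form $P \rtimes \ints/m$ with $P$ a $p$-group and $p \nmid m$, the tame quotient acting faithfully on the cotangent line; and purity of the branch locus for the finite cover of regular schemes $Y^{st} \to X^{st}$ away from the nodes.

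For (i), fix a component $\ol{V}$ of $\ol{Y}$ over $\ol{W}$ of $\ol{X}$, with generic points $\eta_V, \eta_W$; the inertia in question is that of the extension of discrete valuation rings $\hat{\mc O}_{Y^{st}, \eta_V}/\hat{\mc O}_{X^{st}, \eta_W}$. Since both special fibres are reduced, the maximal ideal at each of $\eta_V, \eta_W$ is generated by $\pi$, so the extension has ramification index $1$ and $\pi$ is a common uniformizer. As $G$ fixes $\pi$, the tame character $g \mapsto \ol{g(\pi)/\pi}$ of the inertia group is trivial, so the inertia group has no tame part and is a $p$-group.

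Part (iv) I would get from purity rather than from any global tameness statement. The generic inertia $I_{\eta_V}$ acts trivially on $k(\ol{V})$, hence on all of $\ol{V}$, so $I_{\eta_V} \subseteq I_q$ for every $q \in \ol{V}$. Now let $q$ be smooth, not a node, and not a specialization of a branch point, with image $\bar x$ a smooth point of $\ol{W}$. By purity the branch locus of $Y^{st} \to X^{st}$ is a divisor; near $\bar x$ it has no horizontal part (no branch point specializes there) and at most the vertical component $\{\pi = 0\} = \ol{W}$, which is smooth at $\bar x$. The cover is therefore either \'{e}tale near $q$ (if $I_{\eta_V} = 1$) or ramified only along the smooth divisor $\{\pi = 0\}$; in either case the inertia is constant along that divisor, so $I_q = I_{\eta_V}$.

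Parts (iii) and (ii) are the technical heart, and both reduce to a normal-crossings local picture at $\bar x = f(q)$, using \cite[Lemme 6.3.5]{Ra:ab} that nodes of $\ol{Y}$ lie over nodes of $\ol{X}$. For (iii), at a specialized branch point the branch divisor is the transverse union of the vertical $\{\pi = 0\}$ and the horizontal divisor $\ol{\{x\}}$; the inertia along the horizontal component is the characteristic-$0$ branch inertia $I_y$ (cyclic), while the vertical inertia is the $p$-group $I_{\eta_V}$. The content of the statement is that the \emph{wild} inertia at $\ol{y}$ is exactly $I_{\eta_V}$ --- the $p$-part of $I_y$ is not added but is already ``dissolved'' into the generic inertia of the component that was blown up to absorb it --- so that only the prime-to-$p$ part $(I_y)_{p'}$ survives as extra tame inertia, giving the extension $1 \to I_{\eta_V} \to I_{\ol{y}} \to (I_y)_{p'} \to 1$. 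Establishing this identification of the wild inertia with the generic inertia, through the associated deformation datum, is the step I expect to be the main obstacle. Part (ii) is then a computation in the local model $\hat{\mc O}_{\ol{Y}, \ol{y}} \cong k[[u,v]]/(uv)$ over $k[[s,t]]/(st)$: the subgroup fixing the two branches acts on them with wild parts $I_{\eta_{V'}}$ and $I_{\eta_{V''}}$, while the induced action on the boundary annulus $uv = (\text{unit})\,\pi^e$ is, after removing the wild part, a Kummer action $u \mapsto \zeta u$, $v \mapsto \zeta^{-1} v$ by a prime-to-$p$ root of unity (with a possible further factor if the two branches are interchanged); this exhibits $I_{\ol{y}}$ as an extension of a cyclic prime-to-$p$ group by $\langle I_{\eta_{V'}}, I_{\eta_{V''}} \rangle$.
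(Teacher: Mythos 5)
First, a note on the comparison itself: the paper does not prove this proposition at all --- it is quoted, with attribution, from \cite{Ra:sp}, Proposition 2.4.11 --- so there is no internal proof to measure yours against, and I can only assess the attempt on its own merits. On those merits, part (i) is complete and correct: reducedness of $\ol{Y}$ makes $\pi$ a common uniformizer at the generic points, and $G$-invariance of $\pi$ kills the tame character. Part (iv) has the right mechanism, but the sentence ``the inertia is constant along that divisor'' is not a citable fact for wild ramification --- it is exactly the assertion to be proved. It does follow from your setup in one more step: $I_{\eta_V}$ is the kernel of $I_q$ acting on $B/\pi B$ (where $B = \hat{\mc{O}}_{Y^{st},q}$), hence is normal in $I_q$; the quotient cover $\Spec B^{I_{\eta_V}} \to \Spec \hat{\mc{O}}_{X^{st},\ol{x}}$ is \'{e}tale in codimension one (off $V(\pi)$ by your branch-locus analysis, and at the generic point of $V(\pi)$ by construction), hence \'{e}tale by Zariski--Nagata purity over this regular local base, hence trivial because the base is strictly henselian. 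You should say this; as written, the step is asserted rather than proved.

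The genuine gap is the one you yourself flag in (iii) and silently assume in (ii): the claim that the wild inertia at $\ol{y}$ is exactly $I_{\eta_V}$ (resp.\ is generated by $I_{\eta_{V'}}$ and $I_{\eta_{V''}}$ at a node). That claim is not a detail to be filled in later --- it \emph{is} the proposition; with it deferred, what remains of (ii) and (iii) is bookkeeping about tame quotients. Moreover, your proposed tool, ``the associated deformation datum,'' is the wrong one in the logic of this paper: deformation data (\S\ref{Sdefdata}) are constructed \emph{from} the stable reduction using precisely this inertia structure, so the argument would be circular here. What actually fills the gap in (iii) is an argument of the following shape: after dividing by $I_{\eta_V}$, the residual cover of $\hat{\mc{O}}_{X^{st},\ol{x}} \cong R^{st}[[t]]$ is \'{e}tale outside the horizontal section $\ol{\{x\}}$ \emph{and} \'{e}tale in codimension one along the special fibre (the latter is exactly what dividing by the full generic inertia buys you); one then shows that any such connected cover is cyclic of prime-to-$p$ order --- Kummer-theoretically, a degree-$p$ piece ramified along the section would be given by $y^p = t'\cdot(\text{unit})$ and would induce an inseparable residue extension of $k((t))$ at the generic point of the special fibre, contradicting codimension-one \'{e}taleness, while Abhyankar's lemma handles the prime-to-$p$ part and identifies the quotient with $(I_y)_{p'}$. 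In (ii) the difficulty is strictly greater: the local ring at the image node is $R'[[s,t]]/(st - {\pi'}^{e})$, which is not regular once $e \geq 2$, so purity is simply unavailable, and one needs the statement that a connected Galois cover of this ring, \'{e}tale over the generic fibre and in codimension one along both branches of the special fibre, has cyclic prime-to-$p$ Galois group; this failure-of-purity analysis at the node is where the real work in \cite{Ra:sp} lies, and no part of your annulus computation substitutes for it. (One small point in your favour: the branch-swapping case you hedge against cannot occur, since nodes of $\ol{Y}$ lie over nodes of $\ol{X}$ by \cite{Ra:ab}, Lemme 6.3.5, while the quotient of a node by a branch swap is a smooth point.)
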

If $\ol{V}$ is an irreducible component of $\ol{Y}$, we will always write $I_{\ol{V}} \leq G$ for the inertia group of
the generic point of $\ol{V}$, and $D_{\ol{V}}$ for the decomposition group.

For the rest of this subsection, assume $G$ has a \emph{cyclic} $p$-Sylow subgroup.
When $G$ has a cyclic $p$-Sylow subgroup, the inertia groups above a generic point of an
irreducible component $\ol{W} \subset 
\ol{X}$ are conjugate cyclic groups of $p$-power order.  If they are of order
$p^i$, we call $\ol{W}$ a
\emph{$p^i$-component}.  If $i = 0$, we call $\ol{W}$ an \emph{\'{e}tale
component}, and if $i > 0$, we call $\ol{W}$ an
\emph{inseparable component}. 	 
For an inseparable component $\ol{W}$, the morphism $Y \times_X \ol{W} \to \ol{W}$ induced from $f$ corresponds to an inseparable extension 
of the function field $k(\ol{W})$.

As in \cite{Ra:sp}, we call irreducible component $\ol{W} \subseteq \ol{X}$ a \emph{tail} if it is not the original component and intersects exactly one other 
irreducible component of $\ol{X}$.
Otherwise, it is called an \emph{interior component}.  
A tail of $\ol{X}$ is called \emph{primitive} if it contains a branch
point other than the point at which it intersects the rest of $\ol{X}$.  
Otherwise it is called \emph{new}.  This follows \cite{We:br}.  
An inseparable tail that is a $p^i$-component will also be called a
\emph{$p^i$-tail}.  Thus one can speak of, for instance, ``new $p^i$-tails" or ``primitive \'{e}tale tails."

We call the stable reduction $\ol{f}$ of $f$ \emph{monotonic} 
if for every $\ol{W} \preceq \ol{W}'$, the inertia group of $\ol{W}'$ is contained in the inertia group of $\ol{W}$.  
In other words, the stable reduction is monotonic if the generic inertia does not increase as we move outward from $\ol{X}_0$ along $\ol{X}$.  

\begin{prop} \label{Pmonotonic}
If $G$ is $p$-solvable, then $\ol{f}$ is monotonic.
\end{prop}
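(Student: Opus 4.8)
The plan is to reduce the statement to a local assertion at a single node, and then to rule out an increase of inertia there.

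First I would reduce to adjacent components. If $\ol{W} \preceq \ol{W}'$, then in the tree $\ol{X}$ the two are joined by a chain of consecutive, intersecting components, each $\preceq$ the next, so by transitivity of containment it suffices to treat the case where $\ol{W}$ and $\ol{W}'$ meet at a single node $\ol{x}$ with $\ol{W} \prec \ol{W}'$. By Proposition \ref{Pspecialram}(ii) the $p$-part of the inertia at $\ol{x}$ is generated by $I_{\ol{W}}$ and $I_{\ol{W}'}$; since the $p$-Sylow subgroup of $G$ is cyclic this is a cyclic $p$-group, so $I_{\ol{W}}$ and $I_{\ol{W}'}$ are automatically comparable. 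The whole content is thus to show the inclusion runs the expected way, i.e. to rule out $I_{\ol{W}'} \supsetneq I_{\ol{W}}$. Next I would invoke $p$-solvability exactly once, to simplify the group: letting $N$ be the maximal prime-to-$p$ normal subgroup and passing to $Y/N \to X$, Proposition \ref{Ppsolvable} gives group $\ints/p^n \rtimes \ints/m_G$, and since $N$ meets each (cyclic $p$-group) inertia group trivially, the $p$-parts of all generic inertia groups, and the inclusions among them, are unchanged. Working over a common model dominating the stable models of $Y \to X$ and $Y/N \to X$, monotonicity for one is equivalent to monotonicity for the other, so I may assume $G \cong \ints/p^n \rtimes \ints/m$ with $p \nmid m$. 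Then every inertia group is one of the characteristic subgroups $P_a \cong \ints/p^a$ of the normal cyclic $p$-Sylow subgroup $P$, and these form a single chain.

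Now suppose for contradiction that $I_{\ol{W}'} = P_b \supsetneq P_a = I_{\ol{W}}$. I would examine the cover over the closed disk $D$ of points specializing outward past $\ol{x}$ (in the language of \S\ref{Sdisks}), whose boundary is the annulus attached to $\ol{x}$. Along this annulus the $p$-inertia is $\langle P_a, P_b \rangle = P_b$ by Proposition \ref{Pspecialram}(ii), which equals the generic inertia of the root component $\ol{W}'$ of $D$; on the inner side, however, a point of $\ol{Y}$ over $\ol{x}$ sits on a component whose generic inertia $P_a$ is strictly smaller than the inertia $P_b$ at that point. Since $\ol{W} \cong \proj^1_k$ and every branch point in $D$ specializes strictly into $D$, the $\ol{W}$-side of the node supplies no outward source of wild ramification beyond what $P_a$ accounts for. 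I would make this precise by analyzing the cyclic $\ints/p^n$-part of the cover over the annulus via the reduction theory of cyclic torsors used in Lemma \ref{Lartinschreier} (cf.\ \cite{He:ht}): the conductor and \'{e}tale-reduction data that the boundary of $D$ imposes on the $\ol{W}$-side cannot be realized with generic inertia only $P_a$, yielding the contradiction.

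The main obstacle is precisely this last local comparison: controlling the \emph{direction} in which inertia may change across the annulus. A naive reduction to a single $\ints/p$-quotient fails, because when $P_a$ sits strictly below $P_b$ inside the cyclic group $P$ there is no $\ints/p$-quotient of $G$ detecting the jump; one must track the full cyclic torsor and its reduction, so that the genus-zero constraint on $\ol{X}$ interacts with the valuation-theoretic data of Lemma \ref{Lartinschreier} rather than with a bare \'{e}tale-cover-of-$\proj^1$ argument. Establishing that the inertia level is non-increasing outward for an arbitrary branched $\ints/p^n \rtimes \ints/m$-cover is the technical heart; once it is in hand, the $p$-solvable case follows immediately from the reduction of the first paragraph.
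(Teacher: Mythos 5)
Your first paragraph is essentially the paper's own reduction, and it is correct: by Proposition \ref{Ppsolvable} one passes to the quotient by the maximal prime-to-$p$ normal subgroup $N$, and since quotienting by a prime-to-$p$ group changes neither the $p$-parts of the generic inertia groups nor the inclusions among them, one may assume $G \cong \ints/p^n \rtimes \ints/m_G$. The observation that Proposition \ref{Pspecialram}(ii) plus cyclicity of the $p$-Sylow forces $I_{\ol{W}}$ and $I_{\ol{W}'}$ to be comparable is also fine, so the problem is indeed reduced to ruling out an outward jump $I_{\ol{W}'} \supsetneq I_{\ol{W}}$ at a single node.

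The gap is everything after that, and it is the actual content of the proposition. Your second paragraph does not prove that an outward jump is impossible; it describes what you would like to show (``the conductor and \'{e}tale-reduction data that the boundary of $D$ imposes on the $\ol{W}$-side cannot be realized with generic inertia only $P_a$''), and your final paragraph concedes that establishing this for an arbitrary branched $\ints/p^n \rtimes \ints/m$-cover is ``the technical heart'' still to be done. The paper fills exactly this hole by citation: after the same group-theoretic reduction, it invokes \cite[Remark 4.5]{Ob:vc}, where monotonicity for such covers is proved. Note also that the tool you point to, Lemma \ref{Lartinschreier}, is not adapted to this purpose: it computes the \'{e}tale reduction of an explicit $\mu_{p^n}$-torsor over a closed disk under strong hypotheses on the coefficients, and says nothing about which side of a node carries the larger inertia. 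The mechanism that actually controls the direction of inertia change is the deformation-data formalism --- the differents $\delta_{\omega_i}$ attached to the components and their variation across the annulus (cf.\ \S\ref{Sdefdata}, Proposition \ref{Pdifferentepaisseur}, Lemma \ref{Leffdifferentepaisseur}, and the arguments of \cite[\S 4]{Ob:vc}); one must rule out configurations of additive and multiplicative deformation data with the wrong signs of invariants, and nothing in your sketch engages with that. As written, the proposal establishes only the easy reduction and leaves the substantive claim unproved.
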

\begin{proof} 
By Proposition \ref{Ppsolvable}, we know that there is a prime-to-$p$ group $N$ such that $G/N \cong \ints/p^n \rtimes
\ints/m$.  Since taking the quotient of a $G$-cover by a prime-to-$p$ group does not affect monotonicity, we may assume that $G \cong \ints/p^n
\rtimes \ints/m_G$.  By \cite[Remark 4.5]{Ob:vc}, it follows that $\ol{f}$ is monotonic.
\end{proof}

\begin{prop}[\cite{Ob:vc}, Proposition 2.13] \label{Pcorrectspec}
If $x \in X$ is branched of index $p^as$, where $p \nmid s$, then $x$
specializes to a $p^a$-component of $\ol{X}$.
\end{prop}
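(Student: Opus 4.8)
The plan is to use the inertia description of Proposition \ref{Pspecialram}(iii) to reduce the statement to a computation of generic inertia, and then to pin that generic inertia down by localizing to a cyclic cover of a $p$-adic disk.

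First I would fix $y \in Y$ over $x$; its inertia group $I_y$ is cyclic of order $p^as$, with cyclic $p$-part of order $p^a$. Let $\ol{y}$ be the specialization of $y$, lying on a component $\ol{V}$ of $\ol{Y}$ mapping to the component $\ol{W}_x \subseteq \ol{X}$ to which $x$ specializes, and write $|I_{\ol{V}}| = p^c$. By Proposition \ref{Pspecialram}(iii) the inertia at $\ol{y}$ is an extension of the prime-to-$p$ part of $I_y$ by $I_{\ol{V}}$; in particular the $p$-part of the inertia at $\ol{y}$ is exactly $I_{\ol{V}}$. Since ``$\ol{W}_x$ is a $p^a$-component'' means precisely $c = a$, the prime-to-$p$ data drop out, and the whole problem is to prove $c = a$. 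To compute $c$ I would localize over the open disk $D_{\ol{x}} \subseteq X$ of points specializing to $\ol{x}$; since branch points have distinct specializations, $x$ is the only branch point of $f$ in $D_{\ol{x}}$. Forming $Z = Y/I_y$, the cover $Z \to X$ is unramified at the image of $y$, so over $D_{\ol{x}}$ one may extract a cyclic, totally (wildly) ramified $\ints/p^a$-cover of a disk, branched only at $x$. Enlarging the base field to contain the $p^a$th roots of unity does not change the stable reduction (it is already defined over $K^{st}$, and further base change leaves the special fiber untouched), so I may present this cyclic cover as a $\mu_{p^a}$-torsor $w^{p^a} = g$ on the disk and study its reduction by the valuation-theoretic analysis of Lemma \ref{Lartinschreier} and \cite[Ch.\ 5]{He:ht}.

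A preliminary observation sharpens the target. Since $y$ is totally ramified over $x$, the whole group $\ints/p^a$ fixes $y$, hence fixes $\ol{y}$ and the component $\ol{V}$; thus $\ol{V}$ is the \emph{unique} component above $\ol{W}_x$, its decomposition group is all of $\ints/p^a$, and $k(\ol{V})/k(\ol{W}_x)$ factors as a separable Galois extension of degree $p^{a-c}$ followed by a purely inseparable extension of degree $p^c$. Proving $c = a$ therefore amounts to showing this reduced cover has \emph{no} separable part, equivalently that the $\mu_{p^a}$-torsor above has purely inseparable, rather than partially \'{e}tale, reduction over the component carrying $\ol{x}$.

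This last point is where I expect the real work to lie: group theory alone permits a separable Artin--Schreier quotient, so ruling it out requires the explicit local theory. Concretely, one checks the valuation profile of the coefficients of $g$ on the disk $D_{\ol{x}}$ and invokes the \'{e}tale-versus-inseparable dichotomy for cyclic covers of disks (the input from \cite{He:ht} used in the proof of Lemma \ref{Lartinschreier}); because the branch point $x$, with its full wild ramification of order $p^a$, specializes to $\ol{x}$ on this very component, the reduction there is forced to be inseparable of the full degree $p^a$, giving $c = a$. Some care is also needed to confirm, via the partial order $\preceq$ and the monotonicity of Proposition \ref{Pmonotonic}, that $\ol{W}_x$ is correctly identified as the outermost component meeting the disk, so that none of the wild ramification escapes to a more exterior component and all of $p^a$ is realized generically on $\ol{W}_x$ itself.
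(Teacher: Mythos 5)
The paper itself gives no proof of this statement---it is quoted from \cite[Proposition 2.13]{Ob:vc}---so the comparison can only be against what such a proof must accomplish; judged on those terms, your proposal has a genuine gap, and it sits exactly at the localization step rather than where you place the ``real work.'' Write $p^c$ for the order of $I_{\ol{V}}$, so that (as you say) the problem is to show $c=a$. Your reduction via Proposition \ref{Pspecialram}(iii) is fine, and the containment $I_y \subseteq I_{\ol{y}}$ gives the inequality $c \geq a$ for free. The gap is the claim that, since $Z = Y/I_y \to X$ is unramified at the image of $y$, one may ``extract a cyclic, totally ramified $\ints/p^a$-cover of a disk, branched only at $x$'' sitting over $D_{\ol{x}}$. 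The connected component of $Z \times_X D_{\ol{x}}$ containing the image of $y$ is the quotient $D_Y/I_y$ of the formal fiber $D_Y$ at $\ol{y}$, and its degree over $D_{\ol{x}}$ is exactly $p^{c-a}$---the quantity in question. Unramifiedness at a single point does not trivialize this piece over the disk: a nontrivial connected finite \'{e}tale cover of a $p$-adic open disk can itself be an open disk. Concretely, $w^p = 1+T$ over the open disk $v(T)>0$ is a connected $\mu_p$-cover, \'{e}tale on the whole disk (its only fixed point $T=-1$ lies on the boundary), whose total space is again an open disk and whose reduction $\ol{w}^p = 1 + \ol{T}$ is \emph{purely inseparable}. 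So identifying the base of your extracted cover with $D_{\ol{x}}$ is equivalent to asserting $I_{\ol{y}} = I_y$, i.e.\ to $c=a$ itself; the argument is circular.

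Because of this, the statement you isolate as the crux---that a cyclic $\ints/p^a$-cover of a disk, totally ramified at one point and \'{e}tale elsewhere, has purely inseparable reduction of full degree---is true (Kummer-theoretically the class can be normalized to $(T-t_0)u$ with $u$ a unit, whose reduction has a simple zero and so is not a $p$th power), but it only reproves $c \geq a$. The entire content of the proposition is the reverse bound $c \leq a$: one must show that the \'{e}tale piece $D_Y/I_y \to D_{\ol{x}}$ is trivial, i.e.\ rule out extra generic inseparability of exactly the type exhibited by $w^p=1+T$, and nothing in your proposal engages with this. Two further signs that something is missing: first, the failure mode you worry about (a separable Artin--Schreier part in the reduction) is not the dangerous one, since an Artin--Schreier cover of the germ $\Spec k[[\ol{T}]]$ splits by Hensel's lemma; the danger is inseparable reduction of covers that are \'{e}tale on the generic fiber. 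Second, your argument never uses the standing hypothesis that the $p$-Sylow subgroup of $G$ is cyclic (cyclicity of $I_y$ is automatic in residue characteristic zero), whereas the cited result is proved under that hypothesis; and your appeal to Proposition \ref{Pmonotonic} is not available here, since monotonicity is established only for $p$-solvable $G$, which is not assumed in this proposition.
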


\begin{lemma}[\cite{Ra:sp}, Proposition 2.4.8]\label{Letaletail}
If $\ol{W}$ is an \'{e}tale component of $\ol{X}$, then $\ol{W}$ is a tail.
\end{lemma}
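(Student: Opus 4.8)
The plan is to argue by contradiction: an \'{e}tale component that fails to be a tail would force a configuration incompatible with the minimality and stability of $Y^{st}$. Two reductions come first. Since the generic inertia $I_{\ol{W}}$ is a subgroup of the cyclic $p$-Sylow $P$, whether $\ol{W}$ is \'{e}tale or inseparable is detected by any intermediate $\ints/p$-subquotient cover; so to analyze the reduction near $\ol{W}$ I would replace $f$ by a quotient whose relevant Galois piece is $\ints/p$ (or $\mu_p$, after adjoining roots of unity) and study the reduction of a single cyclic $p$-torsor. Second, a wildly ramified branch point (index divisible by $p$) specializes to a $p^a$-component with $a\geq 1$ by Proposition \ref{Pcorrectspec}, and all branch points specialize to the original component $\ol{X}_0$; hence whenever the reduction is genuinely bad, $\ol{X}_0$ is inseparable. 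Thus an \'{e}tale component is never the original component, and it suffices to rule out \emph{interior} \'{e}tale components.

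Next I would show that no component lying strictly outward from an \'{e}tale $\ol{W}$ can be inseparable, and that no wild branch point can specialize outward from $\ol{W}$. By Proposition \ref{Pspecialram}(ii)--(iii), at the node joining $\ol{W}$ to an outward neighbour (and at any branch specialization on $\ol{W}$) the $p$-part of the inertia is generated by $I_{\ol{W}}=\{1\}$ together with the outward generic inertia, so the local picture is that of a cyclic $p$-torsor over an annulus (resp.\ disk) that is \'{e}tale on the $\ol{W}$-side. The key input is the dichotomy governing the reduction of such torsors (\cite[Ch.\ 5, Proposition 1.6]{He:ht}, as used in Lemma \ref{Lartinschreier}): a torsor whose reduction is \'{e}tale along one boundary of an annulus, and over whose interior no new branching is introduced, reduces to an \'{e}tale cover of the adjacent disk as well. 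Propagating this outward through the finite tree $\ol{X}$ shows that the entire connected subtree lying outward from $\ol{W}$ is \'{e}tale and carries only tame branching.

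Finally I would use stability to contract this outward subtree, contradicting the assumption that $\ol{W}$ is interior. Over the outward subtree the cover is tamely ramified, so each component $\ol{V}$ of $\ol{Y}$ above it meets the rest of $\ol{Y}$ only over nodes and is ramified only over the tame branch points there; the tame Hurwitz formula (together with Lemma \ref{Lartinschreiergenus} for any residual $\ints/p$-pieces) forces the genus-zero such components to carry too few marked points unless the subtree consists of $\ol{W}$ alone, meeting the rest of $\ol{X}$ at a single node. By minimality of the stable model, any superfluous genus-zero components would be blown down, so $\ol{W}$ in fact meets exactly one other component and is a tail.

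The main obstacle is the middle step: that inseparability cannot appear \emph{outward} from an \'{e}tale component without a wild branch point to produce it. This is not formal bookkeeping with Proposition \ref{Pspecialram}, which by itself would permit the generic inertia to jump back up; it genuinely uses the theory of reduction of cyclic $p$-torsors over annuli and disks, i.e.\ that the bad-reduction locus of the torsor is connected and emanates from $\ol{X}_0$ and the wild branch points. Making the propagation precise — choosing a coordinate on each successive annulus so that the \'{e}tale-reduction criterion applies, and tracking the interaction with the prime-to-$p$ part of the inertia through Proposition \ref{Pspecialram} — is the technical core; once it is in place, the stability-and-minimality argument is routine.
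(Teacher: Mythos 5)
The paper does not prove this lemma at all: it is quoted from \cite[Proposition 2.4.8]{Ra:sp}. So the comparison here is between your sketch and what a correct proof must contain, and your sketch has a genuine gap at precisely the point you yourself call the technical core. Your propagation principle is false as stated. You assert that a torsor whose reduction is étale along one boundary of a branch-point-free annulus ``reduces to an étale cover of the adjacent disk as well,'' and you propose to prove this with \cite[Ch.\ 5, Proposition 1.6]{He:ht}. But étale reduction at one end of a branch-point-free annulus does \emph{not} propagate to the other end: by Lemma \ref{Ltailetale}, every étale tail meets an inseparable component, and the annulus at that node contains no branch points (branch points specialize to smooth points); the reduction there is étale on the tail side and inseparable on the other. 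The statement you actually need is directional---étaleness propagates from the \emph{inward} boundary \emph{outward}---and it is not a statement about the annulus at all, but about the whole disk beyond the node. The honest argument is Zariski--Nagata purity applied to the two-dimensional regular complete local ring $\hat{\mc{O}}_{X_W,z}$ at the point $z$ of the smooth model corresponding to $\ol{W}$ to which the outward branch contracts: the cover is étale in codimension one there (horizontally because it is unbranched over the disk, vertically because $\ol{W}$ is étale), hence étale by purity, hence completely split because this ring is strictly henselian; a tame branch point in the disk is then handled by Abhyankar's lemma. Henrio's Proposition 1.6 is an explicit criterion for the reduction type of a given $\mu_p$-torsor over a disk; it contains no such connectedness statement. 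Worse, the principle you fall back on at the end---that the bad-reduction locus is connected and emanates from $\ol{X}_0$ and the wild branch points---is essentially a restatement of the lemma (an interior étale component is exactly a disconnection of that locus), so resting the proof on it is circular.

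There are three further real problems. First, your opening reduction---replacing $f$ by a quotient whose ``relevant Galois piece'' is $\ints/p$---is unavailable: the lemma assumes only that $G$ has a cyclic $p$-Sylow subgroup, not that $G$ is $p$-solvable, and a simple $G$ has no such quotient; one can only factor the \emph{local} extensions through their inertia filtrations. Second, the claim that ``all branch points specialize to the original component $\ol{X}_0$'' is false on the stable model: branch points of prime-to-$p$ index specialize to étale components of $\ol{X}$ (Proposition \ref{Pcorrectspec} with $a=0$), i.e.\ in general to primitive tails rather than to $\ol{X}_0$; so your deduction that bad reduction forces $\ol{X}_0$ to be inseparable is unsupported---it is true, but it is part of what must be proved. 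Third, your final contraction step never uses the standing hypothesis that the branch points specialize to pairwise distinct points of the smooth model $X_R$. Without it, an outward subtree hanging off $\ol{W}$ could contain several tame branch points, the components of $\ol{Y}$ above it could then have positive genus, and stability would give no contradiction; with it, each connected component of $\ol{X}\setminus\ol{X}_0$ contains at most one branch point, which is what makes the genus count close. Your overall skeleton---propagate étaleness outward, then contract superfluous genus-zero components by stability and minimality---is the right one, but as written the decisive step is unproved and the stated form of it is false.
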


\begin{lemma}[\cite{Ob:vc}, Lemma 2.16]\label{Ltailetale}
If $\ol{W}$ is a $p^a$-tail of $\ol{X}$, 
then the component $\ol{W}'$ that intersects $\ol{W}$ is a $p^b$-component with $b > a$.
\end{lemma}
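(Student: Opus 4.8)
The plan is to reduce everything to a local analysis at the single node $\ol{x} = \ol{W} \cap \ol{W}'$, using crucially that $\ol{W}$, being a tail, meets the rest of $\ol{X}$ only there. First I would record two bookkeeping facts. Since $\ol{W}$ is a $p^a$-\emph{tail} it is by definition inseparable, so $a \geq 1$. Moreover $\ol{W}'$ lies inward from $\ol{W}$, so the path in the tree $\ol{X}$ from $\ol{W}'$ toward $\ol{X}_0$ forces $\ol{W}'$ either to meet a second component or to equal $\ol{X}_0$; in either case $\ol{W}'$ is not a tail. By the contrapositive of Lemma \ref{Letaletail} (étale implies tail), $\ol{W}'$ is therefore not étale, so $b \geq 1$ as well. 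Thus both components are inseparable, and it remains only to establish the strict inequality in the correct direction.

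Next I would pin down the inertia at the node. Choose $\tilde{x} \in \ol{Y}$ over $\ol{x}$, lying on a component $\ol{V}$ above $\ol{W}$ (with $I_{\ol{V}}$ cyclic of order $p^a$) and on a component $\ol{V}'$ above $\ol{W}'$ (with $I_{\ol{V}'}$ cyclic of order $p^b$). By Proposition \ref{Pspecialram}(ii), the inertia group at $\tilde{x}$ is an extension of a cyclic prime-to-$p$ group by the $p$-group generated by the relevant conjugates of $I_{\ol{V}}$ and $I_{\ol{V}'}$. Because $G$ has a cyclic $p$-Sylow subgroup, this generated $p$-group is itself cyclic, being generated by two subgroups of a common cyclic $p$-group, and hence has order $p^{\max(a,b)}$. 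So the node is controlled by the larger of the two inertias, and what must be shown is that this larger one is $\ol{W}'$, strictly.

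The crux — and the step I expect to be the main obstacle — is to rule out $b \leq a$. Here I would pass to the open annulus $A \subset X$ corresponding to $\ol{x}$ (in the sense of \S\ref{Sdisks}), restrict the cover, and project to its top cyclic $p$-power layer, obtaining a $\ints/p^{\max(a,b)}$-cover of $A$ whose reduction is inseparable with generic inertia $p^a$ at the $\ol{W}$-end and $p^b$ at the $\ol{W}'$-end; since branch points specialize to smooth points, $A$ itself carries no branching. I would then analyze this cover by the explicit methods for cyclic covers of $p$-adic disks and annuli (Lemma \ref{Lartinschreier} and the deformation-datum machinery recalled in \S\ref{Sdefdata}): each end reduces to an Artin--Schreier-type cover whose conductor records the ramification filtration, and the associated differential form has a well-defined slope across $A$. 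The case $b = a$ is degenerate (constant inertia, trivial reduction over $A$) and must be excluded by appealing to the minimality of the stable model, while the case $b < a$ is excluded by the sign of the slope of the deformation datum, which forces the ramification, and hence the generic inertia, to be strictly larger on the interior side $\ol{W}'$. Pinning down this sign is the delicate point, since a naive "inertia decreases outward" heuristic is exactly what can fail for non-$p$-solvable $G$.

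I would close by emphasizing that the entire argument is local at $\ol{x}$ and invokes neither $p$-solvability nor the monotonicity of Proposition \ref{Pmonotonic}; indeed, the present lemma is of the type one uses as an input toward monotonicity statements, not a consequence of them. The only place where genuine care is needed is the direction of strictness in the crux, where the precise positivity property of the deformation datum on $\ol{W}'$ must replace the monotonicity heuristic.
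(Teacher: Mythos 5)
First, note that the paper never proves this lemma at all: it is imported verbatim from \cite[Lemma 2.16]{Ob:vc}, so your proposal must be judged against the standard argument behind that citation. Your preliminary reductions are fine and do match it: $\ol{W}'$ cannot itself be a tail (else $\ol{X} = \ol{W} \cup \ol{W}'$ would contain no original component), so it is inseparable by the contrapositive of Lemma \ref{Letaletail}; and by Proposition \ref{Pspecialram}(ii) plus cyclicity of the $p$-Sylow subgroup, the $p$-part of the inertia at a point $\tilde{x}$ of $\ol{Y}$ over the node is cyclic of order $p^{\max(a,b)}$. The genuine gap is exactly at your self-identified crux, and the mechanisms you propose cannot close it. An argument that is, as you insist, ``entirely local at $\ol{x}$'' is doomed in principle: at a general node of $\ol{X}$ the generic inertia may decrease outward, stay constant (cf.\ Lemma \ref{Ltwocontract}, which permits equal-inertia crossings at interior components with enough nodes), or, for non-$p$-solvable $G$, even increase outward. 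Hence there is no intrinsic ``sign of the slope'' of a deformation datum across an annulus that forces the inner side to carry the larger inertia, and the case $b=a$ is not ``trivial reduction over $A$'' (both ends are inseparable of the same degree) nor is it excluded by minimality of the model. Whatever rules out $b \leq a$ must use the tail side globally, not the annulus.

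The missing idea is tameness of the separable part over the \emph{whole} tail combined with the stability condition on $\ol{Y}$. Suppose $b \leq a$. Then the $p$-part of $I_{\tilde{x}}$ is $I_{\ol{V}}$ itself, so in the factorization of Definition \ref{Draminvariant} (Galois followed by purely inseparable of degree $p^a$), the Galois part of $\ol{V} \to \ol{W}$ is \emph{tamely} ramified at the node; by Proposition \ref{Pspecialram}(iii) it is also tame at the specialization of any branch point of $f$ (such a point has index $p^a s$ with $p \nmid s$, by Proposition \ref{Pcorrectspec}), and by Proposition \ref{Pspecialram}(iv) it is unramified everywhere else. Since distinct branch points specialize to distinct points of the smooth model, at most one branch point of $f$ specializes to the tail $\ol{W}$; hence the Galois part is a tame Galois cover of $\ol{W} \cong \proj^1_k$ branched at at most two points, i.e.\ a cyclic prime-to-$p$ cover, totally ramified wherever it ramifies. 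Consequently every component $\ol{V}$ of $\ol{Y}$ above $\ol{W}$ has genus $0$ and carries at most two marked points (one point over the node and at most one specialization of a ramification point), contradicting the requirement in the definition of the stable model that genus-zero components of $\ol{Y}$ carry at least three marked points. This contradiction, which your sketch never reaches, is what forces $b > a$; your closing remark that neither $p$-solvability nor monotonicity is needed is correct, but the stability condition is indispensable and is absent from your plan.
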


\begin{prop}\label{Pstabletail}
Suppose $f$ has monotonic stable reduction.  Let $K'/K$ be a field extension such that the following hold for each tail 
$\ol{X}_b$ of $\ol{X}$:

\begin{enumerate}[(i)] 
\item There exists a smooth point $\ol{x}_b$ of $\ol{X}$ on $\ol{X}_b$, such that $\ol{x}_b$ is fixed by $G_{K'}$.
\item There exists a smooth point
$\ol{y}_b$ of $\ol{Y}$ on some component $\ol{Y}_b$ lying above $\ol{X}_b$, such that $\ol{y}_b$ is fixed by $G_{K'}$.
\end{enumerate}
Then the stable model of $f$ can be defined over a tame extension of $K'$.
\end{prop}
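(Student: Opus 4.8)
\emph{Reduction.} Let $P' \trianglelefteq G_{K'}$ be the wild inertia subgroup. By the characterization of the minimal field of definition recalled above (applied over $K'$), the stable model descends to the fixed field of $G_{K'} \cap \Gamma^{st}$, where $\Gamma^{st}$ consists of the elements acting trivially on $\ol Y$; since $\Gamma^{st} \trianglelefteq G_K$ this is a Galois extension of $K'$ with group $G_{K'}/(G_{K'}\cap \Gamma^{st})$. Because the residue field $k$ is algebraically closed, this extension is tame precisely when the image of $P'$ in it is trivial, i.e.\ when $P' \subseteq \Gamma^{st}$. So I would fix a wild $\sigma \in P'$ and show it acts trivially on $\ol Y$; note that its image in the finite quotient $\Gal(K^{st}/K)$ through which the monodromy factors has $p$-power order.

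\emph{Triviality on $\ol X$.} We already know $\sigma$ fixes $\ol X_0$ pointwise. Condition (i) and the fact that each $\ol x_b$ is a smooth point (hence lies on a unique component) show that $\sigma$ fixes every tail of $\ol X$ setwise. Viewing $\ol X$ as a tree of $\proj^1$'s rooted at $\ol X_0$, a graph automorphism fixing the root and every leaf fixes the unique root-to-leaf path pointwise, and therefore fixes every vertex; hence $\sigma$ fixes every component of $\ol X$, and so every node. On each component $\sigma$ then fixes at least two points: two nodes on an interior component, the inward node together with $\ol x_b$ on a tail, or everything on $\ol X_0$. Since the stabilizer of two points of $\proj^1$ in $\Aut(\proj^1_k)$ is a torus $\cong k^{\times}$, which has no $p$-torsion, the $p$-power automorphism $\sigma$ restricts to the identity on each component, so $\sigma$ acts trivially on $\ol X$.

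\emph{Triviality on $\ol Y$.} As $\sigma$ commutes with $G$ and is now trivial on $\ol X = \ol Y/G$, it is a $G$-equivariant automorphism of $\ol Y$ over $\ol X$ of $p$-power order. For each tail, condition (ii) fixes a smooth point $\ol y_b$ on a component $\ol Y_b$, so $\sigma$ fixes $\ol Y_b$ setwise; using that $G$ acts transitively on the components of $\ol Y$ above a fixed component of $\ol X$ and commutes with $\sigma$, one propagates this inward to see that $\sigma$ fixes every component of $\ol Y$ setwise. On a genus-zero component the torus argument applies once two fixed points are in hand, while on a positive-genus component (which occurs above an \'etale tail) $\sigma$ restricts to a deck transformation of $p$-power order of a separable cover fixing a smooth point, which one argues is the identity. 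Combining these, $\sigma$ fixes every component of $\ol Y$ pointwise and every node, hence acts trivially on $\ol Y$, as required.

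I expect the last step to be the main obstacle. On $\ol X$ every component is a $\proj^1$ and the relevant stabilizers are tori with no $p$-torsion, so the rigidity is immediate; but on $\ol Y$ the components can have positive genus and $\ol Y \to \ol X$ is inseparable over the $p^i$-components, so the clean torus argument is unavailable there. The real work is to leverage the $G$-equivariance of the monodromy, the transitivity of $G$ over each component of $\ol X$, and the monotonicity of $\ol f$ in order to convert the single fixed point furnished by condition (ii) on each tail into triviality on \emph{every} component of $\ol Y$, treating the separable (torus) and the inseparable or positive-genus (deck-transformation) cases separately.
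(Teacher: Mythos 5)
Your reduction step and your argument that $\sigma$ acts trivially on $\ol{X}$ are correct and essentially the paper's own (tails are fixed setwise by condition (i), tree rigidity fixes all components and nodes, and $k^{\times}$ has no $p$-torsion). The gap is exactly where you place it --- the passage from $\ol{X}$ to $\ol{Y}$ --- but the sketch you give there is not repairable as written, for two structural reasons. First, your inward propagation of \emph{setwise} fixing fails: if a component $\ol{V}'$ of $\ol{Y}$ over $\ol{W}'\succ\ol{W}$ is only known to be fixed setwise, then $\sigma$ may still permute the points of $\ol{V}'$ lying over the node $\ol{W}\cap\ol{W}'$, and distinct components of $\ol{Y}$ over $\ol{W}$ may pass through distinct such points, so nothing forces $\sigma$ to preserve each component over $\ol{W}$; moreover $\ol{Y}$, unlike $\ol{X}$, need not be tree-like, so even ``all components fixed setwise'' would not hand you the fixed nodes your torus argument needs. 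Second, your genus/separability dichotomy is miscalibrated: positive-genus components of $\ol{Y}$ are not confined to lying over \'{e}tale tails --- they occur over inseparable tails (this is precisely how Lemma \ref{Linsep2etale} is proved) and can occur over interior components, where $\ol{V}\to\ol{W}$ is not separable, so neither of your two cases covers them.

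The missing idea is inertia-theoretic, and it cannot be waved through: it is \emph{false} in general that a $p$-power order deck transformation of a separable cover fixing a smooth point is the identity (for the Artin--Schreier cover $y^p-y=x$, the order-$p$ deck transformations fix the point over $x=\infty$). What makes it true here is the structure of inertia in stable reduction, applied uniformly to every component with no genus distinction. Since a purely inseparable extension admits no nontrivial automorphisms, $\Aut(\ol{V}/\ol{W})$ is the Galois group $D_{\ol{V}}/I_{\ol{V}}$ of the separable part of $\ol{V}\to\ol{W}$, and the stabilizer in it of any point is the image of the inertia group of $f^{st}$ at that point. By Propositions \ref{Pspecialram} and \ref{Pcorrectspec}, at the point $\ol{y}_b$ of condition (ii) that inertia group is an extension of a prime-to-$p$ group by $I_{\ol{Y}_b}$, so its image in $D_{\ol{Y}_b}/I_{\ol{Y}_b}$ is prime to $p$; hence the $p$-power order automorphism $\sigma|_{\ol{Y}_b}$, fixing $\ol{y}_b$, is trivial, and $G$-equivariance kills $\sigma$ on every component over $\ol{X}_b$. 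This is also where monotonicity --- which you list as an ingredient but never use --- actually enters: at a node $\ol{v}$ of $\ol{V}$ over $\ol{W}\cap\ol{W}'$ with $\ol{W}'\succ\ol{W}$, Proposition \ref{Pspecialram} (ii) together with monotonicity shows the $p$-part of the inertia at $\ol{v}$ is exactly $I_{\ol{V}}$, so the same prime-to-$p$ stabilizer argument applies there. The correct induction therefore carries \emph{pointwise} triviality inward: pointwise triviality on all components over $\ol{W}'$ genuinely fixes the nodes, the stabilizer argument kills $\sigma$ on each $\ol{V}$ over $\ol{W}$, and $G$-equivariance spreads this across the fiber; no torus argument on $\ol{Y}$ and no case division by genus are needed.
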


\begin{proof}
We claim that $G_{K'}$ acts on $\ol{Y}$ through a group of prime-to-$p$ order.  This will yield the proposition.

Suppose $\gamma \in G_{K'}$ is such that $\gamma^p$ acts trivially on $\ol{Y}$.  For each tail $\ol{X}_b$, we have that $\gamma$ 
fixes $\ol{x}_b$.  Since $\gamma$ fixes the original component pointwise, it fixes the point of intersection of $\ol{X}_b$ with 
the rest of $\ol{X}$.  Any action on $\proj^1_k$ with order dividing $p$ and two fixed points is trivial, so $\gamma$ fixes each 
$\ol{X}_b$ pointwise.  By inward induction, $\gamma$ fixes $\ol{X}$ pointwise.  So $\gamma$ acts ``vertically" on $\ol{Y}$.  

Now, $\gamma$ also fixes each $\ol{y}_b$.  By Propositions \ref{Pspecialram} and \ref{Pcorrectspec}, the inertia of $f^{st}$ 
at $\ol{y}_b$ is an extension of a prime-to-$p$ group by the generic inertia of 
$f^{st}$ on $\ol{Y}_b$.  So some prime-to-$p$ power $\gamma^i$ of $\gamma$ fixes $\ol{Y}_b$ pointwise.  Since $p \nmid i$ and the
action of $\gamma$ has order $p$, it follows that $\gamma$ fixes $\ol{Y}_b$ pointwise.  
Since $\gamma$ commutes with $G$, then $\gamma$ fixes all components above $\ol{X}_b$ 
pointwise.

We proceed to show that $\gamma$ acts trivially on $\ol{Y}$ by inward induction.  
Suppose $\ol{W}$ is a component of $\ol{X}$ such that, if $\ol{W}' \succ \ol{W}$, then $\gamma$ 
fixes all components above $\ol{W}'$ pointwise.  Suppose $\ol{W}' \succ \ol{W}$ is a component such that $\ol{W}' \cap \ol{W} = 
\{\ol{w}\} \neq \emptyset$.  Let $\ol{V}$ be a component of $\ol{Y}$ above $\ol{W}$, and let $\ol{v}$ be a point of $\ol{V}$ above 
$\ol{w}$.  By the inductive hypothesis, $\gamma$ fixes $\ol{v}$.  Since $\ol{f}$ is monotonic, Proposition \ref{Pspecialram} shows 
that the $p$-part of the inertia group at $\ol{v}$ is the same as the generic inertia group of $\ol{V}$.  Thus $\gamma$ fixes $\ol{V}$ pointwise.  
Because $\gamma$ commutes with $G$, it fixes all components above $\ol{W}$ pointwise.  This completes the induction.
\end{proof}

\subsection{Ramification invariants and the vanishing cycles formula}\label{Svancycles}

Maintain the notation from the beginning of \S\ref{Sstable}, and assume additionally that $G$ has a \emph{cyclic} $p$-Sylow group $P$.
Recall that $m_G = |N_G(P)/Z_G(P)|$.  Below, we define the \emph{effective ramification invariant} $\sigma_b$ corresponding to each tail 
$\ol{X}_b$ of $\ol{X}$.

\begin{definition}\label{Draminvariant}
Consider a tail $\ol{X}_b$ of $\ol{X}$.  Suppose $\ol{X}_b$ intersects the rest of $\ol{X}$ at $x_b$.
Let $\ol{Y}_b$ be a component of $\ol{Y}$ lying above $\ol{X}_b$, and let $y_b$ be a point
lying above $x_b$.  Then the \emph{effective ramification invariant} $\sigma_b$ is defined as follows:
If $\ol{X}_b$ is an \'{e}tale tail, then $\sigma_b$ is the conductor of higher ramification
for the extension $\hat{\mc{O}}_{\ol{Y}_b, y_b}/\hat{\mc{O}}_{\ol{X}_b, x_b}$ (see \S\ref{Sramification}).  If
$\ol{X}_b$ is a $p^i$-tail ($i > 0$), then the extension $\hat{\mc{O}}_{\ol{Y}_b, y_b}/\hat{\mc{O}}_{\ol{X}_b, x_b}$ can be 
factored as $\hat{\mc{O}}_{\ol{X}_b, x_b} \stackrel{\alpha}{\hookrightarrow} S \stackrel{\beta}{\hookrightarrow} 
\hat{\mc{O}}_{\ol{Y}_b, y_b}$, where $\alpha$ is Galois and $\beta$ is purely inseparable of degree $p^i$.
Then $\sigma_b$ is the conductor of higher ramification for the extension $S/\hat{\mc{O}}_{\ol{X}_b, x_b}$.
\end{definition}

The vanishing cycles formula (\cite[3.4.2 (5)]{Ra:sp}) is a key formula that helps us understand the
structure of the stable reduction of a branched $G$-cover of curves in the case where $p$ exactly divides the order of $G$. 
The following theorem, which is the most important ingredient in the proof of Theorem \ref{Tmain}, generalizes the vanishing cycles formula to the case where $G$ 
has a cyclic $p$-Sylow group of arbitrary order.  

\begin{theorem}[Vanishing cycles formula, \cite{Ob:vc}, Corollary 3.15]\label{Tvancycles}
Let $f: Y \to X \cong \proj^1$ be a $G$-Galois cover over $K$ with bad reduction, branched only above $\{0, 1, \infty\}$, where $G$ has a cyclic
$p$-Sylow subgroup.  Let $\ol{f}: \ol{Y} \to
\ol{X}$ is the stable reduction of $f$.  Let $B_{\text{new}}$ be an indexing set for the new \'{e}tale tails and let
$B_{\text{prim}}$ be an indexing set for the primitive \'{e}tale tails.  
Then we have the formula 

\begin{equation}\label{Evancycles} 
\sum_{b \in B_{\text{new}}} (\sigma_b - 1) + \sum_{b \in B_{\text{prim}}} \sigma_b = 1.
\end{equation}
\end{theorem}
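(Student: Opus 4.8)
The plan is to deduce the formula by summing a local identity over the inseparable components of $\ol{X}$, exactly as Raynaud does for the case $p \mathbin{\|} |G|$ in \cite{Ra:sp}, but using the more general \emph{deformation data} attached to a cover with cyclic $p$-Sylow group (recalled in \S\ref{Sdefdata}, following \cite{He:ht} and \cite{Ob:vc}) in place of Raynaud's single logarithmic differential. To each $p^i$-component $\ol{W}$ (with $i > 0$) of $\ol{X}$ the theory of \S\ref{Sdefdata} associates a deformation datum whose differential part is a meromorphic form $\omega_{\ol{W}}$ on $\ol{W} \cong \proj^1_k$; the orders of $\omega_{\ol{W}}$ at the special points of $\ol{W}$ (nodes and specializations of branch points) are governed by the effective ramification invariants of Definition \ref{Draminvariant}. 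The first step is therefore to record, for each inseparable $\ol{W}$, the degree identity $\deg(\mathrm{div}\,\omega_{\ol{W}}) = 2g(\ol{W}) - 2 = -2$, and to rewrite it in terms of the invariants at the special points using the effective local vanishing cycles formula of \cite{Ob:vc}. This is the local form of the statement.

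Next I would sum these local identities over all inseparable components of $\ol{X}$. The effective local vanishing cycles formula relates, at each interior node, the ramification-invariant contributions coming from the two components meeting there; when summed over the whole dual tree these interior contributions telescope. Because $\ol{X}$ is a tree of copies of $\proj^1_k$ and hence has arithmetic genus $0$, the purely geometric part of the sum (the degrees of the dualizing sheaves, one $-2$ per component, corrected by the nodes) collapses to $2p_a(\ol{X}) - 2 = -2$. What survives from the ramification data are the terms attached to the three branch points and to the nodes where \'{e}tale tails meet the rest of $\ol{X}$; one must in addition check, using Lemma \ref{Ltailetale} together with the local formula, that the inseparable tails contribute nothing to the surviving sum.

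It then remains to evaluate the surviving terms. Each of the three branch points $0, 1, \infty$ contributes $+1$, so that together with the geometric $-2$ the constant part of the identity is $-2 + 3 = 1$, i.e.\ minus the Euler characteristic of the three-punctured sphere. The contribution at the root node of an \'{e}tale tail $\ol{X}_b$ is exactly the effective ramification invariant $\sigma_b$, by its definition as the conductor of $\hat{\mc{O}}_{\ol{Y}_b, y_b}/\hat{\mc{O}}_{\ol{X}_b, x_b}$ (resp.\ of its Galois subextension $S$) in Definition \ref{Draminvariant}. The new/primitive distinction produces the offset: a primitive tail carries an honest branch point, whose $+1$ has already been booked on that tail, so it enters the final sum as $\sigma_b$, whereas a new tail carries no branch point and enters with an extra $-1$, as $\sigma_b - 1$. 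Moving the branch-point and geometric constants to one side and the tail terms to the other yields $\sum_{b \in B_{\mathrm{new}}}(\sigma_b - 1) + \sum_{b \in B_{\mathrm{prim}}}\sigma_b = 1$.

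I expect the main obstacle to be the node-by-node bookkeeping in the middle step: verifying that the effective local vanishing cycles formula yields \emph{exact} telescoping at every interior node, including those joining a $p^i$- and a $p^j$-component with $i \neq j$, where the deformation datum changes character, and those adjacent to inseparable tails. A secondary difficulty is that $G$ is only assumed to have a cyclic $p$-Sylow subgroup, not to be $p$-solvable, so the stable reduction need not be monotonic and Proposition \ref{Pmonotonic} is unavailable; one must therefore work with the deformation data of \cite{Ob:vc} in their full generality, which is precisely where the passage from $p \mathbin{\|} |G|$ to cyclic $p$-Sylow of arbitrary order is carried out, rather than reducing to a single differential form on each component as in the classical argument.
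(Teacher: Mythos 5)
The paper does not prove Theorem \ref{Tvancycles} itself: it imports the result from \cite{Ob:vc} (Corollary 3.15), and your plan --- attaching deformation data to the inseparable components, applying the effective local vanishing cycles formula (Lemma \ref{Lgenlocvancycles}) at each one, and telescoping over the dual tree so that only branch-point terms and \'{e}tale-tail invariants survive --- is precisely the argument of that source, and is the same telescoping device this paper redeploys in its proof of Lemma \ref{Lsigmaeff}. Your final accounting (each wild branch point contributing $+1$, the tree contributing $-2$, primitive tails entering as $\sigma_b$ and new tails as $\sigma_b - 1$) is correct, so the proposal matches the cited proof in both structure and substance.
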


\begin{lemma}[\cite{Ob:vc}, Proposition 4.1]\label{Linsepint}
If $b$ indexes an inseparable tail $\ol{X}_b$, then $\sigma_b$ is an integer. 
\end{lemma}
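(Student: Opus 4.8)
The plan is to unwind Definition \ref{Draminvariant} and then reduce everything to the Hasse--Arf theorem. Write $K' = \hat{\mc{O}}_{\ol{X}_b, x_b}$. By definition $\sigma_b$ is the conductor, i.e.\ the greatest upper-numbering jump, of the separable Galois extension $S/K'$ coming from the factorization $K' \stackrel{\alpha}{\hookrightarrow} S \stackrel{\beta}{\hookrightarrow} \hat{\mc{O}}_{\ol{Y}_b, y_b}$, where $\beta$ is purely inseparable of degree $p^i$. Since $k$ is algebraically closed, $\Gal(S/K')$ is the full local inertia group at $y_b$; because the $p$-Sylow subgroup of $G$ is cyclic, Proposition \ref{Pspecialram} shows this group has the form $P' \rtimes C$ with $P'$ a cyclic $p$-group and $C$ of order prime to $p$. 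Equivalently, $\Gal(S/K') \cong D_{\ol{Y}_b}/I_{\ol{Y}_b}$ is the Galois group of the separable part of the map of components $\ol{Y}_b \to \ol{X}_b$.

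The key observation is that for an \emph{abelian} extension the upper-numbering jumps are integers (the Hasse--Arf theorem, \cite[V]{Se:lf}). Since $P'$ is cyclic and $C$ has order prime to $p$, the extension $S/K'$ is abelian — indeed cyclic of $p$-power order — exactly when $C$ acts trivially on $P'$, and in that case $\sigma_b \in \ints$ is immediate. I would first dispatch the case of a \emph{new} inseparable tail, where this holds for free: a new tail carries no branch point of $f$ other than the node $x_b$, and (by Proposition \ref{Pspecialram}) the separable cover $\ol{Y}_b \to \ol{X}_b$ is unramified away from $x_b$. Thus it is branched over $\ol{X}_b \cong \proj^1_k$ only at $x_b$, so its Galois group is a quotient of $\pi_1(\ol{X}_b \setminus \{x_b\}) = \pi_1(\aff^1_k)$; as $\aff^1_k$ has no nontrivial tame cover, $\Gal(S/K')$ has trivial prime-to-$p$ part, i.e.\ $C = 1$. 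Then $\Gal(S/K') = P'$ is cyclic and Hasse--Arf yields $\sigma_b \in \ints$ directly.

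The remaining and, I expect, genuinely delicate case is that of a \emph{primitive} inseparable tail. Here $\ol{X}_b$ carries a primitive branch point whose index is divisible by $p$ (by Proposition \ref{Pcorrectspec}, since $\ol{X}_b$ is a $p^i$-component with $i>0$) but which may contribute a nontrivial \emph{tame} part, so $\Gal(S/K')$ need no longer be a $p$-group, and the naive computation of the conductor would produce an upper jump of the form $u/|C|$ with $u$ an integer (from Hasse--Arf applied to the cyclic wild subextension) — not obviously integral. The crux is therefore to show there is no tame twist \emph{at the node $x_b$ itself}: that the prime-to-$p$ part of the inertia at $x_b$ acts trivially on $P'$ once one passes to the quotient of the node inertia by the generic inertia $I_{\ol{Y}_b} = \ints/p^i$ of the tail, so that the cyclic Hasse--Arf argument of the previous paragraph applies verbatim. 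I would establish this using the finer local description of the cover along an inseparable component in terms of its \emph{deformation datum} (recalled in \S\ref{Sdefdata}): on an inseparable component the datum is multiplicative (logarithmic), which pins $\sigma_b$ to (a shift of) the order at $x_b$ of an honest meromorphic differential form on $\ol{X}_b$, necessarily an integer. Controlling this tame behavior at the node is the step I expect to carry the real weight of the proof.
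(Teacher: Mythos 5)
Your reduction to Hasse--Arf correctly isolates where the difficulty lives (the conductor automatically lies in $\frac{1}{|C|}\ints$, and integrality amounts to the tame part of the node inertia acting trivially on the wild part), but the step you claim comes ``for free'' --- the new-tail case --- is a non sequitur, and it is fatal. Triviality of the tame fundamental group of $\aff^1_k$ says that the Galois group $H$ of the one-point-branched cover $\ol{Y}_b \to \ol{X}_b$ has no nontrivial prime-to-$p$ \emph{quotient} (i.e.\ $H$ is quasi-$p$); it says nothing about the prime-to-$p$ part $C$ of the \emph{inertia subgroup} $\Gal(S/K')$ at the branch point, which is what you need. The decisive check is that your argument nowhere uses the hypothesis $i \geq 1$: applied verbatim to a \emph{new \'{e}tale} tail (where $\ol{Y}_b \to \ol{X}_b$ is itself separable and branched only at the node), it would prove that new \'{e}tale tails have integral $\sigma_b$. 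That conclusion is false in general: new \'{e}tale tails realize the fractional bound of Lemma \ref{Ltailbounds} --- for instance the supersingular tails in the stable reduction of the three-point $PSL_2(p)$-covers $X(p) \to X(1)$ have $\sigma_b = 1 + \frac{2}{p-1}$ (cf.\ \cite{BW:sr}, \cite{Ra:sp}, \cite{We:br}) --- and indeed the proof of Proposition \ref{Pnoinsep} in this paper must explicitly entertain new \'{e}tale tails with $\sigma_b \notin \ints$, ruling them out only by using $p$-solvability to identify the group above the tail as $\ints/p^i \rtimes \ints/m_b$, for which quasi-$p$ forces $m_b = 1$. That group-theoretic identification is exactly what your proof is missing, and for \emph{inseparable} tails it is supplied by the hypothesis you never exploit: since $I_{\ol{Y}_b} \cong \ints/p^i$ is nontrivial, cyclic, and normal in $D_{\ol{Y}_b}$, the decomposition group lies in the normalizer of a nontrivial cyclic $p$-group and is therefore $p$-solvable (a Burnside-type argument; cf.\ Remark \ref{Rmainaddenda}(iii) and the group theory in \cite[\S 2]{Ob:vc}). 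Only with this in hand does quasi-$p$-ness force the tame part of the node inertia to centralize the wild part, so that Hasse--Arf can be invoked.

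For primitive inseparable tails you concede the argument is incomplete, and the sketch as written does not close it: the deformation data are forms on the covering curve $\ol{V}'$, not on $\ol{X}_b$, so integrality of $\ord(\omega)$ at a point above $x_b$ gives integrality of $h_{x_b}$ but not of the invariant $\sigma_{x_b} = h_{x_b}/m_{x_b}$; and the identification of the conductor $\sigma_b$ with (a shift of) a deformation-datum invariant at the node is itself a nontrivial compatibility across the node, not a definition. Note finally that the paper does not prove this lemma at all: it is imported from \cite[Proposition 4.1]{Ob:vc}, where the proof makes essential use of structures that exist precisely because the tail is inseparable --- the deformation data above a $p^i$-component with $i \geq 1$, fed into the effective-invariant and local vanishing cycles machinery recalled in \S\ref{Sdefdata}, together with the group-theoretic consequences of $I_{\ol{Y}_b} \neq 1$ --- in contrast to your proposal, in which inseparability plays no essential role.
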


\begin{lemma} [\cite{Ob:vc}, Lemma 4.2 (i)]\label{Ltailbounds}
A new tail $\ol{X}_b$ (\'{e}tale or inseparable) has $\sigma_b \geq 1 + 1/m$.  
\end{lemma}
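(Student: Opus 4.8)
The plan is to work entirely locally at a single new tail $\ol X_b$ and to extract the bound from the shape of the local extension that defines $\sigma_b$, treating the \'etale and inseparable cases in parallel. By Definition \ref{Draminvariant}, in either case $\sigma_b$ is the conductor of higher ramification (upper numbering) of a \emph{Galois} local extension $S/\hat{\mc O}_{\ol X_b, x_b}$ at the point $x_b$ where $\ol X_b$ meets the rest of $\ol X$. By Proposition \ref{Pspecialram}(ii) and Lemma \ref{Ltailetale}, the component crossing $\ol X_b$ at $x_b$ has strictly larger generic $p$-inertia, so this extension is genuinely wildly ramified; its Galois group has the form $\ints/p^c \rtimes \ints/e$, where the wild part is cyclic (since $P$ is cyclic) and the tame part has order $e \mid m$ (the action being a subquotient of $N_G(P)/Z_G(P)$). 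I would first record this normal form and then isolate the two ingredients needed: a statement about the \emph{denominator} of $\sigma_b$, and the strict inequality $\sigma_b > 1$.

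For the denominator I would argue as follows. Write $K_0 = \Frac(\hat{\mc O}_{\ol X_b, x_b})$, let $K_0^{t}/K_0$ be the tame subextension of degree $e$, and let $j$ be the largest (lower $=$ upper) jump of the wild part $S/K_0^{t}$, a positive integer prime to $p$. Since the upper numbering of $S/K_0$ is obtained by composing with the Herbrand function of the tame extension $K_0^{t}/K_0$, which merely rescales by $1/e$ (see \S\ref{Sramification} and \cite[IV, Prop.\ 14]{Se:lf}), the conductor of $S/K_0$ is $\sigma_b = j/e$. Hence $\sigma_b \in \tfrac1e\ints \subseteq \tfrac1m\ints$. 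For an inseparable tail this step is subsumed by Lemma \ref{Linsepint}, which already gives $\sigma_b \in \ints$.

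The crux is the strict inequality $\sigma_b > 1$, and here the \emph{new}-ness of the tail is essential. Because $\ol X_b$ carries no branch point of $f$ other than $x_b$, every component $\ol Y_b$ of $\ol Y$ above $\ol X_b$ has all of its marked points (ramification points and nodes) lying over the single node $x_b$. The stability of $\ol Y$ (the genus-zero bullet in the definition of the stable model) then forbids $\ol Y_b$ from being rational with too few marked points; since the wild ramification forced at the node by Proposition \ref{Pspecialram}(ii) precludes three or more points of $\ol Y_b$ lying over $x_b$, one concludes that $\ol Y_b$ must have positive genus. Applying Lemma \ref{Lartinschreiergenus} to a $\ints/p$-subquotient cover over $\ol X_b \cong \proj^1_k$ — which, by the $\ints/m$-equivariant structure of the configuration, is branched only at $x_b$ with conductor $j$ — positivity of the genus $(j-1)(p-1)/2$ forces $j > 1$, i.e.\ $j \geq e+1$. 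Combining with $\sigma_b = j/e$ yields $\sigma_b \geq 1 + 1/e \geq 1 + 1/m$ for \'etale tails; for inseparable tails, $\sigma_b \in \ints$ together with $\sigma_b > 1$ gives $\sigma_b \geq 2 \geq 1 + 1/m$ directly.

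I expect the main obstacle to be the rigorous justification of $\sigma_b > 1$: namely, excluding the degenerate genus-zero configuration over a new tail and correctly matching the normalization of the conductor $\sigma_b$ with the wild jump $j$ of the global $\ints/p$-subquotient cover. This demands a precise description of how $\ol Y_b \to \ol X_b$ ramifies over $\ol X_b \cong \proj^1_k$ — in particular, that away from $x_b$ no further wild ramification is introduced — which is exactly the data packaged by the (effective) deformation datum carried by $\ol X_b$, to be developed in \S\ref{Sdefdata}. Once that geometric input is in hand, the denominator computation and the final arithmetic are routine.
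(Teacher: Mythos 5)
Two preliminary remarks. First, the paper itself contains no proof of Lemma \ref{Ltailbounds}: it is imported verbatim from \cite{Ob:vc}, Lemma 4.2(i), so your proposal has to be judged on its own merits rather than against an in-paper argument. Second, your skeleton is the right one and matches the cited proof in spirit: localize at $x_b$, use stability to force $\sigma_b > 1$, and convert this into $\sigma_b \geq 1+1/m$ via a bound on the denominator of $\sigma_b$. But the concrete argument you give breaks at its central quantitative step. You write that positivity of the genus ``forces $j>1$, i.e.\ $j \geq e+1$.'' This is a non sequitur: $j>1$ gives only $j \geq 2$, and then $\sigma_b = j/e \geq 2/e$, which is not even $>1$ once $e \geq 2$; so as written your argument proves neither the lemma nor the weaker statement $\sigma_b > 1$. (Example: $I_y \cong \ints/5 \rtimes \ints/2$ with wild jump $j=2$ would give $\sigma_b = 1$, which your reasoning cannot exclude.) Moreover, the object you feed into Lemma \ref{Lartinschreiergenus} need not exist: when the conjugation action of $\ints/e$ on $\ints/p^c$ is nontrivial, the group $\ints/p^c \rtimes \ints/e$ has \emph{no} quotient isomorphic to $\ints/p$ (any such quotient kills $\ints/e$ and hence the commutators $[\ints/p^c,\ints/e]$, which generate all of $\ints/p^c$), so there is no ``$\ints/p$-subquotient cover over $\ol{X}_b$''; the $\ints/p$-piece lives over an intermediate tame base, where its conductor is not the quantity you need to compare with $e$. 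The correct route is to apply Riemann--Hurwitz to the \emph{full} local extension with group $I_y \cong \ints/p^c\rtimes\ints/e$: genus $(\ol{Y}_b)\geq 1$ forces the different exponent to satisfy $d_y \geq 2|I_y|$, and since $d_y/|I_y| = \int_{-1}^{\sigma_b}\bigl(1 - 1/|I_y^u|\bigr)\,du < 1+\sigma_b$, this yields $\sigma_b > 1$ directly --- the tame contribution to the different is exactly what your $\ints/p$-computation discards --- and only then does integrality ($\sigma_b \in \frac{1}{e}\ints$, by Hasse--Arf on the cyclic wild part) give $\sigma_b \geq 1 + 1/e$.

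Two supporting claims are also asserted rather than proved, and both are genuinely false as stated. (1) Your route to positive genus --- ``the wild ramification forced at the node precludes three or more points of $\ol{Y}_b$ over $x_b$'' --- is wrong: the decomposition group $D_b$ is a quotient of $\pi_1(\aff^1_k)$, hence quasi-$p$ (the same easy direction of Abhyankar invoked in the proof of Proposition \ref{Pnoinsep}), while $\ints/p^c\rtimes\ints/e$ is not quasi-$p$ for $e>1$; so $D_b \supsetneq I_y$ and there are \emph{always} $[D_b:I_y]\geq 2$ points of $\ol{Y}_b$ over $x_b$, possibly more, all wildly ramified. Excluding the genus-zero configuration needs an actual argument, e.g.: a genus-zero Galois cover of $\proj^1_k$ branched at one point has elementary abelian $p$-group as Galois group (Dickson's classification of finite subgroups of $PGL_2(k)$ plus quasi-$p$-ness), hence here $D_b \cong \ints/p$, totally ramified over $x_b$, leaving a single marked point on $\ol{Y}_b$ and contradicting stability. (2) Your denominator claim $e \mid m$ presupposes that the tame part of $I_y$ acts faithfully on the wild part; this is not automatic for inertia at a node (totally ramified local extensions with group $\ints/p^c \times \ints/\ell$ do exist), so one must either prove faithfulness in this geometric situation or show that the kernel of the action divides both the numerator and the denominator of $\sigma_b$ (via Serre, \cite[IV, \S2, Proposition 9]{Se:lf}, the kernel has order $\gcd(j_i, e)$ for every lower jump $j_i$, and this common factor can be cancelled). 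These are fixable, but they are precisely the content of the lemma, not routine details.
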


\begin{lemma}\label{Linsep2etale}
Suppose $\ol{X}_b$ is a new inseparable $p^i$-tail with effective ramification invariant $\sigma_b$.  
Suppose further that the
inertia group $I \cong \ints/p^i$ of some component $\ol{Y}_b$ above $\ol{X}_b$ is normal in $G$.  Then, $\ol{X}_b$
is a new (\'{e}tale) tail of the stable reduction of the quotient cover $f': Y/I \to X$, with effective ramification
invariant $\sigma_b$.
\end{lemma}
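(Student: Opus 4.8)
The plan is to realize $f'$ concretely by quotienting the stable model of $f$ by the normal subgroup $I$, and to track the tail $\ol{X}_b$ and its ramification data under this operation. Since $I \trianglelefteq G$, the cover $f':Y/I \to X$ is $(G/I)$-Galois, and a point of $X$ unramified in $f$ is unramified in $f'$, so the branch locus of $f'$ is contained in that of $f$. Let $Z$ be the normalization of $X^{st}$ in $K^{st}(Y/I)$; since $Y^{st}$ is the normalization of $X^{st}$ in $K^{st}(Y)$ and taking $I$-invariants commutes with normalization, we have $Z = Y^{st}/I$, and its special fiber is $\ol{Y}/I \to \ol{X}$. I will show that the component of $\ol{Y}/I$ over $\ol{X}_b$ is separable over $\ol{X}_b$ with the prescribed conductor, and that $\ol{X}_b$ survives as a new tail in the stable model of $f'$.

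The local computation is the heart of the matter. Let $\eta$, $\eta_b$ be the generic points of $\ol{Y}_b$, $\ol{X}_b$, and set $B = \mc{O}_{Y^{st},\eta}$, $A = \mc{O}_{X^{st},\eta_b}$. Both are discrete valuation rings in which a uniformizer $\pi$ of $R^{st}$ is again a uniformizer (the special fibers are reduced), so $e(B/A)=1$, and by hypothesis the inertia group of $\eta$ is $I$, acting trivially on the residue field $k(\ol{Y}_b)$. Because $e(B/A)=1$, the relation $|I| = e\cdot f_{\mathrm{insep}}$ shows the inseparable degree of $k(\ol{Y}_b)/k(\ol{X}_b)$ is exactly $p^i$. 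Passing to $B^I = \mc{O}_{Z,\eta'}$, the extension $B/B^I$ is Galois of degree $p^i$ with $e(B/B^I)=1$; since $I$ acts trivially on $k(\ol{Y}_b)$ the separable residue degree is $1$, so the residue field $\kappa$ of $B^I$ satisfies that $k(\ol{Y}_b)/\kappa$ is purely inseparable of degree $p^i$. Thus $\kappa$ is precisely the maximal separable subextension $l_{\mathrm{sep}}$ of $k(\ol{Y}_b)/k(\ol{X}_b)$. Consequently the component $\ol{(Y/I)}_b$ of $\ol{Y}/I$ over $\ol{X}_b$ has function field $l_{\mathrm{sep}}$, the map $\ol{(Y/I)}_b \to \ol{X}_b$ is separable, and $\ol{X}_b$ is an \'etale component for $f'$ (hence a tail, by Lemma \ref{Letaletail}). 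Moreover $l_{\mathrm{sep}}$ is exactly the fraction field of the ring $S$ appearing in the factorization of Definition \ref{Draminvariant}, so $\hat{\mc{O}}_{\ol{(Y/I)}_b, y'_b} \cong S$, and the conductor of $S/\hat{\mc{O}}_{\ol{X}_b,x_b}$ — which by Definition \ref{Draminvariant} is the effective ramification invariant of $\ol{X}_b$ as an \'etale tail of $f'$ — equals $\sigma_b$.

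It remains to see that $\ol{X}_b$ genuinely survives in the stable reduction of $f'$ and stays new. Newness is immediate: no branch point of $f$, and a fortiori none of $f'$, specializes to $\ol{X}_b$ away from the node $x_b$. For survival, note that $\ol{X}_b$ is a new inseparable tail, so $\sigma_b$ is an integer (Lemma \ref{Linsepint}) and $\sigma_b \ge 1 + 1/m$ (Lemma \ref{Ltailbounds}), whence $\sigma_b \ge 2$. The separable cover $\ol{(Y/I)}_b \to \ol{X}_b \cong \proj^1_k$ is then wildly ramified over $x_b$ with conductor $\sigma_b \ge 2$ and unramified elsewhere, so a Riemann--Hurwitz computation (cf.\ Lemma \ref{Lartinschreiergenus} applied to the relevant $\ints/p$-subcover) shows $\ol{(Y/I)}_b$ has positive genus. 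Applying Lemma \ref{Lgenusincluded} to $f'$, with the smooth model of $X$ obtained by blowing down $X^{st}$ onto $\ol{X}_b$, we conclude that the stable model of $f'$ contains a component corresponding to $\ol{X}_b$. Since the stable model of $f'$ is a contraction of $\ol{X}$ and a leaf of a tree remains a leaf under contraction, $\ol{X}_b$ is a tail of the stable reduction of $f'$. Together with the previous paragraph this yields all the assertions.

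The step I expect to be the main obstacle is the local identification $\kappa = l_{\mathrm{sep}}$, equivalently $\hat{\mc{O}}_{\ol{(Y/I)}_b,y'_b} \cong S$: one must carefully keep apart the ramification index and the (inseparable) residue degree, and verify that quotienting by $I$ removes exactly the purely inseparable degree-$p^i$ part of the extension while leaving the separable part $S$ untouched. Everything else — newness, the tail property, and positivity of the genus — follows formally from the results already recalled.
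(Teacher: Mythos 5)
Your proof is correct and follows essentially the same route as the paper's: quotient the stable model by $I$ so that the stable model of $f'$ is a contraction of $X^{st}$, get $\sigma_b \geq 2$ from Lemmas \ref{Linsepint} and \ref{Ltailbounds}, and use positive genus together with Lemma \ref{Lgenusincluded} to show $\ol{X}_b$ is not contracted. The only real difference is cosmetic: the paper computes the genus upstairs on $\ol{Y}_b$ via Hurwitz and transfers it through the radicial quotient, whereas you compute it directly on $\ol{(Y/I)}_b$, and your explicit local (DVR) verification that the quotient tail is \'{e}tale with the same effective invariant is treated as immediate from the definitions in the paper.
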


\begin{proof}
Let $(f')^{st}: (Y')^{st} \to (X')^{st}$ be the stable model of $f'$.  Then, since $(Y^{st})/I$ is a semistable
model of $Y/I$, we have that $(Y')^{st}$ is a contraction of $(Y^{st})/I$.  Thus $(X')^{st}$ is a contraction of
$X^{st}$.  To prove the lemma, it suffices to prove that $\ol{X}_b$ is not contracted in the map 
$\alpha: X^{st} \to (X')^{st}$.  

By Lemmas \ref{Linsepint} and  \ref{Ltailbounds}, we know $\sigma_b \geq 2$.  A calculation using the Hurwitz formula (cf.\
\cite[Lemme 1.1.6]{Ra:sp}) shows that the genus of $\ol{Y}_b$ is greater than zero.  Since the quotient morphism $Y \to Y/I$ is
radicial on $\ol{Y}_b$, the normalization of $X^{st}$ in $K^{st}(Y/I)$ has irreducible components of genus greater than zero
lying above $\ol{X}_b$.  By Lemma \ref{Lgenusincluded}, $\ol{X}_b$ is a component of the special fiber of $(X^{st})'$,
thus it is not contracted by $\alpha$.
\end{proof}

\begin{prop}\label{Pnoinsep}
Let $f: Y \to X = \proj^1_K$ be a three-point $G$-cover with bad reduction, where $G$ is $p$-solvable, $G$ has cyclic $p$-Sylow subgroup, and $m_G > 1$.
Then $\ol{X}$ has no inseparable tails or new tails.
\end{prop}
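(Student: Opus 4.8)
The plan is to reduce to the case $G \cong \ints/p^n \rtimes \ints/m_G$ and then to play the group theory of the three inertia generators against the vanishing cycles formula (Theorem \ref{Tvancycles}). By Proposition \ref{Ppsolvable}, quotienting $f$ by the maximal prime-to-$p$ normal subgroup $N$ yields a three-point $(G/N)$-cover with $G/N \cong \ints/p^n \rtimes \ints/m_G$; since $N$ has prime-to-$p$ order, one checks that this quotient changes neither the inseparable degrees of the components, nor which tails are new or primitive, nor the invariants $\sigma_b$, so I may assume $G \cong \ints/p^n \rtimes \ints/m_G$ with $P = \ints/p^n$ normal. The point of this reduction is that every subgroup $\ints/p^i \leq P$ is then characteristic in $P$, hence normal in $G$, which is exactly the hypothesis needed to apply Lemma \ref{Linsep2etale}.

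Next I would analyze the three branch points. Because $m_G > 1$ and $P$ is normal, any element of $G$ with nontrivial image in $G/P \cong \ints/m_G$ has prime-to-$p$ order, so each branch point is either tame (prime-to-$p$ inertia) or wild (inertia a subgroup of $P$). Reading the product-one relation in the abelian quotient $\ints/m_G$ shows the number of wild branch points is $0$ or $1$; bad reduction forces it to be exactly $1$, and connectivity of the induced $\ints/m_G$-cover $f/P$ forces the remaining two inertia groups to have order exactly $m_G$. To pin down the wild inertia I would quotient by it: if the wild inertia were $\ints/p^i$ with $i<n$, then $f/(\ints/p^i)$ would be branched only at the two tame points, giving a tamely ramified two-point cover of $\proj^1$ whose Galois group $\ints/p^{n-i}\rtimes\ints/m_G$ is nonabelian --- impossible, since the tame fundamental group of $\proj^1$ minus two points is procyclic. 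Hence the wild inertia is all of $P$. Combined with Lemma \ref{Ltailetale} and Proposition \ref{Pcorrectspec}, this eliminates primitive inseparable tails: the unique wild point carries the maximal inertia $\ints/p^n$, so its component cannot be a tail.

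With the geometry fixed (two tame branch points, each on its own primitive étale tail), I would rule out new tails. A new inseparable $p^i$-tail has $i<n$ by Lemma \ref{Ltailetale} and normal inertia, so Lemma \ref{Linsep2etale} converts it into a new étale tail of the three-point cover $f/(\ints/p^i)$, again of the form $\ints/p^{n-i}\rtimes\ints/m_G$ with $m_G>1$ and bad reduction; thus it suffices to forbid new \emph{étale} tails for every such cover. For these I apply Theorem \ref{Tvancycles}, $\sum_{B_{\text{new}}}(\sigma_b-1)+\sum_{B_{\text{prim}}}\sigma_b = 1$: each new tail contributes $\sigma_b-1\geq 1/m_G>0$ (Lemma \ref{Ltailbounds}), while the two primitive tails carry order-$m_G$ branch points and so satisfy the companion lower bound $\sigma_b\geq 1-1/m_G$ (the analogue of Lemma \ref{Ltailbounds} for primitive tails), giving $\sum_{B_{\text{prim}}}\sigma_b\geq 2(1-1/m_G)\geq 1$ since $m_G\geq 2$. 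The formula then forces $\sum_{B_{\text{new}}}(\sigma_b-1)\leq 0$, i.e. $B_{\text{new}}=\emptyset$.

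I expect the main obstacle to be the bookkeeping that makes this vanishing cycles count decisive: establishing that there are \emph{exactly} two primitive étale tails and that each carries a branch point whose inertia is the full prime-to-$p$ part $\ints/m_G$, so that the bound $\sigma_b\geq 1-1/m_G$ applies and the two primitive contributions already saturate the total $1$, leaving no room for a new tail. The other delicate points are the reduction step --- verifying that passing to $G/N$ genuinely preserves the classification of tails and the values $\sigma_b$ --- and the clean "quotient-to-a-two-point-cover" argument forcing the wild inertia to equal $P$, which is what disposes of primitive inseparable tails without invoking any deformation data.
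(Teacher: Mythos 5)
Much of your outline does track the paper's proof: the reduction to $G \cong \ints/p^n \rtimes \ints/m_G$ via Proposition \ref{Ppsolvable}, the elimination of branch points on inseparable tails by quotienting to obtain a cover of $\proj^1$ branched at two points with a noncyclic group, and the use of Lemma \ref{Linsep2etale} to convert a new inseparable tail into a new \'{e}tale tail of a quotient three-point cover. The genuine gap is the step that is supposed to finish the proof: the ``companion lower bound'' $\sigma_b \geq 1 - 1/m_G$ for primitive tails is not a lemma in the paper, and it is in fact false. It is even incompatible with the formula you are applying: by (\ref{Evancycles}), with new-tail terms positive (Lemma \ref{Ltailbounds}), one always has $\sum_{b \in B_{\text{prim}}} \sigma_b \leq 1$, so any cover with two primitive tails and $m_G \geq 3$ would force $2(1 - 1/m_G) \leq 1$, i.e.\ $m_G \leq 2$. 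Such covers exist: take $p = 5$, $G = \ints/5 \rtimes \ints/4$, and a three-point $G$-cover with inertia orders $(4,4,5)$ (it exists by Riemann existence and has bad reduction because of the wild branch point); its two tame branch points specialize to two distinct primitive \'{e}tale tails by Proposition \ref{Pcorrectspec} and Lemma \ref{Letaletail}, whose invariants sum to at most $1$, so at least one of them has $\sigma_b \leq 1/2 < 1 - 1/m_G$. The true general bound for a primitive tail is only $\sigma_b > 0$, equivalently $\sigma_b \geq 1/m_b \geq 1/m_G$ (cf.\ the proof of Proposition \ref{Pdiffdiv}, where primitive invariants satisfy $0 < \sigma_i < 1$). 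With the correct bound, your count for one new tail plus two primitive tails gives only $\geq 3/m_G$, which is $\leq 1$ as soon as $m_G \geq 3$, so no contradiction results.

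The case your method therefore cannot reach is a new \'{e}tale tail with $\sigma_b \notin \ints$, and this is exactly where the paper changes tools. The paper splits new tails into two cases. If $\sigma_b \in \ints$ (automatic for the tails produced from inseparable ones, by Lemma \ref{Linsepint}), then Lemma \ref{Ltailbounds} forces $\sigma_b \geq 2$, so this single tail already contributes at least $1$ to the left side of (\ref{Evancycles}), and one needs only \emph{positivity} of the contributions of the (at least two) primitive tails to exceed $1$; no sharp lower bound on primitive invariants is ever used. If instead $\sigma_b \notin \ints$, the counting argument is abandoned: then the prime-to-$p$ inertia $m_b$ at the point where $\ol{X}_b$ meets the rest of $\ol{X}$ is greater than $1$, so a component $\ol{Y}_b$ above $\ol{X}_b$ would be a $\ints/p^i \rtimes \ints/m_b$-cover of $\proj^1_k$ ($i \geq 1$, $m_b > 1$) \'{e}tale outside a single point; this group is not quasi-$p$, contradicting the easy direction of Abhyankar's conjecture. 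Your proposal contains no substitute for this step. A secondary error: your claim that ``bad reduction forces exactly one wild branch point'' is wrong, since three-point covers with all inertia of prime-to-$p$ order (the multiplicative-type covers appearing in the proof of Lemma \ref{Lmultdd}) also have bad reduction. That slip is harmless for your treatment of primitive inseparable tails, which needs only that there is at most one wild point, but it further undercuts the picture of ``exactly two primitive tails, each with full $\ints/m_G$ inertia,'' on which your saturation count was premised.
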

\begin{proof} 
Since taking the quotient of a $G$-cover by a prime-to-$p$ group affects neither ramification invariants (Lemma \ref{Ltamenochange})
nor inseparability, we may assume by Proposition \ref{Ppsolvable} that $G \cong \ints/p^n \rtimes \ints/m_G$.   Then all elements of 
$G$ have either $p$-power order or prime-to-$p$ order.  The resulting cover is
branched at three points (otherwise it would be cyclic), and at least two of these points have prime-to-$p$ branching index.

We first show there are no inseparable tails.  Say there is an inseparable $p^i$-tail $\ol{X}_b$ with
effective ramification invariant $\sigma_b$.  By Lemma \ref{Linsepint}, $\sigma_b$ is an integer.  
By Lemma \ref{Ltailbounds}, $\sigma_b > 1$ if $\ol{X}_b$ does not contain the
specialization of any branch point.  Assume for the moment that this is the case. 
Then $\sigma_b \geq 2$.  Let $I$ be the common inertia group of all components of $\ol{Y}$ above $\ol{X}_b$.  If $f': Y/I \to X$ is the quotient cover, then
we know $f'$ is branched at three points, with at least two having prime-to-$p$ ramification index.  Thus the stable reduction $\ol{f}'$ has at least two primitive tails.  
By Lemma \ref{Linsep2etale}, it also has a new tail corresponding to the image of $\ol{X}_b$, which has
effective ramification invariant $\sigma_b \geq 2$.
Then the left-hand side of (\ref{Evancycles}) for the cover $f'$ is greater than
1, so we have a contradiction.

We now prove that no branch point of $f$ specializes to $\ol{X}_b$.  
By Proposition \ref{Pcorrectspec}, such a branch point $x$ would have ramification index $p^is$, where $p \nmid s$.   
Since $i \geq 1$, the only possible branching index for $x$ is $p^i$ (as it must be the order of an element of $G$).
So in $f': Y/I \to X$, $x$ has ramification index 1.  
Thus $Z \to X$ is branched in at most two points, which contradicts the
fact that $f'$ is not cyclic.

Now we show there are no new tails.  Suppose there is a new tail $\ol{X}_b$ with ramification
invariant $\sigma_b$.  If $\sigma_b \in \ints$, we get the same contradiction as in the inseparable
case.  If $\sigma_b \notin \ints$, and if $\ol{Y}_b \subseteq \ol{Y}$ is an irreducible component above $\ol{X}_b$, then $\ol{Y}_b \to \ol{X}_b$ is a
$\ints/p^i \rtimes \ints/m_b$-cover branched at only one point, where $i \geq 1$ and $m_b > 1$.
This violates the easy direction of Abhyankar's Conjecture, as this group is not quasi-$p$ (see, for instance, \cite[XIII, Corollaire 2.12]{sga1}).
\end{proof}

\section{Deformation data}\label{Sdefdata}
Deformation data arise naturally from the stable reduction of covers.  Much information is lost when we pass from the stable model of a cover to its stable reduction, 
and deformation data provide a way to retain some of this information.  This process is described in detail in
\cite[\S3.2]{Ob:vc}, and we recall some facts here.

\subsection{Generalities}\label{Sgeneralities}
Let $\ol{W}$ be any connected smooth proper curve over $k$.  
Let $H$ be a finite group and $\chi$ a 1-dimensional character 
$H \to \FF_p^{\times}.$  A \emph{deformation datum} over
$\ol{W}$ of type $(H, \chi)$ is an ordered pair $(\ol{V}, \omega)$ such that: $\ol{V} \to \ol{W}$ is
an $H$-cover;
$\omega$ is a meromorphic differential form on $\ol{V}$ that is either logarithmic or
exact (i.e., $\omega = du/u$ or $du$ for 
$u \in k(\ol{V})$); and $\eta^*\omega = \chi(\eta)\omega$ for all $\eta \in H$.  If $\omega$
is logarithmic (resp.\ exact), the deformation datum is called
multiplicative (resp.\ additive).  When $\ol{V}$ is understood, we will sometimes
speak of the deformation datum $\omega$.  

If $(\ol{V}, \omega)$ is a deformation datum, and $w \in \ol{W}$ is a closed point, we
define $m_w$ to be the order of the 
prime-to-$p$ part of the ramification index of $\ol{V} \to \ol{W}$ at $w$.  Define $h_w$
to be $\ord_v(\omega) + 1$, where $v \in
\ol{V}$ is any point which maps to $w \in \ol{W}$.  This is well-defined because $\eta^*\omega$ is a nonzero scalar multiple of $\omega$ for
$\eta \in H$.

Lastly, define $\sigma_x = h_w/m_w$.  We call $w$ a \emph{critical point} of the
deformation datum $(\ol{V}, \omega)$ if 
$(h_w, m_w) \ne (1, 1)$.  Note that every deformation datum contains only a
finite number of critical points.  The
ordered pair $(h_w, m_w)$ is called the \emph{signature} of $(\ol{V}, \omega)$ (or of
$\omega$, if $\ol{V}$ is understood) at $w$, and
$\sigma_w$ is called the \emph{invariant} of the deformation datum at $w$.

\subsection{Deformation data arising from stable reduction.}\label{Sdefdatastable}
Maintain the notation of \S\ref{Sstable}.  In particular, $X \cong \proj^1_K$, we have a $G$-cover $f:Y \to X$ defined over $K$ with bad reduction and at least three 
branch points, there is a smooth model of $X$ where the reductions of the branch points do not coalesce, and
$f$ has stable model $f^{st}: Y^{st} \to X^{st}$ and stable reduction $f: \ol{Y} \to \ol{X}$.  We assume further that 
$G$ has a cyclic $p$-Sylow subgroup.  For each irreducible component of $\ol{Y}$ lying above a
$p^r$-component of $\ol{X}$ with $r > 0$, we obtain $r$ different deformation data.  The details of this construction are given in 
\cite[Construction 3.4]{Ob:vc}, and we only give a sketch here.

Suppose $\ol{V}$ is an irreducible component of $\ol{Y}$ with generic point $\eta$
and nontrivial generic inertia group $I \cong \ints/p^r \subset G$.  We
write $B = \hat{\mc{O}}_{Y^{st}, \eta}$, and $C = B^I$.  The map $\Spec B \to \Spec C$ is given by a tower of $r$ 
maps, each of degree $p$.  We can write these maps as $\Spec C_{i+1} \to \Spec C_i$, for $1 \leq i \leq r$, such that $B = 
C_{r+1}$ and $C = C_1$.  
After a possible finite extension $K'/K^{st}$, each of these maps is given by an equation $y^p = z$ on the generic fiber, where 
$z$ is well-defined up to raising to a prime-to-$p$ power.  The morphism on the special fiber is purely inseparable.
To such a degree $p$ map, \cite[Ch.\ 5, D\'{e}finition 1.9]{He:ht} associates a meromorphic differential form $\omega_i$, well defined up 
to multiplication by a scalar in $\FF_p^{\times}$, on the special 
fiber $\Spec C_i \times_{R^{st}} k = \Spec C_i/\pi$, where $\pi$ is a uniformizer of $R^{st}$.  
This differential form is either logarithmic or exact.  Since $C/\pi \cong k(\ol{V})^{p^r} 
\cong k(\ol{V})^{p^{r-i+1}} \cong C_i/\pi$ for any $i$, each $\omega_i$ can be thought of as a differential form on $\ol{V}' = 
\Spec C \times_{R^{st}} k$, where $k(\ol{V}') = k(\ol{V})^{p^r}$.  

Let $H = D_{\ol{V}}/I_{\ol{V}} \cong D_{\ol{V}'}$.  If $\ol{W}$ is the component of $\ol{X}$ lying below $\ol{V}$, we have that 
$\ol{W} = \ol{V}'/H$.  In fact, each $(\ol{V}', \omega_i)$, for $1 \leq i \leq r$, is a deformation datum of type $(H, \chi)$ over 
$\ol{W}$, where $\chi$ is given by the conjugation action of $H$ on $I_{\ol{V}}$.  The invariant of $\sigma_i$ at a point $w \in 
W$ will be denoted $\sigma_{i,w}$.  
We will sometimes call the deformation datum $(\ol{V}', \omega_1)$ the \emph{bottom 
deformation datum} for $\ol{V}$.  

For $1 \leq i \leq r$, denote the valuation of the different of $C_i \hookrightarrow C_{i+1}$ by $\delta_{\omega_i}$.  
If $\omega_i$ is multiplicative, then $\delta_{\omega_i} = 1$.  
Otherwise, $0 < \delta_{\omega_i} < 1$.

For the rest of this section, we will only concern ourselves with deformation
data that arise from stable reduction in the manner described above.  We will use the notations of \S\ref{Sstable} 
throughout.

\begin{lemma}[\cite{Ob:vc}, Lemma 3.5, cf.\ \cite{We:br}, Proposition 1.7]\label{Lcritical}
Say $(\ol{V}', \omega)$ is a deformation datum arising from the stable reduction
of a cover, and let $\ol{W}$ be the component of $\ol{X}$ lying under
$\ol{V}'$.  Then a critical point $x$ of the
deformation datum on $\ol{W}$ is either a singular point of $\ol{X}$ or the
specialization of a branch point of $Y \to
X$ with ramification index divisible by $p$.  In the first case, $\sigma_x \ne
0$, and in the second case, $\sigma_x = 0$
and $\omega$ is logarithmic.
\end{lemma}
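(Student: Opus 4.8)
The plan is to localize the analysis at a closed point $x \in \ol{W}$ and to show that $x$ is critical only if it is a singular point of $\ol{X}$ or the specialization of a branch point of $Y \to X$. By definition $x$ is non-critical exactly when $(h_x, m_x) = (1,1)$, so I treat the two factors separately: the prime-to-$p$ ramification invariant $m_x$ of $\ol{V}' \to \ol{W}$, and the order $\ord_v(\omega)$ governing $h_x = \ord_v(\omega)+1$. Throughout, $\ol{W}$ is a $p^r$-component with $r > 0$, since otherwise no deformation datum arises, and $I_{\ol{V}} \cong \ints/p^r$ is the generic inertia.

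For the ramification factor I would use Proposition \ref{Pspecialram}. At a smooth closed point of $\ol{Y}$ that is neither a node of $\ol{X}$ nor a point above a branch point, part (iv) of that proposition identifies the inertia group with the $p$-group $I_{\ol{V}}$; its prime-to-$p$ part is trivial, so $\ol{V}' \to \ol{W}$ is unramified there and $m_x = 1$. Thus $m_x > 1$ can only occur at the images of nodes (part (ii)) or of branch-point specializations (part (iii)). Moreover, by Proposition \ref{Pcorrectspec} any branch point specializing to the $p^r$-component $\ol{W}$ has index $p^r s$, hence divisible by $p$, which is exactly why only wildly ramified branch points appear in the statement.

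For the differential factor I would work over the formal disk $D_x$ of points specializing to a smooth non-node point $x$ and invoke Henrio's construction \cite[Ch.\ 5]{He:ht} of $\omega$ from the degree-$p$ layer generating the datum. If $x$ is not a branch-point specialization, then $Y_K \to X_K$ is étale over $D_x$, so the Kummer coordinate $z$ of the layer is a unit on $D_x$, and Henrio's explicit local model then returns a form with neither zero nor pole at $x$, so that $\ord_v(\omega) = 0$ and $h_x = 1$. Extracting this regularity from the local model---showing that étale behavior of the generic fibre forces a regular, nonvanishing reduction of $\omega$---is the step I expect to be the main obstacle, since it is where the whole construction of the deformation datum must be unwound. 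Together with the previous paragraph this shows that every critical point is a singular point of $\ol{X}$ or the specialization of a branch point of index divisible by $p$.

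It remains to separate the two cases. At the specialization of a branch point of index divisible by $p$, the layer is locally $y^p = z$ with $z$ vanishing to prime-to-$p$ order, so Henrio's construction returns a logarithmic form, and $\omega = dz/z$ has a simple pole at $x$; hence $\ord_v(\omega) = -1$, $h_x = 0$, and $\sigma_x = h_x/m_x = 0$, with $\omega$ logarithmic, as asserted. At a node I must instead rule out $\sigma_x = 0$, i.e. $\ord_v(\omega) = -1$. A simple pole has nonzero residue, which is impossible for an exact form; so $\ord_v(\omega) = -1$ would force $\omega$ logarithmic with $z$ of prime-to-$p$ order at $x$, meaning the layer is branched on the generic fibre above $x$. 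But branch points specialize to smooth points disjoint from the nodes, so $Y_K \to X_K$ is étale over the annulus attached to a node and no such branching occurs; therefore $\ord_v(\omega) \neq -1$ and $\sigma_x \neq 0$, completing the classification.
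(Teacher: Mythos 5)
First, a point of order: this paper never proves Lemma \ref{Lcritical} --- it is imported wholesale from \cite[Lemma 3.5]{Ob:vc} (cf.\ \cite[Proposition 1.7]{We:br}) --- so your attempt can only be measured against those sources. Your skeleton is the right one (control $m_x$ via Proposition \ref{Pspecialram}, control $h_x$ by local analysis on formal fibers), and the $m_x$ half is fine. But the step you yourself flag as ``the main obstacle'' is not only missing, the mechanism you propose for it is false. \'{E}taleness of the torsor over the formal fiber does \emph{not} force $\ord_v(\omega) = 0$: for $p$ odd, the $\mu_p$-torsor $y^p = 1 + T^2$ over the open unit disk is \'{e}tale on the generic fiber (its branch points $T = \pm\sqrt{-1}$ have valuation $0$, hence lie off the open disk), yet the associated differential form is $d\ol{u}/\ol{u} = 2t\,dt/(1+t^2)$, which has a simple zero at $t = 0$. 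What excludes this configuration in the stable-reduction setting is \emph{semistability}: since $\ol{y}$ is a smooth point of $\ol{Y}$ (and its images are smooth points of each quotient $\ol{Y}/Q_j$), the formal fiber of $Y^{st}$ and of each intermediate quotient at that point is an \emph{open disk}, whereas the torsor $y^p = 1+T^2$ has total space of positive genus (over a sub-disk it degenerates to an Artin-Schreier cover of conductor $2$, of genus $(p-1)/2$ by Lemma \ref{Lartinschreiergenus}), so it can never be a formal fiber of $Y^{st}$ at a smooth point. The indispensable input is thus the disk structure of the formal fibers together with Henrio's classification of $\mu_p$-torsors over a disk whose total space is again a disk; your sketch never invokes either.

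The node case has the same defect in sharper form. You claim that a simple pole of a logarithmic $\omega$ at a node would mean the Kummer coordinate ``is branched on the generic fibre above $x$,'' contradicting \'{e}taleness over the annulus. This inference is wrong: the torsor $y^p = T$ over the formal annulus $0 < v(T) < e$ is \'{e}tale (the zero $T=0$ is not a point of the annulus), has multiplicative reduction at both boundaries, and its form $dt/t$ has a simple pole at the node point. In general, a function can reduce to something with prime-to-$p$ order at the node while all of its zeros and poles on $X$ lie outside the annulus, so \'{e}taleness yields no contradiction, and your argument collapses for logarithmic data (the residue argument does correctly dispose of exact data). Ruling out $\sigma_x = 0$ at singular points genuinely requires the boundary theory of the torsor on the annulus --- the compatibility between $h_x$, the change of the differents across the annulus, and the invariants on the adjacent component (cf.\ Proposition \ref{Pdifferentepaisseur} and \cite[Ch.\ 5]{He:ht}) --- not an appeal to where the branch points sit.
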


\begin{prop}\label{Pmultdefdata}
Let $(\ol{V}', \omega_1)$ be the bottom deformation datum for some irreducible
component $\ol{V}$ of $\ol{Y}$.  If $\omega_1$ is multiplicative, then $\omega_i = \omega_1$ for $2 \leq i \leq r$.  In particular, all $\omega_i$
are multiplicative.
\end{prop}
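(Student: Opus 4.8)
The plan is to unwind the construction of the $\omega_i$ from \cite[Construction 3.4]{Ob:vc} and keep careful track of the Frobenius twists hidden in the identifications $C_i/\pi \cong k(\ol{V}')$. Recall that $I \cong \ints/p^r$ acts on $B = \hat{\mc{O}}_{Y^{st}, \eta}$ with $C = B^I$, and that (after base change) the tower $C = C_1 \subset \cdots \subset C_{r+1} = B$ is Kummer: writing the $\mu_{p^r}$-torsor as $y^{p^r} = u$ with $u \in C^\times$, we set $y_i = y^{p^{r-i}}$, so that $C_{i+1} = C_i[y_i]$, $y_i^p = y_{i-1}$ (with $y_0 = u$), and $y_i^{p^i} = u$. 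Thus the $i$-th degree-$p$ step $\Spec C_{i+1} \to \Spec C_i$ is $y_i^p = y_{i-1}$, with Kummer element $y_{i-1}$. First I would recall Henrio's differential in the multiplicative case: for a degree-$p$ map $y^p = z$ with multiplicative reduction, \cite[Ch.\ 5]{He:ht} produces the logarithmic form $\omega = d\bar{z}/\bar{z}$, where $\bar{z}$ is the reduction of the Kummer element $z$, a unit whose reduction is not a $p$-th power. Applied to the $i$-th step, if it is multiplicative then $\omega_i = d\bar{y}_{i-1}/\bar{y}_{i-1}$ on $\Spec C_i/\pi$.

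Next I would show, by induction on $i$, that every step of the tower is multiplicative and that $\bar{y}_{i-1}$ is not a $p$-th power in $C_i/\pi$. The base case is the hypothesis: $\omega_1$ multiplicative gives $\omega_1 = d\bar{u}/\bar{u}$ with $\bar{u} \in (C_1/\pi)^\times \setminus (C_1/\pi)^{\times p}$, since nontriviality of the datum forbids $\bar{u}$ from being a $p$-th power (else $d\bar{u}/\bar{u} = 0$). For the inductive step, since $\bar{y}_i = \bar{y}_{i-1}^{1/p}$ and $k$ is perfect, a short computation shows that $\bar{y}_{i-1}$ not a $p$-th power forces $\bar{y}_i$ not a $p$-th power; and because $y_i^p = y_{i-1}$ with $y_{i-1}$ a unit reducing to a nonconstant non-$p$-th-power, the criterion of \cite[Ch.\ 5]{He:ht} shows that this step again has multiplicative reduction, with $\omega_i = d\bar{y}_{i-1}/\bar{y}_{i-1}$.

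Finally I would transport every $\omega_i$ to the common curve $\ol{V}'$ via the identification $C_i/\pi \cong k(\ol{V}')$, which is the $(i-1)$-fold Frobenius isomorphism $\Phi^{i-1}\colon k(\ol{V})^{p^{r-i+1}} \xrightarrow{\sim} k(\ol{V})^{p^r} = k(\ol{V}')$. Since differentials transport through a field isomorphism by $\Phi^{i-1}(a)\,d\Phi^{i-1}(b)$, and since $\Phi^{i-1}(\bar{y}_{i-1}) = \bar{y}_{i-1}^{p^{i-1}} = \bar{u}$ (from $y_{i-1}^{p^{i-1}} = u$), the form $\omega_i = d\bar{y}_{i-1}/\bar{y}_{i-1}$ becomes $d\bar{u}/\bar{u} = \omega_1$ on $\ol{V}'$. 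Hence $\omega_i = \omega_1$ for all $i$ (up to the inherent $\FF_p^\times$-scaling), and in particular each $\omega_i$ is logarithmic, i.e.\ multiplicative.

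The hard part is the inductive propagation of multiplicativity: knowing only the bottom step is multiplicative does not obviously control the reduction type of the higher steps, whose multiplicative-versus-additive dichotomy is governed by delicate valuation data (reflected in $\delta_{\omega_i}$). The crux is thus to show that taking successive $p$-th roots of a unit with good multiplicative reduction preserves multiplicative reduction — additive reduction arises only when the Kummer element is congruent to $1$ to high order (as in Lemma \ref{Lartinschreier}), whereas here $\bar{u}^{1/p^j}$ stays nonconstant at every level. Once multiplicativity holds throughout, the identification $\omega_i = \omega_1$ is the purely formal Frobenius-twist computation above.
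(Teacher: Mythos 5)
Your proposal is correct and takes essentially the same approach as the paper: both present the tower via Kummer theory as successive $p$th roots of a unit Kummer element, invoke Henrio's D\'{e}finition 1.9 to translate multiplicativity of $\omega_1$ into its reduction not being a $p$th power (with $\omega_1 = d\ol{\theta}_1/\ol{\theta}_1$), and use the $p^{i-1}$st-power isomorphism $C_i/\pi \cong C/\pi$ to conclude that each $\omega_i$ is logarithmic and equals $\omega_1$. Your step-by-step induction is merely a repackaging of the paper's one-shot observation that the Kummer element of the $i$th step maps to $\theta_1$ under this power isomorphism, so its reduction is likewise not a $p$th power.
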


\begin{proof}
As is mentioned at the beginning of \cite[\S3.2.2]{Ob:vc}, we may work over a finite extension $K'/K^{st}$ containing the $p^r$th roots of unity.
Let $B$ and $C$ be as in our construction of deformation data.  Let $R'$ be the ring of integers of $K'$.
By Kummer theory, we can write $B \otimes_{R'} K' = (C \otimes_{R'}
K')[\theta]/(\theta^{p^r} - \theta_1)$.  After a further extension of $K'$, we can assume $v(\theta_1) = 0$.

By \cite[Ch.\ 5, D\'{e}finition 1.9]{He:ht},  
if $\omega_1$ is logarithmic, then the reduction $\ol{\theta}_1$ of $\theta_1$
to $k$ is not a $p$th power in $C \otimes_{R'} k$.  Again, by \cite[Ch.\ 5, D\'{e}finition 1.9]{He:ht}, 
we thus know that $\omega_1 = d\ol{\theta}_1/\ol{\theta}_1$.  It easy to
see that $\omega_i$ arises from the equation $y^p = \theta_i$ where $\theta_i =
\sqrt[p^{i-1}]{\theta_1}$.  Under the $p^{i-1}$st power isomorphism $\iota: C_i \otimes_{R'} k \to C
\otimes_{R'} k$, $\iota(\theta_i) = \theta_1$.  So, again by \cite[Ch.\ 5, D\'{e}finition 1.9]{He:ht}, $\omega_i$ is logarithmic, 
and is equal to $\frac{d\theta_1}{\theta_1}$, which is equal to $\omega_1$.
\end{proof}

\begin{lemma}\label{Lmultdd}
If $f$ is a three-point cover, then the original component of $\ol{X}$ is a $p^n$-component, and all deformation data above the original component are multiplicative.
\end{lemma}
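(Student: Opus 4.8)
The statement has two parts: that the generic inertia of $\ol{X}_0$ is the full $p$-Sylow $\ints/p^n$, and that every deformation datum lying over $\ol{X}_0$ is multiplicative. The plan is to dispose of multiplicativity first, since Proposition \ref{Pmultdefdata} reduces it to a single claim: it suffices to show that the \emph{bottom} deformation datum $(\ol{V}', \omega_1)$ over $\ol{X}_0$ is multiplicative, for then $\omega_i = \omega_1$ for all $i$ and all the data over $\ol{X}_0$ are multiplicative. Now the three branch points $0,1,\infty$ specialize to distinct points of $\ol{X}_0$ (we are using the fixed smooth model $X_R$, on which the branch points do not collide), and by Lemma \ref{Lcritical} the critical points of $\omega_1$ on $\ol{X}_0$ are exactly the nodes of $\ol{X}$ on $\ol{X}_0$ together with the specializations of those branch points whose index is divisible by $p$, at which $\omega_1$ is logarithmic. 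Hence, as soon as one branch point of index divisible by $p$ specializes to $\ol{X}_0$, Lemma \ref{Lcritical} gives multiplicativity directly; more conceptually, such a point forces a nonzero residue, which rules out the exact (additive) alternative in the definition of a deformation datum.

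For the first part, the plan is to show that the whole $p$-Sylow $\ints/p^n$ is realized as generic inertia on \emph{some} component of $\ol{X}$, and then to locate that component at $\ol{X}_0$. The second step is the role of monotonicity: after quotienting by the maximal prime-to-$p$ normal subgroup (which changes neither the generic $p$-inertia nor the type of the deformation data), we may in the $p$-solvable case assume $G \cong \ints/p^n \rtimes \ints/m_G$ by Proposition \ref{Ppsolvable}, and then by Proposition \ref{Pmonotonic} the generic inertia does not increase as one moves outward from $\ol{X}_0$; since Lemma \ref{Ltailetale} shows inertia can only drop onto a tail, the maximal generic inertia must occur on $\ol{X}_0$. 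Thus it is enough to know that $\ints/p^n$ occurs as generic inertia at all. When one of the branch points is wildly ramified with $p$-part $p^n$, Proposition \ref{Pcorrectspec} places it on a $p^n$-component, which monotonicity then identifies with $\ol{X}_0$; and the nonvanishing residue at that same branch point simultaneously settles multiplicativity, so in this case both parts follow at once.

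The main obstacle is the complementary case, in which no branch point has $p$-part equal to $p^n$ (in the extreme, all three branch points are tame, so by Lemma \ref{Letaletail} they specialize to \'{e}tale tails and none lies on $\ol{X}_0$). Here the full wild inertia arises only from the bad reduction itself rather than from local branching, and one must rule out that $\ol{X}_0$ is merely a $p^b$-component with $b < n$. I expect to argue via the vanishing cycles formula (Theorem \ref{Tvancycles}) together with a genus computation on the components of $\ol{Y}$ above $\ol{X}_0$: a proper subgroup $\ints/p^b$ acting generically cannot account for the inseparability required by bad reduction while still leaving $\ints/p^n$ as the global $p$-Sylow of $G$, which forces $b = n$. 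In this tame case multiplicativity must be obtained separately, from the logarithmic behavior of $\omega_1$ at the nodes joining $\ol{X}_0$ to its (\'{e}tale) tails rather than at a branch point; verifying that these nodal critical points are of multiplicative type is where the real work lies.
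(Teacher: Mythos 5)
Your reduction to $G \cong \ints/p^n \rtimes \ints/m_G$ and your use of Proposition \ref{Pmultdefdata} to reduce multiplicativity to the bottom deformation datum both match the paper, but the core of your argument has two genuine gaps. The decisive one is the case you defer at the end: when no branch point with index divisible by $p$ lies on $\ol{X}_0$ (in particular when all three branch points are tame, which is the typical situation when $m_G > 1$), you offer only the hope that the vanishing cycles formula plus a genus computation ``forces $b = n$,'' and you explicitly concede that multiplicativity ``is where the real work lies.'' Neither half can be completed with the tools you name: Theorem \ref{Tvancycles} only sees the invariants $\sigma_b$ of the \'{e}tale tails and is blind to whether the differential forms over $\ol{X}_0$ are logarithmic or exact, and the effective different of $\ol{X}_0$ (whose maximality is what multiplicativity amounts to) is tied by Lemma \ref{Leffdifferentepaisseur} to \'{e}paisseurs that no combinatorial count determines. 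The paper's proof instead passes to the quotient $\ints/p \rtimes \ints/m_G$-cover $\eta: Z = Y/J \to X$, where $J$ is the subgroup of order $p^{n-1}$, precisely so that Wewers's complete theory for $p$ \emph{exactly} dividing the group order applies: when all branch points of $\eta$ are tame, $\eta$ is of multiplicative type, and \cite[Corollary 1.5]{We:mc} gives bad reduction with a multiplicative deformation datum on the original component --- an arithmetic input, not a combinatorial one. The conclusion is then lifted back to $f$: the deformation datum of $\eta$ over $\ol{X}_0$ \emph{is} the bottom deformation datum of $f$, and inseparability of $\eta$ over $\ol{X}_0$ forces the inertia $I$ of $f$ there to satisfy $I \supsetneq J$, hence $|I| = p^n$. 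This quotient-and-lift step is the idea your outline is missing.

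The second gap is in the case you do treat. From Proposition \ref{Pcorrectspec} and monotonicity you may conclude that $\ol{X}_0$ is a $p^n$-component when some branch point has index with $p$-part $p^n$; but monotonicity does not ``identify'' the component carrying that branch point with $\ol{X}_0$ --- it only says that every component between the two is also a $p^n$-component. So Lemma \ref{Lcritical} gives you logarithmic deformation data on the component to which the branch point actually specializes, not on $\ol{X}_0$, and you have no mechanism to propagate multiplicativity inward across a node (Proposition \ref{Pmultdefdata} propagates it only vertically, within the tower over a single component). The paper closes exactly this hole by citing \cite[Theorem 2, p.\ 992]{We:br} for the quotient $\eta$: there the original component is the \emph{only} $p$-component, so the wild branch point has nowhere else to go. (Your opening claim that the three branch points specialize to distinct points of $\ol{X}_0$ is also false in general --- tame branch points specialize to \'{e}tale tails by Proposition \ref{Pcorrectspec} and Lemma \ref{Letaletail} --- though you correct this yourself later.)
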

\begin{proof} 
Since $G$ is $p$-solvable, we know by Proposition \ref{Ppsolvable} that $f:Y \to
X$ has a quotient cover $f': Y' \to X$ with Galois group $\ints/p^n \rtimes \ints/m_G$.  Since $Y \to Y'$ is of prime-to-$p$ degree, we may assume that $Y = Y'$ and $G 
\cong \ints/p^n \rtimes \ints/m_G$.
Let $J < G$ be the unique subgroup of order $p^{n-1}$.
Then the quotient cover
$\eta: Z = Y/J \to X$ has Galois group $\ints/p \rtimes \ints/m_G$.  
If all branch points of $\eta$ have prime-to-$p$ branching index, then
\cite[\S1.4]{We:mc} shows that $\eta$ is of \emph{multiplicative type} in the
language of \cite{We:mc}. 
Then $\eta$ has bad reduction by \cite[Corollary 1.5]{We:mc},
and the original component for the stable reduction $\ol{Z} \to \ol{X}$ is a
$p$-component. 
Furthermore, the deformation datum on the irreducible component of $\ol{Z}$
above the original component of $\ol{X}$ is 
multiplicative (also due to \cite[Corollary 1.5]{We:mc}).  

If $\eta$ has a branch point $x$ with ramification index divisible by $p$, then
$\eta$ has bad reduction.  By Proposition
\ref{Pcorrectspec}, $x$ specializes to a $p$-component.  
By \cite[Theorem 2, p.\ 992]{We:br}, this is the original component $\ol{X}_0$, which is the only $p$-component.
The deformation datum above $\ol{X}_0$ must be multiplicative here, as $\ol{X}_0$ contains the specialization of a branch point 
with $p$ dividing the branching index (see Lemma \ref{Lcritical}).

So in all cases, the original component is a $p$-component for $\eta$ with
multiplicative deformation datum.
Thus the bottom deformation datum above $\ol{X}_0$ for $f$ is
multiplicative.  Now, we claim that $\ol{X}_0$ is a $p^n$-component for $f$.  Let $I$ be the inertia group of a component of $\ol{Y}$ lying above $\ol{X}_0$.  
Since $\eta$ is inseparable above $\ol{X}_0$, we must have that $I \supsetneq J$.  
Thus $|I| = p^n$, proving the claim.  Finally, Proposition \ref{Pmultdefdata} shows that all the deformation data
above $\ol{X}_0$ for $f$ are multiplicative.
\end{proof}

\subsection{Effective invariants of deformation data}\label{Seffinvs}
Maintain the notations of \S\ref{Sdefdatastable}
Recall that $\mc{G}'$ is the augmented dual graph of $\ol{X}$.  To each edge $e$ of $\mc{G}'$ we will associate an
invariant $\sigma^{\eff}_e$, called the \emph{effective invariant}.  

\begin{predefinition}[cf.\ \cite{Ob:vc}, Definition 3.10]\label{Dsigmaeff} 
\rm \ \vspace{0.02cm}
\begin{itemize}
\item If $s(e)$ corresponds to a $p^r$-component $\ol{W}$ and $t(e)$ corresponds to a $p^{r'}$-component $\ol{W}'$ with 
$r \geq r'$, then $r \geq 1$ by Lemma \ref{Letaletail}. 
Let $\omega_i$, $1 \leq i \leq r$, be the deformation data above $\ol{W}$.  If $\{w\} = \ol{W} \cap \ol{W}'$, define 
$\sigma_{i,w}$ to be the invariant of $\omega_i$ at $w$.  Then 
$$\sigma^{\eff}_e := \left( \sum_{i=1}^{r-1} \frac{p-1}{p^{i}}\sigma_{i,w} \right) + \frac{1}{p^{r-1}}\sigma_{r,w}.$$
Note that this is a weighted average of the $\sigma_{i,w}$'s.
\item If $s(e)$ corresponds to a $p^r$-component and $t(e)$ corresponds to a $p^{r'}$-component with $r < r'$, then
$\sigma^{\eff}_e := -\sigma^{\eff}_{\ol{e}}$.  
\item If either $s(e)$ or $t(e)$ is a vertex of $\mc{G}'$ but not $\mc{G}$, then $\sigma^{\eff}_e := 0$.  
\end{itemize}
\rm
\end{predefinition}

\begin{lemma}[\cite{Ob:vc}, Lemma 3.11 (i), (iii)]\label{Lsigmaeffcompatibility}
\begin{enumerate}[(i)]
\item For any $e \in E(\mc{G}')$, we have $\sigma^{\eff}_e = -\sigma^{\eff}_{\ol{e}}$.
\item  If $t(e)$ corresponds to an \'{e}tale tail $\ol{X}_b$, then $\sigma^{\eff}_e = \sigma_b$.
\end{enumerate}
\end{lemma}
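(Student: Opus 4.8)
The plan is to read off each clause directly from Definition \ref{Dsigmaeff}, isolating the one case in each part that carries genuine content. For part (i), the definition is arranged so that antisymmetry is automatic in two of the three cases. If either $s(e)$ or $t(e)$ lies in $\mc{G}'\setminus\mc{G}$, then both $\sigma^{\eff}_e$ and $\sigma^{\eff}_{\ol{e}}$ vanish by the third bullet and there is nothing to prove. If $s(e)$ is a $p^r$-component and $t(e)$ a $p^{r'}$-component with $r\ne r'$, then (since $r\geq 1$ whenever $r\geq r'$, as noted via Lemma \ref{Letaletail}) exactly one of $e,\ol{e}$ falls under the second bullet, which \emph{defines} that value as the negative of the other, so the relation holds by fiat. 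Thus the only substantive case is $r=r'$, where both $\sigma^{\eff}_e$ and $\sigma^{\eff}_{\ol{e}}$ are computed from the weighted-average formula of the first bullet, from the deformation data above $\ol{W}=s(e)$ and above $\ol{W}'=t(e)$ respectively.

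For that case I would reduce to a local statement at the node $w=\ol{W}\cap\ol{W}'$: it suffices to prove, for each $i$, the sign flip $\sigma_{i,w}=-\sigma'_{i,w}$, where $\sigma_{i,w}$ and $\sigma'_{i,w}$ are the invariants at $w$ of the $i$-th deformation datum above $\ol{W}$ and above $\ol{W}'$. Since $\ol{W}$ and $\ol{W}'$ carry the same power $p^r$, the weights $\frac{p-1}{p^i}$ and $\frac{1}{p^{r-1}}$ agree on the two sides, so termwise antisymmetry of the invariants forces antisymmetry of the weighted averages. The sign flip itself is geometric: the two families of deformation data arise from the \emph{same} torsor data on the formal annulus $\Spec R[[T,U]]/(TU-p^e)$ joining $\ol{W}$ to $\ol{W}'$, and passing from the $\ol{W}$-end ($T=0$) to the $\ol{W}'$-end ($U=0$) is the coordinate inversion $U=p^e/T$. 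Under this inversion the order of the associated differential at the node transforms so as to send $h_w/m_w$ to its negative, the prime-to-$p$ index $m_w$ being the same from either side; this is the local analysis of \cite[\S3.2]{Ob:vc}, resting ultimately on \cite[Ch.\ 5]{He:ht}.

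For part (ii), $t(e)$ is an \'{e}tale tail $\ol{X}_b$ (a $p^0$-component), so by Lemma \ref{Ltailetale} the adjacent component $s(e)=\ol{W}$ is a $p^r$-component with $r\geq 1$, and $\sigma^{\eff}_e$ is again given by the first bullet. By Definition \ref{Draminvariant}, $\sigma_b$ is the conductor of the $\ints/p^r$-cover $\hat{\mc{O}}_{\ol{Y}_b,y_b}/\hat{\mc{O}}_{\ol{X}_b,x_b}$ at the node. The idea is to compute this conductor through the tower $\ints/p^r\supset\ints/p^{r-1}\supset\cdots\supset 1$ of the cover above $\ol{X}_b$, identifying each successive degree-$p$ Artin--Schreier step with the deformation datum $\omega_i$ above $\ol{W}$ localized at $w$, so that the lower-numbering jumps of the tower are governed by the $\sigma_{i,w}$. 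Converting these jumps into the conductor (the largest upper-numbering jump) via the Herbrand function produces exactly $\sum_{i=1}^{r-1}\frac{p-1}{p^i}\sigma_{i,w}+\frac{1}{p^{r-1}}\sigma_{r,w}$; indeed the weights $\frac{p-1}{p^i}$ are precisely the increments of $\psi$ across the filtration, which is the reason $\sigma^{\eff}_e$ was defined as this particular weighted average.

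The main obstacle is not the bookkeeping of the degenerate cases but the two genuine local computations at the node: the sign flip $\sigma_{i,w}=-\sigma'_{i,w}$ needed for part (i) with $r=r'$, and the identification in part (ii) of the conductor on the \'{e}tale tail with the Herbrand transform of the deformation-data invariants on its inseparable neighbor. Both hinge on tracking how the differential forms attached to the degree-$p$ pieces of an inseparable cover behave under the coordinate inversion across the annulus, and on how the different-valuations $\delta_{\omega_i}$ assemble into the conductor. Since this is exactly the analysis carried out in \cite[Lemma 3.11]{Ob:vc} (following \cite[Ch.\ 5]{He:ht}), in the present paper I would simply invoke that computation; the sketch above indicates the route one takes to establish it.
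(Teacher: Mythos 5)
Your proposal is correct and matches the paper's treatment: the paper states this lemma purely as a quotation of \cite[Lemma 3.11 (i), (iii)]{Ob:vc} with no proof of its own, exactly as you ultimately defer the two substantive local computations (the sign flip at a node between components of equal inertia, and the conductor identification on an \'{e}tale tail) to that reference. Your surrounding gloss --- the case analysis of Definition \ref{Dsigmaeff}, noting that antisymmetry is definitional unless $r = r'$, and the observation that the weights $\frac{p-1}{p^{i}}$, $\frac{1}{p^{r-1}}$ are precisely the coefficients converting lower-numbering jumps into the largest upper-numbering jump --- is a sound account of why the cited result holds.
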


\begin{lemma}[Effective local vanishing cycles formula, \cite{Ob:vc}, Lemma 3.12]\label{Lgenlocvancycles}
Let $v \in V(\mc{G}')$ correspond to a $p^j$-component $\ol{W}$ of $\ol{X}$ with genus $g_v$.  Then
$$\sum_{s(e) = v} (\sigma^{\eff}_e - 1) = 2g_v - 2.$$
\end{lemma}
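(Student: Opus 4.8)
The plan is to prove the formula one deformation datum at a time and then take the weighted average built into Definition \ref{Dsigmaeff}. Write $r = j$, so that there are exactly $r$ deformation data $(\ol{V}', \omega_i)$, $1 \le i \le r$, lying above $\ol{W}$, each of type $(H, \chi)$ for the $H$-cover $\ol{V}' \to \ol{W}$, which is tame (so that $m_w$ is the full ramification index at $w$). The core step is to establish, for each fixed $i$, the single-datum local formula
\[ \sum_{w \in \ol{W}} (\sigma_{i,w} - 1) = 2g_v - 2, \]
where the sum runs over all closed points $w$; only critical points contribute, since $\sigma_{i,w} = 1$ elsewhere.

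First I would prove this single-datum formula by applying the degree formula for the canonical divisor to the meromorphic differential $\omega_i$ on $\ol{V}'$. Since $\omega_i$ is a $\chi$-eigenform, all points $v$ lying over a given $w$ share the same order $\ord_v(\omega_i) = h_{i,w} - 1$, and there are $|H|/m_w$ of them. Hence $\sum_{v} \ord_v(\omega_i) = \sum_w \frac{|H|}{m_w}(h_{i,w}-1)$, which equals $2g_{\ol{V}'} - 2$. Expanding $2g_{\ol{V}'} - 2$ by the tame Riemann--Hurwitz formula for $\ol{V}' \to \ol{W}$, dividing through by $|H|$, and simplifying $\frac{h_{i,w}-1}{m_w} - \frac{m_w - 1}{m_w} = \sigma_{i,w} - 1$ yields the claimed identity. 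This argument is uniform whether $\omega_i$ is logarithmic or exact, since both are genuine meromorphic differentials.

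Next I would form the weighted combination of these $r$ identities using the weights $\frac{p-1}{p^i}$ for $1 \le i \le r-1$ and $\frac{1}{p^{r-1}}$ for $i = r$. A short geometric-series computation shows these weights sum to $1$, so the right-hand side stays $2g_v - 2$, while at each critical point $w$ the left-hand side collapses to $\left(\sum_i \mathrm{wt}_i\, \sigma_{i,w}\right) - 1$. By Lemma \ref{Lcritical}, every critical point $w$ is either a node of $\ol{X}$ on $\ol{W}$ or the specialization of a wildly branched point, so it corresponds to a unique edge $e$ with $s(e) = v$ in $\mc{G}'$. When $e$ points to a $p^{r'}$-component with $r' \le r$ (or to a wild branch vertex, where all $\sigma_{i,w} = 0$), the bracket is exactly $\sigma^{\eff}_e$ by Definition \ref{Dsigmaeff}, so the contribution is $\sigma^{\eff}_e - 1$.

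The step requiring the most care is the remaining case, where $e$ points from the $p^r$-component $\ol{W}$ to an \emph{outward} $p^{r'}$-component with $r' > r$. Here Definition \ref{Dsigmaeff} sets $\sigma^{\eff}_e := -\sigma^{\eff}_{\ol{e}}$, and I must check that the weighted average of the $\sigma_{i,w}$ computed from $\ol{W}$'s side of the node still equals this; this is precisely the compatibility of the deformation data across the shared node. Invoking Lemma \ref{Lsigmaeffcompatibility}(i), namely $\sigma^{\eff}_e = -\sigma^{\eff}_{\ol{e}}$, identifies the contribution at $w$ with $\sigma^{\eff}_e - 1$ in this case as well. Summing over all critical $w$---equivalently, over all edges $e$ with $s(e) = v$, since non-critical points contribute $0$ on both sides---then gives $\sum_{s(e)=v}(\sigma^{\eff}_e - 1) = 2g_v - 2$, as desired.
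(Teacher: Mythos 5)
First, a point of comparison: the paper itself gives no proof of Lemma \ref{Lgenlocvancycles} --- it is quoted verbatim from \cite[Lemma 3.12]{Ob:vc} --- so your proposal has to be measured against the proof in that reference, and it does not supply the key ingredient that reference needs. The fatal problem is your parenthetical claim that the $H$-cover $\ol{V}' \to \ol{W}$ is tame, ``so that $m_w$ is the full ramification index at $w$.'' This is false exactly at the points that matter. Suppose the $p^r$-component $\ol{W}$ meets a $p^{r''}$-component with $r'' > r$ at $w$. By Proposition \ref{Pspecialram}(ii) and the cyclicity of the $p$-Sylow subgroup, the inertia group of $\ol{f}$ at a point of $\ol{Y}$ above $w$ has $p$-part of order $p^{r''}$, so the stabilizer in $H = D_{\ol{V}}/I_{\ol{V}}$ of a point of $\ol{V}'$ above $w$ has order $p^{r''-r}m_w$: the cover $\ol{V}' \to \ol{W}$ is \emph{wildly} ramified there (this is precisely why the paper defines $m_w$ as only the prime-to-$p$ part of the ramification index). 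Consequently your Step 1 identity $\sum_w(\sigma_{i,w}-1)=2g_v-2$ is unproven: combining $\deg\mathrm{div}(\omega_i)=2g_{\ol{V}'}-2$ with Riemann--Hurwitz actually gives
\[ \sum_w \frac{h_{i,w}-1-d_w}{e_w} \;=\; 2g_v-2, \]
where $e_w$ is the full ramification index and $d_w$ the different exponent at points over $w$, and the summand equals $\sigma_{i,w}-1$ only when $e_w=m_w$, $d_w=m_w-1$. Such wild nodes are the rule, not the exception: in the setting where the lemma is applied (Lemma \ref{Lsigmaeff}, with monotonic reduction and $\ol{X}_0$ a $p^n$-component by Proposition \ref{Pmonotonic} and Lemma \ref{Lmultdd}), every component at which the generic inertia strictly drops from its inner neighbor has exactly such a node.

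Your attempted repair in the last paragraph is circular. For an edge $e$ whose target has strictly larger inertia, Definition \ref{Dsigmaeff} \emph{defines} $\sigma^{\eff}_e := -\sigma^{\eff}_{\ol{e}}$, and $\sigma^{\eff}_e$ is deliberately \emph{not} defined as the weighted average of the $\sigma_{i,w}$ from $\ol{W}$'s side; for these edges Lemma \ref{Lsigmaeffcompatibility}(i) is nothing more than this definition restated, so invoking it cannot identify your contribution at $w$ with $\sigma^{\eff}_e-1$. What is actually needed at each inertia-increasing node is a substantive compatibility: that the weighted average over $i$ of $(h_{i,w}-1-d_w)/(p^{r''-r}m_w)$ equals $-\sigma^{\eff}_{\ol{e}}-1$, where $\sigma^{\eff}_{\ol{e}}$ is built from the deformation data of the \emph{other} component. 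Proving this requires the analytic input relating deformation data, differents, and \'{e}paisseurs across the annulus at $w$ (Henrio's degeneration theory for $\mu_p$-torsors, cf.\ Proposition \ref{Pdifferentepaisseur} and \cite{He:ht}); that input is the real content of \cite[Lemma 3.12]{Ob:vc} and is entirely absent from your argument. Away from these nodes --- tame edges, and edges to wild branch points, where $\sigma_{i,w}=0$ by Lemma \ref{Lcritical} --- your computation is correct, so the skeleton (degree of the canonical divisor, Riemann--Hurwitz, weighted averaging) is right; but the wild nodes are the whole difficulty, and they are where the proof is missing.
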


\begin{lemma}\label{Lsigmaeff}
Let $e$ be an edge of $\mc{G}$ such that $s(e) \prec t(e)$.  Write $\ol{W}$ for the component corresponding to $t(e)$.
Let $\Pi_e$ be the set of branch points of $f$ with branching index divisible by $p$ that specialize to or outward from $\ol{W}$.  
Let $B_e$ index the set of \'{e}tale tails $\ol{X}_b$ such that $\ol{X}_b \succeq \ol{W}$.  Then the following formula holds:
$$\sigma^{\eff}_e - 1 = \sum_{b \in B_e} (\sigma_b - 1) - |\Pi_e|.$$
\end{lemma}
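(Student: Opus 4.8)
The plan is to prove the identity by \emph{inward induction} on the tree $\mathcal{G}$, i.e.\ by induction on the number of irreducible components of $\ol{X}$ lying strictly outward from $t(e)$, establishing the formula first for edges pointing into tails and then working back toward $\ol{X}_0$. The structural inputs I would lean on are: every component of $\ol{X}$ has genus $0$ (since $\ol{X}$ is a tree of $\proj^1_k$'s), every non-tail component is inseparable (the contrapositive of Lemma \ref{Letaletail}), and every branch point whose index is divisible by $p$ specializes to an inseparable component (Proposition \ref{Pcorrectspec}). The engine of the induction is the effective local vanishing cycles formula (Lemma \ref{Lgenlocvancycles}) applied at the vertex $t(e)$, together with the antisymmetry $\sigma^{\eff}_e = -\sigma^{\eff}_{\ol{e}}$ (Lemma \ref{Lsigmaeffcompatibility}(i)) and the fact that $\sigma^{\eff}$ vanishes on augmented edges (Definition \ref{Dsigmaeff}).

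For the base case I take $t(e)$ to be an \'etale tail $\ol{X}_b$, so that $B_e = \{b\}$. Since an \'etale tail is a $p^0$-component, Proposition \ref{Pcorrectspec} forbids any branch point of index divisible by $p$ from specializing to it, and since it is a leaf nothing lies strictly outward; hence $\Pi_e = \emptyset$. The asserted formula then collapses to $\sigma^{\eff}_e = \sigma_b$, which is precisely Lemma \ref{Lsigmaeffcompatibility}(ii).

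For the inductive step I take $\ol{W} = t(e)$ to be inseparable (this simultaneously covers interior components and inseparable tails). Let $e_1,\dots,e_k$ be the edges with source $t(e)$ whose targets are the outward neighbors $\ol{W}_1,\dots,\ol{W}_k$, and let $\pi_{\ol{W}}$ be the number of branch points of index divisible by $p$ specializing exactly to $\ol{W}$, each contributing one augmented edge out of $t(e)$ with effective invariant $0$. Since $t(e)$ is a tree vertex with a unique inward neighbor $s(e)$, these together with $\ol{e}$ exhaust the edges out of $t(e)$, so Lemma \ref{Lgenlocvancycles} at $t(e)$ reads
$$(\sigma^{\eff}_{\ol{e}} - 1) + \sum_{j=1}^{k}(\sigma^{\eff}_{e_j} - 1) - \pi_{\ol{W}} = 2g_{t(e)} - 2 = -2,$$
using $g_{t(e)} = 0$. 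Substituting $\sigma^{\eff}_{\ol{e}} = -\sigma^{\eff}_e$ and rearranging gives $\sigma^{\eff}_e - 1 = \sum_{j}(\sigma^{\eff}_{e_j} - 1) - \pi_{\ol{W}}$. I then apply the inductive hypothesis to each $e_j$ (valid because $s(e_j) = t(e) \prec t(e_j)$), and finish with two bookkeeping identities: the \'etale tails outward from $\ol{W}$ partition as $B_e = \bigsqcup_j B_{e_j}$, and the relevant branch points partition as $\Pi_e = \{x : A_x = \ol{W}\} \sqcup \bigsqcup_j \Pi_{e_j}$, so $|\Pi_e| = \pi_{\ol{W}} + \sum_j |\Pi_{e_j}|$. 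These collapse the sum to the desired formula, and the case $k = 0$ (an inseparable tail) is subsumed automatically.

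The main obstacle I anticipate is not a hard estimate but making the edge bookkeeping around $t(e)$ airtight: one must account for exactly the three kinds of edges emanating from $t(e)$ (the single inward edge $\ol{e}$, the outward edges $e_j$, and the augmented edges to the vertices $V_x$), confirm via Proposition \ref{Pcorrectspec} that augmented edges attach only to inseparable components so that every point of $\Pi_e$ is tallied by exactly one $\pi_{\ol{W}}$, and verify the disjointness of the partitions of $B_e$ and $\Pi_e$ over the outward subtrees. Once these are pinned down, the genus-$0$ hypothesis renders every local vanishing cycles contribution equal to $-2$ and the induction telescopes cleanly.
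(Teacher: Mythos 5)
Your proposal is correct and takes essentially the same approach as the paper: the paper's proof is an outward sweep over ``admissible'' edge sets in which $F(A)=\sum_{a\in A}(\sigma^{\eff}_a-1)$ is preserved at each replacement step by Lemma \ref{Lgenlocvancycles} and Lemma \ref{Lsigmaeffcompatibility}(i) (every vertex having genus $0$), terminating at \'{e}tale tails and augmented vertices exactly as in your base case, so your inward induction is the same telescoping argument read in reverse. If anything, your explicit $k=0$ handling of inseparable tails (applying the local formula at the tail vertex itself) is slightly more careful than the paper's wording, which tacitly assumes the sweep ends only at \'{e}tale tails and augmented vertices.
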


\begin{proof}
For the context of this proof, call a set $A$ of edges of $\mc{G}'$ \emph{admissible} if
\begin{itemize}
\item For each $a \in A$, we have $s(e) \preceq s(a) \prec t(a)$.
\item For each $b \in B_e$, there is exactly one $a \in A$ such that $t(a) \preceq v_b$, where $v_b$ is the vertex 
corresponding to $\ol{X}_b$.
\item For each $c \in \Pi_e$, there is exactly one $a \in A$ such that $t(a) \preceq v_c$, where $v_c$ is the vertex 
corresponding to $c$.
\end{itemize}

For an admissible set $A$, write $F(A) = \sum_{a \in A} (\sigma^{\eff}_a - 1)$.  We claim that 
$F(A) = \sum_{b \in B_e} (\sigma_b - 1) - |\Pi_e|$ for all admissible $A$.  
Since the set $\{e\}$ is clearly admissible, this claim proves the lemma. 

Now, if $A$ is an admissible set of edges, then we can form a new admissible set $A'$ by eliminating an edge $\alpha$ such 
that $t(\alpha)$ is not a leaf of $\mc{G}'$, and replacing it with the set of all edges $\beta$ such that $t(\alpha) = s(\beta)$.  
Since $t(\alpha)$ always corresponds to a vertex of genus $0$, Lemmas \ref{Lsigmaeffcompatibility} (i) and \ref{Lgenlocvancycles}
show that $F(A) = F(A')$.  By repeating this process, we see that $F(A) = F(D)$, where 
$D$ consists of all edges $d$ such that $t(d) = v_b$ or $t(d)= v_c$, with $b \in B_e$ or $c \in \Pi_e$.  But by Lemma
\ref{Lsigmaeffcompatibility} (ii), $F(D) = \sum_{b \in B_e} (\sigma_b - 1) + \sum_{c \in |\Pi_e|} (0 - 1)$, proving the
claim.
\end{proof}

The remainder of this section will be used only in Appendix \ref{Aexplicit}, and may be skipped by a reader who does not wish to 
read that section.

Consider two intersecting components $\ol{W}$ and $\ol{W}'$ of $\ol{X}$ as in Definition \ref{Dsigmaeff}.  Suppose $\ol{W}$ is a $p^r$-component and 
$\ol{W}'$ is a $p^{r'}$-component, $r \geq r$.
If $\ol{V}$ and $\ol{V}'$ are intersecting components lying above $\ol{W}$ and $\ol{W}'$, respectively, then
for each $i$, $1 \leq i \leq r$, there is a deformation datum with
differential form $\omega_i$ associated to $\ol{V}$.  Likewise, for each $i'$, $1 \leq i' \leq r'$, there is a deformation datum
with differential form $\omega'_{i'}$ associated to $\ol{V}'$.  Let $(h_{i, w}, m_w)$ be the invariants of $\omega_i$ at 
$w$, the intersection point of $\ol{W}$ and $\ol{W}'$.  Suppose $v$ is an intersection point of $\ol{V}$ and 
$\ol{V}'$.  We have the following proposition relating the change in the
differents of the deformation data (see just before Lemma \ref{Lcritical}) and the \'{e}paisseur of the annulus corresponding to $w$:

\begin{prop}\label{Pdifferentepaisseur}
Let $\epsilon_w$ be the \'{e}paisseur of the formal annulus corresponding to $w$.  
\begin{itemize}
\item If $i = i' + r - r'$, then $\delta_{\omega_i} - \delta'_{\omega'_{i'}} =
\frac{\epsilon_w\sigma_{i,w}(p-1)}{p^{i}}$.
\item If $i \leq r-r'$, then $\delta_{\omega_i} =
\frac{\epsilon_w\sigma_{i,w}(p-1)}{p^{i}}$.
\end{itemize}
\end{prop}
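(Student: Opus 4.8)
The plan is to reduce the statement, which compares the generic differents of matching layers of the two inertia towers, to a purely local computation on the formal annulus attached to the node $w$. After a finite extension $K'/K^{st}$ containing the $p^r$-th roots of unity (as in the proof of Proposition \ref{Pmultdefdata}), I would restrict the stable model to the formal annulus $A_w$ of \'{e}paisseur $\epsilon_w$ lying above $w$, with coordinate $T$ on $\ol{W}$ chosen so that the radial parameter $t = v(T)$ ranges over $(0, \epsilon_w)$, the value $t \to 0$ specializing to $\ol{V}$ and $t \to \epsilon_w$ to $\ol{V}'$. The observation that drives everything is that $\delta_{\omega_i}$ is the valuation of the different of $C_i \hookrightarrow C_{i+1}$ read off at the $\ol{V}$-boundary of $A_w$, while $\delta'_{\omega'_{i'}}$ is the analogous quantity at the opposite boundary; both are therefore boundary values of one radial different function $\delta_i(t)$ on $A_w$. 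The two cases of the proposition correspond exactly to the inertia bookkeeping: writing $I = \ints/p^r$ over $\ol{V}$ with its chain of subgroups $I = I_1 \supset \cdots \supset I_{r+1} = \{e\}$, the surviving inertia over $\ol{V}'$ is $I_{r-r'+1} \cong \ints/p^{r'}$, so the layers with $i \geq r-r'+1$ (i.e.\ $i = i' + r - r'$) persist across the node, whereas those with $i \leq r-r'$ have $I_i \not\subseteq I_{r-r'+1}$ and hence become \'{e}tale on the $\ol{V}'$-side.

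For the core computation I would first absorb the tame part of the ramification: passing to the base $\ol{V}'$ of the deformation datum replaces $A_w$ by a tame cover of \'{e}paisseur $\epsilon_w/m_w$, which is where the denominator $m_w$ of $\sigma_{i,w} = h_{i,w}/m_w$ enters. Then, for each $i$, I would write the $\ints/p$-layer $C_i \hookrightarrow C_{i+1}$ over the annulus by a Kummer equation $y^p = \theta_i$ with $v(\theta_i) = 0$, normalized as in \cite[Ch.\ 5]{He:ht} so that its reduction produces $\omega_i$ with $\ord(\omega_i) = h_{i,w} - 1$ at $w$. The different of this degree-$p$ cover (equivalently its depth, measured by how far $\theta_i$ is from being a $p$-th power) can be computed via the Kummer equation, and because $\theta_i$ behaves like a monomial of order $h_{i,w}$ in the local coordinate at $w$, this quantity varies affinely in the radial parameter $t$. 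The crux is to show that the slope of $\delta_i(t)$ is exactly $\frac{(p-1)\sigma_{i,w}}{p^{i}}$: the factor $(p-1)\sigma_{i,w}$ is the standard Kummer/Artin--Schreier contribution governed by $h_{i,w}$ and $m_w$, while the denominator $p^{i}$ records the $p$-fold contraction of the annulus incurred at each of the $i$ steps of the tower lying below $C_{i+1}$.

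Finally I would evaluate $\delta_i(t)$ at the two endpoints of $A_w$. When $i = i' + r - r'$ the layer persists, so $\delta_i$ interpolates affinely between the value $\delta_{\omega_i}$ at the $\ol{V}$-end and $\delta'_{\omega'_{i'}}$ at the $\ol{V}'$-end; multiplying the slope by the \'{e}paisseur $\epsilon_w$ yields $\delta_{\omega_i} - \delta'_{\omega'_{i'}} = \frac{\epsilon_w \sigma_{i,w}(p-1)}{p^{i}}$. When $i \leq r - r'$ there is no matching layer over $\ol{W}'$: the cover becomes \'{e}tale on the $\ol{V}'$-side (equivalently the annulus closes up there), so the $\ol{V}'$-boundary value is $0$, and the same slope computation gives $\delta_{\omega_i} = \frac{\epsilon_w \sigma_{i,w}(p-1)}{p^{i}}$. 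The sign is consistent with monotonicity (Proposition \ref{Pmonotonic}), the different decreasing as one moves outward from the $p^r$-component.

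The step I expect to be the main obstacle is establishing that $\delta_i(t)$ is genuinely affine with precisely the asserted slope. This requires controlling the reduction of $\theta_i$ uniformly across the whole annulus rather than only at $w$, so as to rule out lower-order corrections to the linear profile, and it requires carefully propagating the per-step $p$-fold contraction of the annulus up the tower so that the weights $p^{-i}$ emerge correctly. Reconciling the radial different computation of \cite{He:ht} with the tower normalization of the deformation data in \cite{Ob:vc} is the delicate bookkeeping at the heart of the argument.
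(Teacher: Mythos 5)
Your proposal is essentially the paper's own argument: the paper localizes at the node, treats each degree-$p$ layer $C_i \subset C_{i+1}$ separately, and obtains exactly your affine-variation/boundary-value statement by citation --- \cite[Ch.\ 5, Proposition 1.10]{He:ht} gives $\delta_{\omega_i} - \delta'_{\omega'_{i'}} = \epsilon h_{i,w}(p-1)$ (with second boundary value $0$ in the case $i \leq r-r'$, matching your \'{e}tale-layer case), where $\epsilon$ is the \'{e}paisseur of the intermediate quotient annulus $\mc{A}/(I_{r-i+1})$, and \cite[Proposition 2.3.2 (a)]{Ra:sp} gives the scaling $\epsilon_w = p^{i}m_w\epsilon$, which is the source of your factors $p^{i}$ and $m_w$. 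The step you single out as the main obstacle --- that the radial different varies affinely with slope governed by $h_{i,w}$ --- is precisely the content of Henrio's cited proposition, so it does not need to be re-derived; granting those two citations, your argument coincides with the paper's, since $\sigma_{i,w} = h_{i,w}/m_w$.
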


\begin{proof}
Write $I_i$ for the unique subgroup of order $p^i$ of the inertia group of $\ol{f}$ at $v$ in $G$.
Let $\mc{A} = \Spec \hat{\mc{O}}_{Y^{st}, v}$.
Let $\epsilon$ be the \'{e}paisseur of $\mc{A}/(I_{r - i+1})$.  Then, in the case $i = i' + r - r'$,
\cite[Ch.\ 5, Proposition 1.10]{He:ht} shows that
$\delta_{\omega_i} - \delta'_{\omega'_{i'}} = \epsilon h_{i,w}(p-1)$.  
In the case $i < r-r'$, the same proposition shows $\delta_{\omega_i} - 0 = \epsilon h_{i,w}(p-1)$ 
Also, \cite[Proposition 2.3.2 (a)]{Ra:sp} shows that $\epsilon_w = p^{i}m_w\epsilon$.  The proposition follows.
\end{proof}

It will be useful to work with the \emph{effective different}, which we define now.
\begin{definition}\label{Deffdifferent}
Let $\ol{W}$ be a $p^r$-component of $\ol{X}$, and let $\omega_i$, $1 \leq i \leq r$, be the deformation data above 
$\ol{W}$.  Define the \emph{effective different} $\delta^{\eff}_{\ol{W}}$ by
$$\delta^{\eff}_{\ol{W}} = \left( \sum_{i=1}^{r-1} \delta_{\omega_i} \right) + \frac{p}{p-1}\delta_{\omega_r}.$$
\end{definition}

\begin{lemma}\label{Leffdifferentepaisseur}
Assume the notations of Proposition \ref{Pdifferentepaisseur}.  Let $e$ be an edge of $\mc{G}$ such that $s(e)$ corresponds to 
$\ol{W}$ and $t(e)$ corresponds to $\ol{W}'$.  Then
$$\delta^{\eff}_{\ol{W}} - \delta^{\eff}_{\ol{W}'} = \sigma^{\eff}_e\epsilon_w.$$
\end{lemma}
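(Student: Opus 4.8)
The plan is to prove the identity by direct computation, feeding the relations of Proposition \ref{Pdifferentepaisseur} into Definition \ref{Deffdifferent} and checking that the result collapses to $\epsilon_w\sigma^{\eff}_e$ as defined in Definition \ref{Dsigmaeff}. The components $\ol{W}$ and $\ol{W}'$ carry $r$ and $r'$ deformation data respectively, with $r \geq r'$, and these are matched by the index shift $i = i' + r - r'$. Accordingly, the first bullet of Proposition \ref{Pdifferentepaisseur} applies to the matched indices $r - r' + 1 \leq i \leq r$ (equivalently $1 \leq i' \leq r'$), while the second bullet applies to the unmatched indices $1 \leq i \leq r - r'$, which occur on $\ol{W}$ but have no counterpart on $\ol{W}'$.

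First I would use the first bullet to rewrite each $\delta'_{\omega'_{i'}}$ as $\delta_{\omega_{i'+r-r'}} - \frac{\epsilon_w\sigma_{i'+r-r',w}(p-1)}{p^{i'+r-r'}}$ and substitute this into $\delta^{\eff}_{\ol{W}'} = \sum_{i'=1}^{r'-1}\delta'_{\omega'_{i'}} + \frac{p}{p-1}\delta'_{\omega'_{r'}}$. Forming the difference $\delta^{\eff}_{\ol{W}} - \delta^{\eff}_{\ol{W}'}$, the matched terms $\delta_{\omega_i}$ for $r - r' + 1 \leq i \leq r - 1$ cancel against their counterparts in $\delta^{\eff}_{\ol{W}}$, and the two top terms $\frac{p}{p-1}\delta_{\omega_r}$ cancel as well. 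What survives is the sum $\sum_{i=1}^{r-r'}\delta_{\omega_i}$ of the unmatched differents, together with the corrections $\sum_{i=r-r'+1}^{r-1}\frac{\epsilon_w\sigma_{i,w}(p-1)}{p^i}$ from the interior matched indices and the single top correction $\frac{p}{p-1}\cdot\frac{\epsilon_w\sigma_{r,w}(p-1)}{p^r} = \frac{\epsilon_w\sigma_{r,w}}{p^{r-1}}$, where I use $\frac{p}{p-1}\cdot\frac{p-1}{p^r} = \frac{1}{p^{r-1}}$.

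To finish, I would apply the second bullet of Proposition \ref{Pdifferentepaisseur} to each unmatched different, rewriting $\delta_{\omega_i} = \frac{\epsilon_w\sigma_{i,w}(p-1)}{p^i}$ for $1 \leq i \leq r - r'$. Combining these with the corrections reassembles the full sum over $1 \leq i \leq r-1$, giving
$$\delta^{\eff}_{\ol{W}} - \delta^{\eff}_{\ol{W}'} = \epsilon_w\left(\sum_{i=1}^{r-1}\frac{(p-1)\sigma_{i,w}}{p^i} + \frac{\sigma_{r,w}}{p^{r-1}}\right) = \sigma^{\eff}_e\epsilon_w,$$
the last equality being exactly Definition \ref{Dsigmaeff}. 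The degenerate case $r = r'$ is subsumed: there the unmatched range $1 \leq i \leq r - r'$ is empty, the first bullet alone covers all indices, and the same cancellation yields the answer. I expect no genuine obstacle here beyond careful bookkeeping; the two points demanding attention are the off-by-one in the index shift relating $i$ and $i'$, and the special weight $\frac{p}{p-1}$ on the top different, which must be reconciled with the top term $\frac{1}{p^{r-1}}\sigma_{r,w}$ of $\sigma^{\eff}_e$ via the identity above.
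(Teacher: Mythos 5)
Your proof is correct and takes essentially the same approach as the paper: the paper's (much terser) proof simply sums the identities of Proposition \ref{Pdifferentepaisseur} over $1 \leq i \leq r-1$ and adds $\frac{p}{p-1}$ times the identity for $i = r$, which is exactly the substitution-and-cancellation computation you carry out in expanded form. Your handling of the index shift $i = i' + r - r'$ and of the weight $\frac{p}{p-1}$ on the top different is precisely the bookkeeping the paper leaves implicit.
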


\begin{proof}
We sum the equations from Proposition \ref{Pdifferentepaisseur} for $1 \leq i \leq r-1$.  
Then we add $\frac{p}{p-1}$ times the equation for $i = r$.  This exactly gives
$\delta^{\eff}_{\ol{W}} - \delta^{\eff}_{\ol{W}'} = \sigma^{\eff}_e\epsilon_w.$
\end{proof}

\section{Quotient covers}\label{Squotient}

In this section, we relate the minimal field of definition of the stable model of a $G$-cover to that of its quotient $G/N$-covers, when $p \nmid |N|$.  
This allows a significant simplification of the group theory in \S\ref{Smain}.
 
\begin{lemma}\label{Lstr2aux2}
Let $f: Y \to X$ be any $G$-Galois cover of smooth, proper, geometrically connected curves over any field 
(we do not assume that a $p$-Sylow subgroup of $G$ is cyclic).  Suppose $G$ has a normal subgroup $N$ such that 
$p \nmid |N|$, and $Z := Y/N$.  So $f$ factorizes as
$$Y \stackrel{q}{\to} Z \stackrel{\eta}{\to} X.$$  Suppose $L$ is a field such that $\eta: Z \to X$ is defined over 
$L$, and let $Z_L$ be a model for $Z$ over $L$.  Suppose further that $q:Y \to Z$ can be defined over
$L$, with respect to the model $Z_L$.  Then the field of moduli $L'$ of $f$ with respect to $L$ satisfies $p \nmid [L':L]$.
\end{lemma}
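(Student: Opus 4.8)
The plan is to analyze how $G_L$ acts on the covers via the associated obstruction maps to $\Out(G)$ and $\Out(N)$, and to show that the relevant obstruction lives in a prime-to-$p$ group. First I would recall, following the machinery of \S\ref{Smere}, that since $\eta: Z \to X$ is defined over $L$ as a $(G/N)$-cover and $q: Y \to Z$ is defined over $L$ as a cover of $Z$, the mere cover $f: Y \to X$ is defined over $L$. Hence there is a well-defined homomorphism $h: G_L \to \Out(G)$, and by the criterion recalled in \S\ref{Smere}, the field of moduli $L'$ of $f$ (as a $G$-cover) relative to $L$ is the fixed field of the subgroup of $G_L$ mapping into $\Inn(G) = G/Z(G)$, i.e.\ $[L':L] = |{\im h}|$. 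So it suffices to show $p \nmid |{\im h}|$.

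**The key step: locating the obstruction in a prime-to-$p$ quotient of $\Out(G)$.**

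The heart of the matter is that the given data already trivialize the "outer" part of the action on the quotient and on the kernel separately. Since $N \trianglelefteq G$ is characteristic-enough to be preserved (it is normal, and the action of $G_L$ coming from $h$ is through automorphisms of $G$ that, up to inner ones, must carry $N$ to a conjugate-hence-equal normal subgroup), every $\sigma \in G_L$ induces compatible outer actions on $N$ and on $G/N$. The hypothesis that $\eta$ is defined over $L$ as a $(G/N)$-cover forces the induced map $G_L \to \Out(G/N)$ to be trivial; the hypothesis that $q$ is defined over $L$ relative to $Z_L$ forces the induced map $G_L \to \Out(N)$ (measuring how $\sigma$ permutes the $N$-action on the fibers of $Y \to Z$) to be trivial as well. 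Thus $\im h$ lands in the subgroup of $\Out(G)$ consisting of outer automorphism classes that act trivially on both $N$ and $G/N$. Such automorphism classes form a group that maps, via the standard long exact sequence relating $\Aut(G)$ to $\Aut(N)$, $\Aut(G/N)$, and the cohomology groups $H^i(G/N, Z(N))$, into $H^1(G/N, Z(N))$ (derivations modulo principal ones). Because $p \nmid |N|$ we have $p \nmid |Z(N)|$, and since $H^1(G/N, Z(N))$ is annihilated by $|Z(N)|$ it has prime-to-$p$ order. Therefore $\im h$ has prime-to-$p$ order, giving $p \nmid [L':L]$.

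**The main obstacle.**

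The step I expect to be the genuine obstacle is making the reduction to $H^1(G/N, Z(N))$ fully rigorous: one must check that an outer automorphism of $G$ fixing $N$ and $G/N$ (pointwise up to inner automorphisms) is classified, modulo inner automorphisms, by a class in $H^1(G/N, Z(N))$, and that this identification is compatible with the two triviality hypotheses coming from the geometry. Concretely, I would lift each $\sigma$ to an honest automorphism $\alpha_\sigma$ of $G$ that is the identity on $G/N$ and, by the $L$-rationality of $q$, acts on $N$ by an inner automorphism of $G$; adjusting by that inner automorphism, $\alpha_\sigma$ becomes the identity on $N$ and on $G/N$, so $\alpha_\sigma$ differs from the identity by a map $G \to Z(N)$ that is a crossing homomorphism, i.e.\ a $1$-cocycle whose class in $H^1(G/N, Z(N))$ is the obstruction to $\alpha_\sigma$ being inner. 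Since this group has order dividing a power of $|Z(N)|$, which is prime to $p$, the image of $h$ is prime to $p$, and the lemma follows.
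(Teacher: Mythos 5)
Your proposal is correct and takes essentially the same approach as the paper's own proof: descend $f$ to $L$ as a mere cover, form $h: G_L \to \Out(G)$ whose kernel cuts out the field of moduli (so $[L':L] = |\im h|$), use the $L$-rationality of $q$ and of $\eta$ to represent each $h_\sigma$ by an automorphism of $G$ that is trivial on $N$ and on $G/N$, and conclude that $\im h$ has prime-to-$p$ order. The only difference is in the finishing step, where your crossed-homomorphism/$H^1(G/N, Z(N))$ bound is an equivalent repackaging of the paper's direct computation that such an automorphism satisfies $\alpha(g) = g s_g$ with $s_g \in N$ and hence $\alpha^{|N|} = \mathrm{id}$ (and the point you flag as the main obstacle---arranging triviality on $N$ and on $G/N$ simultaneously---is glossed at exactly the same spot in the paper, where it is really justified by the compatible $L$-models rather than by pure group theory).
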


\begin{proof}
Clearly, $f$ is defined as a \emph{mere} cover over $L$.  So let $Y_L$ be a model for $Y$ over $L$ such that $Y_L/N = Z_L$ (and set
$X_L = Z_L/(G/N)$).
Then the cover $Y_L \to X_L$ gives rise to a homomorphism $h: G_L \to \Out(G)$ as in \S\ref{Smere}, 
whose kernel is the subgroup of $G_L$ fixing the field of moduli of $f$.  
Since $q$ is defined over $L$, the image of $h$ acts by inner automorphisms on $N$.  Thus, there is a natural homomorphism $r: (\im h) \to \Out(G/N)$.  
Since $\eta$ is defined over $L$, the image of $r \circ h$ 
acts by inner automorphisms on $G/N$.  Take $\ol{\alpha} \in \im h$.  It is easy to see that we can find a representative $\alpha \in \Aut(G)$ of $\ol{\alpha}$
that fixes $N$ pointwise and whose image in $\Aut(G/N)$ fixes $G/N$ pointwise.   If $g \in G$, then $\alpha(g) = gs$, for some $s \in N$.  
Since $\alpha$ fixes $N$, we see that $\alpha^i(g) = gs^i$.  Since $s \in N$, we know $s^{|N|}$ is trivial, so 
$\alpha^{|N|}$ is trivial.  Thus $\ol{\alpha}$ has prime-to-$p$ order, implying that $G_{L}/(\ker h)$ does as well.  
We conclude that the field of moduli $L'$ of $f$ relative to $L$ is a prime-to-$p$ extension of $L$.  
\end{proof}

For the next proposition, $K$ is a characteristic zero complete discrete valuation field with residue field $k$. 

\begin{prop}\label{Pstr2aux}
Let $f:Y \to X \cong \proj^1_K$ be a $G$-cover with bad reduction and stable model $f^{st}$ as in \S\ref{Sstable}.  
Suppose $G$ has a normal subgroup $N$ such that $p \nmid |N|$, and let 
$Z = Y/N$. Let $L/K$ be a finite extension such that both the stable model $\eta^{st}: Z^{st} \to X^{st}$ of 
$\eta: Z \to X$, and each of the 
branch points of the canonical map $q: Y \to Z$, can be defined over $L$.  Then the stable model $f^{st}$ of $f$ can be 
defined over a tame extension of $L$.
\end{prop}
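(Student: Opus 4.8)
The plan is to pass to the minimal field of definition and reduce the statement to a tameness assertion about a single Galois group. Write $L' = L\cdot K^{st}$, where $K^{st}$ is the minimal field of definition of $f^{st}$ over $K$. Since $K^{st}$ is cut out by the condition that $\sigma \in G_K$ act trivially on $\ol{Y}$ (the monodromy characterization recalled in \S\ref{Sstable}), the group $H := \Gal(L'/L)$ acts faithfully on $\ol{Y}$, and by minimality any tame field of definition of $f^{st}$ over $L$ must contain $L'$; hence it suffices to show $L'/L$ is tame. As $k$ is algebraically closed, $L'/L$ is totally ramified, so it is tame if and only if $p \nmid |H|$. Thus the whole proposition reduces to proving $p \nmid |H|$.

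First I would record what the hypotheses give directly. Because $\eta^{st}$ is defined over $L$, every element of $G_L$, and in particular every element of $H$, acts trivially on the special fiber $\ol{Z}$ of $Z^{st}$ and on the base of $\eta^{st}$. I then want to upgrade this to: $H$ acts trivially on the full special fiber $\overline{Y^{st}/N}$ of $Y^{st}/N$ (a semistable model of $Z$ of which $Z^{st}$ is a contraction), and hence on the base $\ol{X}$ of $f^{st}$. The components of $\overline{Y^{st}/N}$ that are contracted in $\ol{Z}$ form genus-zero tails and chains; these are forced into the stable model of $Y$ precisely to separate the specializations of the ramification points of $q \colon Y \to Z$ and to accommodate the positive-genus components of $\ol{Y}$. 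Since by hypothesis the branch points of $q$ are defined over $L$, their specializations are $H$-fixed marked points on these components; together with the node at which each such genus-zero component meets the rest of $\overline{Y^{st}/N}$, this produces at least three $H$-fixed points on it, forcing the $H$-action there to be trivial. An inward induction (exactly as in the proof of Proposition \ref{Pstabletail}, using that an order-$p$ automorphism of $\proj^1_k$ with two fixed points is trivial) then shows that $H$ fixes all of $\overline{Y^{st}/N}$, and therefore $\ol{X}$, pointwise.

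With that in hand the conclusion is algebraic. Now $H$ fixes $\ol{Y}/N = \overline{Y^{st}/N}$ pointwise and, being the monodromy action, commutes with the $G$-action on $\ol{Y}$, in particular with $N$. Localizing at the generic point of a component $\ol{V}$ of $\ol{Y}$, each $\sigma \in H$ induces an $N$-equivariant automorphism of the tame cover $\ol{V} \to \ol{V}/N$ over its (now fixed) base; the group of all such automorphisms is built out of $N$ and so has order prime to $p$ (here the assumption $p \nmid |N|$ is essential). Since an automorphism of $\ol{Y}$ that is trivial on every generic point is trivial, $H$ embeds into the product of these prime-to-$p$ groups, whence $p \nmid |H|$. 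This gives that $L'/L$ is tame, i.e.\ $f^{st}$ is defined over the tame extension $L' = L K^{st}$ of $L$, as desired. I expect the main obstacle to be the middle step: controlling the components of $\overline{Y^{st}/N}$ that do not descend to $\ol{Z}$ (those created by the prime-to-$p$ quotient) and showing that the $L$-rational branch points of $q$ rigidify them. The prime-to-$p$ (rather than trivial) automorphisms surviving in the final step are exactly what make the resulting extension genuinely tame rather than forcing it to be trivial.
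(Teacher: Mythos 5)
Your strategy is genuinely different from the paper's, so let me first note the contrast. The paper never analyzes the monodromy group directly: it passes to the minimal modification $(Z^{st})'$ of $Z^{st}$ separating the specializations of the branch points of $q$ (defined over $L$ by \cite[Remark 2.21]{Li:sr}), descends the tame $N$-cover $q^{st}\colon Y^{st} \to (Z^{st})'$ to a tame extension $M/L$ by formal patching following \cite[Th\'{e}or\`{e}me 3.7]{Sa:rm}, and then handles the $G$-structure by group theory (Lemma \ref{Lstr2aux2}: an outer automorphism of $G$ that is inner on $N$ and on $G/N$ has prime-to-$p$ order) plus cohomological dimension $1$. Your opening reduction --- it suffices to show $p \nmid |H|$ where $H = \Gal(LK^{st}/L)$ acts faithfully on $\ol{Y}$ --- is correct, and your final ``vertical'' step is sound: an element commuting with $N$ and fixing $\ol{Y}/N$ pointwise induces, at each generic fiber of $\ol{Y} \to \ol{Y}/N$, an $N$-equivariant deck transformation, and the group of these has prime-to-$p$ order; this is exactly where $p \nmid |N|$ enters.

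The genuine gap is the middle step. You claim that every component of $\ol{Y^{st}/N}$ contracted in $\ol{Z}$ carries at least three $H$-fixed points, and hence that \emph{all} of $H$ acts trivially on $\ol{Y^{st}/N}$. Both claims are false in general. A contracted tree can have a leaf carrying exactly two special points (its node and the specialization of a single branch point of $q$) --- for instance when three or more branch points of $q$ specialize to one point of $\ol{Z}$ --- and on such a leaf a prime-to-$p$ element of $H$ can act as a nontrivial rotation fixing those two points. This is not a pathology: components corresponding to disks whose radius has non-integral valuation are rotated by tame monodromy (compare Lemma \ref{Lonenewtail}), and this is precisely why the proposition concludes ``tame extension of $L$'' rather than ``$L$''; no fixed-point count can exclude it. Since your step (b) takes ``$H$ fixes $\ol{Y^{st}/N}$ pointwise'' as its premise, the argument as written does not go through. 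What is true, and suffices, is the statement for elements $\gamma \in H$ of order $p$ (two fixed points then force triviality on each $\proj^1_k$, as in Proposition \ref{Pstabletail}); one then concludes $H$ has no $p$-torsion. But even this weaker claim needs an argument you do not supply: to prevent $\gamma$ from permuting the branches of a contracted tree, one must show every leaf of such a tree contains an $H$-fixed point. That requires proving a leaf with no specialization of a branch point of $q$ cannot exist --- using stability of $Y^{st}$ upstairs together with the fact that a tame Galois cover of $\proj^1_k$ branched at at most one point is trivial, and at two points is cyclic of genus zero --- and, where the relevant point is a ramification point of $\eta$ instead, that such points are individually $H$-fixed (this does follow, since their specializations in $\ol{Z}$ are distinct and $H$-fixed, $Z^{st}$ being defined over $L$, but it is an argument, not a hypothesis). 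With these two repairs --- run everything only for order-$p$ elements, and actually justify the fixed points on contracted trees --- your approach can be completed; as written, its key intermediate claim is both unproven and false.
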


\begin{proof}
By \cite[Remark 2.21]{Li:sr}, the minimal modification $(Z^{st})'$ of $Z^{st}$ that separates the specializations of 
the branch points of $q$ is defined over $L$.  
Note that $q$, being an $N$-cover, is tamely ramified.  We claim that $q^{st}: Y^{st} \to (Z^{st})'$ is defined over a tame extension of $L$
(along with the $N$-action).
  
The proof of the claim is almost completely contained in the proof of \cite[Th\'{e}or\`{e}me 3.7]{Sa:rm}, so we only give a sketch.
Break up the formal completion $\mc{Z}$ of $(Z^{st})'$ at its special fiber into three pieces: The piece $\mc{Z}_1$ is the disjoint union of the formal annuli 
corresponding to the completion of each double point; the piece $\mc{Z}_2$ is the disjoint union of the formal disks corresponding to the completion of the 
specialization of each branch point of $q$; 
and the piece $\mc{Z}_3$ is $\mc{Z} \backslash (\mc{Z}_1 \cup \mc{Z}_2)$.  Let $\hat{{Z}}_1$, $\hat{{Z}}_2$, and $\hat{{Z}}_3$ 
be the respective special fibers of $\mc{Z}_1$, $\mc{Z}_2$, and $\mc{Z}_3$.  The proof of
\cite[Th\'{e}or\`{e}me 3.7]{Sa:rm} shows how to lift the covers $q^{st}|_{\hat{{Z}}_1}$ and $q^{st}|_{\hat{{Z}}_3}$ to covers of $\mc{Z}_1$ and $\mc{Z}_3$, 
\'{e}tale on the generic fiber, after a possible tame extension of $L$.  Now, each connected component $\mc{C}_i$ of $\mc{Z}_2$ is isomorphic to  
$\text{Spf } S[[z_i]]$, where $S$ is the ring of integers of $L$.  The special fiber $\hat{{C}}_i$ of $\mc{C}_i$ is isomorphic to $\Spec k[[z_i]]$. 
The cover $q^{st}|_{\hat{{C}}_i}$ is given by a disjoint union of identical covers $\hat{{D}}_i \to \hat{{C}}_i$, each $\hat{{D}}_i$ being given by extracting a
$m_i$th root of $z_i$, where $m_i$ is the branching index of the branch point of $q$ specializing to $\hat{{C}}_i$.  
Since each branch point of $q$ is defined over $L$, there is a unique lift (over $L$) of $q^{st}|_{\hat{{C}}_i}$ to a cover of $\mc{C}_i$, \'{e}tale on the generic fiber
outside the 
appropriate point.  Using the arguments of \cite[Th\'{e}or\`{e}me 3.7]{Sa:rm}, the covers of $\mc{Z}_1$, $\mc{Z}_2$, and $\mc{Z}_3$ patch together uniquely to give a 
cover of $\mc{Z}$, defined over a tame extension of $L$.  By Grothendieck's existence theorem, this cover is algebraic, and it must be the base change of $q^{st}$.  
Thus $q^{st}$ is defined over a tame extension of $L$, and the claim is proved.

Let $M/L$ be a tame extension such that $q^{st}$ is defined over $M$.  By Lemma \ref{Lstr2aux2} 
applied to $q: Y \to Z$ and $\eta: Z \to X$, the field of moduli of $f$ is contained in some tame extension $M'$ of $M$.  
Since $M'$ has cohomological dimension 1, it follows (\cite[Proposition 2.5]{CH:hu}) that $f$ can be defined (as a 
$G$-cover) over $M'$.  Furthermore, $G_{M'} \leq G_M$ acts trivially on the special fiber $\ol{Y}$ of $Y^{st}$.  Thus $f^{st}$ is defined over $M'$.
\end{proof}

\begin{remark}\label{Rmodulitowers}
Suppose $f: Y \to X$, is a $G$-cover, $N \leq G$ is prime-to-$p$ and normal, and the field of \emph{moduli} of 
$f': Y' := Y/N \to X$ is $L$.  One can ask if this implies that the field of 
moduli of $f$ is a tame extension of $L$ (Proposition \ref{Pstr2aux} is the analogous statement for the minimal field of definition of the stable model).
If the answer to this question is yes, then some of the proofs in \S\ref{Smain} would be much easier.  Unfortunately, I believe the answer is no.
\end{remark}

\section{Proof of the main theorem}\label{Smain}
In this section, we will prove Theorem \ref{Tmain}.  Throughout \S\ref{Smain}, if $G \cong \ints/p^n \rtimes \ints/m$ and $p \nmid m$, then $Q_i$ $(0 \leq 
i \leq n)$ is the unique subgroup of order $p^i$.

Let $f: Y \to X = \proj^1$ be a three-point Galois cover defined over $\ol{\rats}$.  Our first step is to reduce to a local problem, which is the content of 
Proposition \ref {Plocal2global}. Let $\rats_p^{ur}$ be the \emph{completion} of the maximal unramified extension of $\rats$.
For an embedding $\iota: \ol{\rats} \hookrightarrow \ol{\rats_p^{ur}}$, let $f_{\iota}$ be the base change of $f$ to 
$\ol{\rats_p^{ur}}$ via $\iota$.
The following proposition shows that, for the purposes of Theorem \ref{Tmain}, we need only consider covers defined over 
$\ol{\rats_p^{ur}}$.

\begin{prop}\label{Plocal2global}
Let $K_{gl}$ be the field of moduli of $f$ (with respect to $\rats$) and let $K_{loc, \iota}$ 
be the field of moduli of
$f_{\iota}$ with respect to $\rats_p^{ur}$.  Fix $n \geq 0$, and suppose that for all embeddings $\iota$, the $n$th higher 
ramification groups of the Galois closure $L_{loc, \iota}$ of $K_{loc, \iota}/\rats_p^{ur}$ for the upper numbering vanish.  
Then all the $n$th higher ramification groups of the Galois closure $L_{gl}$ of $K_{gl}/\rats$ above $p$ for the upper numbering 
vanish.
\end{prop}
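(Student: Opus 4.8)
The plan is to exploit the fact that the higher ramification filtration for the upper numbering, in positive degree, is insensitive to unramified base change: this lets me read off the ramification of $L_{gl}/\rats$ at $p$ after passing to $\rats_p^{ur}$, and then recognize the resulting local extension as (a subextension of) a compositum of local fields of moduli, to which Lemma \ref{Lcompositum} applies.

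First I would fix a prime of $\ol{\rats}$ over $p$ by choosing $\iota$ and unwind the restriction map. Any $\tau \in G_{\rats_p^{ur}}$ fixes $\rats_p^{ur} \supseteq \rats$ and carries $\iota(\ol{\rats})$ (the algebraic closure of $\rats$ inside $\ol{\rats_p^{ur}}$) into itself, so $\iota$ induces an injection $\rho_\iota : G_{\rats_p^{ur}} \hookrightarrow G_\rats$; since $\rats_p^{ur}$ is the completion of the maximal unramified extension, its image is exactly the inertia subgroup $I_\iota \subset G_\rats$ at the prime determined by $\iota$. Because base change commutes with the $G$-action, $(f_\iota)^\tau \cong (f^{\rho_\iota(\tau)})_\iota$ as $G$-covers, and an isomorphism of $G$-covers over the algebraically closed field $\ol{\rats_p^{ur}}$ descends to one over $\ol{\rats}$; hence $\tau \in \Gamma^{in}$ for $f_\iota$ iff $\rho_\iota(\tau) \in \Gamma^{in}$ for $f$. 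Thus $\rho_\iota$ identifies the local $\Gamma^{in}$ with $\Gamma^{in}_{gl} \cap I_\iota$, and correspondingly $K_{loc,\iota} = \iota(K_{gl})\,\rats_p^{ur}$.

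Next I would localize the global picture. Writing $M := \iota(L_{gl})\,\rats_p^{ur}$, the extension $M/\rats_p^{ur}$ is the inertial part of the completion of $L_{gl}/\rats$ at the chosen prime, so for every $i>0$ its $i$th upper-numbering ramification group coincides with that of the decomposition group of $L_{gl}/\rats$ above $p$; hence it suffices to show $(\Gal(M/\rats_p^{ur}))^n = \{\mathrm{id}\}$. Now $L_{gl}$ is the compositum of the finitely many conjugates $\sigma(K_{gl})$, $\sigma \in G_\rats$. Running the computation of the previous paragraph with the embedding $\iota \circ \sigma$ in place of $\iota$—for which $\rho_{\iota\sigma} = \mathrm{conj}_{\sigma^{-1}} \circ \rho_\iota$ and $I_{\iota\sigma} = \sigma^{-1} I_\iota\, \sigma$—shows that $\iota(\sigma(K_{gl}))\,\rats_p^{ur} = K_{loc,\iota\sigma}$. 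Consequently $M$ is contained in the compositum of the Galois closures $L_{loc,\iota\sigma}$ over all $\sigma$.

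Finally I would invoke Lemma \ref{Lcompositum}: the conductor of $\prod_\sigma L_{loc,\iota\sigma}$ over $\rats_p^{ur}$ is the maximum of the conductors of the individual $L_{loc,\iota\sigma}/\rats_p^{ur}$, each of which has vanishing $n$th upper ramification group by the hypothesis applied to the embedding $\iota\sigma$; so the compositum has vanishing $n$th group as well. Since the upper numbering is invariant under quotients (\cite[IV, Proposition 14]{Se:lf}) and $M$ is a subextension, $(\Gal(M/\rats_p^{ur}))^n = \{\mathrm{id}\}$, as desired. The main obstacle I anticipate is the bookkeeping of the first two paragraphs—correctly identifying $\rho_\iota$ with an inertia group and checking that the local fields of moduli for the varying embeddings $\iota\sigma$ assemble into the localization of $L_{gl}$—rather than any single hard estimate; once that dictionary is in place, Lemma \ref{Lcompositum} and the quotient-invariance of the upper numbering finish the argument.
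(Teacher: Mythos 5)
Your proof is correct, and although it begins with the same dictionary as the paper's proof---the embedding $\iota$ identifies $G_{\rats_p^{ur}}$ with an inertia subgroup $I_\iota \subset G_{\rats}$, isomorphism of $G$-covers descends from $\ol{\rats_p^{ur}}$ to $\ol{\rats}$, and the upper-numbering filtration is insensitive to unramified base change---the endgame is genuinely different. The paper fixes one prime $q$ of $L_{gl}$ over $p$ and one embedding $\iota_r$, proves $K_{gl}\subseteq L' := \iota_r^{-1}(L_{loc,\iota_r})$ by the continuity/descent argument, and then deduces $L_{gl}\subseteq L'$ from the assertion that $L'$ is Galois over $\rats$; in particular it uses the hypothesis at only one embedding per prime. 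You instead prove the exact identification $K_{loc,\iota}=\iota(K_{gl})\,\rats_p^{ur}$ (equivalently $\Gamma^{in}_{loc}=\rho_\iota^{-1}(\Gamma^{in}_{gl})$), write the localization $\iota(L_{gl})\,\rats_p^{ur}$ of the global Galois closure as the compositum of the finitely many fields $K_{loc,\iota\sigma}\subseteq L_{loc,\iota\sigma}$, and finish with Lemma \ref{Lcompositum} and quotient-invariance, thereby using the hypothesis at all the conjugate embeddings $\iota\sigma$. What your route buys is that no single pullback ever needs to be normal over $\rats$, and this is a genuine advantage: the paper's normality assertion is problematic, since $L'$ contains $\iota_r^{-1}(\rats_p^{ur})=\ol{\rats}^{I_{\iota_r}}$, an infinite non-normal extension of $\rats$, and because decomposition (hence inertia) subgroups of $G_{\rats}$ at distinct places of $\ol{\rats}$ intersect trivially, a normal extension of $\rats$ containing $\ol{\rats}^{I_{\iota_r}}$ would have to be all of $\ol{\rats}$, whereas $L'$ is the fixed field of an open, hence nontrivial, subgroup of $I_{\iota_r}$. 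Passing to the compositum over the $G_{\rats}$-conjugate embeddings, as you do, is precisely what restores Galois-stability; so your argument not only parallels the paper's proof but in effect repairs it, and it also explains why the hypothesis of the proposition must be assumed for all embeddings $\iota$ rather than for a single one.
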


\begin{proof}
Pick a prime $q$ of $L_{gl}$ above $p$.  We will show that the $n$th higher ramification groups at $q$ vanish.  Choose a
place $r$ of $\ol{\rats}$ above $q$.  Then $r$ gives rise to an embedding 
$\iota_r: \ol{\rats} \hookrightarrow \ol{\rats_p^{ur}}$ preserving the higher ramification filtrations at $r$ for the
upper numbering (and the lower numbering).  Specifically, if $L/\rats_p^{ur}$ is a finite extension such that the $n$th
higher ramification group for the upper numbering vanishes, then the $n$th higher ramification group for the upper
numbering vanishes for $\iota_r^{-1}(L)/\rats$ at the unique prime of $\iota_r^{-1}(L)$ below $r$.  By assumption, the
$n$th higher ramification group for the upper numbering vanishes for $L_{loc, \iota_r}/\rats_p^{ur}$.
Also, the field $L' :=  \iota_r^{-1}(L_{loc, \iota_r})$ is Galois over $\rats$.
So if $K_{gl} \subseteq L'$, then $L_{gl} \subseteq L'$.  We know the $n$th higher ramification groups for $L'/\rats$
vanish.  We are thus reduced to showing that $K_{gl} \subseteq L'$.

Pick $\sigma \in G_{L'}$.  Then $\sigma$ extends by continuity to a unique automorphism $\tau$ in 
$G_{L_{loc, \iota_r}}$.  By the
definition of a field of moduli, $f_{\iota_r}^{\tau} \cong f_{\iota_r}$.  But then $f^{\sigma} \cong f$.  By the definition
of a field of moduli, $K_{gl} \subseteq L'$.
\end{proof}

So, in order to prove Theorem \ref{Tmain}, we can consider three-point covers defined over $\ol{\rats_p^{ur}}$.  In
fact, we generalize slightly, 
and consider three-point covers defined over algebraic closures of complete mixed characteristic
discrete valuation fields with algebraically closed residue fields.
In particular, throughout this section, $K_0$ is the fraction field of the ring $R_0$ of Witt vectors over $k$.  
On all extensions of $K_0$, we normalize the valuation $v$ so that $v(p) = 1$.
Also, write $K_n := K_0(\zeta_{p^n})$, with valuation ring $R_n$ (here $\zeta_{p^n}$ is a primitive $n$th root of unity).  
Let $G$ be a finite, $p$-solvable group with a cyclic $p$-Sylow subgroup $P$ of order $p^n$.
We assume $f:Y \to X = \proj^1$ is a three-point $G$-Galois cover of curves, branched at $0$, $1$, and $\infty$, \emph{a priori} defined
over some finite extension $K/K_0$.  Since $K$ has cohomological dimension 1, 
the field of moduli of $f$ relative to $K_0$ is the same as the minimal field of definition of $f$ that is an extension
of $K_0$ (\cite[Proposition 2.5]{CH:hu}).  
We will therefore go back and forth between fields of moduli and fields of definition without further notice.
Our default smooth model $X_R$ of $X$ is always the unique one such that the specializations of $0$, $1$, and $\infty$ do not collide
on the special fiber.  As in \S\ref{Sstable}, the stable model of $f$ is $f^{st}: Y^{st} \to X^{st}$ and the stable reduction is $\ol{f}: \ol{Y} \to \ol{X}$.
The original component of $\ol{X}$ will be denoted $\ol{X}_0$.

We will first prove Theorem \ref{Tmain} in the case that $G \cong \ints/p^n \rtimes \ints/m_G$.  The cases $m_G > 1$ and $m_G = 1$ have quite different flavors, and
we deal with them separately.  We in fact determine more than we need for Theorem \ref{Tmain}; namely, 
we determine bounds on the higher ramification filtrations of the extension $K^{st}/K_0$, where $K^{st}$ is the minimal field of definition of the stable model of $f$.  
In \S\ref{Spsolvable}, we will generalize to the $p$-solvable case.

\subsection{The case where $G \cong \ints/p^n \rtimes \ints/m_G$, $m_G > 1$}\label{Ssemidirect}

Let $G \cong \ints/p^n \rtimes \ints/m$ such that the conjugation action of $\ints/m$ is faithful (note that this implies $m = m_G$).  
We will show that the field of moduli with respect to $K_0$ of $f$ as a \emph{mere} cover is, in fact, $K_0$.
Then, we will show that its field of moduli with respect to $K_0$ as a $G$-cover is contained in $K_n$.  Lastly, we will
show that its stable model can be defined over a tame extension of $K_n$.

Let $\chi: \ints/m \to \FF_p^{\times}$ 
correspond to the conjugation action of $\ints/m$ on any order $p$ subquotient of $\ints/p^n$.
Now, there is an intermediate $\ints/m$-cover $\eta: Z \to X$ where $Z = Y/Q_n$. 
If $q: Y \to Z$ is the quotient map, then $f = \eta \circ q$.
Because it will be easier for our purposes here, let us assume that the three
branch points of $f$ are $x_1, x_2, x_3 \in R_0$, and that they have pairwise distinct
reduction to $k$ (in particular, none is $\infty$).
Since the $m$th roots of unity are contained in $K_0$, the cover $\eta$ can be
given birationally by the equation
$z^m = (x-x_1)^{a_1}(x-x_2)^{a_2}(x-x_3)^{a_3}$ with $0 \leq a_i < m$ for all $i
\in \{1, 2, 3\}$, where $a_1 + a_2 + a_3 \equiv 0 \pmod{m}$ and not all $a_i \equiv 0 \pmod{m}$.  
Since $\frac{g^*z}{z}$ is an $m$th root of unity, we can and do choose $z$ so that $g^*z = \chi(g)z$ for any $g \in \ints/m$.  
We know from Lemma \ref{Lmultdd} that the original component $\ol{X}_0$ is a $p^n$-component, and all
of the deformation data above $\ol{X}_0$ are multiplicative.

Consider the $\ints/p \rtimes \ints/m$-cover $f': Y' \to X$, where $Y' = Y/Q_{n-1}$.
The stable reduction $\ol{f}': \ol{Y}' \to \ol{X}'$ of this cover has a
multiplicative deformation datum $(\omega, \chi),$ over the original component $\ol{X}_0$.
For all $x \in \ol{X}_0$, recall that $(h_x, m_x)$ is the signature of the
deformation datum at $x$, and $\sigma_x =
h_x/m_x$ (see \S\ref{Sdefdata}).
Also, since there are no new tails (Proposition \ref{Pnoinsep}), 
\cite[Theorem 2, p.\ 992]{We:br} shows that the stable reduction $\ol{X}'$ consists
only of the original component $\ol{X}_0$ along with a primitive \'{e}tale tail
$\ol{X}_i$ for each branch point $x_i$ of $f$ (or $f'$) with prime-to-$p$ ramification index.  
The tail $\ol{X}_i$ intersects $\ol{X}_0$ at the specialization of $x_i$ to $\ol{X}_0$.

\begin{prop}\label{Pdiffdiv}
For $i = 1, 2, 3$, let $\ol{x}_i$ be the specialization of $x_i$ to $\ol{X}_0$.
For short, write $h_i$, $m_i$, and $\sigma_i$ for $h_{\ol{x}_i}$, $m_{\ol{x}_i}$,
and $\sigma_{\ol{x}_i}$.
\begin{enumerate}[(i)]
\item For $i = 1, 2, 3$, $h_i \equiv a_i/\gcd(m, a_i) \pmod{m_i}$.
\item In fact, the $h_i$ depend only on the $\ints/m$-cover $\eta:Z \to X$.
\end{enumerate}
\end{prop}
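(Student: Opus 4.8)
The plan is to prove (i) by a purely local analysis at the point $v$ of $\ol{V}$ lying over $\ol{x}_i$, and then to deduce (ii) by feeding (i) into the vanishing cycles formula. The first preliminary observation I would make is that, since the conjugation action of $\ints/m$ on $\ints/p^n$ is faithful and $p \nmid m$, the character $\chi \colon \ints/m \to \FF_p^{\times}$ is \emph{injective}; hence $m \mid p-1$ and the image of $\chi$ is exactly the group of $m$-th roots of unity inside $\FF_p^{\times} \subset k^{\times}$. This is what reconciles the $\FF_p^{\times}$-valued character $\chi$ with the $m$-th roots of unity governing the Kummer cover $\eta$, and makes the normalization $g^{*}z = \chi(g)z$ consistent. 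Over $\ol{X}_0$ the multiplicative deformation datum is $(\ol{V}, \omega)$ with $\ol{V} = \ol{Z}$ the reduction of $z^m = \prod_j (x-x_j)^{a_j}$ and $g^{*}\omega = \chi(g)\,\omega$. Writing $s = \ol{x} - \ol{x}_i$ near $\ol{x}_i$, one computes $\ord_v(s) = m_i = m/\gcd(m,a_i)$ and $\ord_v(z) = a_i' := a_i/\gcd(m,a_i)$, with $\gcd(a_i', m_i) = 1$.

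For the congruence in (i), I would fix integers $\alpha,\beta$ with $\alpha a_i' + \beta m_i = 1$, so that $t := z^{\alpha}s^{\beta}$ is a uniformizer at $v$. The inertia group of $\ol{V} \to \ol{X}_0$ at $v$ is the unique cyclic subgroup of order $m_i$ in $\ints/m$, generated by $\gamma = g^{d}$ with $d = \gcd(m,a_i)$. Since $\gamma^{*}z = \chi(g)^{d}z$ and $\gamma^{*}s = s$, we get $\gamma^{*}t = \chi(g)^{d\alpha}t$, so $\gamma$ acts on the cotangent line at $v$ through the primitive $m_i$-th root of unity $\xi := \chi(g)^{d\alpha}$. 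Writing $\omega = (\text{unit})\cdot t^{h_i-1}\,dt$ near $v$ and using that $\omega$ is a $\gamma$-eigenform, one finds $\gamma^{*}\omega = \xi^{h_i}\omega$. Comparing with $\gamma^{*}\omega = \chi(\gamma)\omega = \chi(g)^{d}\omega$ gives $\chi(g)^{d\alpha h_i} = \chi(g)^{d}$; as $\chi(g)$ is a primitive $m$-th root of unity this yields $\alpha h_i \equiv 1 \pmod{m_i}$, and then $\alpha a_i' \equiv 1 \pmod{m_i}$ forces $h_i \equiv a_i' \pmod{m_i}$, which is precisely (i).

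To obtain (ii), I would argue that (i) already determines $h_i$ modulo $m_i$, and that the vanishing cycles formula pins down the exact value. By Proposition \ref{Pnoinsep} the cover $f'$ has no new (or inseparable) tails, so Theorem \ref{Tvancycles} applied to $f'$ reads $\sum_b \sigma_b = 1$, the sum ranging over the primitive \'etale tails $\ol{X}_i$. Each $\sigma_i = h_i/m_i$ is the effective invariant of the tail $\ol{X}_i$ and is strictly positive, and since at least two of the $x_i$ have prime-to-$p$ ramification index (as in the proof of Proposition \ref{Pnoinsep}), there are at least two positive summands; hence every $\sigma_i < 1$. Because $h_i = \ord_v(\omega)+1$ is an integer, this forces $1 \le h_i \le m_i - 1$, so $h_i$ is the \emph{unique} integer in $\{1,\dots,m_i-1\}$ congruent to $a_i/\gcd(m,a_i)$ modulo $m_i$. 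This is manifestly a function of $\eta$ alone, giving (ii).

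I expect the main obstacle to be the local computation underlying (i): one must correctly identify the inertia generator $\gamma$ at $v$, its action on a chosen uniformizer, and then match the two descriptions of how $\gamma$ acts on $\omega$ — once through the cotangent character $\xi^{h_i}$ and once through the deformation-datum character $\chi(\gamma)$. Getting this matching exactly right, rather than off by an inverse or an extraneous power, is the delicate step, and it is only the identification $m \mid p-1$ (so that $\chi$-values \emph{are} the $m$-th roots of unity appearing in $\eta$) that keeps the bookkeeping clean.
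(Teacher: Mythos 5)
Your part (i) is correct, and it is in substance the paper's own computation: the paper likewise forms the uniformizer $t_i = z^{\alpha}(x-x_i)^{\beta}$ with $\alpha a_i + \beta m = \gcd(m,a_i)$ and compares the eigenvalue $\chi(g)$ of $\omega$ with the $h_i$th power of the eigenvalue of $t_i$. Your variant, which restricts the comparison to the inertia subgroup $\langle g^d\rangle$ at $v$ and works modulo $m_i$ throughout, is if anything more careful than the paper's, which asserts the intermediate congruence $\alpha h_i \equiv 1 \pmod{m}$ even though only the congruence modulo $m_i$ (which is all that is needed) is really justified by the leading-coefficient comparison at the fixed point. Your preliminary remark that faithfulness forces $\chi$ injective, hence $m \mid p-1$, is also correct and is what makes the normalization $g^*z = \chi(g)z$ meaningful.

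There is, however, a genuine (if small) gap in your part (ii): your argument silently assumes that all three branch points have prime-to-$p$ branching index, i.e.\ that each $\ol{x}_i$ is the attaching point of a primitive tail. The setup only guarantees this for at least \emph{two} of the three points; the third may have branching index divisible by $p$ (necessarily a $p$-power, since every element of $\ints/p^n \rtimes \ints/m$ has $p$-power or prime-to-$p$ order). For such a wild branch point $x_i$ there is no tail: $x_i$ specializes onto $\ol{X}_0$ itself (Proposition \ref{Pcorrectspec}), one has $a_i = 0$ and $m_i = 1$, your set $\{1,\dots,m_i-1\}$ is empty, and the vanishing cycles sum contains no term indexed by $x_i$ — so your recipe determines nothing about $h_i$ in this case. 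The repair is one sentence, and it is exactly what the paper does: whether $x_i$ is wild is itself visible from $\eta$ (it happens iff $a_i \equiv 0 \pmod m$), and for such a point Lemma \ref{Lcritical} gives $\sigma_i = 0$, hence $h_i = 0$, which is again a function of $\eta$ alone. With that case added, your proof of (ii) agrees with the paper's: known fractional part plus the bound $0 < \sigma_i < 1$ from the vanishing cycles formula (for the tail points) pins down $h_i$ exactly.
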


\begin{proof}
\emph{To (i):} (cf.\ \cite[Proposition 2.5]{We:mc}) 
Let $\ol{Z}_0$ be the unique irreducible component lying above $\ol{X}_0$, and
suppose that $\ol{z}_i \in \ol{Z}_0$ lies above $\ol{x}_i$.  
Let $t_i$ be the formal parameter at $\ol{z}_i$ given by $z^{\alpha}(x-x_i)^{\beta}$, where $\alpha a_i + \beta m = 
\gcd(m, a_i)$.  Then 
$$\omega = (c_0 t_i^{h_i-1} + \sum_{j=1}^{\infty}c_j t_i^{h_i - 1 + j}) dt_i$$
in a formal neighborhood of $\ol{z}_i$.
Recall that, for $g \in \ints/m$, $g^*z = \chi(g)z$ and $g^*\omega = \chi(g)\omega$.
Then $$\chi(g) = \frac{g^*\omega}{\omega} = \left(\frac{g^*t_i}{t_i}\right)^{h_i} = \left(\frac{g^*z}{z}\right)^{\alpha h_i} = \chi(g^{\alpha h_i}).$$
So $\alpha h_i \equiv 1 \pmod{m}$.  It follows that $h_i \gcd(m, a_i) \equiv h_i (\alpha a_i + \beta m) \equiv a_i \pmod{m}$. 
It is clear that the ramification index $m_i$ at $\ol{x}_i$ is $m/\gcd(m, a_i)$.
Dividing $$h_i \gcd(m, a_i) \equiv a_i \pmod{m}$$ by $\gcd(m, a_i)$ yields (i).
\\
\\
\emph{To (ii):} Since we know the congruence class of $h_i$ modulo $m_i$, it
follows that the fractional part $\langle
\sigma_i \rangle$ of $\sigma_i$ is determined by $\eta:Z \to X$.  
But if $x_i$ corresponds to a primitive tail, the vanishing cycles formula (\ref{Evancycles}) shows that $0 < \sigma_i < 1$. 
If $x_i$ corresponds to a wild branch point, then $\sigma_i = 0$.
Thus $\sigma_i$ is determined by $\langle \sigma_i \rangle$, so it is determined
by $\eta:Z \to X$.
Since $h_i = \sigma_im_i$, we are done.
\end{proof}

\begin{corollary}\label{Cdiffdiv}
The differential form $\omega$ corresponding to the cover $f': Y' \to X$ is
determined (up to multiplication by an element of $\FF_p^{\times}$) by $\eta:Z \to X$.
\end{corollary}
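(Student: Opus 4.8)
The plan is to realize $\omega$ as a meromorphic differential on the component $\ol{Z}_0$ of $\ol{Z}$ lying above $\ol{X}_0$, and to show that it is pinned down, up to an $\FF_p^{\times}$-scalar, by two pieces of data that both depend only on $\eta$: the cover $\ol{Z}_0 \to \ol{X}_0$ and the divisor of $\omega$. First I would note that $\ol{Z}_0 \to \ol{X}_0 \cong \proj^1_k$ is itself determined by $\eta$: since $\eta$ is tame it has good reduction, and $\ol{Z}_0 \to \ol{X}_0$ is just the reduction of the equation $z^m = \prod_i (x-x_i)^{a_i}$, which is recorded by the exponents $a_i$ defining $\eta$. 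By Lemma \ref{Lmultdd} the datum $\omega$ is multiplicative, i.e.\ logarithmic, so I may take $\omega$ of the form $du/u$.

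Next I would compute the divisor $(\omega)$ on $\ol{Z}_0$. By Lemma \ref{Lcritical}, the critical points of the datum on $\ol{X}_0$ are either nodes of $\ol{X}'$ or specializations of branch points of $p$-divisible index; since there are no new tails (Proposition \ref{Pnoinsep}), the nodes on $\ol{X}_0$ are exactly the points where the primitive tails $\ol{X}_i$ attach. Thus every critical point is the specialization of a branch point of $f$, and $\ord_v(\omega) = 0$ away from these. At a point above a tamely branched $\ol{x}_i$ one has $\ord_v(\omega) = h_i - 1$, and $h_i$ depends only on $\eta$ by Proposition \ref{Pdiffdiv}; at a point above a wild branch point $\sigma_i = 0$, so $h_i = 0$ and $\omega$ has a simple pole. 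The one genuinely delicate point to verify is that the set of wild branch points is visible from $\eta$: here I would invoke the three-point hypothesis, namely that $f$ is branched at exactly $\{0,1,\infty\}$ while $\eta$ is branched precisely at the tamely branched points of $f$, so the wild branch points are their complement inside $\{0,1,\infty\}$. Hence the location and order of every zero and pole of $\omega$, and therefore $(\omega)$ itself, is determined by $\eta$.

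Finally I would use the standard fact that two nonzero meromorphic differentials on the proper connected curve $\ol{Z}_0$ with the same divisor differ by a scalar in $k^{\times}$, their ratio being a rational function of trivial divisor. Since both differentials are logarithmic, this scalar must already lie in $\FF_p^{\times}$: if $\omega$ and $c\omega$ are each fixed by the Cartier operator $C$, then the $p^{-1}$-semilinearity $C(c\omega) = c^{1/p}C(\omega)$ forces $c^{1/p} = c$, i.e.\ $c \in \FF_p^{\times}$. (Equivalently, the construction of the deformation datum in \S\ref{Sdefdatastable} specifies $\omega$ only up to $\FF_p^{\times}$ to begin with.) Combining the three steps, $\omega$ is determined by $\eta$ up to multiplication by an element of $\FF_p^{\times}$, which is the assertion.

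The hard part is not any single deep input but the bookkeeping in the middle paragraph: one must confirm that every feature of $(\omega)$ — in particular which of the three points $\{0,1,\infty\}$ carries wild branching, a datum that $\eta$ a priori forgets by killing the $p$-part of inertia — is nonetheless recoverable from $\eta$, and it is exactly the three-point hypothesis that supplies this. Once the divisor is known to depend only on $\eta$, the reduction of the $k^{\times}$-ambiguity to an $\FF_p^{\times}$-ambiguity via the logarithmic (Cartier-fixed) structure is formal.
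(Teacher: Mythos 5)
Your proof is correct and takes essentially the same route as the paper's: determine the divisor of $\omega$ from $\eta$ (via Proposition \ref{Pdiffdiv}), observe that two meromorphic differentials on a complete curve with the same divisor differ by a scalar, and use the Cartier-operator characterization of logarithmic forms to cut that scalar down to $\FF_p^{\times}$. The additional bookkeeping you supply---pinning down the critical points via Lemma \ref{Lcritical} and the tail structure, and recovering the wild branch points as the complement of the branch locus of $\eta$---is exactly what the paper compresses into its one-line appeal to Proposition \ref{Pdiffdiv}.
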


\begin{proof}
Proposition \ref{Pdiffdiv} determines the divisor corresponding to $\omega$ from
$\eta:Z \to X$.  Two meromorphic differential forms on a complete curve can have the same divisor
only if they differ by a scalar multiple.  Also, if $\omega$ is logarithmic and $c \in k$, then $c\omega$ is logarithmic
iff $c \in \FF_p$, by basic properties of the Cartier operator (see, for instance, \cite[p.\ 136]{We:mc}).
\end{proof}

We will now show that $\eta:Z \to X$ determines not only the differential form
$\omega$, but also the entire cover $f:Y \to X$ as a mere cover.  This is the key lemma of this section.
We will prove it in several stages, using an induction.

\begin{lemma}\label{Ldiffcov}
Assume $m > 1$. 
\begin{enumerate}[(i)]
\item
If $f:Y \to X$ is a three-point $\ints/p^n \rtimes \ints/m$-cover (with faithful
conjugation action of $\ints/m$ on
$\ints/p^n$) defined over some finite extension $K/K_0$, 
then it is determined as a mere cover by the map $\eta: Z = Y/Q_n \to X$.
\item
If $f:Y \to X$ is a three-point $\ints/p^n \rtimes \ints/m$-cover (with faithful
conjugation action of $\ints/m$ on
$\ints/p^n$) defined over some finite extension $K/K_0$, its field
of moduli (as a mere cover) with respect to $K_0$ is $K_0$, and $f$ can be defined over $K_0$ (as a mere
cover).
\item
In the situation of part (ii), the field of moduli of $f$ (as a $\ints/p^n
\rtimes \ints/m$-cover) with respect to $K_0$ is contained in $K_n = K_0(\zeta_{p^n})$.  
Thus $f$ can be defined over $K_n$ (as a $\ints/p^n \rtimes \ints/m$-cover).
\end{enumerate}
\end{lemma}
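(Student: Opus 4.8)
The plan is to prove all three parts together by induction on $n$, treating part (i) as the heart of the matter and deducing (ii) and (iii) by descent. Throughout I would record that, by Lemma \ref{Lmultdd} and Proposition \ref{Pnoinsep}, the stable reduction of any such $f$ has the simplest possible shape: the original component $\ol{X}_0$ is a $p^n$-component carrying only multiplicative deformation data, and $\ol{X}$ consists of $\ol{X}_0$ together with one primitive \'etale tail for each branch point of prime-to-$p$ index, with no new or inseparable tails. This is the ``multiplicative type'' situation of \cite{We:mc}, and it is what makes a reconstruction of $f$ from reduction data possible.

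For the base case $n=1$ of (i), the cover $f = f'$ is a three-point $\ints/p \rtimes \ints/m$-cover. By Corollary \ref{Cdiffdiv} its bottom deformation datum $\omega$ is determined up to $\FF_p^{\times}$ by $\eta$, and by Proposition \ref{Pdiffdiv} the conductors $\sigma_i$ of the \'etale tails are as well. I would then invoke the lifting theory for multiplicative-type covers from \cite{We:mc} (resting on \cite{He:ht}): such a cover is reconstructed from its tame quotient $\eta$ together with its deformation datum, so that the isomorphism class of $f$ as a mere cover depends only on $\eta$. For the inductive step, set $W = Y/Q_1$; then $W \to X$ is a three-point $\ints/p^{n-1} \rtimes \ints/m$-cover (the faithfulness of the $\ints/m$-action survives passage to $\ints/p^{n-1}$, as the kernel of $(\ints/p^n)^{\times} \to (\ints/p^{n-1})^{\times}$ is a $p$-group) whose $\ints/m$-quotient is again $\eta$. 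By the inductive hypothesis $W \to X$ is determined by $\eta$ and is defined over $K_0$ as a mere cover. It remains to see that the top layer $Y \to W$, a $\ints/p$-cover, is determined. Here the key input is Proposition \ref{Pmultdefdata}: since the bottom deformation datum above $\ol{X}_0$ is multiplicative, \emph{all} the $\omega_i$ coincide with $\omega$, so $Y \to W$ is again a multiplicative-type cover whose deformation datum is determined by $\eta$. Applying the same reconstruction relative to the now-fixed base $W$ shows that $Y \to W$, hence $f$, is determined as a mere cover by $\eta$, completing part (i).

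Part (ii) is then formal. Because $\eta$ is given by $z^m = \prod_i (x - x_i)^{a_i}$ with $x_i \in R_0$ and $\mu_m \subset K_0$, it is defined over $K_0$, so $\eta^{\sigma} \cong \eta$ for every $\sigma \in G_{K_0}$. By part (i), $f^{\sigma}$ and $f$ are both determined as mere covers by their (isomorphic) $\ints/m$-quotients, so $f^{\sigma} \cong f$ as mere covers; that is, the field of moduli of $f$ as a mere cover relative to $K_0$ is $K_0$ itself. Since $K_0$ has cohomological dimension $1$, \cite[Proposition 2.5]{CH:hu} lets me descend, so $f$ is defined over $K_0$ as a mere cover.

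For part (iii), fix the model of the mere cover over $K_0$ from part (ii); this also defines $Z$ and the tame $\ints/m$-action over $K_0$. The remaining $\ints/p^n$-action is the Kummer action of $\mu_{p^n} \cong Q_n$ on $q: Y \to Z$, which becomes defined once $\zeta_{p^n}$ is available; thus the full $G$-action, and with it $f$ as a $\ints/p^n \rtimes \ints/m$-cover, is defined over $K_n = K_0(\zeta_{p^n})$. Hence $K_n$ is a field of definition of $f$ as a $G$-cover, so it contains the field of moduli as a $G$-cover, which gives both assertions of (iii). The main obstacle in the whole argument is the reconstruction step inside part (i): establishing that the multiplicative deformation datum, which by Corollary \ref{Cdiffdiv} and Proposition \ref{Pmultdefdata} is pinned down by $\eta$, determines the cover \emph{uniquely}, and not merely up to the finitely many possibilities allowed by rigidity of three-point covers. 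This is exactly where the uniqueness built into the lifting theory of \cite{We:mc} must be used.
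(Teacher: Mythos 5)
Your parts (ii) and (iii) track the paper's argument closely enough (part (ii) is essentially identical; in (iii) you elide the $\Out(G)$-argument that the paper uses to see that the mere-cover model over $K_n$ actually carries the full $G$-action, but that is repairable). The real problem is the inductive step of part (i). You reduce to the claim that the top layer $Y \to W$, $W = Y/Q_1$, is ``a multiplicative-type cover whose deformation datum is determined by $\eta$,'' and then assert that ``applying the same reconstruction relative to the now-fixed base $W$'' pins it down. There is no such reconstruction theorem to apply: the lifting/uniqueness theory of \cite{We:mc} concerns three-point $\ints/p \rtimes \ints/m$-covers of $\proj^1$, and its uniqueness rests on rigidity of three-point covers. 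It says nothing about $p$-cyclic covers of the positive-genus curve $W$, and the statement you need is in fact false in that generality: a deformation datum only constrains the \emph{reduction}, so for instance $y^p = u$ and $y^p = u(1+\pi h)$ (with $v(\pi)$ small and positive, $h$ regular) are covers of $W$ with identical deformation data that are in general not isomorphic. Equality of the $\omega_i$ (Proposition \ref{Pmultdefdata}) never by itself recovers the generic fiber; some global input over $X = \proj^1$ is indispensable.

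The paper supplies exactly that input, by a different mechanism from yours. For $n = 1$ it does not cite a lifting theorem at all: given two Kummer classes $u, v \in K(Z)$ with $d\ol{u}/\ol{u} = d\ol{v}/\ol{v}$, it forms the \emph{difference cover} $t^p = u/v$; after normalizing so $\ol{u} = \ol{v}$, this cover cannot have multiplicative reduction, yet composed with $Z \to X$ it is a three-point $\ints/p \rtimes \ints/m$-cover of $\proj^1$ (or trivial), and Lemma \ref{Lmultdd} forces any nontrivial such cover to be multiplicative --- so it is trivial and $u/v$ is a $p$th power. For $n > 1$ the induction runs through the quotient $Y/Q_1 \to X$ (a three-point $\ints/p^{n-1} \rtimes \ints/m$-cover with the same $\eta$), which determines $u$ up to $p^{n-1}$st powers; then the difference cover $t^{p^n} = u/v$ splits into $p^{n-1}$ disjoint $\ints/p \rtimes \ints/m$-covers of $X$, each of which the $n=1$ case identifies with a cover obtained by extracting a $p$th root of a power of $u$, whence $\sqrt[p^{n-1}]{u/v} = u^c w^p$ and $v$ and $u$ generate the same mere cover. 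The crucial feature, absent from your layer-by-layer scheme, is that every auxiliary cover considered is again a cover of $\proj^1$ branched at (at most) three points, so that Lemma \ref{Lmultdd} and the three-point rigidity remain available at every stage. To repair your argument you would have to prove a uniqueness statement for $\ints/p$-covers of $W$ with prescribed deformation datum \emph{subject to} the constraint that the composite $Y \to W \to X$ is Galois over $X$; making that constraint effective is precisely the paper's difference-cover trick, so the repair collapses back into the paper's proof.
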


\begin{proof}
\emph{To (i):} 
We first assume $n=1$, so $G \cong \ints/p \rtimes \ints/m$.
Write $Z^{st}$ for $Y^{st}/Q_1$ and $\ol{Z}$ for the special fiber of $Z^{st}$.
We know from Corollary \ref{Cdiffdiv} that $\eta$ determines (up to a scalar
multiple in $\FF_p^{\times}$) the logarithmic differential form 
$\omega$ that is part of the deformation datum $(\ol{Z}_0, \omega)$ on the irreducible
component $\ol{Z}_0$ above $\ol{X}_0$. 
Let $\xi$ be the generic point of $\ol{Z}_0$.
Then $\omega$ is of the form $d\ol{u}/\ol{u}$, where $\ol{u} \in k(\ol{Z}_0)$ is
the reduction of some function $u \in
\hat{\mc{O}}_{(Z')^{st}, \xi}$.
Moreover, by \cite[Ch.\ 5, D\'{e}finition 1.9]{He:ht}, we can choose $u$ such that the cover $Y \to Z$ is given
birationally by extracting a $p$th root of $u$ 
(viewing $u \in K(Z) \cap \hat{\mc{O}}_{(Z')^{st}, \xi}$).
That is, $K(Y) = K(Z)[t]/(t^p - u)$.
We wish to show that knowledge of $d\ol{u}/\ol{u}$ up to a scalar multiple $c
\in \FF_p^{\times}$ determines $u$ up to raising to 
the $c$th power, and then possibly multiplication by a $p$th power in $K(Z)$ (as
this shows $Y' \to X$ is uniquely
determined as a mere cover). 
This is equivalent to showing that knowledge of $d\ol{u}/\ol{u}$
determines $u$ up to a $p$th power (i.e., that if $d\ol{u}/\ol{u} = d\ol{v}/\ol{v}$, then $u/v$ is a $p$th power in $K(Z)$).

Suppose that there exist $u, v \in K(Z) \cap \mc{O}_{(Z')^{st}, \xi}$ such
that $d\ol{u}/\ol{u} = d\ol{v}/\ol{v}$.
Then $\ol{u} = \ol{\kappa} \, \ol{v}$, with $\ol{\kappa} \in k(\ol{Z}_0)^p$.  Since $\ol{\kappa}$ is a $p$th power, 
it lifts to some $p$th power $\kappa$ in $K$.  Multiplying $v$ by
$\kappa$, we can assume that $\ol{u} = \ol{v}$.
Consider the cover $Y' \to Z$ given birationally by the field extension $K(Y')
= K(Z)[t]/(t^p - u/v)$.  
Since $\ol{u} = \ol{v}$, we have that $u/v$ is congruent to $1$ in the residue field of $\mc{O}_{(Z')^{st}, \xi}$.
This means that the cover $Y' \to Z$ cannot have multiplicative reduction (see
\cite[Ch.\ 5, Proposition 1.6]{He:ht}).  
But the cover $Y' \to Z \to X$ is a $\ints/p \rtimes \ints/m$-cover, branched
at three points, so it must have
multiplicative reduction if the $\ints/p$ part is nontrivial (Lemma \ref{Lmultdd}).
Thus it is trivial, which means that $u/v$ is a $p$th power in $K(Z)$, i.e., $u
= \phi^pv$ for some $\phi \in K(Z)$.  This proves the case $n=1$.

For $n > 1$, we proceed by induction.  
We assume that (i) is known for $\ints/p^{n-1} \rtimes
\ints/m$-covers.  Given $\eta: Z \to X$, we wish to determine $u \in
K(Z)^{\times}/(K(Z)^{\times})^{p^n}$ such that 
$K(Y)$ is given by $K(Z)[t]/(t^{p^n} - u)$.  
By the induction hypothesis, we know that $u$ is well-determined up to
multiplication by a $p^{n-1}$st power.
Suppose that extracting $p^n$th roots of $u$ and $v$ both give $\ints/p^n
\rtimes \ints/m$-covers branched at $0, 1,$ and $\infty$.  
Consider the cover $Y' \to Z \to X$ of smooth curves given birationally by $K(Y') =
K(Z)[t]/(t^{p^n} - u/v)$.
Since $u/v$ is a $p^{n-1}$st power in $K(Z)$, this cover splits into a disjoint
union of $p^{n-1}$ different
$\ints/p \rtimes \ints/m$-covers.
By our previous argument, each of these covers can be given by extracting a $p$th root of some
power of $u$ itself!   So $\sqrt[p^{n-1}]{u/v} = u^cw^p$, where $w \in K(Z)$ and $c \in \ints$.  
Thus $v = u^{1 - p^{n-1}c}w^{-p^n}$, which means that extracting $p^n$th roots of
either $u$ or $v$ gives the same mere cover.
\\
\\
\emph{To (ii):}
We know that the cyclic cover $\eta$ of part (i) is defined over $K_0$, because we
have written it down explicitly.
Now, for $\sigma \in G_{K_0}$, $f^{\sigma}$ is a $\ints/p^n \rtimes
\ints/m$-cover with quotient cover $\eta$, branched at
0, 1, and $\infty$.
By part (i), there is only one such (mere) cover, so $f^{\sigma} \cong f$ as mere
covers.
So the field of moduli of $f$ as a mere cover with respect to $K_0$ is $K_0$.
It is also a field of definition, by \cite[Proposition 2.5]{CH:hu}.
\\
\\
\emph{To (iii):}
Since $f$ is defined over $K_0$ as a mere cover, it is certainly defined over
$K_n$ as a mere cover.  We thus obtain a homomorphism $h: G_{K_n} \to \Out(G)$, as
in \S\ref{Smere}.
By Kummer theory, we can write $K_n(Z) \hookrightarrow K_n(Y)$ as a Kummer
extension, with Galois action defined over
$K_n$.  The means that the image of $h$ acts trivially on $\ints/p^n$.  
Furthermore, $\eta: Z \to X$ is defined over $K_0$ as a $\ints/m$-cover.  Thus, if $r: \Out(G) \to \Out(\ints/m)$ is the
natural map, the image of $r \circ h$ acts trivially on $\ints/m$.  
But the only automorphisms of $G$
satisfying both of these properties are inner, so $h$ is trivial.  This shows that the field of moduli of $f$ with respect to $K_0$ is $K_n$.
Since $K_0$ has cohomological dimension 1, we see that $f:Y \to X$ is defined
over $K_n$ as a $\ints/p^n \rtimes \ints/m$-cover.
\end{proof}

We know from Lemma \ref{Ldiffcov} that $f$ is defined over $K_0$ as a mere
cover and over $K_n$ as a $G$-cover.
Recall from \S\ref{Sstable} that the minimal field of
definition of the stable model $K^{st}$ is the fixed field
of the subgroup $\Gamma^{st} \leq G_{K_0}$ that acts trivially on the
stable reduction $\ol{f}: \ol{Y} \to \ol{X}$.
Recall also that the action of $G_{K_n}$ centralizes the action of $G$.

\begin{lemma}\label{Lfixallcomponents}
If $g \in G_{K_n}$ acts on $\ol{Y}$ with order $p$, then $g$ acts trivially on
$\ol{Y}$.
\end{lemma}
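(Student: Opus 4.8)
The plan is to show that any such $g$, which by the monodromy property commutes with the $G$-action, first fixes $\ol{X}$ pointwise and then acts trivially on every component of $\ol{Y}$, so that the order-$p$ hypothesis is contradicted unless $g$ is already trivial. I would begin by recalling the shape of $\ol{X}$: since $m_G>1$, Proposition \ref{Pnoinsep} together with \cite[Theorem 2, p.\ 992]{We:br} shows that $\ol{X}$ is the original component $\ol{X}_0$ (a $p^n$-component, by Lemma \ref{Lmultdd}) with a primitive \'etale tail $\ol{X}_i$ attached for each branch point $x_i$ of prime-to-$p$ ramification index. The action of $G_{K_0}$, hence of $g\in G_{K_n}$, fixes $\ol{X}_0$ pointwise because $k$ is algebraically closed. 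On a tail $\ol{X}_i\cong\proj^1_k$ the node $\ol{X}_i\cap\ol{X}_0$ lies on $\ol{X}_0$ and the specialization of the $K_0$-rational branch point $x_i$ is a distinct smooth point; both are $g$-fixed, and an automorphism of $\proj^1_k$ of order dividing $p$ with two fixed points is trivial. Thus $g$ fixes every tail, and so all of $\ol{X}$, pointwise.

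Next I would treat the unique component $\ol{Y}_0$ of $\ol{Y}$ over $\ol{X}_0$. Writing $\ol{Z}=\ol{Y}/Q_n$ for the reduction of the cyclic cover $\eta\colon Z\to X$, let $\ol{Z}_0$ be the unique component over $\ol{X}_0$ (as in the proof of Proposition \ref{Pdiffdiv}); since $\ol{X}_0$ is a $p^n$-component, $\ol{Y}_0\to\ol{Z}_0$ is purely inseparable of degree $p^n$. The restriction $g|_{\ol{Z}_0}$ lies in $\Aut(\ol{Z}_0/\ol{X}_0)=\ints/m$, because $g$ fixes $\ol{X}_0$ pointwise and commutes with $\ints/m=G/Q_n$; as $g$ has order dividing $p$ and $p\nmid m$, this forces $g|_{\ol{Z}_0}=\mathrm{id}$. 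Now $g$ commutes with the $Q_n$-quotient $\ol{Y}_0\to\ol{Z}_0$, so it induces a $k(\ol{Z}_0)$-automorphism of the purely inseparable extension $k(\ol{Y}_0)$; by rigidity of purely inseparable extensions this is trivial, hence $g$ fixes $\ol{Y}_0$ pointwise.

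For the tail components $\ol{Y}_{i,j}$, each meets only $\ol{Y}_0$ (the tails meet only $\ol{X}_0$), at a node that is therefore $g$-fixed. The same argument as above gives $g$ trivial on the corresponding component $\ol{Z}_i$ of $\ol{Z}$, and then $\ol{Y}_{i,j}\to\ol{Z}_i$ is a $\ints/p^n$-cover on which $g$ acts as a deck transformation. The node is a wild ramification point of this cover, so fixing it alone is insufficient; the crux is to produce a second, \'etale, $g$-fixed point. For this I would use that $x_i$ has prime-to-$p$ ramification index $m_i$ with $\mu_{m_i}\subset K_0\subseteq K_n$, so the tame Kummer local structure of $f$ over $x_i$ is already defined over $K_n$; hence the ramification points of $f$ above $x_i$ are $K_n$-rational, and their specializations are smooth points of $\ol{Y}$ lying on the $\ol{Y}_{i,j}$ in the \'etale locus of $\ol{Y}_{i,j}\to\ol{Z}_i$, and are $G_{K_n}$-fixed. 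A deck transformation of a Galois cover fixing a point where the cover is \'etale acts freely there, hence trivially, so $g$ is trivial on each $\ol{Y}_{i,j}$. Since $\ol{X}_0$ and the tails exhaust $\ol{X}$, this covers every component of $\ol{Y}$.

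The main obstacle is this last step: the obvious fixed point (the node) is precisely the wild ramification locus of $\ol{Y}_{i,j}\to\ol{Z}_i$, so triviality genuinely requires the second, \'etale fixed point, and the real content is the $G_{K_n}$-invariance of the specializations of the tame ramification points above $x_i$. This is exactly hypothesis (ii) of Proposition \ref{Pstabletail} for $K'=K_n$, and an equally valid organization of the whole argument is to verify hypotheses (i) and (ii) of that proposition directly over $K_n$ (using monotonicity from Proposition \ref{Pmonotonic}) and invoke it, concluding that $G_{K_n}$ acts on $\ol{Y}$ through a prime-to-$p$ group, which is the assertion of the lemma.
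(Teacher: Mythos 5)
Your opening steps (that $g$ fixes $\ol{X}_0$ pointwise, and then, having at least two fixed points on each tail, fixes all of $\ol{X}$ pointwise) match the paper, which argues exactly as in the proof of Proposition \ref{Pstabletail}. After that, however, there are two genuine gaps. The first is your description of $\ol{X}$: \cite[Theorem 2]{We:br} applies only to covers whose Galois group has order \emph{exactly} divisible by $p$, which is why the paper invokes it only for the quotient cover $f'\colon Y/Q_{n-1}\to X$ with group $\ints/p\rtimes\ints/m$, never for $f$ itself. In fact the claimed shape is false for $n\geq 2$: take $G=\ints/p^2\rtimes\ints/2$ (dihedral of order $2p^2$, $p$ odd) and the three-point cover $y\mapsto (y^{p^2}+y^{-p^2})/2$, branched with indices $(2,2,p^2)$. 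Its ramification points over the two tame branch points are the $2p^2$-th roots of unity, and since $\mu_{p^2}$ breaks into $p$ cosets of $\mu_p$ (each of diameter $|\zeta_p-1|$) inside a cluster of diameter $|\zeta_{p^2}-1|$, the stable model is forced to contain, on each side, an interior $p$-component strictly between $\ol{X}_0$ and the primitive tail; Proposition \ref{Pnoinsep} excludes inseparable and new \emph{tails}, not interior inseparable components. Consequently the tail components of $\ol{Y}$ need not meet $\ol{Y}_0$, and your induction never reaches the components over such intermediate $p^j$-components, where the separable part of $\ol{f}$ is Galois with a group having nontrivial $p$-part (a subgroup of $\ints/p^{n-j}\rtimes\ints/m$), so neither your prime-to-$p$ deck-group argument nor purely inseparable rigidity applies there.

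The second gap is your self-identified crux: the claim that the ramification points of $f$ above $x_i$ are $K_n$-rational does not follow from the local structure over $x_i$ being tame Kummer, and it is not true in general. Rationality of those fiber points is a global question about extracting $p^n$-th roots of the value of the Kummer function defining $Y\to Z$ at the points of $Z$ over $x_i$; compare Proposition \ref{Pm1stable}(ii), where precisely this issue forces extensions such as $K_n(\sqrt[p^{n-s}]{a/(a+b)})$. What is true, and what would rescue the step, is group-theoretic: since the $G$-action is defined over $K_n$, the $G_{K_n}$-action on the fiber over $x_i$ commutes with the transitive $G$-action, hence factors through $\Aut_G(G/I)=N_G(I)/I$, and for $G=\ints/p^n\rtimes\ints/m$ with faithful action and $I$ a nontrivial prime-to-$p$ inertia group one computes that $N_G(I)/I$ is prime to $p$; combined with the injectivity of specialization on ramification points, this forces an order-$p$ element $g$ to fix the specializations. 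This normalizer/trivial-center computation is exactly the content the paper does not redo but cites: having fixed $\ol{X}$ pointwise, it invokes \cite[Lemmas 5.4 and 5.8]{Ob:vc} together with the existence of at least two primitive tails and the triviality of $Z(G)$ to conclude that $g$ acts trivially on $\ol{Y}$. The same missing ingredient undermines your alternative organization via Proposition \ref{Pstabletail}: its hypothesis (ii) with $K'=K_n$ is precisely the unproved (and generally false) rationality statement, which holds only after a tame extension of $K_n$.
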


\begin{proof}
First, note that since each tail $\ol{X}_b$ of $\ol{X}$ is primitive (Proposition \ref{Pnoinsep}), each
contains the specialization of a $K_0$-rational point (which must be fixed by $g$).  
As in the proof of Proposition \ref{Pstabletail}, $g$ fixes all of $\ol{X}$ pointwise.  

There are at least two primitive tails, because, for $G \cong \ints/p^n \rtimes \ints/m$ with $m > 1$ and faithful conjugation action, a three-point
$G$-cover must have at least two branch points with prime-to-$p$ branching index.  Since $G$ has trivial center, then 
\cite[Lemmas 5.4 and 5.8]{Ob:vc} shows that $g$ acts trivially on $\ol{X}$. 
\end{proof}

\begin{prop}\label{Psolvstabtame}
Assume $m > 1$.
Let $f:Y \to X$ be a three-point $G$-cover, where $G \cong \ints/p^n \rtimes \ints/m$ 
(with faithful conjugation action of $\ints/m$ on $\ints/p^n$).  Choose a model for $f$ over $K_n$ (Lemma
\ref{Ldiffcov} (iii)).  Then there is a tame extension $K^{stab}/K_n$ such that
the stable model $f^{st}: Y^{st} \to X^{st}$ is defined over $K^{stab}$.  In particular, the $n$th higher ramification groups for the upper
numbering of $K^{stab}/K_0$ vanish.
\end{prop}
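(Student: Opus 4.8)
The plan is to show that the absolute Galois group $G_{K_n}$ acts on the stable reduction $\ol{Y}$ through a finite group of \emph{prime-to-$p$} order; the proposition then follows from a ramification computation for the tower $K_0 \subseteq K_n \subseteq K^{stab}$. This is the same overall strategy as in the proof of Proposition \ref{Pstabletail}, but the real work has already been done in Lemma \ref{Lfixallcomponents}, which is sharper than what a direct appeal to Proposition \ref{Pstabletail} would give (it spares us from having to produce a $G_{K_n}$-fixed smooth point \emph{upstairs} on each tail).

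First I would set $K^{stab} := K^{st}K_n$, the compositum inside $\ol{K_0}$. Since $K^{st}/K_0$ and $K_n/K_0$ are both finite Galois, so is $K^{stab}/K_0$. Recall from \S\ref{Sstable} that $K^{st}$ is the fixed field of $\Gamma^{st}$, the subgroup of elements acting trivially on $\ol{Y}$; equivalently $\Gamma^{st} = G_{K^{st}}$. Hence the monodromy action $G_{K_n} \to \Aut(\ol{Y})$ has kernel $\Gamma^{st} \cap G_{K_n} = G_{K^{st}} \cap G_{K_n} = G_{K^{stab}}$, so it induces an injection $\Gal(K^{stab}/K_n) \hookrightarrow \Aut(\ol{Y})$. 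In particular $K^{stab}$ is the minimal field of definition of $f^{st}$ containing $K_n$, so the stable model is defined over $K^{stab}$, as required.

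Next I would show $p \nmid [K^{stab}:K_n]$. If $p$ divided $|\Gal(K^{stab}/K_n)|$, then by Cauchy's theorem this group---identified above with the image of the monodromy action---would contain an element of order exactly $p$, which is the image of some $g \in G_{K_n}$ acting on $\ol{Y}$ with order $p$. But Lemma \ref{Lfixallcomponents} forces any such $g$ to act trivially on $\ol{Y}$, a contradiction. Hence $\Gal(K^{stab}/K_n)$ has prime-to-$p$ order, so $K^{stab}/K_n$ is tamely ramified.

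Finally, for the ramification conclusion I would invoke Lemma \ref{Ltamenochange} on the tower $K_0 \subseteq K_n \subseteq K^{stab}$: here $K^{stab}/K_n$ is tame and $K_n/K_0$ is ramified (note $K_n = K_0$ would force $p=2$, $n=1$, hence $m=1$ since $\Aut(\ints/2)$ is trivial, which is excluded by $m>1$), with both $K^{stab}/K_0$ and $K_n/K_0$ finite Galois, so the conductor of $K^{stab}/K_0$ equals that of $K_n/K_0$. Since $K_n = K_0(\zeta_{p^n})$ and $k$ is algebraically closed, this is the conductor of the cyclotomic extension $\rats_p(\zeta_{p^n})/\rats_p$, whose upper-numbering jumps occur at $0,1,\dots,n-1$; thus the conductor is $n-1$. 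As $n-1 < n$, the $n$th higher ramification group of $K^{stab}/K_0$ for the upper numbering vanishes. The one point demanding care is this last step: verifying the hypotheses of Lemma \ref{Ltamenochange} (especially that $K_n/K_0$ is genuinely ramified, the tame case $n=1$ giving conductor $0$) and pinning down the cyclotomic conductor as exactly $n-1$, since it is precisely the gap between $n-1$ and $n$ that produces the vanishing at level $n$.
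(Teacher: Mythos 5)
Your proof is correct and takes essentially the same route as the paper's: Lemma \ref{Lfixallcomponents} together with a Cauchy-type argument shows $G_{K_n}$ acts on $\ol{Y}$ through a prime-to-$p$ group, so $K^{stab}/K_n$ is tame, and the vanishing of the $n$th upper-numbering ramification group then follows from the conductor of the cyclotomic extension $K_n/K_0$ (the paper cites \cite[Corollary to IV, Proposition 18]{Se:lf} for this) combined with Lemma \ref{Ltamenochange}. Your explicit identification $K^{stab}=K^{st}K_n$ and your verification that $K_n/K_0$ is genuinely ramified when $m>1$ are just careful renderings of steps the paper leaves implicit.
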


\begin{proof}
By Lemma \ref{Lfixallcomponents}, no element of $G_{K_n}$ acts with order $p$ on $\ol{Y}$.  So the subgroup of $G_{K_n}$ that acts trivially on $\ol{Y}$ has
prime-to-$p$ index, and its fixed field $K^{stab}$ is a tame extension of $K_n$.  
By \cite[Corollary to IV, Proposition 18]{Se:lf}, the $n$th higher ramification groups for the upper numbering of the
extension $K_n/K_0$ vanish.  By Lemma \ref{Ltamenochange}, the $n$th higher ramification groups for the upper numbering
of $K^{stab}/K_0$ vanish.
\end{proof}

\subsection{The case where $G \cong \ints/p^n$}\label{Smequals1}

Maintaining the notation of \S\ref{Smain}, we now set $G \cong \ints/p^n$.  
Finding the field of moduli is easy in this case, but understanding the stable model (which is
needed to apply Proposition \ref{Pstr2aux}) is more difficult.

\begin{prop}\label{Pcyclicmoduli}
The field of moduli of $f: Y \to X$ relative to $K_0$ is $K_n = K_0(\zeta_{p^n})$.
\end{prop}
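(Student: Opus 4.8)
The plan is to put $f$ into an explicit Kummer normal form and then track how the $\ints/p^n$-action twists under the Galois action. The point is that the underlying cover can be defined over $\rats$ as a \emph{mere} cover, but its field of moduli \emph{as a $G$-cover} is forced up to $K_n$ by the cyclotomic character. By the definition of the field of moduli relative to $K_0$, it suffices to compute $\Gamma^{in} = \{\sigma \in G_{K_0} : f^\sigma \cong f \text{ as } G\text{-covers}\}$ and show $\Gamma^{in} = G_{K_n}$.

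First I would reduce to a normal form. Since $f$ is a $\ints/p^n$-cover of $\proj^1$ branched only at $0,1,\infty$, Kummer theory over $\ol{K_0}$ (where every element is a $p^n$th power, so the usual Kummer constant can be absorbed) shows that, as a mere cover, $f$ is isomorphic to $Y\colon y^{p^n} = x^a(x-1)^b$ for suitable integers $a,b$, a cover defined over $\rats \subset K_0$. I then fix a primitive $p^n$th root of unity $\zeta \in \ol{K_0}$ and let the chosen generator of $\ints/p^n \cong \Aut(Y/X)$ act by $\tau\colon y \mapsto \zeta y$; this pins down $f$ as a $G$-cover.

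The key step is to compute the conjugate $G$-action. Because the defining equation has coefficients in $\rats$, there is a canonical identification $Y^\sigma \cong Y$ of mere covers over $X$, under which every isomorphism of mere covers $\phi\colon Y \to Y^\sigma$ lies in $\Aut(Y/X) = \langle \tau \rangle$. The subtlety is that the generator of the \emph{transported} $G$-action on $Y^\sigma$ is not $\tau$ but $\tau^{\chi(\sigma)}$, where $\chi\colon G_{K_0} \to (\ints/p^n)^\times$ is the mod-$p^n$ cyclotomic character: conjugating $\tau$ (multiplication of $y$ by $\zeta$) by the $\sigma$-semilinear descent map replaces $\zeta$ by $\sigma(\zeta) = \zeta^{\chi(\sigma)}$. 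Since $\langle \tau \rangle$ is abelian, an isomorphism $\phi = \tau^j$ intertwines the two $G$-actions precisely when $\tau^{j+1} = \tau^{\chi(\sigma)+j}$, i.e.\ when $\chi(\sigma) \equiv 1 \pmod{p^n}$. Hence $\Gamma^{in} = \ker \chi$.

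Finally I would identify $\ker \chi$ with $G_{K_n}$. Since $K_0$ is absolutely unramified with algebraically closed residue field, $K_n = K_0(\zeta_{p^n})$ is totally ramified over $K_0$ of degree $p^{n-1}(p-1)$, and the cyclotomic character induces an isomorphism $\Gal(K_n/K_0) \xrightarrow{\sim} (\ints/p^n)^\times$; thus $\ker \chi = G_{K_n}$, and the field of moduli is $\ol{K_0}^{G_{K_n}} = K_n$. (Equivalently, this is Fried's branch cycle lemma: conjugation by $\sigma$ raises each inertia generator to the $\chi(\sigma)$th power, and for abelian $G$ the isomorphism class of the $G$-cover is exactly its branch datum, so $f^\sigma \cong f$ iff $\chi(\sigma) \equiv 1$.) The only genuine obstacle is the twisting of the $G$-action by $\chi$ in the third paragraph—carefully distinguishing the mere cover, which descends to $\rats$, from the $G$-cover, whose field of moduli captures the full cyclotomic field $K_n$.
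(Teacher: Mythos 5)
Your proof is correct, but it takes a genuinely different route from the paper's. The paper also starts from the Kummer normal form $y^{p^n} = x^a(x-1)^b$ with Galois action $y \mapsto \zeta_{p^n}y$, but only to exhibit $K_n$ as a field of definition; for the lower bound it never computes $\Gamma^{in}$. Instead it observes that connectedness forces $f$ to be totally ramified above some branch point $x_0$, and invokes Raynaud's result (\cite[Proposition 4.2.11]{Ra:sp}) that if $f$ is defined over $K$ as a $G$-cover then the residue field of the unique point $y_0$ above $x_0$ contains the $p^n$th roots of unity; since $K(y_0) = K(x_0) = K$, every field of definition contains $K_n$. It then concludes by identifying the field of moduli with the minimal field of definition, which is legitimate because $K_0$ has cohomological dimension $1$ (\cite[Proposition 2.5]{CH:hu}). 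Your argument instead computes $\Gamma^{in}$ directly from its definition: under the canonical identification $Y^{\sigma} \cong Y$ (available because the normal form has rational coefficients), the transported generator of the $G$-action is $\tau^{\chi(\sigma)}$, and since $\Aut(Y/X) = \langle \tau \rangle$ is abelian, no choice of intertwining isomorphism $\phi = \tau^j$ can absorb the twist, giving $\Gamma^{in} = \ker\chi = G_{K_n}$; this is the branch cycle lemma specialized to abelian covers. What your approach buys: it is self-contained (no appeal to Raynaud's proposition, nor to the connectedness/total-ramification observation) and it needs no cohomological-dimension argument, since the field of moduli is by definition the fixed field of $\Gamma^{in}$. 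What the paper's approach buys: it sidesteps the semilinear bookkeeping of conjugate $G$-actions, where direction-of-twist errors are easy to make (you handle this correctly), and it stays inside the fields-of-definition framework that the rest of \S\ref{Smain} uses anyway.
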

\begin{proof} Since the field of moduli of $f$ relative to $K_0$ is the intersection of all extensions of $K_0$ which are
fields of definition of $f$, it suffices to show that $K_n$ is the minimal such extension.
By Kummer theory, $f$ can be defined over $\ol{K_0}$ birationally by the
equation $y^{p^n} = x^a(x-1)^b$, for some integral $a$ and $b$.  The Galois action is generated by $y \mapsto
\zeta_{p^n} y$.  This cover is clearly defined over $K_n$ as a $G$-cover.
 
Since $Y$ is connected, $f$ is totally ramified above at least one of the branch points $x_0$ (i.e., with index $p^n$).  
Let $y_0 \in Y$ be the
unique point above $x_0$.  Assume $f$ is defined over some finite extension $K/K_0$ as a
$G$-cover, where $Y$ and $X$ are considered
as $K$-varieties. 
Then, by \cite[Proposition 4.2.11]{Ra:sp}, the residue field $K(y_0)$ of $y_0$
contains the $p^n$th roots of unity.  Since
$y_0$ is totally ramified, $K(y_0) = K(x_0) = K$, and thus $K \supseteq K_n$.  So $K_n$ is
the minimal extension of $K_0$ which is a field of definition of $f$.  Thus $K_n$ is the field of moduli of $f$ with
respect to $K_0$.
\end{proof}

In the rest of this section, we analyze the stable model of three-point $G$-covers $f: Y \to X$ (a complete description, at least 
in the case $p > 3$, is given in Appendix \ref{Aexplicit}).
By Kummer theory, $f$ can be given (over $\ol{K_0}$) by an equation of the form $y^{p^n} = cx^a(x-1)^b$, 
for any $c \in \ol{K_0}$ (note that different values of $c$ might give different models over subfields of $\ol{K_0}$).  
The ramification indices above $0$, $1$, and $\infty$ are $p^{n-v(a)}$, $p^{n-v(b)}$, and $p^{n- v(a+b)}$, respectively.  
Since $Y$ is connected, we must have that at least two of $a, b$, and $a+b$ are prime to $p$.  Note that if, $p=2$, then exactly 
two of $a, b$, and $a+b$ are prime to $p$.
In all cases, we assume without loss of generality that $f$ is totally ramified above $0$ and $\infty$, and we set $s$ to be such 
that $p^s$ is the ramification index above $1$.  Then $v(b) = n - s$.  

As in \S\ref{Sstable}, write $f^{st}: Y^{st} \to X^{st}$ for the stable model of $f$, and
$\ol{f}:\ol{Y} \to \ol{X}$ for the stable reduction.  

\begin{lemma}[cf.\ \cite{CM:sr}, \S3]\label{Lonenewtail}
The stable reduction $\ol{X}$ (over $\ol{K_0}$)
contains exactly one \'{e}tale tail $\ol{X}_b$, which is a new tail with effective invariant 
$\sigma_b = 2$.  

If $p>3$, or $p = 3$ and either $s > 1$ or $s = n = 1$, set $d = \frac{a}{a+b}$.  If $p=3$ and $n > s =1$, set 
$$d = \frac{a}{a+b} + \frac{\sqrt[3]{3^{2n+1}\binom{b}{3}}}{a+b},$$ where we choose any cube root.
If $p = 2$, set $$d = \frac{a}{a+b} + \frac{\sqrt{2^nbi}}{(a+b)^2},$$ where 
$i^2 = -1$ and the square root sign represents either square root.

Then $\ol{X}_b$ corresponds to the disk of radius $|e|$ centered at $d$, where $e \in \ol{K_0}$ has $v(e) = \frac{1}{2}(2n - s + \frac{1}{p-1})$.
\end{lemma}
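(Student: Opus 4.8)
The plan is to split the statement into a combinatorial part (there is a unique étale tail, it is new, and $\sigma_b = 2$) and a $p$-adic-analytic part (pinning down the center $d$ and radius $|e|$), with essentially all the difficulty in the latter. For the combinatorial part I would run the vanishing cycles formula. Since $G \cong \ints/p^n$ is a $p$-group, each of the branch points $0, 1, \infty$ has ramification index a pure power of $p$ that is at least $p$, so by Proposition~\ref{Pcorrectspec} each specializes to an inseparable ($p^i$, $i \geq 1$) component; hence there are no primitive étale tails, i.e.\ $B_{\mathrm{prim}} = \emptyset$. Noting that $f$ has bad reduction (by Lemma~\ref{Lmultdd} the original component is a $p^n$-component, which is incompatible with good reduction), Theorem~\ref{Tvancycles} reads $\sum_{b \in B_{\mathrm{new}}}(\sigma_b - 1) = 1$. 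As $m_G = 1$ here, Lemma~\ref{Ltailbounds} forces every new tail to satisfy $\sigma_b \geq 2$, so there is exactly one new étale tail and it has $\sigma_b = 2$; together with $B_{\mathrm{prim}} = \emptyset$ this yields exactly one étale tail in all.

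To locate that tail I would pass to a finite extension containing $\zeta_{p^n}$, turning $f$ into the $\mu_{p^n}$-torsor $y^{p^n} = g(x)$ with $g = c x^a (x-1)^b$, and search for the disk on which this torsor acquires étale reduction via Lemma~\ref{Lartinschreier}. Writing $x = d + eT$ and $g(d + eT) = g(d)(1 + \sum_i c_i T^i)$, Lemma~\ref{Lartinschreier} produces an étale reduction birational to Artin--Schreier covers of conductor $h$, where $h$ is the largest index $i \neq p$ at which $v(c_i)$ attains its minimum $n + \tfrac{1}{p-1}$; and for an étale tail this conductor is exactly $\sigma_b$ (Definition~\ref{Draminvariant}, with $x_b$ the point $\ol{T} = \infty$). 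So to realize $\sigma_b = 2$ I want $v(c_2) = n + \tfrac{1}{p-1}$ and $v(c_i) > n + \tfrac{1}{p-1}$ for $i > 2$, $i \neq p$. Choosing $d$ to be the unique critical point $a/(a+b)$ of $g$ kills the linear term, and a short computation gives $c_2 = -\tfrac{(a+b)^3}{2ab}e^2$; using $v(a) = v(a+b) = 0$ and $v(b) = n - s$, the equation $v(c_2) = n + \tfrac{1}{p-1}$ solves to $v(e) = \tfrac{1}{2}(2n - s + \tfrac{1}{p-1})$, which is the claimed radius. Estimating the remaining coefficients through $(\log g)^{(i)}(d)$ confirms $v(c_i) > n + \tfrac{1}{p-1}$ for every $i \geq 3$ with $i \neq p$.

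The main obstacle is the coefficient $c_p$, and this is precisely what forces the case distinctions. For $p > 3$, and for $p = 3$ when $s > 1$, a valuation estimate gives $v(c_p) > n + \tfrac{1}{p-1}$, so condition (i) of Lemma~\ref{Lartinschreier} applies at $d = a/(a+b)$ and returns $h = \sigma_b = 2$. When $p = 3$ and $n > s = 1$ the estimate instead gives $v(c_3) = n + \tfrac{1}{4} < n + \tfrac{1}{2}$, so (i) fails; here I would shift the center to $d = \tfrac{a}{a+b} + \Delta$, reintroducing a linear term $c_1 \sim (\log g)''(d)\,\Delta\,e$ and invoking condition (ii), whose delicate requirement $v\!\bigl(c_p - \tfrac{c_1^p}{p^{(p-1)n+1}}\bigr) > n + \tfrac{1}{p-1}$ becomes a cancellation $\Delta^p \approx c_p\,p^{(p-1)n+1}$; extracting the cube root and tracking the $\binom{b}{3}$ coming from $c_3$ gives $\Delta = \tfrac{\sqrt[3]{3^{2n+1}\binom{b}{3}}}{a+b}$. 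The remaining small cases are where the obstacle really bites: for $p = 3$ with $s = n = 1$ one has $v(c_3) = \tfrac{5}{4} < \tfrac{3}{2}$ already at the unshifted center, and for $p = 2$ the index $p$ coincides with the leading index $2$, so Lemma~\ref{Lartinschreier} no longer suffices and one must use its $p = 2$ analogue from Appendix~\ref{Asmallprimes} (the $T^p$ contribution being absorbed through the Artin--Schreier operator). The analogous completion-of-the-square cancellation for $p = 2$ produces the square-root correction $\tfrac{\sqrt{2^n b i}}{(a+b)^2}$. I expect these small-prime reductions to be the bulk of the technical work.

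Finally, in each case I would confirm that the disk really carries the unique étale tail. By Lemma~\ref{Lartinschreiergenus} the Artin--Schreier cover above it has genus $\tfrac{(h-1)(p-1)}{2} = \tfrac{p-1}{2} > 0$, so by Lemma~\ref{Lgenusincluded} the corresponding component is not contracted in passing to the stable model. The center and radius computations place $0$, $1$, $\infty$ strictly outside the disk (for instance $v(d - 1) = n - s < v(e)$, so $1$ lies outside), hence no branch point specializes to it and it is a \emph{new} étale tail. By the uniqueness established in the first step, it is \emph{the} étale tail, with $\sigma_b = 2$, completing the proof.
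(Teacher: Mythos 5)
Your overall route is the same as the paper's: the vanishing cycles formula (Theorem \ref{Tvancycles}) together with Lemma \ref{Ltailbounds} (and integrality) to get a unique, new \'{e}tale tail with $\sigma_b = 2$; then expansion of $g(d+et)$ over the disk $R\{t\}$, Lemma \ref{Lartinschreier} to get \'{e}tale reduction to Artin--Schreier covers of conductor $2$, and Lemmas \ref{Lartinschreiergenus}, \ref{Lgenusincluded}, \ref{Letaletail} to force the corresponding component into $\ol{X}$ as the unique tail; with $p=2$ and $p=3$, $n>s=1$ handled by the Appendix \ref{Asmallprimes} analogues, exactly as in the paper's Lemma \ref{Lsmallprimetails}. (Incidentally, your $c_2 = -\frac{(a+b)^3}{2ab}e^2$ is the correct value; the paper's stated $\frac{(a+b)^3}{ab}e^2$ differs by a unit, which is harmless for odd $p$ since only the valuation matters.)

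There is, however, a concrete error in your case analysis: you claim that for $p=3$ with $s=n=1$ one has $v(c_3) = \frac{5}{4} < \frac{3}{2}$ at the center $d = \frac{a}{a+b}$, and you therefore shunt this case into the small-prime machinery. That valuation is wrong, and the claim contradicts the statement you are proving, which explicitly places $p=3$, $s=n=1$ in the unshifted case $d=\frac{a}{a+b}$. Your figure $\frac{5}{4}$ comes from the formula $v(c_\ell) = n + \frac{1}{p-1} + \frac{\ell-2}{2}\left(s+\frac{1}{p-1}\right) - v(\ell)$, but that formula is only valid when $s < n$: it rests on the $j=\ell$ term of $c_\ell = e^\ell \sum_{j} \binom{a}{\ell-j}\binom{b}{j}d^{j-\ell}(d-1)^{-j}$ dominating, which happens precisely because $v\left((d-1)^{-\ell}\right) = -\ell(n-s) < 0$. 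When $s = n$ one has $v(d-1) = v(b) = 0$, so every summand of $c_\ell$ has valuation at least $\ell v(e) = \frac{\ell}{2}\left(n + \frac{1}{p-1}\right)$; for $p=3$, $s=n=1$, $\ell = 3$ this gives $v(c_3) \geq \frac{9}{4} > \frac{3}{2}$, and condition (i) of Lemma \ref{Lartinschreier} applies directly with $h=2$ (indeed $c_1 = 0$, so there is no linear term to cancel and condition (ii) is not even available). As written, your proof would hunt for a shifted center and a cancellation that do not exist, and the appendix results you invoke (Lemma \ref{Lsmallprimetails}, Proposition \ref{Pdegree2n}) do not cover $p=3$, $s=n=1$, so that case would simply be left unproved. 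The fix is to split the coefficient estimates into $s=n$ and $s<n$ before applying (\ref{Eclval}), as the paper does.
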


\begin{proof} 
By the Hasse-Arf theorem, the effective ramification invariant $\sigma$ of any \'{e}tale tail is an integer.  
Clearly there are no primitive tails, as there are no branch points with prime-to-$p$ branching index.  By Lemma 
\ref{Ltailbounds}, any new tail has $\sigma \geq 2$.  By the vanishing cycles formula (\ref{Evancycles}), there is exactly 
one new tail $\ol{X}_b$ and its invariant $\sigma_b$ is equal to $2$. 

We know that $f$ is given by an equation of the form $y^{p^n} = g(x) := cx^a(x-1)^b$, and that any value of $c$ yields $f$ 
over $\ol{K_0}$.  Taking $K$ sufficiently large, we may (and do) assume that $c = d^{-a}(d-1)^{-b}$.  Note that, in all cases, $g(d) = 1$, 
$v(d) = v(a) = 0$, and $v(d-1) = v(b) = n - s$.

Let $K$ be a subfield of $\ol{K_0}$
containing $K_0(\zeta_{p^n}, e, d)$.  Let $R$ be the valuation ring of $K$. 
Consider the smooth model $X'_R$ of $\proj^1_K$ corresponding to the coordinate $t$, where $x = d + et$. 
The formal disk $D$ corresponding to the
completion of $D_k := X'_k \backslash \{t = \infty\}$ in $X'_R$ is the
closed disk of radius 1 centered at $t = 0$, or equivalently, the disk of radius $|e|$ centered at $x = d$ (\S\ref{Sdisks}). 
Its ring of functions is $R\{t\}$.  

In order to calculate the normalization of $X'_R$ in $K(Y)$, we calculate the normalization $E$ of $D$ in 
the fraction field of $R\{t\}[y]/(y^{p^n} - g(x)) = R\{t\}[y]/(y^{p^n} -
g(d+et))$.  Now, $g(d+et) = \sum_{\ell=0}^{a+b} c_{\ell}t^{\ell},$ where 
\begin{equation}\label{Ecell}
c_{\ell} = e^{\ell} \sum_{j=0}^{\ell} \binom{a}{\ell-j}\binom{b}{j}d^{j-\ell}(d-1)^{-j}.
\end{equation}  

If $s = n$ and $\ell \geq 3$, then clearly $v(c_{\ell}) \geq v(e^{\ell}) = \frac{\ell}{2}(n + \frac{1}{p-1}) > n + \frac{1}{p-1}$.
If $s < n$ and $\ell \geq 3$, then the $j = \ell$ term is the term of least valuation in (\ref{Ecell}), and thus it has the same valuation as $c_{\ell}$.  
We obtain
\begin{equation}\label{Eclval}
v(c_{\ell}) = \ell v(e) + v(b) - v(\ell) - \ell(n-s) = n + \frac{1}{p-1} + \frac{\ell-2}{2}(s + \frac{1}{p-1}) - v(\ell)
\end{equation}
(unless, of course, $c_{\ell} = 0$).

Now, assume either that $p > 3$, or if $p=3$, then $s > 1$ or $s = n = 1$.  Then $d = \frac{a}{a+b}$.  
It is easy to check, using (\ref{Eclval}), that $v(c_{\ell}) > n + \frac{1}{p-1}$ for $\ell \geq 3$.
Equation (\ref{Ecell}) shows that $c_0 = 1$, $c_1 = 0$, and $c_2 = \frac{(a+b)^3}{ab}e^2$, which has valuation $n+\frac{1}{p-1}$.
Thus we are in the situation (i) of Lemma \ref{Lartinschreier} (with $h=2$), and
the special fiber $E_k$ of $E$ is a disjoint union of $p^{n-1}$ \'{e}tale covers of $D_k \cong
\aff^1_k$.  Each of these extends to an Artin-Schreier cover of conductor 2 over $\proj^1_k$.  
By Lemma \ref{Lartinschreiergenus}, these have genus $\frac{p-1}{2} > 0$.  Therefore, by Lemma \ref{Lgenusincluded}, 
the component $\ol{X}_b$ corresponding to $D$ is included in the stable model.  By Lemma \ref{Letaletail}, it is a tail.
Since there is only one tail of $\ol{X}$, and it has effective ramification invariant $2$, it must correspond to $\ol{X}_b$.

For the cases where either $p=2$, or $p = 3$ and $n > s = 1$, see Lemma \ref{Lsmallprimetails}.
\end{proof}

\begin{remark}\label{Rcolemanmccallum}
The computation of Lemma \ref{Lonenewtail}
is similar to the relevant parts of \cite[\S3]{CM:sr} (in particular, Lemma 3.6 and Case 5 of Theorem 3.18 there).  Our task is simplified because
we know from the outset what we are looking for, that is, a new tail with $\sigma_b = 2$.
\end{remark}

\begin{corollary}\label{Cnoinseptails}
\begin{enumerate}[(i)]
\item If $f$ is totally ramified above $\{0, 1, \infty\}$, then $\ol{X}$ has no inseparable tails.  
\item If $p > 3$, and $f$ is totally ramified above only $\{0, \infty\}$, then if $\ol{X}$ has an inseparable tail, the tail 
contains the specialization of $x=1$.
\item If $p = 3$, suppose $f$ is totally ramified above only $\{0, \infty\}$ and ramified of index $3^s$ above $1$.  Then any inseparable tail 
of $\ol{X}$ not containing the specialization of $x=1$ is a $p^{s-1}$-tail (in particular, $s \geq 2$).  Furthermore, such a tail corresponds to the
disk of radius $|e'|$ centered at $d'$, where $v(e') = n - s + \frac{2}{3}$ and $d' = \frac{a}{a+b} + \frac{\sqrt[3]{3^{2(n-s+1) + 1}\binom{b}{3}}}{a+b}$. 
\item If $p = 2$, suppose $f$ is totally ramified above only $\{0, \infty\}$ and ramified of index $2^s$ above $1$.  Then any 
inseparable tail of $\ol{X}$ not containing the specialization of $x=1$ is a $p^j$-tail, for some $j<s$.  Furthermore, such a 
tail corresponds to the disk of radius $|e_j|$ centered at $d_j$, where $v(e_j) = \frac{1}{2}(2n-s-j+1)$, $$d_j = \frac{a}{a+b} + 
\frac{\sqrt{2^{n-j}bi}}{(a+b)^2},$$ and $i^2 = -1$.
\end{enumerate}
\end{corollary}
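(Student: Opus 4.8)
The plan is to prove all four parts uniformly by quotienting out the generic inertia of the putative inseparable tail and reducing to the one-tail picture of Lemma \ref{Lonenewtail}. Suppose $\ol{X}_b$ is an inseparable $p^j$-tail ($j \geq 1$) that does not contain the specialization of $x=1$. Since $G \cong \ints/p^n$ is abelian, the generic inertia of $\ol{Y}$ above $\ol{X}_b$ is the unique order-$p^j$ subgroup $Q_j$, which is normal, and $j < n$ by Lemma \ref{Ltailetale}. I would pass to the quotient cover $f' \colon Y/Q_j \to X$, which Kummer theory presents as the $\ints/p^{n-j}$-cover $(y')^{p^{n-j}} = cx^a(x-1)^b$ with $y' = y^{p^j}$, having ramification indices $p^{n-j}, p^{\max(s-j,0)}, p^{n-j}$ above $0,1,\infty$. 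By Lemma \ref{Linsep2etale}, $\ol{X}_b$ persists as a \emph{new \'etale} tail of the stable reduction of $f'$, with the same $\sigma_b$ and corresponding to the same disk of $\proj^1$; by Lemma \ref{Lonenewtail} it is then the unique \'etale tail of $f'$.

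Two mechanisms drive the argument. First, if $j \geq s$ then $f'$ is branched only above $\{0,\infty\}$, so $Y/Q_j \cong \proj^1$ is a cyclic cover of $\proj^1$ ramified at exactly two points; such a cover has potentially good reduction and hence no tails, contradicting the previous paragraph. Thus $j < s$, and in particular $s \geq 2$. Second, for $j < s$ the cover $f'$ is a genuine three-point cover, so Lemma \ref{Lonenewtail} locates $\ol{X}_b$ (as the \'etale tail of $f'$, with parameters $n' = n-j$, $s' = s-j$) and, applied to $f$ itself, locates the unique \'etale tail of $f$. Because disks with a common center are totally ordered by inclusion, whenever these two tails share a center the nesting $v(e') = v(e) - \tfrac{j}{2} < v(e)$ forces the \'etale tail of $f$ to lie strictly outward from $\ol{X}_b$, contradicting tail-ness of $\ol{X}_b$.

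The proof of each part is then a matter of deciding when the two centers coincide. In part (i) we have $s=n$, which forces $p$ odd (for $p=2$ the three indices $a,b,a+b$ cannot all be prime to $p$); then $s'=n'$ and Lemma \ref{Lonenewtail} assigns the unshifted center $\tfrac{a}{a+b}$ to both $f$ and $f'$, so the nesting contradiction eliminates every inseparable tail. In part (ii), $p>3$ forces the unshifted center $\tfrac{a}{a+b}$ in all cases, giving the same contradiction. In part (iii), $p=3$: for $j < s-1$ both tails are unshifted and we again reach a contradiction, while $j \geq s$ is excluded; the sole survivor is $j = s-1$, where $f'$ falls into the shifted case $n' > s' = 1$ of Lemma \ref{Lonenewtail}, whose center differs from $\tfrac{a}{a+b}$, so no contradiction arises and $\ol{X}_b$ is the asserted $p^{s-1}$-tail. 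In part (iv), $p=2$: every \'etale tail is shifted by $\tfrac{\sqrt{2^{n'}bi}}{(a+b)^2}$, which genuinely depends on $n'=n-j$, so the centers for $f$ and $f'$ never agree and no $1 \leq j < s$ is obstructed.

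The main obstacle is the precise small-prime bookkeeping of centers and radii: the contradiction mechanism needs the two centers to coincide \emph{exactly}, so I must use the exact center $\tfrac{a}{a+b}$ of Lemma \ref{Lonenewtail} in the unshifted cases and verify genuine distinctness of the shifts in the surviving cases. This requires the refined computation of Lemma \ref{Lsmallprimetails}; in particular the $p=3$ shifted case carries radius $v(e') = n' - \tfrac13$, which for $n' = n-s+1$ yields the stated $v(e') = n-s+\tfrac23$ rather than the value predicted by the generic radius formula. The remaining ingredients---monotonicity (Proposition \ref{Pmonotonic}), specialization of wild branch points (Proposition \ref{Pcorrectspec}), and the tail structure (Lemma \ref{Ltailetale})---are already available.
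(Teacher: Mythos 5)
Your proposal follows the paper's own strategy---quotient by $Q_j$, apply Lemma \ref{Linsep2etale} to make the inseparable tail an \'etale tail of $f'\colon Y/Q_j \to X$, locate it with Lemma \ref{Lonenewtail}, and contradict the disk geometry (your nesting argument is the paper's disjointness of tail disks)---but two of your steps fail. The first is your exclusion of $j \geq s$. When $j \geq s$ the quotient $f'$ is branched at only two points, so it has \emph{no stable model} in the sense of \S\ref{Sstable}: that framework, and in particular the proof of Lemma \ref{Linsep2etale} (which begins by taking the stable model of $f'$), assumes at least three branch points. Hence the statement ``$\ol{X}_b$ persists as a new \'etale tail of the stable reduction of $f'$'' is not available for such $j$, and ``a two-point cover has potentially good reduction and hence no tails'' is not meaningful either; you derive a contradiction between two assertions, neither of which is licensed. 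The paper's mechanism here is different and is exactly the missing idea: $Y/Q_j \cong \proj^1$ has genus zero, so by constancy of arithmetic genus every component of the semistable model $Y^{st}/Q_j$ has genus zero, while the component over $\ol{X}_b$ is a radicial image of a component $\ol{Y}_b$ of $\ol{Y}$ above $\ol{X}_b$, whose genus is positive by the Hurwitz formula since $\sigma_b \geq 2$ (Lemmas \ref{Linsepint} and \ref{Ltailbounds}).

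The second failure is the radius in part (iii). You claim Lemma \ref{Lsmallprimetails} shows that the shifted case $p = 3$, $n' > s' = 1$ ``carries radius $v(e') = n' - \frac{1}{3}$.'' It shows the opposite: it asserts that Lemma \ref{Lonenewtail} holds \emph{as stated}, i.e.\ with radius $v(e') = \frac{1}{2}(2n' - s' + \frac{1}{2}) = n' - \frac{1}{4}$, and its proof verifies the hypotheses of Lemma \ref{Lartinschreier}(ii) using exactly that value; the quantity $n' - \frac{1}{3}$ is the valuation of the shift $d' - \frac{a}{a+b}$, not of a disk radius. Consequently the only radius obtainable from the paper's lemmas---and the one the paper's own proof of (iii) produces by substituting $(n,s) \mapsto (n-s+1,1)$---is $v(e') = n - s + \frac{3}{4}$. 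The value $n - s + \frac{2}{3}$ printed in the corollary appears to be a misprint: a closed disk of radius exponent $n - s + \frac{2}{3}$ about $d'$ would contain $\frac{a}{a+b}$ and hence meet the \'etale tail's disk, which is impossible for distinct tails. You have reverse-engineered a radius to match that misprint and attributed it to a lemma that contradicts it, so this step has no proof. Aside from these two points, your case analysis---parts (i), (ii), (iv), and the identification $j = s-1$ in (iii)---is correct and coincides with the paper's argument.
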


\begin{proof} 
\emph{To (i):}  Let $d = \frac{a}{a+b}$ as in Lemma \ref{Lonenewtail}.
Suppose there is an inseparable $p^j$-tail $\ol{X}_c \subset \ol{X}$ (we know $j < n$, by Lemma \ref{Ltailetale}).
By Proposition \ref{Pcorrectspec}, $\ol{X}_c$ is a new inseparable tail.  By Lemma \ref{Linsep2etale},
$\ol{X}_c$ is a new (\'{e}tale) tail of the stable reduction of $Y/Q_j \to X$. 
Its corresponding disk must contain $d$, by Lemma \ref{Lonenewtail} (substituting $n-j$ for $n$ in the statement).  
But this is absurd, because the disks corresponding to $\ol{X}_c$ and the \'{e}tale tail $\ol{X}_b$ are disjoint.
\\
\\
\noindent\emph{To (ii):}
Assume $f$ is ramified above $x=1$ of index $p^s$, $s < n$.  
Let $\ol{X}_c$ be a new inseparable $p^j$-tail of $\ol{X}$, and $\sigma_c$ its ramification 
invariant.  By Lemma \ref{Ltailbounds}, $\sigma_c > 1$.  Let $\ol{Y}_c$ be a component of $\ol{Y}$ lying above $\ol{X}_c$.
If $j \geq s$, we see that $Y/Q_j \to X$ is branched at two points,
and thus has genus zero.  Since $Q_j \leq I_{\ol{Y}_c}$, the constancy of arithmetic genus in flat families shows that $\ol{Y}_c$ 
has genus zero.  But any component $\ol{Y}_c$ above $\ol{X}_c$ must have genus greater than 1 (see \cite[Lemme
1.1.6]{Ra:sp}).  This is a contradiction.

Now suppose $j < s$.  Then $Y/Q_j \to X$ is a three-point cover.  So we obtain the same contradiction as in (i).
\\
\\
\emph{To (iii):}
As in (ii), we see that any new inseparable $p^j$-tail $\ol{X}_c$ of $\ol{X}$ must satisfy $j < s$.  In particular, $s \geq 2$.  
As in (i), $\ol{X}_c$ must correspond to the same disk as the new \'{e}tale tail of the stable reduction of $f': Y/Q_j \to X$,
but the disk must not contain the specialization of $d=\frac{a}{a+b}$.   By Lemma \ref{Lonenewtail}, this can only happen if $f'$ has  
degree greater than $3$, but is branched of index $3$ above $1$.  Thus $j = s-1$.  
Thus $f'$ is a $\ints/p^{n-s+1}$-cover.  We conclude using Lemma \ref{Lonenewtail}, replacing $n$ by $n-s+1$ and $s$ by $1$.
\\
\\
\emph{To (iv):}
Let $j$ and $\ol{X}_c$ be as in part (ii).
As in (ii), we may assume $j < s$.  
As in (i), $\ol{X}_c$ is the new \'{e}tale tail of the stable reduction of $f': Y/Q_j \to X$.  
The cover $f'$ is a $\ints/p^{n-j}$-cover, totally ramified above $0$ and $\infty$, and ramified of index $2^{s-j}$ above $1$.
We conclude using Lemma \ref{Lonenewtail}, replacing $n$ by $n-j$ and $s$ by $s-j$.
\end{proof}

\begin{corollary}\label{Cinsepprimitive}
In cases (ii), (iii), and (iv) above, $x=1$ in fact specializes to an inseparable tail.
\end{corollary}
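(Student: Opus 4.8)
The plan is to pin down the component of $\ol{X}$ to which $x=1$ specializes and to show that it must be terminal. Throughout, $f$ is a three-point $\ints/p^n$-cover, totally ramified over $0$ and $\infty$ and of index $p^s$ over $1$, with $1 \leq s < n$ (the content of cases (ii)--(iv)). First I would invoke Proposition \ref{Pcorrectspec}: since $x=1$ has branching index $p^s$ with $1 \leq s < n$, it specializes to a $p^s$-component $\ol{W}$ of $\ol{X}$, which is inseparable. It therefore suffices to prove that $\ol{W}$ is a tail, i.e.\ that no irreducible component of $\ol{X}$ lies strictly outward from $\ol{W}$.

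So suppose, for contradiction, that some component --- and hence some tail $\ol{X}_t$ --- satisfies $\ol{X}_t \succ \ol{W}$. Since $0$, $1$, $\infty$ all specialize to components $\preceq \ol{W}$ (with $x=1$ lying on $\ol{W}$ itself), the tail $\ol{X}_t$ contains the specialization of no branch point, so it is \emph{new}; and by monotonicity (Proposition \ref{Pmonotonic}) it is a $p^j$-tail for some $j \leq s$. The case $j = s$ is immediately excluded: by Lemma \ref{Linsep2etale} (applicable since $Q_s \trianglelefteq G$) a new $p^s$-tail would be a new \'{e}tale tail of $Y/Q_s \to X$, but this cover is branched only over $0$ and $\infty$, hence has good reduction and no tails. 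Thus $j < s$, and by Lemma \ref{Linsep2etale} (when $j \geq 1$, directly when $j = 0$) $\ol{X}_t$ is the unique new \'{e}tale tail of $f_j := Y/Q_j \to X$. Now $f_j$ is a three-point $\ints/p^{n-j}$-cover, totally ramified over $0, \infty$ and of index $p^{s-j}$ over $1$, so Lemma \ref{Lonenewtail} (with $n$ replaced by $n-j$ and $s$ by $s-j$) locates the disk of $\ol{X}_t$: it is centered at $d$ (or at the nearby point $d'$ or $d_j$ when $p \leq 3$) and has depth $\tfrac{1}{2}(2n - j - s + \tfrac{1}{p-1}) > n-s = v(d-1)$, so in particular it does not contain $x=1$.

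The hard part is the final step. Since $\ol{X}_t \succ \ol{W}$, the disk of $\ol{X}_t$ lies inside that of $\ol{W}$, which contains $x=1$; as the disk of $\ol{X}_t$ is centered at $d$, the disk of $\ol{W}$ then contains both $x=1$ and $d$, and so has depth at most $v(1-d) = n-s$. To contradict this I must show that $\ol{W}$ in fact lies at depth strictly greater than $n-s$ --- the depth at which $x=1$ and $d$ separate. I expect this to be the main obstacle, and it seems to require a direct analysis of the $\mu_{p^n}$-torsor $y^{p^n} = g(x)$ on the disks centered at $x=1$, in the style of Lemmas \ref{Lartinschreier} and \ref{Lonenewtail}: the goal is to show that on every disk of depth at most $n-s$ about $x=1$ the generic inertia is strictly larger than $p^s$ (indeed is all of $\ints/p^n$), so that the $p^s$-component $\ol{W}$ can only appear strictly deeper than depth $n-s$. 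Granting this, the disk of $\ol{W}$ cannot contain $d$, which is the desired contradiction. Hence nothing lies outward from $\ol{W}$, so $\ol{W}$ is an inseparable tail and $x=1$ specializes to it, as claimed.
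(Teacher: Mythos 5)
Your reduction (the first four steps) is sound and is close in spirit to the first half of the paper's own proof: the paper likewise replaces a putative $p^i$-tail outward from $\ol{W}$ by an \'{e}tale tail of the three-point quotient cover $Y/Q_i \to X$ via Lemma \ref{Linsep2etale}, and then invokes Lemma \ref{Lonenewtail}. (One formal quibble: your exclusion of $j=s$ appeals to the ``stable model'' of the two-point cover $Y/Q_s \to X$, but the paper only defines stable models for covers with at least three branch points; the paper's Corollary \ref{Cnoinseptails}(ii) handles that case by a genus argument instead, and yours is repairable the same way.) The problem is the final step, which is exactly where the real content lies and which you do not supply. You need: the $p^s$-component $\ol{W}$ to which $x=1$ specializes corresponds to a disk about $x=1$ of depth \emph{strictly} greater than $n-s$; equivalently, every disk about $x=1$ of depth at most $n-s$ has generic inertia strictly larger than $p^s$. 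You acknowledge this ``seems to require a direct analysis of the $\mu_{p^n}$-torsor'' and then proceed by ``granting this.'' That is a genuine gap, not a routine verification: the claim is true, but it is essentially the assertion in Proposition \ref{Ppartbranchstable} that $\ol{X}_{\dagger}$ sits at depth $n-s+\frac{1}{p-1}$ while the component at depth $n-s$ is a $p^{s+1}$-component --- and the proof of that proposition \emph{uses} Corollary \ref{Cinsepprimitive}, so you cannot quote it without circularity. Proving it independently requires either a new computation in the style of Lemma \ref{Lartinschreier} on disks centered at $x=1$ (with separate treatment of $p=2$ and $p=3$, as in Appendix \ref{Asmallprimes}), or the effective-different machinery of \S\ref{Seffinvs}; neither is carried out in your proposal.

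For comparison, the paper avoids any depth computation. After the same reduction, it works with the deformation data on $\ol{W}$ itself: since the wild branch point $x=1$ specializes to $\ol{W}$, the data above $\ol{W}$ are multiplicative and all identical (Lemma \ref{Lcritical} and Proposition \ref{Pmultdefdata}); Lemma \ref{Lsigmaeff} then pins down the invariants of the logarithmic differential $\omega$ at every point of $\ol{W}$ ($\sigma = 0$ at the specialization of $x=1$, $\sigma = 2$ at the node toward the unique \'{e}tale tail, $\sigma = 1$ elsewhere except the inward node); and a two-point-quotient argument, as in Corollary \ref{Cnoinseptails}(ii), shows every component of $\ol{Y}$ above $\ol{W}$ has genus zero, so $\deg \omega = -2$. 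The divisor of $\omega$ is then forced to have a double pole at the inward node, which is impossible for a logarithmic form. So the contradiction is extracted from the differential form, not from locating $\ol{W}$ $p$-adically. To salvage your route you would have to actually prove the inertia/depth claim; as written, the proposal stops short of a proof precisely at its crux.
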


\begin{proof}
If $x=1$ specializes to a component $\ol{W}$ that is not a tail, then there exists a tail $\ol{X}_c$ lying outward from $\ol{W}$.  
If $\ol{X}_c$ is a $p^i$-tail, then Lemma \ref{Ltailetale} and Proposition \ref{Pcorrectspec} show that $i < s$.    
By Lemma \ref{Linsep2etale}, $\ol{X}_c$ is an \'{e}tale tail of the stable model of $Y/Q_i \to X$.  As $i < s$, this is still a three-point cover.  
So we may assume (still, for the sake of contradiction) that there is an \'{e}tale tail lying outwards from the specialization of $x=1$.  By Lemma \ref{Lonenewtail}, we have 
$\sigma_c = 2$, and $\ol{X}_c$ is the only \'{e}tale tail of $f$.

Let $e_0$ (resp.\ $e_1$) be the edge of $\mc{G}'$ with 
source corresponding to $\ol{W}$ and target corresponding to the branch point $x=1$ (resp.\ the immediately
following component of $\ol{X}$ in the direction of $\ol{X}_c$).  Then $\sigma^{\eff}_{e_1} = 2$ by Lemma \ref{Lsigmaeff}, and $\sigma^{\eff}_{e_0} = 0$.
The deformation data above $\ol{W}$ are multiplicative and identical, and $\sigma^{\eff}$ is given by a weighted 
average of invariants.  So for any deformation datum $\omega$ above $\ol{W}$, we have $\sigma_{x_0} = 0$ and $\sigma_{x_1} = 2$, 
where the points $x_0$ and $x_1$ correspond to $e_0$ and $e_1$, respectively.  Furthermore, $\sigma_x = 1$ for all $x$ other than $x_0$, $x_1$, and the intersection point
$x_2$ of $\ol{W}$ and the next most inward component.

Now, by a similar argument as in the first part of the proof of Corollary \ref{Cnoinseptails} (ii), any component of $\ol{Y}$ above $\ol{W}$ must 
have genus zero.  Thus $\omega$ has degree $-2$.  Since $\omega$ has simple poles above $x_0$ and simple zeroes above $x_1$, it 
must have a double pole above $x_2$. But a logarithmic 
differential form cannot have a double pole.  This is a contradiction.
\end{proof}

\begin{remark}\label{Rallinsep}
In Corollary \ref{Cnoinseptails} (iv), there in fact does exist an inseparable $p^j$-tail for each $1 \leq j < s$.
Each of these is the same as the unique new tail of the cover $Y/Q_j \to X$.  We omit the details.
\end{remark}

We give the major result of this section:
\begin{prop}\label{Pm1stable}
Assume $G = \ints/p^n$, $n \geq 1$, and $f:Y \to X$ is a three-point $G$-cover defined over $\ol{K_0}$, totally ramified above $\{0, \infty\}$
and ramified of index $p^s$ above $1$.  Suppose $f$ is given over $\ol{K_0}$ by $y^{p^n} = x^a(x-1)^b$.
\begin{enumerate}[(i)]
\item If $s=n$ (i.e., $f$ is totally ramified above $1$), then there is a model for $f$ defined over $K_n =
K_0(\zeta_{p^n})$ whose stable model can be defined over a tame extension $K^{stab}/K_n$.
\item If $p > 3$ and $s < n$, then there is a model for $f$ over $K_n$ whose stable model can be defined over a tame extension 
$K^{stab}/K_n\left(\sqrt[p^{n-s}]{\frac{a}{a+b}}\right)$.  
\item If $p = 3$ and $1 =s < n$, then there is a model for $f$ over 
$K_n\left(\sqrt[3]{3^{2n+1}\binom{b}{3}}\right)$ whose stable model can be defined over a tame extension $K^{stab}$ of 
$$K_n\left(\sqrt[3]{3^{2n+1}\binom{b}{3}}, \ \ \sqrt[3^{n-1}]{\frac{a}{a+b}}\right).$$
\item Assume $p = 3$ and $1 < s < n$.  Let $$d' = \frac{a}{a+b} + \frac{\sqrt[3]{3^{2(n-s+1) + 1}\binom{b}{3}}}{a+b}.$$ 
Then there is a model for $f$ over $K_n$ whose stable model can be defined over a tame extension $K^{stab}$ of
$$K_n\left(d', \sqrt[3^{n-s}]{\frac{a}{a+b}}, \ \ \sqrt[3^{n-s+1}]{\frac{(d')^a(d'-1)^b}{a^ab^b(a+b)^{-(a+b)}}}\right).$$  
\item Assume $p = 2$. 
For $0 \leq j < s$, let $$d_j = \frac{a}{a+b} + \frac{\sqrt{2^{n-j}bi}}{(a+b)^2},$$ where $i^2 =-1$, and the square root sign represents either square root.
Then there is a model for $f$ over $K_n$ whose stable model can defined over a tame extension $K^{stab}$ of 
$$K := K_n\left(\sqrt[2^{n-1}]{d_0}, \sqrt[2^{s-1}]{d_0 -1}, \sqrt[2^{n-j}]{d_j}, \sqrt[2^{s-j}]{d_j-1}\right)_{1 \leq j < s}.$$
\end{enumerate}
\end{prop}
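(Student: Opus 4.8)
The plan is to apply Proposition \ref{Pstabletail}. Since $G \cong \ints/p^n$ is abelian it is $p$-solvable, so $\ol{f}$ is monotonic by Proposition \ref{Pmonotonic} and the hypotheses of Proposition \ref{Pstabletail} are available. It therefore suffices to produce, for each tail $\ol{X}_b$ of $\ol{X}$, a smooth point $\ol{x}_b$ of $\ol{X}$ on $\ol{X}_b$ and a smooth point $\ol{y}_b$ of $\ol{Y}$ on a component above $\ol{X}_b$, both fixed by $G_{K'}$, where $K'$ is the field asserted in each case; the proposition then gives the stable model over a tame extension of $K'$. By Proposition \ref{Pcyclicmoduli} and cohomological dimension $1$ (\cite[Proposition 2.5]{CH:hu}), $f$ is already defined over $K_n$, and I am free to choose the Kummer model $y^{p^n} = c\,x^a(x-1)^b$ with any $c$ in the asserted base field; the choice of $c$ is what I would use to minimize the extension.

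First I would enumerate the tails. By Lemma \ref{Lonenewtail} there is a unique \'{e}tale tail, the new tail centered at $d = d_0$. When $s < n$, Corollaries \ref{Cnoinseptails} and \ref{Cinsepprimitive} (with Remark \ref{Rallinsep}) identify the inseparable tails: the primitive $p^s$-tail to which $x=1$ specializes, and, for $p \le 3$, the additional new inseparable $p^j$-tails centered at the shifted points $d'$ (resp.\ $d_j$) recorded there. For the point $\ol{x}_b$ on $\ol{X}$ I would always take the specialization of $x = d_\bullet$ (the center of the tail in question) or of $x=1$; this lies on the correct tail and is a smooth point, and it is $G_{K'}$-fixed as soon as the center lies in $K'$, which holds because each asserted field contains the relevant centers (directly or as a power of an adjoined radical).

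The crux is the point $\ol{y}_b$ on $\ol{Y}$. For an inseparable $p^j$-tail that is \emph{not} the $x=1$ tail, Lemma \ref{Linsep2etale} shows it is the new \'{e}tale tail of the quotient $\ints/p^{n-j}$-cover $Y/Q_j \to X$, and (as in the proof of that lemma) $Y \to Y/Q_j$ is radicial over it; hence a $G_{K'}$-fixed point of the reduction of $Y/Q_j$ lifts to a \emph{unique}, therefore fixed, point of $\ol{Y}$. The point downstairs I would take is the specialization of the point of the Kummer cover $\tilde{y}^{\,p^{n-j}} = x^a(x-1)^b$ over $x = d_j$, which becomes rational after adjoining $\sqrt[p^{n-j}]{d_j^a(d_j-1)^b}$; using $p \nmid a$ and $v(b)=n-s$ one checks $\sqrt[p^{n-j}]{d_j^a(d_j-1)^b} \in K_n\!\left(\sqrt[p^{n-j}]{d_j},\ \sqrt[p^{s-j}]{d_j-1}\right)$, which accounts for the corresponding radicals in the statement. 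For the $x=1$ tail I would instead use the ramification point of $f$ over $x=1$: it is the unique preimage in $Y$ of a point of $Y/Q_s$ over $x=1$ (where $Y \to Y/Q_s$ is totally ramified), rational after adjoining $\sqrt[p^{n-s}]{c}$ by \cite[Proposition 4.2.11]{Ra:sp}; taking $c = d^{-a}(d-1)^{-b}$ (legitimate when $d \in K_0$, i.e.\ $p>3$) turns this into $\sqrt[p^{n-s}]{d}$ and simultaneously forces $g(d)=1$, so that the point $(d,1)$ handles the \'{e}tale tail over $K_n$ for free. For $p \le 3$, where $d=d_0$ is irrational, I would handle the \'{e}tale tail through $Y/Q_1$, using that in characteristic $p$ the bottom $\ints/p$-layer of the $\mu_{p^n}$-tower reduces inseparably (Lemma \ref{Lartinschreier}); this saves one power of $p$ and produces the radicals $\sqrt[p^{n-1}]{d_0}$, $\sqrt[p^{s-1}]{d_0-1}$ of case (v).

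Finally I would collect, over all tails, the radicals forced by these fixed-point computations; they assemble into exactly the field $K'$ of each of (i)--(v), and Proposition \ref{Pstabletail} delivers the stable model over a tame extension of $K'$. I expect the main obstacle to be the \'{e}tale tail together with the small-prime cases $p=2,3$: there the centers $d_j$ (and $d'$) are themselves irrational, extra inseparable tails appear, and one must match the characteristic-zero Kummer coordinates against the characteristic-$p$ Artin-Schreier reduction of Lemma \ref{Lartinschreier} precisely enough to see that exactly one power of $p$ is saved and that no spurious radical is needed. Checking that the chosen model is defined over the stated base field while all the required fixed points become rational over the stated extension is the delicate step.
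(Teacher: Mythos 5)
Your skeleton is the paper's: enumerate the tails via Lemma \ref{Lonenewtail} and Corollaries \ref{Cnoinseptails}, \ref{Cinsepprimitive}; produce $G_{K'}$-fixed points on $\ol{X}$ and $\ol{Y}$ over each tail; lift fixed points through $Y \to Y/Q_j$ where that map is radicial; normalize the Kummer model so that $g(d)=1$; and finish with Proposition \ref{Pstabletail}. For $p>3$ this is essentially the paper's proof of (i) and (ii). But there is a genuine gap in your treatment of the small primes, which is exactly cases (iii)--(v). Your device of ``handling the \'{e}tale tail through $Y/Q_1$'' to ``save one power of $p$'' fails: over the \'{e}tale tail the generic inertia is trivial, and by Lemma \ref{Lonenewtail} the components of $\ol{Y}$ above it are Artin-Schreier covers whose Galois group is precisely $Q_1$ acting \emph{separably}. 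So $Y \to Y/Q_1$ is a separable degree-$p$ cover there, not radicial, and a $G_{K'}$-fixed point of the reduction of $Y/Q_1$ has $p$ distinct preimages in $\ol{Y}$ which $G_{K'}$ may permute; no fixed point is produced. Moreover, since $\ol{f}$ is \'{e}tale over the specialization of $x=d_0$, specialization gives a $G_{K'}$-equivariant bijection from the characteristic-zero fiber $f^{-1}(d_0)$ onto the corresponding fiber of $\ol{f}$; hence with your model $y^{2^n}=x^a(x-1)^b$ a fixed point over the \'{e}tale tail genuinely requires $\sqrt[2^n]{d_0^a(d_0-1)^b}\in K'$, i.e.\ radicals of the strength $\sqrt[2^{n}]{d_0}$ and $\sqrt[2^{s}]{d_0-1}$, which the field $K$ of case (v) does not contain.

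The repair is the paper's, and it exposes the real error in your parenthetical ``$c=d^{-a}(d-1)^{-b}$ is legitimate only when $d\in K_0$, i.e.\ $p>3$'': legitimacy only requires $c$ to lie in the base field of the asserted model, not $d\in K_0$. For $p=2$ one has $d_0\in K_n$ (Lemma \ref{Lwhichell}; the paper checks $c\in K_n$); for $p=3$, $s>1$ one has $d=\frac{a}{a+b}\in K_0$ anyway; and for $p=3$, $s=1$ the statement of (iii) enlarges the field of the model to $K_n\bigl(\sqrt[3]{3^{2n+1}\binom{b}{3}}\bigr)$ precisely so that $c$ lies in it. With this one normalization used consistently for \emph{all} tails---you cannot take $c=1$ when computing fibers of the quotient covers and $c=d^{-a}(d-1)^{-b}$ for the \'{e}tale tail, since Proposition \ref{Pstabletail} applies to a single model and its single stable reduction---the fiber over $x=d$ is $\mu_{p^n}\subset K_n$ and the \'{e}tale tail costs nothing. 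The radicals $\sqrt[2^{n-1}]{d_0}$, $\sqrt[2^{s-1}]{d_0-1}$ of case (v) are then needed not for the \'{e}tale tail but because the fibers of $Y/Q_j$ over $x=d_j$ and of $Y/Q_s$ over $x=1$ now involve $c$: one must bound $L_j=K_n\bigl(\sqrt[2^{n-j}]{d_j^a(d_j-1)^b/(d_0^a(d_0-1)^b)}\bigr)$ and $L'=K_n\bigl(\sqrt[2^{n-s}]{c}\bigr)$, and the exponents $n-1$, $s-1$ come from $j\geq 1$ in the inseparable tails, not from any saving over the \'{e}tale tail.
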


\begin{proof}  In each case of the proposition, let $d$ be as in Lemma \ref{Lonenewtail}.  Set $c = d^{-a}(d-1)^{-b}$.  The model of $f$ 
we will use will always be the one given by the equation $y^{p^n} = cx^a(x-1)^b$.  In all cases, there is a unique \'{e}tale tail
$\ol{W}$ of $\ol{X}$ containing the specialization of $x=d$, which is a smooth point of $\ol{X}$.
Furthermore, the points in the fiber of $f$ above $x=d$ are all $K_n$-rational.
\\\\
\emph{To (i):}
Since $s = n$, we have $d = \frac{a}{a+b}$ and $a$, $b$, $a+b$ are prime to $p$.  Our model for $f$ is defined over $K_n$.
By Corollary \ref{Cnoinseptails}, the tail $\ol{W}$ is the unique tail of $\ol{X}$.  Since the point $x=d$ and all points in the fiber
of $f$ above $x=d$ are $K_n$-rational, their specializations are fixed by $G_{K_n}$.  
By Proposition \ref{Pstabletail}, the stable model of $f$ is defined over a tame extension of $K_n$.
\\
\\
\emph{To (ii) and (iii):}
By Corollary \ref{Cnoinseptails} (ii) and (iii), there is a unique inseparable tail $\ol{W}'$ 
containing the specialization of $x=1$ (to a smooth point of $\ol{X}$).  
Now, consider $Y/Q_s$. (note that $Q_s$ is the inertia group above $x=1$).  
This is a cover of $X$ given birationally by the equation $y^{p^{n-s}} = cx^a(x-1)^b$.  Since
$p^{n-s}$ exactly divides $b$, we set $y' = y/(x-1)^{b/p^{(n-s)}}$.  The new
equation $(y')^{p^{n-s}} = cx^a$ shows that the points above $x=1$ in $Y/Q_s$ are defined over the field $K_{n-s}(c, \sqrt[p^{n-s}]{c}) = K_{n-s}(d, \sqrt[p^{n-s}]{d})$, and their specializations are thus fixed by its absolute Galois group. 
Since the map $Y^{st} \to Y^{st}/Q_s$ is radicial above $\ol{W}'$, all points of $\ol{Y}$ above the specialization of $x=1$
are fixed by $G_{K_{n-s}(d, \sqrt[p^{n-s}]{d})}$.
By Proposition \ref{Pstabletail}, the stable model of $f$ is defined over a tame extension of $K_n(d, \sqrt[p^{n-s}]{d})$.  

If $p > 3$ and $s < n$, then $K_n(d, \sqrt[p^{n-s}]{d}) = K_n(\sqrt[p^{n-s}]{\frac{a}{a+b}})$, finishing the proof of (ii).  If $p = 3$ and $s = 1$, then 
$d = \frac{a}{a+b}\left(1 + \frac{\sqrt[3]{3^{2n+1}\binom{b}{3}}}{a}\right)$.  Since $v(\sqrt[3]{3^{2n+1}\binom{b}{3}}) =  n - \frac{1}{3}$, 
the binomial theorem shows that $1 + \frac{\sqrt[3]{3^{2n+1}\binom{b}{3}}}{a}$ is a $3^{n-1}$st power in $K_n\left(\sqrt[3]{3^{2n+1}\binom{b}{3}}\right)$.  
Thus $$K_n(d, \sqrt[3^{n-1}]{d}) = K_n\left(\sqrt[3]{3^{2n+1}\binom{b}{3}}, \sqrt[3^{n-1}]{\frac{a}{a+b}}\right),$$ finishing the proof of (iii).
\\
\\
\emph{To (iv):}
Here $d = \frac{a}{a+b}$, and our model of $f$ is defined over $K_n$.
By Corollary \ref{Cnoinseptails} (iii), there is an inseparable tail $\ol{W}'$ containing the specialization of $x=1$ and a unique new inseparable tail
containing the specialization of $x=d'$.  As in parts (ii) and (iii), the fiber of $\ol{f}$ above the specialization of $x=1$ is pointwise fixed by
the absolute Galois group of $K_n(\sqrt[3^{n-s}]{\frac{a}{a+b}})$.  Likewise, the fiber of $\ol{f}$ above the specialization of $x=d'$ is fixed by
the absolute Galois group of $K_n(\sqrt[3^{n-s+1}]{c(d')^a(d'-1)^b})$.  By Proposition \ref{Pstabletail}, the stable model of $f$ is defined over a
tame extension of the compositum of these two fields, which is exactly the field given in part (iv) of the proposition.
\\
\\
\emph{To (v):}
In this case, $d = d_0$.  Note that $n \geq 2$, as there are no three-point $\ints/2$-covers.
One sees that $c = d_0^{-a}(d_0-1)^{-b} \in K_n$ (in fact, $c  \in K_3$ always, and $c \in K_2$ for $n=2$). So our model of $f$ is defined over $K_n$.

By Corollary \ref{Cnoinseptails} (iv) (and Remark \ref{Rallinsep}), there is a
unique inseparable $p^j$-tail $\ol{W}_j$ of $\ol{X}$ for each $1 \leq j < s$.  Also, there is an inseparable tail containing the
specialization of $x=1$ (even if these inseparable tails did not exist, our proof would still carry through---only our 
$K$ would overestimate the minimal field of definition of 
the stable model).  Each tail $\ol{W}_j$ contains the specialization of $x=d_j$ to a smooth point of $\ol{X}$.

As in (iv), the fiber of $\ol{f}$ above the specialization of $x=d_j$, for $1 \leq j < s$,
is pointwise fixed by $G_{L_j}$, where $L_j = K_n\left(\sqrt[2^{n-j}]{\frac{d_j^a(d_j-1)^b}{d_0^a(d_0-1)^b}}\right)$.  
As in (ii) and (iii), 
the fiber above the specialization of $x=1$ is pointwise fixed by $G_{L'}$, where $L' = K_n\left(\sqrt[2^{n-s}]{d_0^{-a}(d_0-1)^{-b}}\right)$.  
Keeping in mind that $v(b) = n-s$, we see that $K$ 
(as defined in the proposition) contains the compositum of $L'$ and all the extensions $L_j$.  We conclude using Proposition 
\ref{Pstabletail}.
\end{proof}

\begin{corollary}\label{Ccyclicstable}
In each case covered in Proposition \ref{Pm1stable}, the $n$th higher ramification group of $K^{stab}/K_0$ for the upper numbering vanishes.
\end{corollary}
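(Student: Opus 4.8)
The plan is to show that in every case the conductor (largest upper-numbering ramification break) of the Galois closure of $K^{stab}/K_0$ is strictly less than $n$, which is equivalent to the vanishing of the $n$th higher ramification group. In each part of Proposition \ref{Pm1stable} the field $K^{stab}$ is a tame extension of an explicitly described field $K'$ obtained from $K_n$ by adjoining finitely many radicals $\sqrt[p^{m}]{\alpha}$. Since a compositum of tame extensions is tame and tameness is preserved under the conjugation needed to pass to the Galois closure, Lemma \ref{Ltamenochange} reduces us to computing the conductor of (the Galois closure of) $K'/K_0$. By Lemma \ref{Lcompositum} this conductor is the maximum of the conductors of the individual generating extensions $K_n/K_0$ and $K_n(\sqrt[p^{m}]{\alpha})/K_0$, so it suffices to bound each separately. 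For $K_n/K_0$ this is the cyclotomic computation already used in Proposition \ref{Psolvstabtame}: by \cite[IV]{Se:lf} the upper jumps of $K_n/K_0$ are $0,1,\dots,n-1$, so its conductor is $n-1<n$.

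It therefore remains to treat each radical extension $K_n(\sqrt[p^{m}]{\alpha})/K_0$. The key structural observation is that in every case listed in Proposition \ref{Pm1stable} the exponent satisfies $m\le n-1$ (indeed $m=n-s$, $n-1$, $s-1$, $n-j$, $s-j$, or $1$, always bounded by $n-1$ since $s\le n$ and $1\le j<s$). As $\alpha$ lies in $K_0^{\times}$ (or, for the correction terms inside $d'$ and $d_j$, in a field already adjoined), the extension $K_n(\sqrt[p^{m}]{\alpha})$ is Galois over $K_0$, and I would first normalize $\alpha$ modulo $p^{m}$th powers: since $k$ is algebraically closed, the Teichm\"{u}ller part of a unit $\alpha$ is a $p^{m}$th power lying in $K_0$ (as in the proof of Lemma \ref{Lartinschreier}), so one reduces to $\alpha=1+x$ with $v(x)>0$ reduced, while the non-unit radicands of valuation $n-s$ split off a power of $p$ whose $p^{m}$th root contributes a separate cyclic $p$-extension.

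With $\alpha$ in normal form, I would compute the conductor $c$ of $K_n(\sqrt[p^{m}]{\alpha})/K_n$ directly, layer by layer through the tower $K_n\subset K_n(\sqrt[p]{\alpha})\subset\cdots\subset K_n(\sqrt[p^{m}]{\alpha})$; each step is a $\ints/p$-Kummer extension whose break is governed by the reduced valuation of the radicand exactly as in Lemma \ref{Lartinschreier}, and the resulting $c$ stays small (bounded independently of the cyclotomic depth). To pass from $K_n$ down to $K_0$ I would use the compatibility of the ramification filtration with the tower $K_0\subset K_n\subset K_n(\sqrt[p^{m}]{\alpha})$ (\cite[IV]{Se:lf}): writing $H=\Gal(\cdot/K_n)$, quotient-compatibility forces the $n$th upper ramification group of the whole extension into $H$, and the Herbrand function identifies the largest break meeting $H$ with $\varphi_{K_n/K_0}(c)$. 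Because $\varphi_{K_n/K_0}$ carries $[0,\psi_{K_n/K_0}(n))$ into $[0,n)$ and $\psi_{K_n/K_0}(n)=\tfrac{p^{n}-p}{p-1}+1$ is far larger than the bound on $c$, every such break is $<n$, completing the reduction.

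The main obstacle is the input to this last step: obtaining a clean, uniform bound on the over-$K_n$ conductors $c$ for all the radicands, and in particular handling the non-unit radicands of valuation $n-s$, whose $p$th-power part produces a root of $p$ and hence a genuinely wild, a priori larger, break, as well as the small primes $p=2,3$, where the correction terms built into $d'$ and $d_j$ in Lemma \ref{Lonenewtail} are precisely what force the reduced valuations into the range handled by Lemma \ref{Lartinschreier} (and its $p=2$ analogue in Appendix \ref{Asmallprimes}). I expect the delicate bookkeeping for these cases---verifying that even the root-of-$p$ contributions, after the contracting transfer $\varphi_{K_n/K_0}$, remain below $n$---to be where the real work lies; the reductions via Lemmas \ref{Ltamenochange} and \ref{Lcompositum} and the cyclotomic computation are routine.
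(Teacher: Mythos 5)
Your reduction skeleton coincides with the paper's: Lemma \ref{Ltamenochange} disposes of the tame part, Lemma \ref{Lcompositum} splits the compositum into its generating extensions, and $K_n/K_0$ has conductor $n-1$ (the paper quotes \cite[Corollary to IV, Proposition 18]{Se:lf}). Your Herbrand transfer is also sound in principle: for $L = K_n(\sqrt[p^m]{\alpha})$ Galois over $K_0$ with $H = \Gal(L/K_n)$, the largest upper break of $L/K_0$ whose ramification group meets $H$ is indeed $\varphi_{K_n/K_0}(c)$, where $c$ is the conductor of $L/K_n$, so the criterion becomes $c < \psi_{K_n/K_0}(n)$. (A slip: $\psi_{K_n/K_0}(n) = p^n-1$, not $\frac{p^n-p}{p-1}+1$; since your value is smaller for $p>2$, this only makes the condition you would verify stronger, so it is harmless.)

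The gap is that essentially all of the mathematical content of the corollary lies in the step you defer, and the two assertions you make about how it would go are not correct. First, Lemma \ref{Lartinschreier} concerns the reduction of $\mu_{p^n}$-torsors over a $p$-adic disk $\Spec R\{T\}$ and outputs conductors of Artin--Schreier covers in characteristic $p$; it does not compute upper ramification breaks of Kummer extensions of the local field $K_n$, so it cannot "govern" the layers of your tower. Second, the claim that $c$ "stays small (bounded independently of the cyclotomic depth)" is false: for a radicand $\alpha \equiv 1 \pmod{p^{n-s}}$ the breaks of $K_n(\sqrt[p^m]{\alpha})/K_n$ scale with the absolute ramification of $K_n$, hence grow with $n$, as does the threshold $\psi_{K_n/K_0}(n) = p^n-1$; what must be proved is the precise comparison between two quantities that both grow, and this is a genuine computation, different in each case of Proposition \ref{Pm1stable}. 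The paper supplies exactly this: case (ii) is \cite[Theorem 5.8]{Vi:ra} (unit radicand); cases (iii)--(v), where the radicands involve non-units (the valuation-$(n-s)$ terms and roots of essentially $p$-power elements) and the primes $2$ and $3$, are Lemma \ref{Lsmallprimeram} and Proposition \ref{Pappendixmain} of Appendix \ref{Asmallprimes}, which rest on \cite[Theorem 6.5]{Vi:ra} and on several results of \cite{Ob:cw} about approximating radicands by $p$th powers and bounding conductors in towers $K_0 \subseteq L \subseteq M$. None of that is routine bookkeeping (for $p=2$ it takes pages), and your proposal acknowledges but does not carry it out; as written it establishes only the easy reductions, not the corollary.
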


\begin{proof} We first note that any tame extension of a Galois extension of $K_0$ is itself Galois over $K_0$.
In case (i) of Proposition \ref{Pm1stable}, $K^{stab}$ is contained in a
tame extension of $K_n$.  The $n$th higher ramification groups for the upper
numbering for $K_n/K_0$ vanish by \cite[Corollary to IV, Proposition 18]{Se:lf}.  By Lemma \ref{Ltamenochange}, the $n$th higher 
ramification groups vanish for $K^{stab}/K_0$ as well.  

For case (ii) of Proposition \ref{Pm1stable}, we note that $v(\frac{a}{a+b}) = 0$.
So $K_n(\sqrt[p^{n-s}]{\frac{a}{a+b}})/K_0$ has trivial $n$th higher
ramification groups for the upper numbering by \cite[Theorem 5.8]{Vi:ra}.  We again conclude using Lemma 
\ref{Ltamenochange}.

For cases (iii) and (iv) of Proposition \ref{Pm1stable}, Lemma \ref{Lsmallprimeram} shows that $K^{stab}$ is a tame extension of an extension of 
$K_0$ for which the $n$th higher ramification groups for the upper numbering vanish.  For case (v), this fact is shown by Proposition \ref{Pappendixmain}.
We again conclude using Lemma \ref{Ltamenochange}.
\end{proof}

\subsection{The general $p$-solvable case}\label{Spsolvable}

We maintain the notation of \S\ref{Smain}.

\begin{prop}\label{Pmainpsolvable}
Let $G$ be a $p$-solvable finite group with a cyclic $p$-Sylow subgroup $P$ of order $p^n$. 
If $f:Y \to X$ is a three-point $G$-cover of $\proj^1$ defined over $\ol{K_0}$,
then there exists a field extension $K'/K_0$ such that \vspace{0.05cm}
\begin{enumerate}[(i)]
\item The cover $f$ has a model whose stable model is defined over $K'$.
\item The $n$th higher ramification group of $K'/K_0$ for the upper numbering vanishes. 
\end{enumerate}
In particular, if $K$ is the field of moduli of $f$ relative to $K_0$, then $K \subseteq K'$, so the $n$th higher ramification group of $K/K_0$ for the upper 
numbering vanishes.
\end{prop}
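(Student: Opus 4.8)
The plan is to reduce the general $p$-solvable case to the semidirect-product cases already treated in \S\ref{Ssemidirect} and \S\ref{Smequals1}, using the quotient-cover comparison of \S\ref{Squotient}. Let $N \subseteq G$ be the maximal prime-to-$p$ normal subgroup. By Proposition \ref{Ppsolvable}, $\bar G := G/N \cong \ints/p^n \rtimes \ints/m_G$ with faithful conjugation action, and the exponent $n$ is unchanged since $P$ injects into $\bar G$. Set $Z := Y/N$, so that $f$ factors as $Y \stackrel{q}{\to} Z \stackrel{\eta}{\to} X$, where $\eta$ is a three-point $\bar G$-cover and $q$ is an $N$-cover, hence tame. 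I would then produce a field $K^{stab}$ over which the stable model of $\eta$ is defined and whose $n$th upper-numbering ramification group over $K_0$ vanishes, and finally invoke Proposition \ref{Pstr2aux} to transfer this conclusion back to $f$ at the cost of a tame extension.

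For the middle step I would split on $m_G$. If $m_G > 1$, Proposition \ref{Psolvstabtame} gives a tame extension $K^{stab}/K_n$ over which $\eta^{st}$ is defined and whose $n$th higher ramification group over $K_0$ vanishes. If $m_G = 1$, then $\bar G \cong \ints/p^n$; after relabeling $0,1,\infty$ so that $\eta$ is totally ramified above two of them (possible because a connected three-point $\ints/p^n$-cover must have at least two branch points of index $p^n$), Proposition \ref{Pm1stable} furnishes a field $K^{stab}$ of definition for $\eta^{st}$, and Corollary \ref{Ccyclicstable} guarantees that the $n$th upper-numbering ramification group of $K^{stab}/K_0$ vanishes. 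In either case $K^{stab}/K_0$ may be taken Galois.

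Next I would apply Proposition \ref{Pstr2aux}, whose hypotheses require a field $L$ over which both $\eta^{st}$ and the branch points of $q$ are defined. These branch points lie in $Z$ above $\{0,1,\infty\}$, so they are defined over the splitting field of the fibers of $\eta$ over those three points. Here I would use that every cyclic subgroup of $\bar G = \ints/p^n \rtimes \ints/m_G$ with faithful action is either a $p$-group or of prime-to-$p$ order, so each such fiber is a single Galois orbit under a tame permutation action; consequently these fibers split over a tame extension of $K^{stab}$ — the only $p$-power roots that could intervene (in the cyclic case, over the index-$p^s$ point) are already absorbed into $K^{stab}$ by the construction of Proposition \ref{Pm1stable}. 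Thus $L$ is a tame extension of $K^{stab}$, and by Lemma \ref{Ltamenochange} the $n$th higher ramification group of $L/K_0$ still vanishes. Proposition \ref{Pstr2aux} then defines $f^{st}$ over a tame extension $K'$ of $L$, and one more application of Lemma \ref{Ltamenochange} yields (i) and (ii); taking $K'$ Galois over $K_0$ (a tame extension of a Galois extension is Galois) is harmless.

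For the final assertion, recall from \S\ref{Sstable} that the minimal field of definition $K^{st}$ of the stable model is the fixed field of $\Gamma^{st} = \{\sigma : \sigma \text{ acts trivially on } \ol{Y}\}$; since any such $\sigma$ fixes $f^{st}$ and hence $f$, we have $\Gamma^{st} \subseteq \Gamma^{in}$, so the field of moduli $K$ satisfies $K \subseteq K^{st} \subseteq K'$. As $K^{st}/K_0$ and $K'/K_0$ are Galois and the upper numbering is compatible with quotients (\cite[IV, Proposition 14]{Se:lf}), the vanishing of the $n$th ramification group for $K'/K_0$ forces it for $K^{st}/K_0$, and therefore for the Galois closure of $K/K_0$, which lies in $K^{st}$. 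The step I expect to be the crux is the control of the field of definition of the branch points of $q$: one must check that passing from $\eta$ back to $f$ introduces no new wild ramification beyond level $n$, and this is precisely where the analysis of cyclic inertia in $\bar G$ together with the absorption of the relevant $p$-power roots into $K^{stab}$ is needed.
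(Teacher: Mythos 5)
Your overall strategy (quotient by the maximal prime-to-$p$ normal subgroup $N$, handle $G/N \cong \ints/p^n \rtimes \ints/m_G$ via \S\ref{Ssemidirect} and \S\ref{Smequals1}, then transfer back through Proposition \ref{Pstr2aux} and Lemma \ref{Ltamenochange}) is the paper's strategy, but there is a genuine gap at the very first step: you assert that $\eta: Z = Y/N \to X$ is a \emph{three-point} $G/N$-cover, and this can fail. If the inertia group of $f$ at one of the three branch points is contained in $N$, that point is not a branch point of $\eta$ at all. This really happens: take $G = \ints/p \times S_3$ with $p > 3$ (so $N = S_3$, $G/N \cong \ints/p$) and branch cycles $g_0 = (0,(12))$, $g_1 = (1,(23))$, $g_\infty = (g_0g_1)^{-1}$; these generate $G$ and have product $1$, but the image of $g_0$ in $G/N$ is trivial, so $\eta$ is branched at only two points. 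In that situation $G/N$ is forced to be cyclic and $\eta$ is a two-point $\ints/p^n$-cover with good reduction, so Propositions \ref{Psolvstabtame} and \ref{Pm1stable} and Corollary \ref{Ccyclicstable} — the entire middle step of your argument — do not apply, and no relabeling of $0,1,\infty$ fixes this. The paper treats this case separately and easily: one may take $\eta$ to be $y^{p^n} = x$, defined over $K_n$; the branch points of $q$ lying over $x = 1$ are then $p^n$th roots of unity, hence $K_n$-rational, and Proposition \ref{Pstr2aux} gives a tame extension $K'$ of $K_n$, which suffices since the $n$th upper-numbering ramification group of $K_n/K_0$ vanishes. Your proof needs this case added.

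A secondary weakness is the step you yourself flag as the crux, namely getting the branch points of $q$ defined over a controlled field. Your argument — that each fiber of $\eta$ over $\{0,1,\infty\}$ splits over a \emph{tame} extension of $K^{stab}$ because of a "tame permutation action," with the $p$-power roots "already absorbed" into $K^{stab}$ — is not justified as stated: nothing a priori forces the splitting field of such a fiber to be tame over $K^{stab}$, and the absorption claim silently reopens the explicit computations of Proposition \ref{Pm1stable}. The paper's argument is simpler and needs no extra extension: when $\eta$ is a three-point cover, every branch point of $q$ is a ramification point of $\eta$; by the definition of the stable model these ramification points specialize to \emph{distinct} smooth points of $\ol{Y}^{\dagger}$, and since $G_{K^{stab}}$ acts trivially on $\ol{Y}^{\dagger}$ (as $K^{stab}$ contains the minimal field of definition of the stable model), Galois-equivariance and injectivity of the specialization map force each ramification point to be fixed by $G_{K^{stab}}$, i.e., defined over $K^{stab}$ itself. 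With that substitution, and the missing two-point case supplied, your proof goes through and coincides with the paper's.
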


\begin{proof}
By Proposition \ref{Ppsolvable}, we know that there is a prime-to-$p$ subgroup $N$ 
such that $G/N$ is of the form $\ints/p^n \rtimes \ints/m_G$.
Let $f^{\dagger}: Y^{\dagger} \to X$ be the quotient $G/N$-cover.  

Suppose first that $f^{\dagger}$ is a three-point cover. Then
we know from Propositions \ref{Psolvstabtame} and \ref{Pm1stable}, along with Corollary \ref{Ccyclicstable}, that there exists a model of $f^{\dagger}$ 
whose stable model can be defined over a field $K^{stab}$ such that the $n$th higher ramification groups for the upper numbering for $K^{stab}/K_0$ vanish.
Let $\ol{f}^{\dagger}: \ol{Y}^{\dagger} \to \ol{X}^{\dagger}$ be the stable reduction of $f^{\dagger}$.
The branch points of $Y \to Y^{\dagger}$ are all ramification points of
$f^{\dagger}$, because $f^{\dagger}$ is branched at three points.
Thus, by definition, their specializations do not coalesce on $\ol{Y}^{\dagger}$.  Since $G_{K^{stab}}$ acts trivially on $\ol{Y}^{\dagger}$, it
permutes the ramification points of $f^{\dagger}$ trivially, and thus these points are defined over $K^{stab}$.  By Proposition
\ref{Pstr2aux}, the stable model $f^{st}$ of $f$ can be defined over a tame extension $K'/K^{stab}$.
By Lemma \ref{Ltamenochange}, $K'$ satisfies the properties of the proposition.

Now, suppose that $f^{\dagger}$ is branched at fewer than three points.  Since $\cf(\ol{K_0}) = 0$, the cover $f^{\dagger}$ must be a $\ints/p^n$-cover branched at
two points, say (without loss of generality) $0$ and $\infty$.  Then the
branch points of $Y \to Y^{\dagger}$ include the points of $Y^{\dagger}$ lying over $x=1$, as well as the ramification
points of $f^{\dagger}$.  We may assume that $f^{\dagger}: Y^{\dagger} \to X$ is given by the equation
$y^{p^n} = x$, which is defined over $K_n$ as a $\ints/p^n$-cover.  Then, the points lying above $x=1$ are also defined 
over $K_n$.  By Proposition \ref{Pstr2aux}, we can take $K'$ to be a tame extension of $K_n$.  
The $n$th higher ramification group of $K_n/K_0$ for the upper numbering vanishes 
(\cite[Corollary to IV, Proposition 18]{Se:lf}).  By Lemma \ref{Ltamenochange}, 
the $n$th higher ramification group of $K'/K_0$ for the upper numbering vanishes.
\end{proof}

Theorem \ref{Tmain} now follows from Propositions \ref{Plocal2global} and \ref{Pmainpsolvable}.

\begin{remark}\label{Rlessexplicit}
The proofs of Propositions \ref{Psolvstabtame} and \ref{Pm1stable}, and Corollary \ref{Ccyclicstable}, which are the main ingredients in the proof of 
Theorem \ref{Tmain}, depend on writing 
down explicit extensions and calculating their higher ramification groups.  It would be interesting to find a method to place 
bounds on the conductor without writing down explicit extensions.  Such a method might be more easily generalizable to the 
non-$p$-solvable case.
\end{remark}

\appendix

\section{Explicit determination of the stable model of a three-point $\ints/p^n$-cover, $p > 2$}\label{Aexplicit}
Throughout this appendix, we assume the notations of \S\ref{Smequals1} (in particular, that $f:Y \to X$ is given by $y^{p^n} = cx^a(x-1)^b$ for some 
$c$, and $d$ is as in Lemma \ref{Lonenewtail}).  So $G \cong \ints/p^n$ and $Q_i$ $(0 \leq i \leq n)$ is the unique subgroup of order $p^i$.
For a three-point $G$-cover $f$ defined over $\ol{K_0}$, 
the methods of \S\ref{Smequals1} are sufficient to bound the conductor of the field of 
moduli of $f$ above $p$.   But we can also completely determine the structure of the stable model of $f$ (Propositions 
\ref{Pfullbranchstable}, \ref{Ppartbranchstable}, and \ref{Ptopstable}).  Although this is essentially already done in \cite[\S 3]{CM:sr}, we include this 
appendix for three reasons.  First, we compute the stable reduction of the cover $f$, as opposed to the curve $Y$.  Second, we have fewer restrictions
than Coleman in the case $p = 3$ (we allow not only covers with full ramification above all three branch points, but also covers with ramification
index $3$ above one of the branch points).  Most importantly, our proof 
requires significantly less computation and guesswork, and takes advantage of the vanishing cycles formula, as well as the 
effective different (Definition \ref{Deffdifferent}). Indeed, the majority of the computation required is already encapsulated in Lemma \ref{Lonenewtail}.

While it would be a somewhat tedious calculation, our proof can be adapted to the case of all cyclic three-point covers without using new techniques.  
However, for simplicity, we 
assume throughout this appendix that either $p > 3$, or that $p=3$ and either $f$ is totally ramified above three points, or $f$ is totally ramified 
above two points and ramified of index $3$ above the third. 

\begin{lemma}\label{Lno2jumps}
The stable reduction $\ol{X}$ cannot have a $p^i$-component intersecting a $p^{i+j}$-component, for $j \geq 2$.
\end{lemma}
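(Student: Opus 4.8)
The plan is to suppose for contradiction that two intersecting components of $\ol{X}$ carry generic inertia $Q_i$ and $Q_{i+j}$ with $j \geq 2$, and to extract a contradiction from the deformation data above the larger of the two. Write $\ol{W}$ for the $p^{i+j}$-component and $\ol{W}'$ for the $p^i$-component, meeting at the node $w$. First I would note that $\ol{W}$ must be an \emph{interior} component: were it a tail, Lemma \ref{Ltailetale} would force its unique neighbor to be a $p^b$-component with $b > i+j$, whereas that neighbor is $\ol{W}'$, a $p^i$-component with $i < i+j$. Since $\ints/p^n$ is abelian, hence $p$-solvable, Proposition \ref{Pmonotonic} applies and $\ol{W}$ lies inward of $\ol{W}'$; in particular $\ol{W}$ sits on the path from $\ol{X}_0$ to $\ol{W}'$, and every component between $\ol{X}_0$ and $\ol{W}$ is a $p^r$-component with $r \geq i+j$.

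Next I would analyze the deformation data $\omega_1, \dots, \omega_{i+j}$ above $\ol{W}$, where $\omega_l$ arises from the subquotient step $Y/Q_{i+j-l} \to Y/Q_{i+j-l+1}$. Under the matching of Proposition \ref{Pdifferentepaisseur} (with $r = i+j$, $r' = i$), the data $\omega_{j+1}, \dots, \omega_{i+j}$ are precisely those that continue across $w$ to $\ol{W}'$, while the bottom $j$ data $\omega_1, \dots, \omega_j$ die there. Applying the second case of Proposition \ref{Pdifferentepaisseur} to the latter gives, for $1 \leq l \leq j$,
\[
\delta_{\omega_l} = \frac{\epsilon_w\,\sigma_{l,w}(p-1)}{p^{l}},
\]
where $\epsilon_w$ is the \'{e}paisseur of the annulus at $w$ and $\sigma_{l,w}$ is the invariant of $\omega_l$ at $w$. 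Since $j \geq 2$, both $l=1$ and $l=2$ are available.

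The heart of the argument is then to show that the bottom datum $\omega_1$ on $\ol{W}$ is \emph{multiplicative}. The datum $\omega_1$ is that of the fixed $\ints/p$-subquotient $Y/Q_{i+j-1}\to Y/Q_{i+j}$; by monotonicity this same subquotient remains inseparable on every component along the inward path from $\ol{W}$ back to $\ol{X}_0$, so it occurs as a deformation datum all the way in, and on $\ol{X}_0$ it is multiplicative by Lemma \ref{Lmultdd}. Since multiplicativity (logarithmic type) is preserved as one moves along this chain of components carrying a single subquotient, $\omega_1$ is multiplicative on $\ol{W}$. Proposition \ref{Pmultdefdata} then forces $\omega_l = \omega_1$ for all $l$, so each $\delta_{\omega_l} = 1$ and the $\sigma_{l,w}$ share a common value $\sigma$. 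Feeding $\delta_{\omega_1} = \delta_{\omega_2} = 1$ and $\sigma_{1,w} = \sigma_{2,w} = \sigma$ into the displayed relations yields simultaneously $\epsilon_w\sigma(p-1) = p$ and $\epsilon_w\sigma(p-1) = p^2$, which is impossible. This contradiction proves the lemma.

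The step I expect to be the genuine obstacle is establishing that $\omega_1$ is multiplicative on the interior component $\ol{W}$ — equivalently, ruling out that the dying bottom data are additive (exact). The displayed relations are perfectly consistent with $0 < \delta_{\omega_l} < 1$, so the additive alternative cannot be killed by any purely local computation at $w$; it must be controlled globally, namely by the fact that exact deformation data for a three-point $\ints/p^n$-cover are confined to tails (this is where Lemma \ref{Lmultdd}, the preservation of multiplicativity across annuli, and monotonicity have to be combined with care). Once that confinement is in hand, the interiority of $\ol{W}$ established in the first step closes the case.
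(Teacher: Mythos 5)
Your argument breaks down at exactly the step you flagged, and the repair you propose is based on a false premise. The claim that multiplicativity of the datum attached to a fixed $\ints/p$-subquotient propagates outward from $\ol{X}_0$ — equivalently, that exact deformation data are confined to tails — is contradicted by the very stable reductions this appendix computes. Take the stable reduction of a cover totally ramified over all three points with $n \geq 2$ (Proposition \ref{Pfullbranchstable}), and let $w = \ol{X}_0 \cap \ol{X}_1$, so $\ol{X}_0$ is a $p^n$-component and $\ol{X}_1$ an interior $p^{n-1}$-component. By Lemma \ref{Lmultdd} and Proposition \ref{Pmultdefdata}, all data $\omega_l$ above $\ol{X}_0$ are equal and multiplicative, so $\delta_{\omega_l} = 1$ and the invariants $\sigma_{l,w}$ share a common value $\sigma_w$. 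The dying bottom datum gives $1 = \delta_{\omega_1} = \epsilon_w\sigma_w(p-1)/p$ (second case of Proposition \ref{Pdifferentepaisseur}), and then the first case of that proposition, applied to the bottom datum $\omega'_1$ on $\ol{X}_1$ (which matches $\omega_2$ on $\ol{X}_0$, i.e.\ the \emph{same} $\ints/p$-subquotient), yields
$$\delta_{\omega'_1} = 1 - \frac{\epsilon_w\sigma_w(p-1)}{p^2} = 1 - \frac{1}{p} < 1,$$
so $\omega'_1$ is exact even though the datum of that same subquotient on $\ol{X}_0$ is logarithmic. Multiplicativity dies immediately upon leaving the original component; interior components other than $\ol{X}_0$ carry exact bottom data, and Proposition \ref{Pmultdefdata} only transfers multiplicativity upward within a single component, never across a node. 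Consequently your argument proves only the case $\ol{W} = \ol{X}_0$ (there your local computation at $w$ is sound and does give the contradiction $\epsilon_w\sigma(p-1) = p = p^2$), but it cannot be started on any other interior component, and no purely local or "propagation" argument will close that gap.

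The paper's proof takes an entirely different route that sidesteps deformation data: letting $\ol{X}_c$ be the $p^i$-component, a Hurwitz-formula computation shows the components of $\ol{Y}$ above $\ol{X}_c$ have positive genus, so by Lemma \ref{Lgenusincluded} the component $\ol{X}_c$ survives in the stable reduction of the quotient cover $f'\colon Y/Q_i \to X$, where it is \'etale and hence a tail (Lemma \ref{Letaletail}). Because this \'etale tail meets a $p^j$-component with $j \geq 2$, the result cited as \cite[Lemma 4.2]{Ob:vc} (of which Lemma \ref{Ltailbounds} quotes only part (i)) forces its effective invariant to satisfy $\sigma_c \geq p > 2$, contradicting the vanishing cycles formula (\ref{Evancycles}) for the three-point cover $f'$. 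The global input you were missing is precisely this combination of the vanishing cycles formula with the strengthened tail bound for jumps of size $\geq 2$; it cannot be replaced by the (false) assertion that interior data remain multiplicative.
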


\begin{proof}
Let $\ol{X}_c$ be such a $p^i$-component.  Then, a calculation
with the Hurwitz formula shows that the genus of any component $\ol{Y}_c$ above $\ol{X}_c$ is greater than 0.
By Lemma \ref{Lgenusincluded}, $\ol{X}_c$ is a component of the stable reduction of the cover $f': Y/Q_i \to X$.
It is \'{e}tale, and thus a tail by Lemma \ref{Letaletail}.  Let $\sigma_c$ be its effective ramification invariant.  By \cite[Lemma
4.2]{Ob:vc}, $\sigma_c \geq p > 2$.  
But this contradicts the vanishing cycles formula (\ref{Evancycles}).
\end{proof}

\begin{lemma}\label{Ltwocontract}
Suppose $\ol{W}$ is a $p^i$-component of $\ol{X}$ that does not contain the specialization of a branch point of $f$,
and does not intersect a $p^j$-component with $j > i$.  Then $\ol{W}$ intersects at least three other components.
\end{lemma}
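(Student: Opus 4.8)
The plan is to argue by contradiction: if $\ol{W}$ met at most two other components, I will show that every irreducible component $\ol{V}$ of $\ol{Y}$ lying over $\ol{W}$ is a smooth rational curve carrying at most two marked points, which contradicts the defining property of the stable model that each genus-zero component of $\ol{Y}$ carries at least three marked points. First I would dispose of the case that $\ol{W}$ meets exactly one component, so that $\ol{W}$ is a tail. If $i=0$, then $\ol{W}$ is an \'etale component and Lemma \ref{Ltailetale} (with $a=0$) produces a neighbour that is a $p^b$-component with $b>0=i$, contradicting the hypothesis; if $i\ge 1$, then $\ol{W}$ is a $p^i$-tail and Lemma \ref{Ltailetale} again produces a neighbour of strictly larger inertia, again a contradiction. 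In particular the hypotheses force $i\ge 1$, so $\ol{W}$ is inseparable, and it remains only to rule out exactly two neighbours.

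The heart of the matter is an inertia computation at the nodes. Let $w$ be any node of $\ol{W}$, lying on a neighbour $\ol{W}'$, and let $v$ be a point of $\ol{V}$ over $w$. By Proposition \ref{Pspecialram}, the inertia at $v$ is generated, up to a prime-to-$p$ part, by the generic inertia groups of the components through $v$; since $G\cong\ints/p^n$ is a $p$-group there is no prime-to-$p$ part, so the inertia at $v$ equals the group generated by $I_{\ol{V}}$ and $I_{\ol{V}'}$. By hypothesis every neighbour $\ol{W}'$ is a $p^{j}$-component with $j\le i$, hence $I_{\ol{V}'}\subseteq I_{\ol{V}}=Q_i$ and the inertia at $v$ is exactly $Q_i=I_{\ol{V}}$. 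This is precisely what forces $\ol{V}\to\ol{W}$ to be radicial. Indeed, write $\ol{V}\to\ol{W}$ as its purely inseparable part of degree $|I_{\ol{V}}|=p^i$ followed by the separable Galois cover of group $H=D_{\ol{V}}/I_{\ol{V}}$. By Proposition \ref{Pspecialram} the inertia strictly exceeds the generic inertia $I_{\ol{V}}$ only at nodes or at specializations of branch points of $f$; the latter do not occur on $\ol{W}$ by hypothesis, and at the nodes the inertia equals $I_{\ol{V}}$ by the computation just made. Hence the separable cover underlying $\ol{V}\to\ol{W}$ is everywhere unramified over $\ol{W}\cong\proj^1_k$ (recall $\ol{X}$ is a tree of projective lines, \S\ref{Sdisks}). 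Since $\proj^1_k$ admits no nontrivial connected finite \'etale cover, this separable cover is an isomorphism, so $H$ is trivial and $\ol{V}\to\ol{W}$ is purely inseparable.

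Consequently $\ol{V}$ has genus $0$, exactly one point of $\ol{V}$ lies over each node of $\ol{W}$, and $\ol{V}$ contains no specialization of a ramification point of $f$ (again because no branch point specializes to $\ol{W}$). If $\ol{W}$ had at most two neighbours, $\ol{V}$ would carry at most two marked points, contradicting stability; hence $\ol{W}$ meets at least three other components. I expect the inertia computation and its consequence to be the main obstacle, since the whole content of the lemma lies in distinguishing the present interior, inertia-maximal $\ol{W}$ from a genuine tail: at a tail the unique node sits on a neighbour of strictly larger inertia (Lemma \ref{Ltailetale}), so the inertia there jumps, the separable cover acquires a wild branch point, and the component above it can have positive genus. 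When no such upward jump and no branch point is available on $\ol{W}$, simple-connectedness of $\proj^1_k$ collapses the separable cover and renders the component above unstable.
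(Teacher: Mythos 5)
Your proof is correct and follows essentially the same route as the paper's: factor the cover $\ol{V}\to\ol{W}$ into its purely inseparable and separable parts, use Proposition \ref{Pspecialram} (ii) together with the hypothesis that no neighbour has larger inertia and no branch point specializes to $\ol{W}$ to control the inertia at the nodes, and contradict the three-point stability condition via a genus-zero component carrying at most two marked points. The only (harmless) differences are that you exploit $G\cong\ints/p^n$ being a $p$-group to make the separable part \'{e}tale---hence trivial by simple connectedness of $\proj^1_k$, so that $\ol{V}\to\ol{W}$ is radicial---where the paper merely notes that the separable part is tame and branched at at most two points (which already forces genus zero), and that you dispose of the tail case separately via Lemma \ref{Ltailetale}, while the paper's count of branch points handles one or two neighbours uniformly.
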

 
\begin{proof}  
Let $\ol{V}$ be an irreducible component of $\ol{Y}$ lying above $\ol{W}$.  Let $\ol{V}'$ be the smooth, proper curve with 
function field $k(\ol{V})^{p^i}$.  Then $f^{st}$ induces a natural map $\alpha: \ol{V}' \to \ol{W}$.  
By Proposition \ref{Pspecialram} (i) and (ii), this map is tamely ramified, and is branched only at points where $\ol{W}$ intersects another 
component.  If there are only two such points, then $\alpha$ is totally ramified, and $\ol{V}'$ (and thus $\ol{V}$) has genus 
zero.  This violates the three-point condition of the stable model.
\end{proof}

We now give the structure of the stable reduction when $f$ has three totally ramified points.
\begin{prop}\label{Pfullbranchstable}
Suppose that $f$ is totally ramified above all three branch points.  
Then $\ol{X}$ is a chain, with one $p^{n-i}$-component $\ol{X}_i$ for each $i$, $0 \leq i \leq n$ ($\ol{X}_0$ is the
original component).  For each $i > 0$, the component $\ol{X}_{n-i}$ corresponds to the closed disk of radius $p^{-\frac{1}{2}(i + \frac{1}{p-1})}$
centered at $d = \frac{a}{a+b}$.
\end{prop}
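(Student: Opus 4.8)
The plan is to build the chain from the bottom up using the quotient covers together with Lemma \ref{Lonenewtail}, and then to use a tree-counting argument to show that nothing else can occur.

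First I would construct the components. For each $i$ with $1 \leq i \leq n$, the quotient $Y/Q_{n-i} \to X$ is a three-point $\ints/p^i$-cover, totally ramified above all three branch points (dividing by $Q_{n-i}$ lowers each ramification index from $p^n$ to $p^i$). Applying Lemma \ref{Lonenewtail} to this cover (taking the lemma's parameters $n$ and $s$ both equal to $i$) produces a unique new \'etale tail of effective invariant $2$, corresponding to the closed disk of radius $p^{-\frac12(i+\frac1{p-1})}$ centered at $d = \frac{a}{a+b}$. Pulling back along $Y \to Y/Q_{n-i}$, which is radicial on the component above this tail, preserves its positive genus, so Lemma \ref{Lgenusincluded} shows that the corresponding disk yields a component $\ol{X}_i$ of $\ol{X}$ that is a $p^{n-i}$-component (its generic inertia in $f$ is $Q_{n-i}$). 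Together with the original component $\ol{X}_0$, which is a $p^n$-component by Lemma \ref{Lmultdd}, this exhibits $n+1$ components with generic inertia $p^n, p^{n-1}, \dots, p^0$, sitting in nested disks centered at $d$ (the radii $p^{-\frac12(i+\frac1{p-1})}$ are strictly decreasing in $i$).

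Next I would show that these are all the components of $\ol{X}$ and that they form a chain. Since $f$ is totally ramified above all three points, Corollary \ref{Cnoinseptails}(i) rules out inseparable tails, while Lemma \ref{Lonenewtail} gives exactly one \'etale tail; hence $\ol{X}$ has a single tail. A tree whose only leaf other than the root $\ol{X}_0$ is this tail must be a chain running from $\ol{X}_0$ to the tail. Along this chain the generic inertia is non-increasing outward by monotonicity (Proposition \ref{Pmonotonic}, as $\ints/p^n$ is $p$-solvable), it drops by at most one power of $p$ between adjacent components by Lemma \ref{Lno2jumps}, and it starts at $p^n$ and ends at $p^0$. It remains to forbid two adjacent components of equal inertia, which would lengthen the chain: all three branch points have index $p^n$, so by Proposition \ref{Pcorrectspec} they specialize to a $p^n$-component, necessarily $\ol{X}_0$, and hence every interior component other than $\ol{X}_0$ carries no branch point. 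If such a component had the same inertia $p^i$ as its inward neighbor, then it would neither meet a component of strictly larger inertia nor contain a branch point, yet it would meet only its two chain-neighbors, contradicting Lemma \ref{Ltwocontract}. Therefore the inertia strictly decreases by exactly one power at each step, the chain has exactly $n+1$ components, and these must be the $\ol{X}_i$ constructed above.

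Finally, matching the two descriptions identifies each $p^{n-i}$-component with the disk of radius $p^{-\frac12(i+\frac1{p-1})}$ produced by Lemma \ref{Lonenewtail} applied to $Y/Q_{n-i}$, completing the proof. The main obstacle I anticipate is the combinatorial step of pinning down the chain exactly: one must simultaneously control the branch-point specializations, invoke monotonicity and the no-double-jump lemma, and apply the three-point stability condition through Lemma \ref{Ltwocontract} to forbid repeated inertia, so that the $n+1$ constructed components account for all of $\ol{X}$ with nothing contracted or inserted.
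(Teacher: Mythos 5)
Your proposal is correct and follows essentially the same route as the paper's own proof: the single \'{e}tale tail (Lemma \ref{Lonenewtail} together with Corollary \ref{Cnoinseptails}) forces $\ol{X}$ to be a chain, monotonicity combined with Lemmas \ref{Lno2jumps} and \ref{Ltwocontract} pins down exactly one $p^j$-component for each $0 \leq j \leq n$, and Lemma \ref{Lonenewtail} applied to the quotients $Y/Q_{n-i}$ identifies the disks centered at $d$. The differences are cosmetic---you construct the disk-components first and count afterwards, and you route the ``quotient tail gives a component of $\ol{X}$'' step through genus growth and Lemma \ref{Lgenusincluded} rather than through the paper's observation that the quotient's stable model is a contraction of $Y^{st}/Q_{n-i} \to X^{st}$---while your parenthetical claim that the constructed component has generic inertia exactly $Q_{n-i}$ (a priori one only gets containment in $Q_{n-i}$) is, just as implicitly in the paper, only justified by the final matching and counting step.
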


\begin{proof}
We know from Lemma \ref{Lonenewtail} and Corollary \ref{Cnoinseptails} that $\ol{X}$ has only one tail, so it must be a
chain.  The original component contains the specializations of the branch points, so it must be a $p^n$-component.  By,
Lemma \ref{Lno2jumps}, there must be a $p^{n-i}$-component for each $i$, $0 \leq i \leq n$.  Also, by Lemma \ref{Ltwocontract}, 
there cannot be two intersecting components $\ol{W} \prec \ol{W}'$ of $\ol{X}$ with the same size inertia groups.  
Since $\ol{f}$ is monotonic (Proposition 
\ref{Pmonotonic}), there must be exactly one $p^i$-component of $\ol{X}$ for each $0 \leq i \leq n$.

It remains to show that the disks are as claimed.  For $i = n$, this follows from Lemma \ref{Lonenewtail}.  For $i < n$,
consider the cover $Y/Q_{n-i} \to X$.  
The stable model of this cover
is a contraction of $Y^{st}/Q_{n-i} \to X^{st}$.  By Lemma \ref{Lonenewtail} (using $i$ in place of $n$), the stable
reduction of $Y/Q_{n-i} \to X$ has a new \'{e}tale tail corresponding to a closed disk centered at $d$ with radius 
$p^{-\frac{1}{2}(i+ \frac{1}{p-1})}$.  Thus $\ol{X}$ also contains such a component.  This is true for every $i$,
proving the proposition.
\end{proof}

Things are more complicated when $f$ has only two totally ramified points:
\begin{prop}\label{Ppartbranchstable}
Suppose that $f$ is totally ramified above $0$ and $\infty$, and ramified of index $p^s$ above $1$, for some $0 < s < n$.
If $p=3$, assume further that $s=1$.
Then the augmented dual graph $\mc{G}'$ of the stable reduction of $\ol{X}$ is as in Figure \ref{Fstable}.
\begin{figure}[htp]
\centering
\includegraphics[totalheight=.15\textheight, width=.9\textwidth]{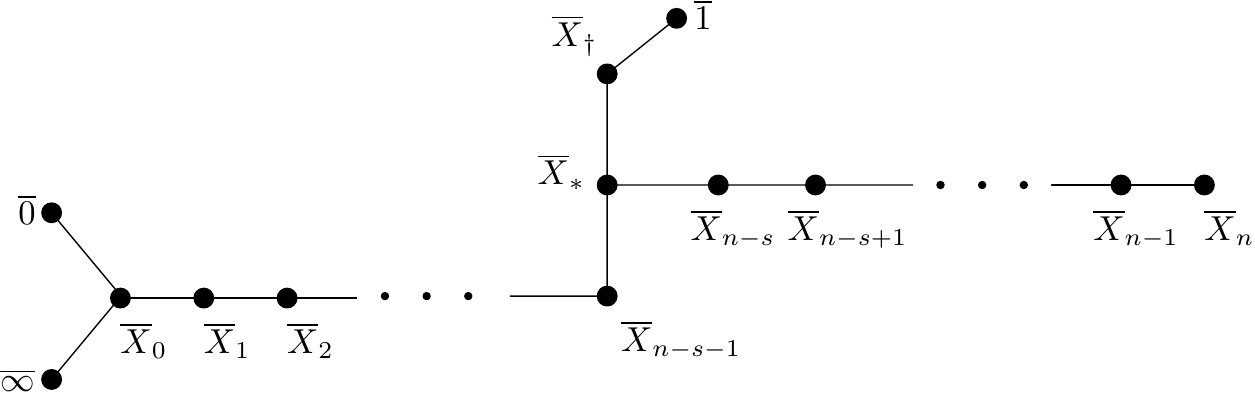}
\caption{The augmented dual graph $\mc{G}'$ of the stable reduction of a three-point $\ints/p^n$-cover with two totally ramified points, $p \ne 2$}\label{Fstable}
\end{figure}

In particular, as labeled in Figure \ref{Fstable}, the original component $\ol{X}_0$ is a $p^n$-component.  
For $s+1 \leq i < n$, $\ol{X}_{n-i}$ is a $p^{i}$-component corresponding to the disk of radius $p^{-\frac{1}{2}(n-i+\frac{1}{p-1})}$ centered at $d$.
For $0 \leq i \leq s$, $\ol{X}_{n-i}$ is a $p^{i}$-component corresponding to the disk of radius $p^{-\frac{1}{2}(2n-s-i+\frac{1}{p-1})}$ centered at $d$.
The component $\ol{X}_*$ is a $p^{s+1}$-component corresponding to the disk of radius $p^{-(n-s)}$ centered at $d$.
The component $\ol{X}_{\dagger}$ is a $p^s$-component corresponding to the disk of radius $p^{-(n-s+\frac{1}{p-1})}$ centered at $1$.
The vertices corresponding to $0$, $1$, and $\infty$ are marked as $\ol{0}$, $\ol{1}$, and $\ol{\infty}$.
\end{prop}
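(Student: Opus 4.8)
The plan is to read off the whole graph in three stages: its shape together with the generic inertia, then the radii of the disks centred at $d$, and finally the branch vertex $\ol{X}_*$ and the tail $\ol{X}_\dagger$ carrying $x=1$. I would first pin down that there are exactly two tails. The original component $\ol{X}_0$ is a $p^n$-component by Lemma~\ref{Lmultdd}, and it carries the specializations of $0$ and $\infty$; by Lemma~\ref{Lonenewtail} there is a single \'etale tail $\ol{X}_b$, a new tail with $\sigma_b=2$. By Corollary~\ref{Cnoinseptails}(ii) (and, when $p=3$, the standing hypothesis $s=1$, which excludes the extra inseparable tails of Corollary~\ref{Cnoinseptails}(iii)) every inseparable tail contains the specialization of $x=1$, while Corollary~\ref{Cinsepprimitive} guarantees that $x=1$ does specialize to an inseparable tail; by Proposition~\ref{Pcorrectspec} this tail $\ol{X}_\dagger$ is a $p^s$-component. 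Thus $\ol{X}$ has exactly the two tails $\ol{X}_b$ and $\ol{X}_\dagger$, so the tree $\mc{G}$, rooted at $\ol{X}_0$ (which has valence one, the two tail-disks being disjoint proper subdisks of the disk of $\ol{X}_0$), has exactly one vertex $\ol{X}_*$ of valence three: a trunk runs from $\ol{X}_0$ to $\ol{X}_*$, one arm from $\ol{X}_*$ to $\ol{X}_b$, and a single edge from $\ol{X}_*$ to $\ol{X}_\dagger$ (length one, by Lemma~\ref{Ltailetale}). Monotonicity (Proposition~\ref{Pmonotonic}) together with Lemmas~\ref{Lno2jumps} and~\ref{Ltwocontract} then forces the generic inertia to drop by exactly one power of $p$ at each edge, giving the labels $p^n,\dots,p^{s+1}$ on the trunk and $p^s,\dots,p^0$ on the arm to $\ol{X}_b$, with $\ol{X}_*$ itself a $p^{s+1}$-component (the repeated inertia $p^{s+1}$ being permitted only because $\ol{X}_*$ has valence three).

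For the radii on the \'etale side, the radius of $\ol{X}_b$ is exactly Lemma~\ref{Lonenewtail}. For $0\le r<s$ I would identify the $p^r$-component with the unique new \'etale tail of the quotient $Y/Q_r\to X$: this is a three-point $\ints/p^{n-r}$-cover, totally ramified over $0,\infty$ and of index $p^{s-r}$ over $1$, so Lemma~\ref{Lonenewtail} (with $n,s$ replaced by $n-r,s-r$) places it on the disk of radius $p^{-\frac{1}{2}(2n-s-r+\frac{1}{p-1})}$ centred at $d$; that this component survives the passage from $\ol{X}$ to the stable reduction of $Y/Q_r\to X$ follows from Lemmas~\ref{Linsep2etale} and~\ref{Lgenusincluded}, just as in Corollary~\ref{Cnoinseptails}. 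The branch vertex is then located geometrically: because $v(d-1)=v(b)=n-s$, the smallest disk containing both the disk around $d$ carrying $\ol{X}_b$ and the disk around $1$ carrying $\ol{X}_\dagger$ has radius $p^{-(n-s)}$, and this disk is $\ol{X}_*$.

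The quotient device fails precisely for the components that are never tails of any quotient, namely the trunk $p^r$-components with $s<r<n$ (where $Y/Q_r$ degenerates to a two-point cover of good reduction) and the boundary case $r=s$. For these I would turn to the effective different. By Lemma~\ref{Lmultdd} all deformation data over $\ol{X}_0$ are multiplicative, so $\delta^{\eff}_{\ol{X}_0}=n+\frac{1}{p-1}$, and the edge invariants follow from Lemma~\ref{Lsigmaeff} together with the vanishing cycles data: $\sigma^{\eff}_e=1$ along the trunk (outward lie only $\ol{X}_b$, with $\sigma_b=2$, and the wild point $x=1$), $\sigma^{\eff}_e=2$ along the arm to $\ol{X}_b$, and $\sigma^{\eff}_e=0$ on the edge to $\ol{X}_\dagger$. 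Lemma~\ref{Leffdifferentepaisseur} converts these into the drop of $\delta^{\eff}$ across each annulus, and combining this bookkeeping with the deformation-data differents along the trunk (propagated outward from the multiplicative datum over $\ol{X}_0$ via Proposition~\ref{Pdifferentepaisseur}) should yield the radius $p^{-\frac{1}{2}(n-r+\frac{1}{p-1})}$ for each trunk $p^r$-component and fix $\ol{X}_{n-s}$.

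I expect the genuine difficulty to be twofold, and to lie entirely in this last stage. The first part is determining the non-multiplicative differents along the trunk explicitly — this is the computation that replaces the guesswork of \cite{CM:sr}, and it is what distinguishes the trunk (whose radii match the fully-ramified case of Proposition~\ref{Pfullbranchstable}) from the \'etale arm. The second is locating $\ol{X}_\dagger$, whose position the effective different cannot detect because its edge has $\sigma^{\eff}=0$: the radius $p^{-(n-s+\frac{1}{p-1})}$ must instead be extracted from the local Kummer structure at the index-$p^s$ point $x=1$, by the same explicit reduction (a localized version of Lemma~\ref{Lonenewtail}, resting on \cite{He:ht}) that produced $\ol{X}_b$. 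Throughout, Lemma~\ref{Lno2jumps}, Lemma~\ref{Ltwocontract} and the vanishing cycles formula~(\ref{Evancycles}) would be invoked one final time to confirm that no further components intervene, so that the \'epaisseurs distribute as exactly one component per inertia level as claimed in Figure~\ref{Fstable}.
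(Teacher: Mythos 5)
Your plan follows the same architecture as the paper's proof: exactly two tails (Corollary \ref{Cinsepprimitive} plus Lemma \ref{Lonenewtail}), the separating component $\ol{X}_*$ at radius $p^{-(n-s)}$, the components between $\ol{X}_*$ and the \'{e}tale tail located by applying Lemma \ref{Lonenewtail} to the three-point quotients $Y/Q_r$ ($r<s$), and the remaining components found by effective-different bookkeeping applied to the two-point quotients $Y/Q_r$ ($s \le r < n$), whose deformation data over $\ol{X}_0$ are multiplicative and whose edge invariants all equal $1$. One remark on $\ol{X}_{\dagger}$: you do not need a separate ``localized Kummer computation'' at $x=1$; the paper obtains $\ol{X}_{\dagger}$ from this same bookkeeping, because for the quotient $Y^{st}/Q_s$ the edge toward $\ol{X}_{\dagger}$ has effective invariant $1$ (not $0$, which is the value only for $f$ itself), so the annulus between $\ol{X}_*$ and $\ol{X}_{\dagger}$ has \'{e}paisseur exactly $\frac{1}{p-1}$.

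The genuine gap is quantitative and sits in your final stage: the bookkeeping you set up, if actually carried out, contradicts the radii you assert it yields. You correctly compute $\sigma^{\eff}_e = 1$ along the trunk; by Lemma \ref{Leffdifferentepaisseur} the effective different therefore drops at \emph{unit} rate there. Hence for the two-point quotient $Y/Q_r$, whose effective different over $\ol{X}_0$ is $n-r+\frac{1}{p-1}$, the quotient becomes \'{e}tale --- i.e., the inertia of $f$ falls to $Q_r$ --- exactly on the disk of radius $p^{-(n-r+\frac{1}{p-1})}$ about $d$, not $p^{-\frac{1}{2}(n-r+\frac{1}{p-1})}$; similarly $\ol{X}_{n-s}$ sits at radius $p^{-(n-s+\frac{1}{p-1})}$, not $p^{-\frac{1}{2}(2n-2s+\frac{1}{p-1})}$. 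The factor $\frac{1}{2}$ in the exponent is created by $\sigma^{\eff}_e=2$, so it occurs only on the arm between $\ol{X}_*$ and the \'{e}tale tail; your parenthetical claim that the trunk radii ``match the fully-ramified case of Proposition \ref{Pfullbranchstable}'' is exactly backwards --- it is the \'{e}tale arm that reproduces the fully-ramified spacing, while the trunk annuli are twice as thick. A direct check: for $n \ge s+2$ the quotient $Y/Q_{n-1}$ is (after absorbing $(x-1)^b$) the cover $y^p = x^a$, and on the disk of radius $p^{-\frac{1}{2}(1+\frac{1}{p-1})}$ its linear Kummer coefficient has valuation $\frac{1}{2}\cdot\frac{p}{p-1} < \frac{p}{p-1}$, so the reduction there is still inseparable and that disk is still a $p^n$-component, not a $p^{n-1}$-component. (You are partly a victim of the paper here: the printed statement of the proposition carries these same misprints, but the paper's own proof derives the unit-rate radii just described.) Had you executed the computation rather than asserted its outcome, the inconsistency with your own edge invariants would have surfaced immediately.
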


\begin{proof}
Recall that $v(1-d) = v(b) = n-s$, so long as $p \geq 3$.  By Corollary \ref{Cinsepprimitive} and Lemma 
\ref{Lonenewtail}, $\ol{X}$ contains exactly two tails: an inseparable tail $\ol{X}_{\dagger}$ containing the specialization of $x=1$; 
and an \'{e}tale tail $\ol{X}_n$ containing the specialization of $d$. 
There must be a component of $\ol{X}$
``separating" $1$ and $d$, i.e., corresponding to the disk centered at $d$ (equivalently, $1$) of radius $|1-d| = p^{-(n-s)}$.  Call
this component $\ol{X}_*$.  Then $\ol{X}$ looks like a chain from the original component $\ol{X}_0$ to $\ol{X}_*$,
followed by two chains, one going out to $\ol{X}_{\dagger}$ and one going out to $\ol{X}_n$.  

Let us first discuss the component $\ol{X}_*$.  Consider the cover $f': Y^{st}/Q_s \to X^{st}$.  For any edge $e$ of $\mc{G}'$ corresponding to a 
singular point on $\ol{X}$, we will take $(\sigma^{\eff}_e)'$ to mean the effective invariant for the cover $f'$.
Now, the generic fiber of $f'$ is a cover branched at two points, so $Y^{st}/Q_s$ has genus 0 fibers.  By Lemma \ref{Lartinschreiergenus}, any tail 
$\ol{X}_b$ for the blow-down of the special fiber $\ol{f}'$ of $f'$ to a stable curve
must have $\sigma_b = 1$.  Lemma \ref{Lsigmaeff} shows that, if $s(e) \prec t(e)$, then
$(\sigma^{\eff}_e)' = 1$.  Since the deformation data above $\ol{X}_0$ are multiplicative, the effective different
$(\delta^{\eff})'$ for $f'$ above $\ol{X}_0$ is $n - s + \frac{1}{p-1}$.  So above $\ol{X}_*$, it is $n-s + \frac{1}{p-1} - (n-s) =
\frac{1}{p-1} > 0$, by Lemma \ref{Leffdifferentepaisseur} applied to each of the singular points between $\ol{X}_0$ and
$\ol{X}_*$ in succession.   This means that $\ol{X}_*$ is an inseparable component for
$f'$, which means that it is at least a $p^{s+1}$-component for $f$.  

Next, we examine the part of $\ol{X}$ between $\ol{X}_0$ and $\ol{X}_*$.
By Lemma \ref{Lno2jumps}, there must be a $p^i$-component of $\ol{X}$ for each $i$ such that $s+1 \leq i
\leq n$.  Then if we take 
$f'_i: Y^{st}/Q_i \to X^{st}$, the effective different for $f_i'$ above $\ol{X}_0$ is $n - i + \frac{1}{p-1}$.
As in the previous paragraph, Lemma \ref{Leffdifferentepaisseur} shows that above a component 
corresponding to the closed disk of radius $p^{-(n-i+\frac{1}{p-1})}$ centered at $d$, the effective different for $f_i'$ will be
0.  This means that this component is the innermost $p^i$-component.   In Figure \ref{Fstable}, we label this component $\ol{X}_{n-i}$.
In particular, the $p^{s+1}$-component $\ol{X}_{n-s-1}$ corresponds to the closed disk 
of radius $p^{-(n-s-1 + \frac{1}{p-1})}$ around $d$.  Note that $\ol{X}_*$ corresponds to the closed disk of radius $p^{-(n-s)}$ around $d$, and thus lies outward from 
$\ol{X}_{n-s-1}$.
By monotonicity, $\ol{X}_*$ is a $p^{s+1}$-component.  By Lemma \ref{Ltwocontract}, $\ol{X}_*$ intersects $\ol{X}_{n-s-1}$, and for $s+1 < i \leq n$, there is 
exactly one $p^i$-component, namely $\ol{X}_{n-i}$. 
So the part of $\ol{X}$ between $\ol{X}_0$ and $\ol{X}_*$ is as in Figure \ref{Fstable}, and radii of the corresponding
disks are as in the proposition.

Now, let us examine the part of $\ol{X}$ between $\ol{X}_*$ and $\ol{X}_{\dagger}$.  We have seen that $\ol{X}_*$ is a
$p^{s+1}$-component, and $\ol{X}_{\dagger}$ is a $p^s$-component by Proposition \ref{Pcorrectspec}.  So, by Lemma \ref{Ltwocontract},
 this part of $\ol{X}$ consists only of these two components.  Recall that if we quotient out $Y^{st}$ by $Q_s$, the
effective different above $\ol{X}_*$ is $\frac{1}{p-1}$.  Also, recall that the effective invariant $(\sigma^{\eff}_e)'$ for $s(e), t(e)$ corresponding to $\ol{X}_*$, $\ol{X}_{\dagger}$ 
is 1.  So by Lemma
\ref{Leffdifferentepaisseur}, the \'{e}paisseur of this annulus is $\frac{1}{p-1}$, and $\ol{X}_{\dagger}$ corresponds to the disk of radius $p^{-(n-s+\frac{1}{p-1})}$ centered
at 1.  

Lastly, let us examine the part of $\ol{X}$ between $\ol{X}_*$ and the new tail $\ol{X}_n$.  We know there must be a
$p^i$-component for each $i$, $0 \leq i \leq s+1$.  This component must be unique, by Lemma \ref{Ltwocontract}.  
These components are labeled
$\ol{X}_{n-i}$ in Figure \ref{Fstable} (with the exception of $\ol{X}_*$, which corresponds to $i =s+1$).  
We calculate the radius of the closed disk
corresponding to each $\ol{X}_{n-i}$.  For $i = s$, the radius is $p^{-(n-s+\frac{1}{p-1})}$ for the same
reasons as for $\ol{X}_{\dagger}$.
For $i=0$, we already know from Lemma \ref{Lonenewtail} that the radius is
$p^{-\frac{1}{2}(2n-s+\frac{1}{p-1})}$.
For $1 \leq i \leq s-1$, we consider the cover $Y/Q_i \to X$.  The stable model of this cover
is a contraction of $Y^{st}/Q_i \to X^{st}$.  Since $Y/Q_i \to X$ is still a three-point cover, we can use Lemma 
\ref{Lonenewtail} (with $n-i$ and $s-i$ in place of $n$ and $s$) to obtain that the stable
reduction of $Y/Q_i \to X$ has a new tail corresponding to a closed disk centered at $d$ with radius 
$p^{-\frac{1}{2}(2n-s-i+ \frac{1}{p-1})}$.  This is the component $\ol{X}_{n-i}$.  
\end{proof}

Propositions \ref{Pfullbranchstable} and \ref{Ppartbranchstable} give us the entire structure of the stable reduction $\ol{X}$.  The following proposition
gives us the structure of $\ol{Y}$.

\begin{prop}\label{Ptopstable}
Suppose we are in the situation of either Proposition \ref{Pfullbranchstable} or \ref{Ppartbranchstable}.  If $\ol{W}$ is a $p^i$-component of $\ol{X}$ 
which does not intersect a $p^{i+1}$-component, then $\ol{f}^{-1}(\ol{W})$ consists of $p^{n-i}$ connected components, each of which is  
a genus zero radicial extension of $\ol{W}$.  If $\ol{W}$ borders a $p^{i+1}$ component $\ol{W}'$, then $\ol{f}^{-1}(\ol{W})$ consists of $p^{n-i-1}$ 
connected components, each a radicial extension of an Artin-Schreier cover of $\ol{W}$, branched of order $p$ at the point of intersection $w$
of $\ol{W}$ and $\ol{W}'$.  The conductor of this cover at its unique ramification point is $2$, unless we are in the situation of Proposition \ref
{Ppartbranchstable} and $i \geq s$, in which case the conductor is $1$.   

The rest of the structure of $\ol{Y}$ is determined by the fact that $\ol{Y}$ is tree-like (i.e., the dual graph of its irreducible components is a tree).
\end{prop}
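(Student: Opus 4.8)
The plan is to work one component of $\ol{X}$ at a time and to reduce every assertion to the already-understood quotient covers $g_i := Y/Q_i$, which are cyclic of order $p^{n-i}$. Fix a $p^i$-component $\ol{W}$ of $\ol{X}$ and a component $\ol{V}$ of $\ol{Y}$ above it. Recall from the construction in \S\ref{Sdefdatastable} that, writing $I_{\ol{V}} = Q_i$ and $H = D_{\ol{V}}/I_{\ol{V}}$, the reduced map $\ol{V} \to \ol{W}$ factors as a purely inseparable map $\ol{V} \to \ol{V}'$ of degree $p^i$ (with $k(\ol{V}') = k(\ol{V})^{p^i}$) followed by a separable Galois cover $\ol{V}' \to \ol{W}$ of group $H$. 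Since $G$ is abelian, all components above $\ol{W}$ share the same decomposition group, so $\ol{f}^{-1}(\ol{W})$ has exactly $[G : D_{\ol{V}}] = p^{n-i}/|H|$ connected components; thus the whole proposition reduces to identifying $H$ and, in the separable case, the conductor of $\ol{V}' \to \ol{W}$.

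First I would identify $H$ with the generic Galois group of the reduction of $g_i$ over $\ol{W}$. Quotienting by $Q_i = I_{\ol{V}}$ collapses the radicial factor $\ol{V} \to \ol{V}'$ (indeed $C/\pi \cong k(\ol{V})^{p^i} = k(\ol{V}')$ in the notation of \S\ref{Sdefdatastable}), so the reduction of $g_i$ above $\ol{W}$ is exactly $\ol{V}' \to \ol{W}$, a cover with trivial generic inertia, hence generically \'{e}tale. By Proposition \ref{Pspecialram}, together with monotonicity (Proposition \ref{Pmonotonic}) and the fact that for a $\ints/p^n$-cover the prime-to-$p$ part of every inertia group is trivial, $\ol{V}' \to \ol{W}$ can ramify only at a node of $\ol{X}$ where $\ol{W}$ meets a strictly larger component; by monotonicity such a neighbor is inward, and by Lemma \ref{Lno2jumps} it is a $p^{i+1}$-component, giving node inertia $Q_{i+1}/Q_i \cong \ints/p$ in $g_i$. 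Hence, if $\ol{W}$ borders no $p^{i+1}$-component, then $\ol{V}' \to \ol{W} \cong \proj^1_k$ is everywhere \'{e}tale, so $H = 1$ and $\ol{V} = \ol{V}'$ is a genus-zero radicial cover of $\ol{W}$ (Case 1, with $p^{n-i}$ components); if $\ol{W}$ borders a $p^{i+1}$-component $\ol{W}'$ at a point $w$, then $\ol{V}' \to \ol{W}$ is an abelian cover of $\proj^1$ ramified only at $w$, so $H$ equals its inertia $\ints/p$ there, i.e.\ $\ol{V}' \to \ol{W}$ is an Artin--Schreier cover branched only at $w$ and $\ol{f}^{-1}(\ol{W})$ has $p^{n-i-1}$ components, each radicial over this Artin--Schreier cover (Case 2).

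It then remains to compute the conductor $h$ of $\ol{V}' \to \ol{W}$ at $w$ in Case 2, which by Lemma \ref{Lartinschreiergenus} satisfies $g(\ol{V}) = (h-1)(p-1)/2$ and is controlled by whether $g_i$ is a three- or two-point cover. When $g_i$ is a three-point cover (all cases of Proposition \ref{Pfullbranchstable}, and the cases $i < s$ of Proposition \ref{Ppartbranchstable}), $\ol{W}$ is an \'{e}tale, hence tail, component of $g_i$ (Lemma \ref{Letaletail}), and in fact the unique new \'{e}tale tail furnished by Lemma \ref{Lonenewtail}, whose effective invariant is $\sigma = 2$; since for an \'{e}tale tail this invariant is the conductor (Definition \ref{Draminvariant}), we get $h = 2$. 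When $g_i$ is branched at only two points (exactly the cases $i \geq s$ of Proposition \ref{Ppartbranchstable}), its generic fiber has genus $0$, so every component of its reduction has genus $0$; as $Q_i \subseteq I_{\ol{V}}$ makes $\ol{V} \to \ol{Y}/Q_i$ radicial, $\ol{V}$ has genus $0$ and therefore $h = 1$. Finally, since $\ol{Y}$ is tree-like and its nodes lie over the nodes of $\ol{X}$ with multiplicities dictated by the decomposition groups just computed, the per-component description assembles into the full tree, completing the proof.

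I expect the main obstacle to be the middle step: ruling out all sources of separable ramification of $\ol{V}' \to \ol{W}$ except the single inward node --- this is where monotonicity, Lemma \ref{Lno2jumps}, and the vanishing of the prime-to-$p$ inertia must be combined --- and then pinning $|H|$ to exactly $p$ via the one-point ramification of an abelian cover of $\proj^1$. By contrast, the conductor computation is comparatively routine once the reduction to $g_i$ and Lemma \ref{Lonenewtail} are in hand.
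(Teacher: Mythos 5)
Your proposal is correct and takes essentially the same route as the paper: the paper's (omitted) details for the component structure rest on exactly the ingredients you use---Proposition \ref{Pspecialram}, Lemma \ref{Lno2jumps}, monotonicity, and the fact that a cyclic $p$-group cover of $\proj^1$ branched at one point with inertia $\ints/p$ must be a $\ints/p$-cover---and its conductor computations are the same two you give (Lemma \ref{Lonenewtail} applied to the three-point quotient $Y/Q_i \to X$ for conductor $2$, and the genus-zero argument via Lemma \ref{Lartinschreiergenus} for the two-point quotient when $i \geq s$). The one point you take for granted is the tree-likeness of $\ol{Y}$ itself, which is part of the statement and which the paper justifies by citing \cite[Th\'{e}or\`{e}me 1]{Ra:pg} (applicable because $G$ is a $p$-group).
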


\begin{proof}
That $\ol{Y}$ is tree-like follows from \cite[Th\'{e}or\`{e}me 1]{Ra:pg}.  This means that any two irreducible components of $\ol{Y}$ can intersect
at at most one point.
Everything else except the statement about the conductors follows from Proposition \ref{Pspecialram}, 
Lemma \ref{Lno2jumps}, and the fact that if 
$H$ is a cyclic $p$-group, then an $H$-Galois cover of $\proj^1$ branched at one point with inertia groups $\ints/p$ must, in fact, be a $\ints/p$-cover. 
We omit the details.  

For the remainder of the proof, let $\ol{W}$ be a $p^i$-component intersecting a $p^{i+1}$-component $\ol{W}'$.  

Suppose we are in the situation of Proposition \ref{Ppartbranchstable} 
and $i \geq s$.  Then $Y/Q_i \to X$ is branched at two points, so $Y$ has genus 
zero.  So any component of the special fiber of $Y^{st}/Q_i$ must also have genus zero.  
Since $Q_i$ acts trivially above $\ol{W}$, every component above $\ol{W}$ must 
have genus zero.  If such a component is a radicial extension of an Artin-Schreier cover, then Lemma \ref{Lartinschreiergenus} shows that the Artin-
Schreier cover must have conductor 1.

Now, suppose that $f$ has three totally ramified points or that we are in the situation of Proposition \ref{Ppartbranchstable} and $i < s$.  Then $\ol{W}$ 
is the unique $p^i$-component of $\ol{X}$ 
(Propositions \ref{Pfullbranchstable} and \ref{Ppartbranchstable}), and is thus the unique \'{e}tale tail of the stable reduction of 
the three-point cover $f': Y/Q_i \to X$.  By Lemma \ref{Lonenewtail}, the irreducible components above $\ol{W}$ in the stable reduction of $f': Y/Q_i \to 
X$ are Artin-Schreier covers with conductor 2.   Since $\ol{W}$ is a $p^i$-component, the irreducible components of $\ol{Y}$ above $\ol{W}$ are radicial extensions of Artin-Schreier covers with conductor 2.
\end{proof}

\section{Composition series of groups with cyclic $p$-Sylow subgroup}\label{Agroups}
In this appendix, we prove Proposition \ref{Pgroups}, which shows that a finite, non-$p$-solvable group with cyclic $p$-Sylow 
subgroup has a unique composition factor with order divisible by $p$.
Before we prove Proposition \ref{Pgroups}, we prove a lemma.  Our proof depends on the classification of finite
simple groups.

\begin{lemma}\label{Lsimpleout}
Let $S$ be a nonabelian finite simple group with a (nontrivial) cyclic $p$-Sylow subgroup.  
Then any element $\ol{x} \in \Out(S)$ with order $p$ lifts to an automorphism $x \in \Aut(S)$ with order $p$.
\end{lemma}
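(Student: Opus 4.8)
The plan is to reduce the statement to a question about Sylow subgroups of a single extension, and then to invoke the classification of finite simple groups (CFSG) together with the known structure of $\Out(S)$. Fix $\bar{x} \in \Out(S)$ of order $p$, let $H \leq \Aut(S)$ be the preimage of $\langle \bar{x}\rangle$ under the projection $\Aut(S) \to \Out(S)$, so that $H$ is an extension of $\ints/p$ by $\mathrm{Inn}(S) \cong S$, and let $P_H$ be a Sylow $p$-subgroup of $H$. Since $S$ has cyclic Sylow $p$-subgroup of order $p^a$, the subgroup $C := P_H \cap \mathrm{Inn}(S)$ is cyclic of order $p^a$ of index $p$ in $P_H$. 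Producing an order-$p$ lift of \emph{some} generator of $\langle\bar{x}\rangle$ suffices, since an appropriate power of it then lifts $\bar{x}$ itself; and this is exactly the assertion that $P_H$ has an element of order $p$ outside $C$, equivalently that $P_H$ has more than one subgroup of order $p$, i.e. that $P_H$ is neither cyclic nor generalized quaternion.

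First I would dispose of $p = 2$ outright: a finite group with cyclic Sylow $2$-subgroup has a normal $2$-complement (Burnside's theorem, as $2$ is the least prime dividing the order), hence cannot be nonabelian simple; thus no nonabelian finite simple $S$ has cyclic Sylow $2$-subgroup and the case $p = 2$ is vacuous. For the remaining odd $p$, generalized quaternion groups do not occur, so it suffices to rule out $P_H$ being cyclic, i.e. to rule out an automorphism of order $p^{a+1}$ inducing $\bar{x}$ whose $p$th power is an inner automorphism by a generator of a Sylow $p$-subgroup of $S$.

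By CFSG I would then split into cases. For $S$ alternating or sporadic, $\Out(S)$ is an explicit group of order dividing $4$ whose odd part is trivial, so no element $\bar{x}$ of odd order $p$ exists and the statement is vacuous. For $S$ of Lie type I would use the standard description $\Out(S) = \mathrm{Outdiag}(S) \rtimes (\Phi_S\Gamma_S)$, where $\mathrm{Outdiag}(S)$ is the abelian group of diagonal classes, $\Phi_S$ the cyclic group of field automorphisms, and $\Gamma_S$ the graph automorphisms. The crucial point is that the cyclic-Sylow hypothesis forces $p \nmid |\mathrm{Outdiag}(S)|$: whenever an odd prime $p \neq \mathrm{char}$ divides $|\mathrm{Outdiag}(S)|$, the underlying root datum produces an elementary abelian $p$-subgroup of rank $\geq 2$ inside a maximal torus of $S$ (for instance $p \mid \gcd(n, q-1)$ in type $A_{n-1}$ forces $n \geq p \geq 3$ and a $\ints/p$-rank $n-1 \geq 2$ torus), contradicting cyclicity of the Sylow $p$-subgroup. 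This is the step I expect to be the \textbf{main obstacle}, since it must be verified uniformly across the Lie types using the classification of their outer automorphism groups and Sylow structure; I would organize it as a separate computation handling types $A$, $B$, $C$, $D$, $E_6$, ${}^2\!A$, ${}^2\!E_6$ (the only places an odd $p$ divides $|\mathrm{Outdiag}|$) one family at a time.

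Granting this, a Sylow $p$-subgroup of $\Out(S)$ injects into $\Phi_S\Gamma_S$, so after conjugating by an element of $\Aut(S)$ — which preserves $\mathrm{Inn}(S)$, element orders, and hence the lifting property — I may assume $\bar{x} \in \Phi_S\Gamma_S$. But an element of odd order $p$ in $\Phi_S\Gamma_S$ is a product of a field automorphism and (only for $p = 3$, triality on $D_4$) a graph automorphism, each realized by a concrete automorphism of $S$ of order exactly $p$: the field automorphism is $\mathrm{Frob}^{f/p}$ on $\FF_q = \FF_{p_0^f}$, and the graph automorphism is an honest Dynkin-diagram symmetry. Their product is then an automorphism of order $p$ lifting $\bar{x}$, as desired. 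Finally, in defining characteristic $p = p_0$ the cyclic-Sylow hypothesis restricts $S$ to rank-one groups (essentially $PSL_2(p)$) whose $\Out(S)$ has order prime to the odd prime $p$, so that case contributes nothing. Assembling these cases completes the proof.
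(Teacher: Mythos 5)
Your proposal is correct and is essentially the paper's own proof: both reduce via the classification to groups of Lie type (the $p=2$, alternating, and sporadic cases being vacuous because those outer automorphism groups are $2$-groups), both identify the key step as showing that cyclicity of the Sylow $p$-subgroup forces $p \nmid |\mathrm{Outdiag}(S)|$ --- verified by exhibiting a non-cyclic elementary abelian $p$-subgroup inside a torus, exactly for the families $A_{n-1}$, ${}^2A_{n-1}$, $E_6$, ${}^2E_6$ --- and both then obtain the order-$p$ lift as (a conjugate of a prime-to-$p$ power of) the explicit field automorphism $\mathrm{Frob}^{f/p}$. The only organizational difference is at the graph-automorphism step: the paper proves $p \nmid |\Gamma_S|$ outright by showing $O_8^+(q)$ has non-cyclic Sylow $3$-subgroups, whereas you allow a possible triality factor and realize it by an order-$3$ diagram symmetry commuting with Frobenius; both variants are sound.
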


\begin{proof}
All facts about finite simple groups used in this proof that are not clear from the definitions or otherwise cited
can be found in \cite{atlas}.

First, note that no non-abelian simple group has a cyclic 2-Sylow subgroup, so we assume $p \neq 2$. 
Note also that no primes other than $2$ divide the order of the outer
automorphism group of any alternating or sporadic group.  So we may assume that $S$ is of Lie type.

We first show that $p$ does not divide the order $g$ of the graph automorphism group or $d$ of the diagonal automorphism 
group of $S$.
The only simple groups $S$ of Lie type for which an odd prime divides $g$ are those of the form $O_8^+(q)$.  
In this case $3 | g$.  But $O_8^+(q)$ contains $(O_4^+(q))^2$ in block form, and the order of $O_4^+(q)$ is
$\frac{1}{(4, q^2-1)}(q^2(q^2-1)^2)$.  This is divisible by 3, so $O_8^+(q)$ contains the group $\ints/3 \times
\ints/3$, and does not have a cyclic 3-Sylow subgroup.

The simple groups $S$ of Lie type for which an odd prime $p$ divides $d$ are the following:
\begin{enumerate}
\item $PSL_n(q)$, for $p | (n, q-1)$.
\item $PSU_n(q^2)$, for $p | (n, q+1)$.
\item $E_6(q)$, for $p = 3$ and $3 | (q-1)$.
\item $^2E_6(q^2)$, $p = 3$ and $3 | (q+1)$.
\end{enumerate}
Now, $PSL_n(q)$ contains a split maximal torus $((\ints/q)^{\times})^{n-1}$.  Since $p |(q-1)$, this group contains
$(\ints/p)^{n-1}$ which is not cyclic, as $p | n$ and $p \ne 2$.  So a $p$-Sylow subgroup of $PSL_n(q)$ is not cyclic.
The diagonal matrices in $PSU_n(q^2)$ form the group $(\ints/(q+1))^{n-1}$, which also contains a non-cyclic $p$-group (as $p > 2$ and $p|(n, q+1)$).
The group $E_6(q)$ has a split maximal torus $((\ints/q)^{\times})^6$ (\cite[\S35]{Hu:la}), and thus contains a
non-cyclic $3$-group.  Lastly, $^2E_6(q^2)$ is constructed as a subgroup of $E_6(q^2)$.  When $q \equiv -1 \pmod{3}$,
the ratio $|E_6(q^2)|/|^2E_6(q^2)|$ is not divisible by 3, so a $3$-Sylow subgroup of $^2E_6(q^2)$ is isomorphic to one of
$E_6(q^2)$, which we already know is not cyclic.  

So if there exists an element $\ol{x} \in \Out(S)$ of order $p$, then $p$ divides $f$, the
order of the group of field automorphisms.  Also, since the group of field automorphisms is cyclic and $p$ does not 
divide $d$ or $g$, a $p$-Sylow subgroup of $\Out(S)$ is cyclic.  This means that all elements of order $p$ in $\Out(S)$ 
are conjugate in $\Out(S)$, up to a prime-to-$p$th power.  
Now, there exists an automorphism $\alpha$ in $\Aut(S)$ which has order $p$ and is not inner.
Namely, we view $S$ as the $\FF_{q}$-points of some $\ints$-scheme, where $q = \wp^f$ for some prime $\wp$, 
and we act on these
points by the $(f/p)$th power of the Frobenius at $\wp$.  Let $\ol{\alpha}$ be the image of $\alpha$ in $\Out(S)$.  Since there exists $c$ prime to $p$
such that $\ol{\alpha}^c$ is conjugate to $\ol{x}$ in $\Out(S)$, there exists some $x$ conjugate to $\alpha^c$ in $\Aut(S)$ such that
$\ol{x}$ is the image of $x$ in $\Out(S)$.  Since $\alpha^c$ has order $p$, so does $x$.  It is the automorphism we seek.
\end{proof}

The main theorem we wish to prove in this section states that a finite group with a cyclic $p$-Sylow subgroup
is either $p$-solvable or ``as far from $p$-solvable as possible."
\begin{prop} \label{Pgroups}
Let $G$ be a finite group with a cyclic $p$-Sylow subgroup $P$ of order $p^n$.  
Then at least one of the following two statements is true:
\begin{itemize}
\item $G$ is $p$-solvable.
\item $G$ has a simple composition factor $S$ with $p^n \, | \, |S|$.
\end{itemize}
\end{prop}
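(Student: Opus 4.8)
The plan is to reduce the problem, by a standard argument on the generalized Fitting subgroup, to the situation $\mathrm{Inn}(S) \le G \le \Aut(S)$ for a single nonabelian simple group $S$, and then to rule out outer automorphisms of order $p$ using Lemma \ref{Lsimpleout}. First I would reduce to the case $O_{p'}(G) = 1$, where $O_{p'}(G)$ is the maximal prime-to-$p$ normal subgroup (which exists, as noted before Proposition \ref{Ppsolvable}). Passing from $G$ to $G/O_{p'}(G)$ leaves a cyclic $p$-Sylow of order $p^n$, preserves non-$p$-solvability (since $p$-solvability is closed under extensions and $O_{p'}(G)$ is a $p'$-group, hence $p$-solvable), and only discards prime-to-$p$ composition factors; so it suffices to treat $G$ with $O_{p'}(G)=1$.

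Assuming $G$ is not $p$-solvable, I claim $G$ has a normal subgroup $S$ that is nonabelian simple with $Z_G(S) = 1$. Consider the generalized Fitting subgroup $F^*(G) = F(G)E(G)$. Since $O_{p'}(G)=1$, the Fitting subgroup is $F(G) = O_p(G)$, a cyclic $p$-group. If $E(G)=1$, then $Z_G(F^*(G)) \le F^*(G) = O_p(G)$ forces $G/O_p(G) \hookrightarrow \Aut(O_p(G))$ to be abelian, so $G$ would be solvable, a contradiction; hence $E(G)\neq 1$. Now I analyze the components (the subnormal quasisimple subgroups), using that the $p$-Sylow $P$ of $G$ is cyclic. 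Every component has order divisible by $p$: otherwise the normal subgroup generated by its $G$-conjugates would be a nontrivial normal $p'$-group, contradicting $O_{p'}(G)=1$. Each component $L$ has cyclic $p$-Sylow (a subgroup of $P$), so $L/Z(L)$ has cyclic $p$-Sylow and hence the $p$-part of its Schur multiplier vanishes; since $Z(E(G))$ is a normal $p$-group (its prime-to-$p$ part lies in $O_{p'}(G)=1$), each $Z(L)$ is a $p$-group which is a quotient of that multiplier, hence trivial. Thus the components are simple with trivial center, so they commute and generate their direct product; two distinct $p$-divisible components would produce a non-cyclic $p$-Sylow, so there is exactly one component $S = E(G) \trianglelefteq G$, nonabelian simple with $p \mid |S|$. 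Finally $Z_G(S) \trianglelefteq G$ meets $S$ trivially and must be a $p'$-group (else $S \times Z_G(S)$ has non-cyclic $p$-Sylow), so $Z_G(S) \le O_{p'}(G) = 1$. Conjugation then embeds $\mathrm{Inn}(S) \cong S \trianglelefteq G \hookrightarrow \Aut(S)$.

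It remains to show $P \le \mathrm{Inn}(S)$, equivalently that $S$ carries the full $p^n$. Write $p^a$ for the order of a $p$-Sylow of $S$ and $\pi \colon \Aut(S) \to \Out(S)$ for the projection; suppose $\pi(P) \neq 1$ and pick $\bar x \in \pi(P)$ of order $p$. By Lemma \ref{Lsimpleout}, $\bar x$ lifts to $x \in \Aut(S)$ of order $p$, and since $\bar x \in \pi(G)$ and $\mathrm{Inn}(S) \le G$, in fact $x \in G$. The element $x$ permutes the $p$-Sylow subgroups of $S$, a set of size $\equiv 1 \pmod p$, so it normalizes one of them, say $P_S$ (cyclic of order $p^a$). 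Then $\langle P_S, x\rangle \le G$ is a $p$-group of order $p^{a+1}$ in which $\langle x\rangle \cap P_S = 1$ and $\langle x\rangle \not\le P_S$; it therefore has two distinct subgroups of order $p$ and is non-cyclic, contradicting that it lies in a cyclic $p$-Sylow of $G$. Hence $\pi(P)=1$, so $P \le \mathrm{Inn}(S)$ and $a=n$; then $S$ is a composition factor of $G$ with $p^n \mid |S|$, which is also a composition factor of the original group.

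The main obstacle is the concentration of the $p$-part: a priori the cyclic $p$-Sylow could be distributed between a central $\ints/p$ and a simple factor carrying only $p^{n-1}$, which is precisely the configuration that a naive induction on the order cannot exclude. Ruling it out is what forces the use of the vanishing of the $p$-part of the Schur multiplier (a consequence of the cyclic $p$-Sylow) to collapse the quasisimple layer to a genuinely simple, self-centralizing normal subgroup; only then does Lemma \ref{Lsimpleout} supply the non-cyclic $p$-group that completes the argument.
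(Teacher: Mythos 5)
Your proof is correct, and it shares its skeleton with the paper's: both first quotient out the maximal prime-to-$p$ normal subgroup, both then locate a normal nonabelian simple subgroup $S$ with $p \mid |S|$, and both finish with essentially the same move --- Lemma \ref{Lsimpleout} produces an order-$p$ element inducing an outer automorphism of $S$, and combining it with a $p$-Sylow subgroup of $S$ yields a $p$-group with two distinct subgroups of order $p$, contradicting cyclicity. The middle of the argument, however, is genuinely different. The paper takes $S$ to be a \emph{minimal normal subgroup} of $G/N$: it is a product of isomorphic simple groups, hence (by the cyclic Sylow hypothesis and the vanishing of $O_{p'}$) a single simple group of order divisible by $p$; when $S \cong \ints/p$ the paper concludes $p$-solvability by citing \cite[Corollary 2.4 (i)]{Ob:vc}, and when $S$ is nonabelian it pulls back an order-$p$ subgroup of $G/S$ to an extension $1 \to S \to H \to \ints/p \to 1$ and splits it via Lemma \ref{Lsimpleout}. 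You instead assume non-$p$-solvability outright and run the generalized Fitting subgroup machinery: $E(G) \neq 1$ (else $G$ is metabelian, hence solvable), the components have trivial centers because a cyclic $p$-Sylow kills the $p$-part of the Schur multiplier, there is a unique component $S = E(G) = F^*(G)$, and self-centralization gives $G \hookrightarrow \Aut(S)$, after which Lemma \ref{Lsimpleout} and Sylow counting force $P \le \mathrm{Inn}(S)$. Your route buys self-containedness: it avoids the external citation the paper needs to dispose of the abelian minimal normal subgroup case, replacing it with standard (if heavier) $F^*$/component/Schur-multiplier theory; it also makes explicit the Sylow-counting step (non-cyclicity of $P_S\langle x\rangle$) that the paper leaves implicit when it asserts that the semidirect product $S \rtimes \ints/p$ cannot have a cyclic $p$-Sylow subgroup. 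Both arguments invoke the classification of finite simple groups only through Lemma \ref{Lsimpleout}.
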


\begin{proof}
We may replace $G$ by $G/N$, where $N$ is the maximal prime-to-$p$ normal subgroup of $G$.
So assume that any nontrivial normal subgroup of $G$ has order divisible by $p$.
Let $S$ be a minimal normal subgroup of $G$.  Then $S$ is a direct product of isomorphic simple groups \cite[8.2, 8.3]{As:fg}.
Since $G$ has cyclic $p$-Sylow subgroup, and no nontrivial normal subgroups of
prime-to-$p$ order, we see that $S$ is a simple group with $p \, | \, |S|$.
If $S \cong \ints/p$, then \cite[Corollary 2.4 (i)]{Ob:vc} shows that $G$ is $p$-solvable.
So assume, for a contradiction, that $p^n \nmid |S|$ and $S \ncong \ints/p$.  
Then $G/S$ contains a subgroup of order $p$.  Let $H$ be the inverse image of this subgroup in $G$.  
It follows that $H$ is an extension of the form
\begin{equation}\label{Egroupextension}
1 \to S \to H \to H/S \cong \ints/p \to 1.
\end{equation}
We claim that $H$ cannot have a cyclic $p$-Sylow subgroup, thus obtaining the
desired contradiction.

To prove our claim, we show that $H$ is in fact a semidirect product $S \rtimes
H/S$, i.e., we can lift $H/S$ to
a subgroup of $H$.  Let $\ol{x}$ be a generator of $H/S$.  
We need to find a lift $x$ of $\ol{x}$ which has order $p$. 
It suffices to find $x$ lifting $\ol{x}$ such that conjugation by $x^p$ on $S$
is the trivial isomorphism, as $S$ is center-free.
Since the possible choices of $x$ correspond to the possible automorphisms of $S$ which lift the outer automorphism $\phi_{\ol{x}}$ given by $\ol{x}$, 
we need only find an automorphism of $S$ of order $p$ which lifts $\phi_{\ol{x}}$.  Since $\phi_{\ol{x}}$ has order $p$, our desired automorphism is provided by Lemma \ref
{Lsimpleout}, finishing the proof.
\end{proof}

\begin{remark}
As was mentioned in the introduction, there are limited examples of simple groups with cyclic $p$-Sylow subgroups of order greater than $p$.  
For instance, there are no sporadic groups or
alternating groups.  There are some of the form $PSL_r(\ell)$, including all groups of the form $PSL_2(\ell)$ with $v_p(\ell^2-1) > 1$ and $p$, $\ell$ odd.  
There is also the Suzuki group $Sz(32)$.  All other examples are too large to be included in \cite{atlas}.
\end{remark}

\section{Computations for $p = 2, 3$}\label{Asmallprimes}
We collect some technical computations involving small primes that would have disrupted the continuity of the main text.

For the following proposition, $R$ is a mixed characteristic $(0, 2)$ complete discrete valuation ring with residue field $k$ and fraction field $K$.
for any scheme $S$ over $R$, we write $S_k$ (resp.\ $S_K$) for $S \times_R k$ (resp.\ $S \times_R K$).
\begin{prop}\label{Pdegree2n}
Assume that $R$ contains the $2^n$th roots of unity, where $n \geq 2$.  
Let $X = \Spec A$, where $A = R\{T\}$.  
Let $f: Y_K \to X_K$ be a $\mu_{2^n}$-torsor given by the equation $y^{2^n} = s$,
where $s \equiv 1 + c_1T + c_2T^2 \pmod{2^{n+1}},$ such that $v(c_2) = n$, $c_2$ is a square in $R$, and $\frac{c_1^2}{c_2} \equiv
2^{n+1}i \pmod{2^{n+2}}$, where $i$ is (either) square root of $-1$.
Then $f: Y_K \to X_K$ splits into a union of $2^{n-2}$ disjoint 
$\mu_4$-torsors.  Let $Y$ be the normalization of $X$ in the total ring of fractions of $Y_K$. Then the map $Y_k \to X_k$ is \'{e}tale, and 
is birationally equivalent to the union of $2^{n-2}$ disjoint $\ints/4$-covers of $\proj^1_k$, each branched at one point, with 
first upper jump equal to 1.
\end{prop}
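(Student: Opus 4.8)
The plan is to follow the template of the proof of Lemma \ref{Lartinschreier}, adapting it to the harder $p=2$ situation, where the degree-$2$ reduction theory degenerates and one is forced to work at level $4$. Set $N = 2^{n-2}$. First I would extract an $N$th root $w = s^{1/N} \in A^{\times}$ by means of the binomial (equivalently, $\exp \circ \frac{1}{N}\log$) series, whose convergence is guaranteed by $v(c_2) = n$ together with $v(c_1) = n + \frac{1}{2}$ (the latter being forced by $\frac{c_1^2}{c_2} \equiv 2^{n+1}i$, since then $v(c_1^2) = 2n+1$). Writing $s = w^{N}$ and using that $R$ contains the $2^n$th, hence the $N$th, roots of unity, the equation $y^{2^n} = s$ becomes $(y^4)^{N} = w^{N}$, so that $y^4 = \zeta w$ as $\zeta$ ranges over the $N = 2^{n-2}$ distinct $N$th roots of unity in $R$. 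This exhibits $f$ as a disjoint union of $2^{n-2}$ \emph{$\mu_4$-torsors} and proves the first assertion; since the various $\zeta$ merely permute isomorphic components, it then suffices to analyze a single torsor $y^4 = w$.

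Next I would record the precise shape of $w$. A direct computation gives $w = 1 + \alpha_1 T + \alpha_2 T^2 + (\text{higher order})$, with $v(\alpha_1) = \frac{5}{2}$ and $\alpha_2 = \frac{c_2}{N}(1 + \cdots)$ of valuation exactly $2 = \frac{p}{p-1}$. The essential $p=2$ phenomenon is that the term of least valuation occurs in degree $2$, which \emph{is} divisible by $p$; this is exactly the configuration forbidden in case (i) of Lemma \ref{Lartinschreier}, so the $\ints/2$-quotient $z^2 = w$ (with $z = y^2$, i.e.\ the cover $Y/(\ints/2) \to X$) does not directly reduce to an Artin--Schreier cover. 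The remedy exploits the hypothesis that $c_2$ is a square: choosing $\gamma$ with $\gamma^2 = \alpha_2$ (so $v(\gamma) = 1$) and dividing the right-hand side by the square $(1 + \gamma T)^2$ -- which does not alter the $\mu_2$-torsor class -- replaces the degree-$2$, valuation-$2$ term by the degree-$1$, valuation-$2$ term $-2\gamma T$, while pushing the new $T^2$-coefficient up to valuation $4$. The transformed torsor $(z')^2 = w'$ now satisfies hypothesis (i) of Lemma \ref{Lartinschreier} with $h = 1$; hence by \cite[Ch.\ 5, Proposition 1.6]{He:ht} and Lemma \ref{Lexplicitconductor} it has \'{e}tale reduction to an Artin--Schreier cover of conductor $1$. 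Since $Y/(\ints/2) \to X$ is the $\ints/2$-quotient, its conductor is the first upper jump of the $\ints/4$-cover, so this first jump equals $1$.

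Finally I would promote the analysis from the $\ints/2$-quotient to the full order-$4$ torsor. Applying \cite[Ch.\ 5, Proposition 1.6]{He:ht} to $y^4 = w$ shows that $Y_k \to X_k$ is \'{e}tale and identifies the reduction as a $\ints/4$-cover of $\proj^1_k$ branched at one point, with first upper jump $1$ by the preceding step; that the reduced inertia is genuinely $\ints/4$ rather than $\ints/2$ follows from the nontriviality (conductor $1$, so $w$ is not a square) of the quotient. Here the hypothesis $\frac{c_1^2}{c_2} \equiv 2^{n+1}i \pmod{2^{n+2}}$ enters decisively: the primitive fourth root of unity $i$ is precisely the arithmetic datum that pins down the top (second) layer of the torsor, forcing the second deformation datum to be nondegenerate and the reduction to stay \'{e}tale rather than additive. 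Reassembling the $2^{n-2}$ components completes the proof.

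The step I expect to be the main obstacle is the middle one. For $p=2$ the valuation-$2$ term in the $p$-divisible degree $2$ genuinely obstructs the naive degree-$2$ reduction, and it is only by using that $c_2$ is a square -- to split off $(1+\gamma T)^2$ and trade the bad even-degree term for an odd-degree one -- in tandem with the fourth-root-of-unity congruence on $c_1$ controlling the top layer, that one recovers a clean $\ints/4$-reduction with first upper jump $1$. Carefully tracking the valuations through the completion of the square (including where the auxiliary square root $\gamma$ is defined) and verifying that the two hypotheses interact as claimed is the delicate part that has no counterpart in the odd-$p$ argument of Lemma \ref{Lartinschreier}.
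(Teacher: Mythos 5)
Your first two steps coincide with the paper's own proof: the binomial-series extraction of $\sqrt[2^{n-2}]{s}$ reduces everything to the case $n=2$ (the paper records that $b_{\ell} = c_{\ell}/2^{n-2}$ again satisfies the hypotheses), and your completion of the square on the bottom layer is the same manipulation as the paper's --- you divide by $(1+\gamma T)^2$ with $\gamma^2 = \alpha_2$ where the paper multiplies by $(1+T\sqrt{-b_2})^2$, and either way the $\mu_2$-quotient $z^2 = w$ acquires a degree-one coefficient of valuation exactly $2$ while all even-degree coefficients are pushed above valuation $2$, so the \'{e}tale-reduction case of \cite[Ch.\ 5, Proposition 1.6]{He:ht} (case (i) of Lemma \ref{Lartinschreier}) gives an Artin--Schreier cover of conductor $1$, hence first upper jump $1$.

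The final step, however, is a genuine gap, and it is where essentially all of the content of the proposition lies. Henrio's Proposition 1.6 is a statement about $\mu_p$-torsors over a formal disk; it cannot be ``applied to $y^4 = w$,'' and the sentence asserting that the congruence on $c_1^2/c_2$ ``forces the second deformation datum to be nondegenerate and the reduction to stay \'{e}tale'' is a restatement of what must be proved, not an argument. What is actually required is an analysis of the second-layer $\mu_2$-torsor $q: Y_K \to Z_K$ \emph{over the normalization of the disk in the bottom layer}: the paper identifies that normalization as $\Spec R\{U\}$ via $z' = 1-2U$ (so the bottom reduction is $\ol{U}^2 - \ol{U} = \ol{T}\sqrt{\ol{-b_2/4}}$), rewrites the top layer as $y^2 = (1-2U)(1+T\sqrt{-b_2})^{-1}$, and proves by direct computation that this right-hand side is congruent mod $4$ to a square in $R\{U\}$: from $(1-2U)^2 = g(1+T\sqrt{-b_2})^2$ and $b_1/\sqrt{-b_2} \equiv 2+2i \pmod{4}$ --- this is exactly where the hypothesis $c_1^2/c_2 \equiv 2^{n+1}i \pmod{2^{n+2}}$ enters --- one gets $T\sqrt{-b_2} \equiv 2iU - 2iU^2 \pmod{4}$, so the right-hand side is $\equiv 1-(2+2i)U+2iU^2 = (1-(1+i)U)^2 \pmod 4$. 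Your proposal uses the congruence only through the valuation $v(c_1) = n+\tfrac{1}{2}$ and omits this computation entirely; but the conclusion genuinely depends on the unit $i$. For instance, if $c_1 = 0$ (still with $v(c_2)=n$ and $c_2$ a square), your bottom-layer argument runs unchanged and gives conductor $1$, yet the top layer becomes $y^2 \equiv 1-2U^2 \pmod 4$ over $R\{U\}$, and $1-2U^2$ cannot be made congruent to $1$ mod $4$ by multiplying by squares (after multiplication by $(1+\sqrt{-2}U)^2$ it is $\equiv 1+2\sqrt{-2}U$, whose critical term has valuation $\tfrac{3}{2}<2$ in odd degree with non-square residue in $k[\ol{U}]$): this is Henrio's additive case, so the reduction of $q$ is \emph{not} \'{e}tale. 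Hence no argument that bypasses the explicit work over $R\{U\}$ can be correct, and your proposal is missing precisely that step.
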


\begin{proof}
Using the binomial theorem, we see that $\sqrt[2^{n-2}]{s}$ exists in $A$ and is congruent to $1 + b_1T + b_2T^2 \pmod{8}$, with $b_1 = 
\frac{c_1}{2^{n-2}}$ and $b_2 = \frac{c_2}{2^{n-2}}$.  Then $v(b_2) = 2$, $b_2$ is a square in $R$, and $\frac{b_1^2}
{b_2} \equiv 8i \pmod{16}.$  Thus, we reduce to the case $n=2$.

Let $Z_K \cong Y_K/\mu_2$.  The natural maps $r: Z_K \to X_K$ and $q: Y_K \to Z_K$ are given by the 
equations $$z^2 = g,y^2 = z,$$ respectively.  Let us write $g' = g(1 + T\sqrt{-b_2})^2$ and $z' = z(1 + T\sqrt{-b_2})$.  Then 
$r$ is also given by the equation $$(z')^2 = g'.$$  Now, $g' = 1 + 2T\sqrt{-b_2} + \epsilon$, where $\epsilon$ is a power series 
whose coefficients all have valuation greater than 2 (note that, by assumption, $v(b_1) = \frac{5}{2}$).  By 
\cite[Ch.\ 5, Proposition 1.6]{He:ht} and Lemma \ref{Lexplicitconductor}, the torsor $r$ has (nontrivial)
\'{e}tale reduction $Z_k \to X_k$, which is 
birationally equivalent to an Artin-Schreier cover with conductor 1.  By Lemma \ref{Lartinschreiergenus}, $Z_k$ has genus zero.
Let $U$ be such that $1-2U = z'$.  Then the cover $Z_k \to X_k$ is given by the equation 
$$(\ol{U})^2 - \ol{U} = \ol{T}\sqrt{(\ol{-b_2/4})},$$ where an overline represents reduction modulo $\pi$.  Then $\ol{U}$ is a 
parameter for $Z_k$, and the normalization of $A$ in $Z_K$ is $R\{U\}$.  

It remains to show that $q$ has \'{e}tale reduction.  The cover $q$ is given by the equation 
\begin{equation}\label{Edegree4}
y^2 = z = z'(1+T\sqrt{-b_2})^{-1} = (1-2U)(1 + T\sqrt{-b_2})^{-1}.
\end{equation}
By \cite[Ch.\ 5, Proposition 1.6]{He:ht}, it will suffice to show that, up to multiplication by a square in $R\{U\}$, the right-hand side 
of (\ref{Edegree4}) is congruent to $1 \pmod{4}$ in $R\{U\}$.  Equivalently, we must show that the right-hand side is congruent 
modulo $4$ to a square in $R\{U\}$.
Modulo $4$, we can rewrite the right-hand side as 
\begin{equation}\label{Edegree42}
1 - 2U - T\sqrt{-b_2}.
\end{equation}  We also have that
$$(1-2U)^2 = z^2(1+T\sqrt{-b_2})^2 = g(1+T\sqrt{-b_2})^2 \equiv 1 + T(b_1 + 2\sqrt{-b_2}) \pmod{8}.$$
Rearranging, this yields that $$\frac{-4U+4U^2}{(b_1/\sqrt{-b_2}) + 2} \equiv T\sqrt{-b_2} \pmod{4}.$$  Since $\frac{b_1^2}{b_2} 
\equiv 8i \pmod{16}$, it is clear that $\frac{b_1^2}{-b_2} \equiv 8i \pmod{16}.$  One can then show that $\frac{b_1}{\sqrt{-
b_2}} \equiv 2 + 2i \pmod{4}$.  We obtain $T\sqrt{-b_2} \equiv 2iU - 2iU^2 \pmod{4}$. 
So (\ref{Edegree42}) is congruent to
$1 - (2 + 2i)U + 2iU^2$ modulo 4.  This is $(1 - (1 + i)U)^2$, so we are done.     
\end{proof}

\begin{lemma}\label{Lsmallprimetails}
Lemma \ref{Lonenewtail} holds when $p=2$, and also when $p=3$ and $n > s=1$.
\end{lemma}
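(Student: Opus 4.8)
The plan is to follow the strategy of the proof of Lemma \ref{Lonenewtail}, replacing the appeal to situation (i) of Lemma \ref{Lartinschreier} by situation (ii) when $p=3$, and by Proposition \ref{Pdegree2n} when $p=2$. First I observe that the opening paragraph of the proof of Lemma \ref{Lonenewtail}---which produces a unique \'{e}tale tail $\ol{X}_b$, shows it is new, and computes $\sigma_b = 2$ via the vanishing cycles formula (\ref{Evancycles}), Lemma \ref{Ltailbounds}, and the Hasse--Arf theorem---uses no hypothesis on $p$. Hence only the identification of the disk corresponding to $\ol{X}_b$ remains. As in the main proof I take $c = d^{-a}(d-1)^{-b}$ so that $g(d) = 1$, pass to the coordinate $x = d + et$ with $v(e) = \frac{1}{2}(2n - s + \frac{1}{p-1})$, and expand $g(d+et) = \sum_\ell c_\ell t^\ell$ using (\ref{Ecell}). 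Writing $d = \frac{a}{a+b} + \delta$, the shift $\delta$ has $v(\delta) > v(d-1) = n-s$ and $v(\delta) > 0 = v(d)$, so $v(d)$ and $v(d-1)$ are unchanged and the valuations of $c_\ell$ for $\ell \geq 3$ are still governed by (\ref{Eclval}); the entire content of the two new cases lies in the low-order coefficients $c_1, c_2, c_3$.

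For $p = 3$ with $n > s = 1$, I will verify that the $c_\ell$ satisfy hypothesis (ii) of Lemma \ref{Lartinschreier} with $h = 2$. A direct computation from (\ref{Eclval}) gives $v(c_2) = n + \frac{1}{2}$ and $v(c_\ell) > n + \frac{1}{2}$ for all $\ell \neq 1,2,3$, so $\min_{i \neq 1,3} v(c_i) = n + \frac{1}{2}$ is attained only at $i = 2$. The naive center would give $v(c_3) = n + \frac{1}{4} < n + \frac{1}{2}$ (the $j = 3$ term of (\ref{Ecell}) dominates), which is exactly why situation (i) fails and the cube-root shift is needed: with $\delta = \frac{1}{a+b}\sqrt[3]{3^{2n+1}\binom{b}{3}}$ one finds $v(c_1) = n + \frac{5}{12} > n$, and $\delta$ is tuned precisely so that the leading ($j=3$) part of $c_3$ cancels against $\frac{c_1^3}{3^{2n+1}}$. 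The delicate point of this case is this cancellation: expanding both terms to leading order and using $(a+b)^3 - a^3 = b(3a^2 + 3ab + b^2)$, I expect $v(c_3 - \frac{c_1^3}{3^{2n+1}}) = 2n + \frac{1}{4} > n + \frac{1}{2}$, which is condition (ii). Lemma \ref{Lartinschreier} then gives \'{e}tale reduction over the disk as a disjoint union of Artin--Schreier covers of conductor $2$, which have genus $\frac{(2-1)(3-1)}{2} = 1 > 0$ by Lemma \ref{Lartinschreiergenus}.

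For $p = 2$ I will instead match the $c_\ell$ to the hypotheses of Proposition \ref{Pdegree2n} (so $n \geq 2$, which is automatic since there is no three-point $\ints/2$-cover). Here $v(e) = \frac{1}{2}(2n - s + 1)$ and $\delta = \frac{1}{(a+b)^2}\sqrt{2^n b i}$. The computation gives $v(c_2) = n$ (the $j = 2$ term dominates) and $v(c_\ell) \geq n + 1$ for $\ell \geq 3$, so $g(d+et) \equiv 1 + c_1 t + c_2 t^2 \pmod{2^{n+1}}$; after possibly enlarging $K$ I may take $c_2$ to be a square in $R$. The square-root shift makes $v(c_1) = n + \frac{1}{2}$, and the main point is the congruence $\frac{c_1^2}{c_2} \equiv 2^{n+1} i \pmod{2^{n+2}}$: computing the leading coefficients of $c_1$ and $c_2$ gives $\frac{c_1^2}{c_2} \equiv \frac{2^{n+1} i}{a^2(b-1)}$, and since $a$ is odd and $b$ is even we have $a^2(b-1) \equiv 1 \pmod 2$, whence $\frac{c_1^2}{c_2} \equiv 2^{n+1} i \pmod{2^{n+2}}$. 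Proposition \ref{Pdegree2n} then gives \'{e}tale reduction over the disk as a disjoint union of $\ints/4$-covers of $\proj^1_k$, each branched at one point with first upper jump $1$; as such a cover has distinct odd lower jumps $1 < \ell_2$, the Hurwitz formula gives genus $\frac{\ell_2 - 1}{2} \geq 1 > 0$.

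In both cases I finish exactly as in Lemma \ref{Lonenewtail}: the component of $\ol{X}$ corresponding to the disk of radius $|e|$ centered at $d$ carries components of positive genus, so by Lemma \ref{Lgenusincluded} it lies in the stable model, by Lemma \ref{Letaletail} it is a tail, and being the unique \'{e}tale tail it must be $\ol{X}_b$. The only genuine obstacle is the low-order valuation bookkeeping---in particular the two cancellation facts (the cubic cancellation $v(c_3 - \frac{c_1^3}{3^{2n+1}}) > n + \frac{1}{2}$ for $p = 3$, and the congruence $\frac{c_1^2}{c_2} \equiv 2^{n+1} i$ for $p = 2$), together with controlling the sub-leading corrections---which is precisely where the non-obvious choices of $d$ are forced.
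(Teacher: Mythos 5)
Your proposal is correct and follows essentially the same route as the paper's own proof: for $p=3$, $n>s=1$ you verify hypothesis (ii) of Lemma \ref{Lartinschreier} with $h=2$, the key point being the cancellation $v\bigl(c_3 - \frac{c_1^3}{3^{2n+1}}\bigr) > n+\frac{1}{2}$ forced by the cube-root shift, and for $p=2$ you verify the hypotheses of Proposition \ref{Pdegree2n} via $v(c_2)=n$, $v(c_\ell)\geq n+1$ for $\ell \geq 3$, and the congruence $\frac{c_1^2}{c_2} \equiv 2^{n+1}i \pmod{2^{n+2}}$ reducing to the parity of $a^2(b-1)$ --- exactly the computations the paper performs. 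The only cosmetic difference is at the end of the $p=2$ case, where the paper cites \cite[Lemma 19]{Pr:lg} for the bound (conductor $\geq 2$) on a $\ints/4$-cover with first upper jump $1$, while you invoke the equivalent standard fact that the two lower jumps are distinct, odd, and hence the second is at least $3$.
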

\begin{proof}
Use the notation of Lemma \ref{Lonenewtail}, and let $R/W(k)$ be a large enough finite extension.  
As in the proof of Lemma \ref{Lonenewtail}, we must show that, 
if $D$ is the formal disk with ring of functions $R\{t\}$, then the normalization $E$ of $D$ in 
the fraction field of $R\{t\}[y]/(y^{p^n} - g(d+et))$ has special fiber with irreducible components of positive genus. 
Here $g(d+et) = \sum_{\ell=0}^{\infty} c_{\ell}t^{\ell}$, and
\begin{equation}\label{Ecell2}
c_{\ell} = e^{\ell} \sum_{j=0}^{\ell} \binom{a}{\ell-j}\binom{b}{j}d^{j-\ell}(d-1)^{-j}.
\end{equation}  
In (\ref{Ecell2}), $v(a) = 0$, $v(b) = n-s$, $v(d) = 0$,
$v(d-1) = n-s$, and $v(e) = \frac{1}{2}(2n - s + \frac{1}{p-1})$.

First, assume that $p = 3$ and $n > s = 1$.  Then $d = \frac{a}{a+b} + \frac{\sqrt[3]{3^{2n+1}\binom{b}{3}}}{a+b}$.  Using (\ref{Ecell2}), one calculates
$c_0 = 1$, $$c_1 =  e\left(\frac{(a+b)d - a}{d(d-1)}\right) = e\left(\frac{\sqrt[3]{3^{2n+1}\binom{b}{3}}}{d(d-1)}\right),$$ and $v(c_2) = n + \frac{1}{2}$.  By (\ref{Eclval}),
we have $v(c_{\ell}) \geq n + \frac{1}{2}$ except when $\ell = 3$.
Furthermore, each term in (\ref{Ecell2}) for $\ell = 3$, other than $j=3$, has valuation greater than $n + \frac{1}{2}$.  So
$$c_3 \equiv e^3\binom{b}{3}(d-1)^{-3} \pmod{3^{n+\frac{1}{2} + \epsilon}},$$ for some $\epsilon > 0$. 
Thus $v(c_1) = n + \frac{5}{12} > n$ and $v(c_3) = n + \frac{1}{4} > n$.  Note also that $v(c_{\ell}) > n + \frac{1}{2}$ for $\ell \geq 4$.
Now, $\frac{c_1^3}{3^{2n+1}} = e^3\binom{b}{3}(d-1)^{-3}d^{-3}$.  Since $v(d-1) = n-s > \frac{1}{4}$, and $v(e^3\binom{b}{3}(d-1)^{-3}) = n + \frac{1}{4}$, we obtain that
$$\frac{c_1^3}{3^{2n+1}} \equiv e^3\binom{b}{3}(d-1)^{-3} \equiv c_3 \pmod{3^{n + \frac{1}{2} + \epsilon}},$$ for some $\epsilon > 0$.
We are now in the situation (ii) of Lemma \ref{Lartinschreier} (with $h=2$), and we conclude using Lemma \ref{Lartinschreiergenus}.

Next, assume $p = 2$.
First, note that $n > s$, as there are no three-point $\ints/2^n$-covers of $\proj^1$ that are totally ramified above all three branch points.
Consider (\ref{Ecell2}).
Clearly $c_0 = 1$.  We claim that $v(c_2) = n$, that $\frac{c_1^2}{c_2} \equiv 2^{n+1}i \pmod{2^{n+2}}$, and that $v(c_{\ell}) \geq n+1$ for 
$\ell \geq 3$.  We may assume that $K$ contains $\sqrt{c_2}$.  Given the claim, we can apply Proposition \ref{Pdegree2n} to see that 
the special fiber $E_k$ of $E$ is a disjoint union of $2^{n-2}$ \'{e}tale $\ints/4$-covers of the special fiber $D_k$ of $D$, each of which extends to 
a cover $\phi: E_k' \to \proj^1_k$ branched at one point with first upper jump equal to 1.    
By \cite[Lemma 19]{Pr:lg}, such a cover has conductor at least $2$.  A 
Hurwitz formula calculation shows that the each component of $E_k'$ has positive genus, proving the lemma.

Now we prove the claim.  The term in (\ref{Ecell2}) for 
$c_2$ with lowest valuation corresponds to $j = 2$, and this term has valuation $2v(e) + v(b) - 1 - 2v(d-1)$, which is equal to 
$n$.  For $c_{\ell}$, $\ell \geq 3$, we have $v(c_{\ell}) = n + 1 + \frac{\ell-2}{2}(s + 1) - v(\ell)$ by (\ref{Eclval}). 
Since $s \geq 1$, we obtain $v(c_{\ell}) \geq n + 1$ for $\ell \geq 3$.  

Lastly, we must show that $\frac{c_1^2}{c_2} \equiv 2^{n+1}i \pmod{2^{n+2}}$.  Choose $d = \frac{a}{a+b} + \frac{\sqrt{2^nbi}}{(a+b)^2}$ as in Lemma 
\ref{Lonenewtail}.  Using (\ref{Ecell2}), we compute
$$\frac{c_1^2}{c_2} = \frac{(a(d-1)+bd)^2}{\binom{a}{2}(d-1)^2 + abd(d-1) + \binom{b}{2}d^2}.$$  Then the congruence
$\frac{c_1^2}{c_2} \equiv 2^{n+1}i \pmod{2^{n+2}}$ is equivalent to 
$$\frac{2((a+b)d -a)^2}{-bd^2} \equiv 2^{n+1}i \pmod{2^{n+2}}$$ (as the other terms in the denominator become negligible).
Plugging in $d$ to $\frac{2((a+b)d -a)^2}{-bd^2}$, we obtain 
$$\frac{2^{n+1}bi}{-b(a^2 + \frac{2a}{a+b}\sqrt{2^nbi} + \frac{2^nbi}{(a+b)^2})} \equiv 2^{n+1}i \pmod{2^{n+2}}.$$ This is 
equivalent to $\frac{-1}{a^2} \equiv 1 \pmod{2},$ as the terms in the denominator, other than $-ba^2$, are 
negligible.   This is certainly true, as $a$ is odd.  This completes the proof of the claim, and thus the lemma.
\end{proof}

\begin{lemma}\label{Lsmallprimeram}
Let $p=3$, let $n > s$ be positive integers, and let $a$ and $b$ be integers with $v_3(a) = 0$ and $v_3(b) = n-s$.
Write $K_0 = \Frac(W(k))$ and, for all $i > 0$, write $K_i = K_0(\zeta_{3^i})$, where $\zeta_{3^i}$ is a primitive $3^i$th root of unity.
If $s=1$, then the $n$th higher ramification groups for the upper numbering of 
$$K_n\left(\sqrt[3]{3^{2n+1}\binom{b}{3}}, \ \ \sqrt[3^{n-1}]{\frac{a}{a+b}}\right)/K_0$$ vanish.
If $s > 1$, let $$d' = \frac{a}{a+b} + \frac{\sqrt[3]{3^{2(n-s+1) + 1}\binom{b}{3}}}{a+b}.$$ 
Then the $n$th higher ramification groups for the upper numbering of
$$K_n\left(d', \sqrt[3^{n-s}]{\frac{a}{a+b}}, \ \ \sqrt[3^{n-s+1}]{\frac{(d')^a(d'-1)^b}{a^ab^b(a+b)^{-(a+b)}}}\right)/K_0$$ vanish.  
\end{lemma}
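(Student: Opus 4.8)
The plan is to use Lemma~\ref{Lcompositum} to reduce the vanishing of the $n$th upper-numbering ramification group of the full field to a bound on the conductor of each generating extension separately. Recall from \S\ref{Sramification} that the conductor of a Galois extension is its largest upper jump, so that the $n$th upper-numbering ramification group of (the Galois closure of) a field $L/K_0$ vanishes precisely when the conductor of $L/K_0$ is strictly less than $n$. Writing the field in question as a compositum over $K_0$ of $K_n$ and the various radical extensions, and passing to Galois closures, Lemma~\ref{Lcompositum} shows that its conductor is the maximum of the conductors of the pieces; thus it suffices to bound each piece below $n$.

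In the case $s=1$ (so $n\geq 2$), every generating radicand lies in $K_0$: the element $3^{2n+1}\binom{b}{3}$ is a rational integer, and $\frac{a}{a+b}$ is a unit (since $v(a)=0<v(b)=n-1$ forces $v(a+b)=0$). I would handle $K_n/K_0$ by \cite[Corollary to IV, Proposition 18]{Se:lf} (its conductor is $n-1$), and each of $K_n(\sqrt[3]{3^{2n+1}\binom{b}{3}})/K_0$ and $K_n(\sqrt[3^{n-1}]{\frac{a}{a+b}})/K_0$ by \cite[Theorem 5.8]{Vi:ra}, exactly as in the proof of Corollary~\ref{Ccyclicstable}(ii). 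Here I would first record the valuations $v(\binom{b}{3})=n-2$ and $v(\frac{a}{a+b}-1)=n-1$ so as to feed Viviani's formula the correct data; these yield conductors strictly below $n$.

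The case $s>1$ is the substantive one. Three of the four pieces are again $K_0$-radicals: $K_n/K_0$ (Serre), $K_n(\sqrt[3^{n-s}]{\frac{a}{a+b}})/K_0$ (Viviani, using $v(\frac{a}{a+b}-1)=n-s$), and $K_n(d')/K_0$, which equals $K_n(\sqrt[3]{3^{2(n-s+1)+1}\binom{b}{3}})/K_0$ (Viviani, using $v(\binom{b}{3})=n-s-1$, so the radicand has valuation $3(n-s)+2$ and its cube root generates a wild cubic extension). The remaining piece $K_n(d',\sqrt[3^{n-s+1}]{w})$, with $w=\frac{(d')^a(d'-1)^b}{a^ab^b(a+b)^{-(a+b)}}$, is the crux, because $w$ involves $d'$ and so does not lie in $K_0$. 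Setting $\tilde d=\frac{a}{a+b}$ and $g(x)=x^a(x-1)^b$, one checks $a^ab^b(a+b)^{-(a+b)}=\pm\,g(\tilde d)$, so $w=\pm\,g(d')/g(\tilde d)$. Using that the logarithmic derivative $\frac{a}{x}+\frac{b}{x-1}$ of $g$ vanishes at $x=\tilde d$, together with $v(d'-\tilde d)=n-s+\tfrac23$ and $v\big((g'/g)'(\tilde d)\big)=-(n-s)$, I would compute $v(w-1)=n-s+\tfrac43$. This places $w$ strictly between a $3^{n-s}$th power and a $3^{n-s+1}$th power in $K_n(d')$ (the thresholds being $n-s+\tfrac12$ and $n-s+\tfrac32$), so that $\sqrt[3^{n-s+1}]{w}$ contributes exactly one wild $\ints/3$-layer, whose conductor over $K_0$ must then be shown to be $<n$.

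The main obstacle will be this last conductor computation. Because $w$ lives in $K_n(d')$, which is itself wildly ramified over $K_0$, one cannot read the $K_0$-conductor off the relative conductor over $K_n(d')$: the Herbrand transition $\psi_{K_n(d')/K_0}$ scales jumps upward, so the relative jump must be pinned down precisely rather than merely bounded. Determining the true distance of $w$ from a $3^{n-s+1}$th power requires tracking the leading term $w-1\sim\mp\frac{a+b}{2ab}\big(3^{2(n-s+1)+1}\binom{b}{3}\big)^{2/3}$ and using that $K_n(d')$ contains $\sqrt[3]{3^{2(n-s+1)+1}\binom{b}{3}}$ but not its ninth root, so that this leading term cannot be absorbed into a cube inside $K_n(d')$. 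Carrying this through the Herbrand correspondence and verifying that the resulting upper jump of $K_n(d',\sqrt[3^{n-s+1}]{w})/K_0$ falls strictly below $n$ is where the real work lies; once it is done, Lemma~\ref{Lcompositum} assembles the pieces and finishes the proof.
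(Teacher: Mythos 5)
Your overall strategy---reduce via Lemma~\ref{Lcompositum} to a conductor bound for each generating extension, and handle the $K_0$-radical pieces by \cite[Corollary to IV, Proposition 18]{Se:lf} and \cite{Vi:ra}---is the paper's, and your treatment of the case $s=1$ and of the pieces $K_n/K_0$, $K_n(\sqrt[3^{n-s}]{a/(a+b)})/K_0$, $K_n(d')/K_0$ is essentially correct (modulo a citation slip: non-unit radicands such as $3^{2n+1}\binom{b}{3}$ require \cite[Theorem 6.5]{Vi:ra}, not Theorem 5.8). The problems are in the crux piece $K_n(d',\sqrt[3^{n-s+1}]{w})$. First, your valuation is wrong. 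With $\tilde d=\frac{a}{a+b}$ and $\epsilon=d'-\tilde d$, so $v(\epsilon)=n-s+\frac{2}{3}$, one has $\log w=\sum_{k\ge 2}\frac{(-1)^{k-1}(a+b)^k}{k}\bigl(a^{1-k}+(-1)^kb^{1-k}\bigr)\epsilon^k$, and the $k$-th term has valuation $(n-s)+\frac{2k}{3}-v(k)$. The minimum is attained at $k=3$, not $k=2$: the cubic term $\frac{(a+b)^3}{3}\bigl(\frac{1}{a^2}-\frac{1}{b^2}\bigr)\epsilon^3$ has valuation $n-s+1<n-s+\frac{4}{3}$, because of the factor $\frac13$ and the pole $b^{-2}$. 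So $v(w-1)=n-s+1$ and your claimed leading term is not the leading term. (This still makes $w$ a $3^{n-s}$-th power, since $n-s+1>n-s+\frac12$, but it would corrupt the ``precise'' jump computation you propose to build on it.)

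Second, and decisively, you never prove the conductor bound for this piece, which is the entire content of the lemma when $s>1$; you only locate ``where the real work lies,'' and the route you sketch there is both unnecessary and backwards. Since $w$ is already a $3^{n-s}$-th power in $K_1(d')$, set $d'''=\sqrt[3^{n-s}]{w}\in K_1(d')$; then the crux field is the compositum of $K_n$ (conductor $n-1$) with the \emph{fixed} extension $M=K_1(d',\sqrt[3]{d'''})$, whose degree over $K_0$ does not grow with $n$, so by Lemma~\ref{Lcompositum} it suffices to show $h_{M/K_0}<n$. Writing $L=K_1(d')$: the conductor of $L/K_1$ is $3$ by \cite[Lemma 3.2]{Ob:cw}, hence (invariance of the lower numbering under subgroups, then Herbrand) $h_{L/K_0}=\frac32$; the relative conductor $h_{M/L}$ is merely \emph{bounded} by $9$, again by \cite[Lemma 3.2]{Ob:cw}; and the tower lemma \cite[Lemma 2.1]{Ob:cw} gives $h_{M/K_0}\le\frac32+\frac16(9-3)=\frac52<3\le n$. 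The damping factor $\frac16=1/e(L/K_0)$ is precisely why a crude bound on the relative jump suffices---contrary to your assertion that it ``must be pinned down precisely rather than merely bounded,'' which is the claim that makes your remaining task look (and, over the base $K_n(d')$ you work over, actually be) hard. As it stands, the proposal stops short of a proof.
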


\begin{proof}
Assume $s=1$.  Then $3^{2n+1}{\binom{b}{3}}$ has valuation
$3n-1$.  Since $n \geq 2$, the $n$th higher ramification groups for the upper numbering of $L = K_1\left(\sqrt[3]{3^{2n+1}{\binom{b}{3}}}\right)/K_0$ 
vanish by 
\cite[Theorem 6.5]{Vi:ra}.  Also, the $n$th higher ramification groups for the upper numbering of
$L=K_n\left(\sqrt[3^{n-1}]{\frac{a}{a+b}}\right)/K_0$ vanish by \cite[Theorem 5.8]{Vi:ra}.
By Lemma \ref{Lcompositum},  $K_n\left(\sqrt[3]{3^{2n+1}\binom{b}{3}}, \sqrt[3^{n-1}]{\frac{a}{a+b}}\right)/K_0$ has trivial 
$n$th higher ramification groups for the upper numbering. 

Now, assume $s > 1$. We use case (ii) of Corollary \ref{Ccyclicstable} 
and Lemma \ref{Lcompositum} to reduce to showing that the conductor of $K/K_0$ is less than $n$, where
$$K:= K_n\left(d', \sqrt[3^{n-s+1}]{\frac{(d')^a(d'-1)^b}{a^ab^b(a+b)^{-(a+b)}}}\right)/K_0.$$
Since $v(b) = n-s$ and $v(a) = 0$, one calculates that $d'' := d'/(\frac{a}{a+b})$ can be written as $1 + r$, where $v(r) = n - s + \frac{2}{3}$.
The same is true for $(d'')^a$.  By the binomial expansion, $(d'')^a$ is a $3^{n-s}$th power in $K_n(d')$.  Thus so is 
$(d'')^a\frac{(d'-1)^b}{b^b(a+b)^{-b}}$.  So we can write $K = K_n(d', \sqrt[3]{d'''})$, for some $d''' \in K_n(d')$.  Using Lemma 
\ref{Lcompositum} again, we need only show that the conductor $h$ of $K_1(d', \sqrt[3]{d'''})/K_0$ is less than $n$. Note that $n \geq 3$.  

Let $L = K_1(d')$ and $M = K_1(d', \sqrt[3]{d'''})$.  By \cite[Lemma 3.2]{Ob:cw}, the conductor
of $L/K_1$ is $3$.  Since the lower numbering is invariant under subgroups, the greatest lower jump for the higher ramification filtration of $L/K_0$
is $3$.  Thus the conductor of $L/K_0$ is $\frac{3}{2}$.  By \cite[Lemma 3.2]{Ob:cw}, the conductor of $M/L$ is $\leq 9$.  
Applying \cite[Lemma 2.1]{Ob:cw} to $K_0 \subseteq L \subseteq M$ yields that $h$ is either $\frac{3}{2}$ or satisfies
$h \leq \frac{3}{2} + \frac{1}{6}(9 - 3) < 3 \leq n$. 
\end{proof}

For the rest of the appendix, $K_0$ is the fraction field of $W(k)$, where $k$ is algebraically closed of characteristic $2$.  We set $K_r := 
K_0(\zeta_{2^r})$.

We state an easy lemma from elementary number theory without proof:
\begin{lemma}\label{Lwhichell}
Choose $d \in \rats$, and a square root $i$ of $-1$ in $K_2$.  Let $v$ be the standard $2$-adic valuation.
If $v(d)$ is even, then $di$ is a square in $K_3$, but not in $K_2$.  Also, $d$ is a square in $K_2$.  
If $v(d)$ is odd, then $di$ is a square in $K_2$, and $d$ is a square in $K_3$.
\end{lemma}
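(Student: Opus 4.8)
The plan is to reduce the four assertions to a short list of facts about squares in the tower $K_0 \subset K_2 \subset K_3$, where $[K_2:K_0] = [K_3:K_2] = 2$ (these $2$-power cyclotomic extensions are totally ramified of the expected degree over the unramified base $K_0$) and $K_3 = K_2(\zeta_8)$ with $\zeta_8$ a square root of $i$. First I would write $d = 2^{v(d)} d_0$, where $d_0 \in \rats$ is a $2$-adic unit. Since $d_0$ is a ratio of odd integers it reduces to $1$ in $k$; and because $d_0 - 1$ and $d_0 + 1$ differ by $2$, exactly one of them has valuation $1$ and the other has valuation $\geq 2$. This is the structural input that will make the final answer depend only on the parity of $v(d)$.

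The three facts I would establish are: (a) if $z \in K_0$ has $v(z) \geq 2$, then $1+z$ is a square in $K_0$; (b) $d_0$ is a square in $K_2$; and (c) $(1+i)^2 = 2i$ and $i = \zeta_8^2$, while $i$ is not a square in $K_2$. For (a), the case $v(z) > 2$ is immediate from Hensel's lemma applied to $Y^2 - (1+z)$ at $Y=1$; the borderline case $v(z) = 2$, say $z = 4w$ with $w$ a unit, is where algebraic closedness enters: solving $Y^2 = 1 + 4w$ with $Y = 1 + 2u$ amounts to $u^2 + u = w$, whose reduction is the Artin--Schreier equation $\bar u^2 + \bar u = \bar w$ over $k$, solvable precisely because $k$ is algebraically closed, and a root then lifts by Hensel since $2u+1$ is a unit. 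For (b), if $v(d_0 - 1) \geq 2$ then (a) makes $d_0$ a square already in $K_0$, while if $v(d_0+1) \geq 2$ then (a) makes $-d_0$ a square in $K_0$, and $d_0 = i^2(-d_0)$ is then a square in $K_2$ because $-1 = i^2$. For (c), the identity $(1+i)^2 = 2i$ is a direct computation, $i = \zeta_8^2$ shows $i$ (hence $2 = (2i)/i$) is a square in $K_3$, and $i$ cannot be a square in $K_2$ since a square root of $i$ is a primitive $8$th root of unity, which would force $\zeta_8 \in K_2$ and contradict $[K_3:K_2] = 2$.

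Finally I would assemble the two cases. When $v(d)$ is even, $2^{v(d)}$ is a square in $K_0$, so by (b) $d$ is a square in $K_2$; then $di$ is a square in $K_2$ iff $i$ is, which fails by (c), while in $K_3$ the element $i$ is a square, so $di$ is a square in $K_3$. When $v(d)$ is odd, $2^{v(d)} = 2 \cdot 2^{v(d)-1}$ with $2^{v(d)-1}$ a square in $K_0$; then $di = (2i)\, 2^{v(d)-1} d_0$ is a square in $K_2$ by (b) and (c), and $d = 2 \cdot 2^{v(d)-1} d_0$ is a square in $K_3$ because $2$ is a square in $K_3$ by (c), $d_0$ a square in $K_2 \subseteq K_3$. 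I expect the only genuine subtlety to be the $v(z) = 2$ case of fact (a): this is the single point where the hypothesis that $k$ is algebraically closed is used, and it is exactly what prevents the finer $2$-adic behaviour of $d_0$ (which over $\rats_2$ would distinguish residues mod $8$) from mattering, leaving only the parity of $v(d)$.
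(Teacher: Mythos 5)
Your proof is correct. Note that the paper itself states this lemma without proof (it is introduced with the words ``we state an easy lemma from elementary number theory without proof''), so there is no argument in the paper to compare against; your write-up supplies exactly the missing verification. All the essential points are in order: the fact that $1+z$ is a square in $K_0$ whenever $v(z)\geq 2$ (the one place where algebraic closedness of $k$ enters, via solvability of the Artin--Schreier equation $u^2+u=w$, which is precisely what kills the mod-$8$ obstructions present over $\rats_2$); the reduction of the odd rational unit $d_0$ to a square in $K_2$ using that one of $d_0 \pm 1$ has valuation at least $2$ and that $-1 = i^2$; the identity $(1+i)^2 = 2i$; and the fact that $i$ first becomes a square in $K_3$. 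The degree assertions $[K_2:K_0]=[K_3:K_2]=2$ that you invoke to rule out $i$ being a square in $K_2$ are indeed standard, since $\zeta_{2^r}-1$ has an Eisenstein minimal polynomial over the absolutely unramified field $K_0$, so your argument is complete as written.
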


We turn to the field extension in Proposition \ref{Pm1stable}(v).  Recall that $n \geq 2$ is a 
positive integer, and $s$ is an integer satisfying $0 < s < n$.  Also, $a$ is an odd integer and $b$ is an integer exactly 
divisible by $2^{n-s}$.  For each $0 \leq j < s$, set 
$$d_j = \frac{a}{a+b} + \frac{\sqrt{2^{n-j}bi}}{(a+b)^2},$$ where $i^2 = -1$ and the square root symbol represents either square root.  
Lastly, as in Proposition \ref{Pm1stable} 
(iii), set $$K := K_n\left(\sqrt[2^{n-1}]{d_0}, \sqrt[2^{s-1}]{d_0 -1}, \sqrt[2^{n-j}]{d_j},
\sqrt[2^{s-j}]{d_j-1}\right)_{1 \leq j < s}.$$

For the purpose of the proof below, we let $v_{\ell}$ be the valuation on $K_{\ell}$ normalized so that a uniformizer has valuation $1$
(in contrast to the convention for the rest of this paper).
\begin{prop}\label{Pappendixmain}
Let $L$ be the Galois closure of $K$ over $K_0$.  Then the conductor $h_{L/K_0}$ is less than $n$.
\end{prop}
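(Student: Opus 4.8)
The plan is to reduce, via Lemma \ref{Lcompositum}, the computation of $h_{L/K_0}$ to bounding the conductor over $K_0$ of the Galois closure of each of the single-radical extensions generating $K$, and to show that each such conductor is $< n$. Since $K_n$ already contains $\zeta_{2^{n}}$, hence all the roots of unity $\zeta_{2^{n-1}}, \ldots, \zeta_{2^{s-1}}$ needed to make the various $K_n(\sqrt[2^m]{\alpha})$ Galois over $K_n$, forming the Galois closure over $K_0$ only adds the Galois conjugates of the $d_j$ (those obtained by $i \mapsto -i$ and by changing the sign of the square root). These conjugates have the same valuations as the $d_j$, so they produce extensions of exactly the same shape with the same conductor bounds, and Lemma \ref{Lcompositum} still applies. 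Throughout I would use the valuation $v$ normalized by $v(2) = 1$; then $v(a) = v(a+b) = 0$ and $v(b) = n-s$, whence $v(d_j) = 0$ and $v(d_j - 1) = n - s$, the unit radicals have root order at most $2^{n-1}$ and the $d_j-1$ radicals root order at most $2^{s-1}$, and the cyclotomic part $K_n/K_0$ has conductor $< n$ by \cite[Corollary to IV, Proposition 18]{Se:lf}.

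Next I would split each radical into pieces that can be fed to known conductor computations. Writing $b = 2^{n-s}b'$ with $b'$ odd, one factors $d_j = \tfrac{a}{a+b}(1+\epsilon_j)$ with $v(\epsilon_j) = n - \tfrac{s+j}{2}$, and $d_j - 1 = 2^{n-s}\,\tfrac{-b'}{a+b}(1+\delta_j)$ with $v(\delta_j) = \tfrac{s-j}{2}$, so that each $\sqrt[2^m]{\alpha}$ becomes a product of a root of a rational unit, a root of a power of $2$, and a root of a principal unit $1+\epsilon_j$ or $1+\delta_j$. The rational-unit radicals $\sqrt[2^m]{a/(a+b)}$ and $\sqrt[2^m]{-b'/(a+b)}$ have conductor $< n$ over $K_0$ by \cite[Theorem 5.8]{Vi:ra}, exactly as in the proof of Corollary \ref{Ccyclicstable}(ii). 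For the power-of-$2$ radical $\sqrt[2^{s-j}]{2^{n-s}}$, I would cancel the common factors of $2$ between the exponent $n-s$ and the root order $2^{s-j}$ to rewrite it as $\sqrt[2^r]{2}$ with $r \le s-1 < n$, and bound its conductor using \cite[Theorem 6.5]{Vi:ra}; here Lemma \ref{Lwhichell} records which of $K_2, K_3$ already contains the relevant square root $\sqrt{2^{n-j}bi}$, according to the parity of $s+j$, so that the base fields are correctly identified.

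The hard part will be the principal-unit radicals $\sqrt[2^{n-j}]{1+\epsilon_j}$ and $\sqrt[2^{s-j}]{1+\delta_j}$: the valuations $v(\epsilon_j) = n - \tfrac{s+j}{2}$ and $v(\delta_j) = \tfrac{s-j}{2}$ are positive but, because $j < s$, not large enough to make $1+\epsilon_j$ or $1+\delta_j$ an outright $2^{n-j}$-th or $2^{s-j}$-th power, so these genuinely ramified factors must be estimated directly. My plan is to use the binomial expansion to write $1+\epsilon_j$ (resp.\ $1+\delta_j$) as a $2^{k}$-th power times a correction for the largest $k$ permitted by its valuation (roughly $k < v(\epsilon_j) - 1$), thereby lowering the effective root order to a small power of $2$, and then to bound the conductor of the residual small-degree radical extension using the tower and degree-$2$ radical conductor estimates \cite[Lemmas 2.1 and 3.2]{Ob:cw}, in the same spirit as the $s>1$ case of Lemma \ref{Lsmallprimeram}. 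Assembling these bounds, every generator's Galois closure has conductor $< n$, and Lemma \ref{Lcompositum} then yields $h_{L/K_0} < n$. The delicate points, where I expect the real work to lie, are verifying that the reduced root orders and the $\sqrt[2^r]{2}$ pieces all stay strictly below $n$, and correctly tracking the $2$-adic valuations of the binomial coefficients arising in $\sqrt[2^{n-j}]{1+\epsilon_j}$ and $\sqrt[2^{s-j}]{1+\delta_j}$.
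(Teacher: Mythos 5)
Your opening reduction (Lemma \ref{Lcompositum}) and your valuation bookkeeping ($v(d_j)=0$, $v(d_j-1)=n-s$, $v(\epsilon_j)=n-\tfrac{s+j}{2}$, $v(\delta_j)=\tfrac{s-j}{2}$) agree with the paper, but the two tools you rely on to finish cannot carry the weight you place on them. First, the citations of \cite[Theorems 5.8 and 6.5]{Vi:ra} are not valid here: those results concern odd residue characteristic, which is why the paper invokes them only for $p>3$ (Corollary \ref{Ccyclicstable}(ii)) and $p=3$ (Lemma \ref{Lsmallprimeram}), and never at $p=2$. The entire reason the reference \cite{Ob:cw} (``\ldots especially in mixed characteristic $(0,2)$'') exists and is the sole external input of the paper's proof of Proposition \ref{Pappendixmain} is that no Viviani-type bound applies at $p=2$; at this prime the correct statements require tracking parities and congruences, which is precisely what Lemma \ref{Lwhichell} and the valuation-parity hypotheses of \cite[Corollaries 4.3 and 4.4]{Ob:cw} are designed to do. So your ``easy'' pieces $\sqrt[2^m]{a/(a+b)}$, $\sqrt[2^m]{-b'/(a+b)}$ and $\sqrt[2^r]{2}$ are left without justification.

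Second, your plan for the principal-unit radicals rests on a false premise. After extracting the maximal $2^k$-th power from $1+\epsilon_j$ (with $k\approx v(\epsilon_j)-1$), the residual root order is about $2^{(s-j)/2+1}$, which is unbounded as $s-j$ grows (for $j=0$, $s=n-1$ it is roughly $2^{n/2}$); this is not a ``small-degree'' extension. Climbing such a tower one quadratic step at a time with \cite[Lemmas 2.1 and 3.2]{Ob:cw} produces an increment at essentially every step (the relative conductor of each quadratic step grows with the absolute ramification, so dividing by the accumulated ramification index in the tower bound does not make the increments negligible), and you give no argument that the accumulated bound stays below $n$. This is exactly the difficulty the paper's proof circumvents: it never unscrews a radical one square root at a time, but instead feeds each whole extension $K_c(\sqrt[2^c]{\alpha})$ into \cite[Corollaries 4.3 and 4.4]{Ob:cw}, which bound the conductor of the Galois closure over $K_0$ in a single stroke under hypotheses on the size, and sometimes the parity, of the valuation of $\alpha-1$. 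Those corollaries are used first to discard all radicals $K_c(\sqrt[2^c]{\cdot})$ with $c+\ell(j)\le n$ (where $\ell(j)\in\{2,3\}$ is determined by Lemma \ref{Lwhichell}), and then, after factoring the five remaining radicands as a principal unit times an explicit square (e.g.\ $d_j=\alpha_j\beta^2$ with $\beta\in K_2$, $d_j-1=\alpha_j'\gamma^2$ with $\gamma\in K_3$), to handle each remaining family. They are the crux of the argument, and they are absent from your proposal.
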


\begin{proof}
Note that $h_{K_n/K_0} = n-1$.  Thus, by
Lemma \ref{Lcompositum}, we need only consider the extensions 
$K_{n-1}(\sqrt[2^{n-1}]{d_0})$, $K_{s-1}(\sqrt[2^{s-1}]{d_0
-1})$, $K_{n-j}(\sqrt[2^{n-j}]{d_j})$ $(1 \leq j < s)$, and $K_{s-j}(\sqrt[2^{s-j}]{d_j-1})$ $(1 \leq j < s)$ of $K_0$, 
and we consider them separately.  Write $\ell(j)$ for the smallest $\ell$ such that $d_j \in K_{\ell}$.
By Lemma \ref{Lwhichell}, we have that $\ell(j) = 2$ for $s+j$ odd and $\ell(j)=3$ for $s+j$ even.  By \cite[Corollary 4.4]{Ob:cw}, 
we need only consider those fields $K_{c}(\sqrt[2^c]{d_j})$ and $K_c(\sqrt[2^c]{d_j-1})$ such that $c + \ell(j) > n$.  
Since $\ell(j) \leq 3$ for all $j$, we are reduced to bounding the conductors of the (Galois closures of the) following 
fields over $K_0$:
$$K_{n-1}(\sqrt[2^{n-1}]{d_j})_{j \in \{0, 1\}},\  K_{n-2}(\sqrt[2^{n-2}]{d_2}),\  K_{n-2}(\sqrt[2^{n-2}]{d_j - 1})_{j \in \{0,1\}}.$$  
For any of the above field extensions involving $d_j$, we may assume that $s > j$.

\begin{itemize}
\item Let $M$ be the Galois closure of $K_{n-1}(\sqrt[2^{n-1}]{d_j})$ over $K_0$, where $j \in \{0, 1\}$. 
First, assume $\ell(j) = 2$.  Then $v_2(d_j - 1) = v_2(b) = 2(n-s) > 1$.  We conclude using 
\cite[Corollary 4.4]{Ob:cw} (with $c = n-1$, $\ell = 2$, and $t_{\alpha} = d_j - 1$) that $h_{M/K_0} < n$.

Now, assume $\ell(j) = 3$.  Suppose $d_j \in (K_3^{\times})^2$.  We know that $v_3(d_j - 1) = 4(n-s) \geq 4$.
By \cite[Lemma 3.1]{Ob:cw}, if $(d_j')^2 = d_j$, then $v_3(d_j' - 1) >1$.  Then $M$ is the Galois closure of  
$K_{n-1}(\sqrt[2^{n-2}]{d_j'})$ over $K_0$.  We conclude using 
\cite[Corollary 4.4]{Ob:cw} (with $c = n-2$, $\ell = 3$, and $t_{\alpha} = d_j' - 1$) that $h_{M/K_0} < n$.

Lastly, suppose $d_j \notin (K_3^{\times})^2$.  Write $d_j = \alpha_j \beta^2$, where $\beta^2 = \frac{a}{a+b}$, and $\beta \in K_2$ 
(Lemma \ref{Lwhichell}).  Write $\alpha_j = 1 + t_j$.  
One can find $\gamma_j \in K_3$ such that $\alpha_j = \alpha_j' \gamma_j^2$, where $\alpha_j' = 1 + t_j'$ and $v_3(t_j')$ is odd 
(this is the main content of \cite[Lemma 3.2(i)]{Ob:cw}).  Then $d_j = \alpha'_j (\beta\gamma_j)^2$.  By \cite[Remark 3.4]{Ob:cw}, we have 
$$v_3(t_j') \geq v_3(t_j) = 4(n - \frac{s+j}{2}) > 5,$$ the last inequality holding because $n > s > j$.
This means that 
$$v_3(\gamma_j^2 - 1) = v_3(\frac{\alpha_j - \alpha_j'}{\alpha_j'}) \geq v_3(t_j)  > 5.$$
Also, $v_3(\beta^2 - 1) = 4(n-s) \geq 4$.  So $v_3((\beta\gamma_j)^2 - 1) \geq 4$.
By \cite[Lemma 3.1]{Ob:cw}, we obtain $v_3(\beta\gamma_j - 1) >1$.  We conclude using 
\cite[Corollary 4.3]{Ob:cw} (with $c = n-1$, $\ell = 3$, $t_{\alpha'} = t_j'$, and $t_{\beta} =  \beta\gamma_j -1$), that $h_{M/K_0} < n$.

\item Let $M$ be the Galois closure of $K_{n-2}(\sqrt[2^{n-2}]{d_2})$ over $K_0$.  If $\ell(2) = 2$, then $h_{M/K_0} < n$ by
\cite[Corollary 4.4]{Ob:cw} (with $c = n-2$ and $\ell = 2$).  So assume $\ell(2) = 3$. 
Since $v_3(d_2 - 1) = 4(n-s) > 1$, we obtain that $h_{M/K_0} < n$ by \cite[Corollary 4.4]{Ob:cw} (with $c = n-2$, $\ell = 3$, and
$t_{\alpha} = d_2 - 1$).

\item Let $M$ be the Galois closure of $K_{n-2}(\sqrt[2^{n-2}]{d_j-1})$ over $K_0$, where $j \in \{0,1\}$.  
As in the previous case, we may assume that $\ell(2) = 3$. 
By Lemma \ref{Lwhichell}, there exists $\gamma \in K_3$ such that $\gamma^2 = \frac{-b}{a+b}$.  Write $d_j-1 = \alpha_j' \gamma^2$. 
Then $M$ is contained in the compositum of the Galois closures $M'$ of $K_{n-2}(\sqrt[2^{n-2}]{\alpha_j'})$ and $M''$ of
$K_{n-2}(\sqrt[2^{n-3}]{\gamma})$ over $K_0$.  

Now, $v_3(\alpha_j' - 1) = 4(n - \frac{s+j}{2} - (n-s)) = 4(\frac{s-j}{2}) > 1$.
By \cite[Corollary 4.4]{Ob:cw} (with $c = n-2$, $\ell = 3$, and $t_{\alpha} = \alpha_j'-1$), we have $h_{M'/K_0} < n$.
Also by \cite[Corollary 4.4]{Ob:cw} (with $c=n-3$, $\ell = 3$, and $t_{\alpha} = \gamma-1$), we have $h_{M''/K_0} < n$.
By Lemma \ref{Lcompositum}, we have $h_{M/K_0} < n$.
\end{itemize}
\end{proof}


\begin{thebibliography}{ZZZZZZ }

\bibitem[Asc00]{As:fg}
Aschbacher, Michael.  ``Finite Group Theory," 2nd ed. Cambridge University Press, Cambridge, 2000

\bibitem[Bec89]{Be:rp}
Beckmann, Sybilla.  ``Ramified Primes in the Field of Moduli of Branched
Coverings of Curves," J. Algebra 125 (1989),
236--255.

\bibitem[Bel79]{Be:ga}
Belyi, G. V. ``Galois Extensions of a Maximal Cyclotomic Field," Izv. Akad. Nauk
SSSR Ser. Mat. 43 (1979), 267--276.

\bibitem[BW04]{BW:sr}
Bouw, Irene; Wewers, Stefan.  ``Stable reduction of modular curves," Modular Curves and Abelian Varieties,
1--22, Progr. Math., 224, Birkh\"{a}user, Basel, 2004.

\bibitem[ATLAS]{atlas}
Conway, John H. et al. ``Atlas of Finite Groups," Oxford University Press, Eynsham, 1985.

\bibitem[CM88]{CM:sr}
Coleman, Robert; McCallum, William.  ``Stable reduction of Fermat curves and Jacobi sum Hecke Characters," J. Reine Angew. Math., 385 (1988), 41--101.

\bibitem[CH85]{CH:hu}
Coombes, Kevin; Harbater, David. ``Hurwitz Families and Arithmetic Galois
Groups," Duke Math. J., 52 (1985), 821--839.

\bibitem[Col93]{Co:pv}
Colmez, Pierre. ``P\'{e}riodes des vari\'{e}t\'{e}s ab\'{e}liennes \`{a} multiplication complexe," Ann. of Math. (2) 138 (1993), no. 3, 625--683

%
\bibitem[DM69]{DM:ir}
Deligne, Pierre; Mumford, David. ``The Irreducibility of the Space of Curves of
Given Genus," Inst. Hautes \'{E}tudes Sci.
Publ. Math. 36 (1969), 75--109.

\bibitem[Epp73]{Ep:wr}
Epp, Helmut P. ``Eliminating wild ramification," Invent. Math., 19 (1973), no. 3, 235--249.

%
\bibitem[SGA1]{sga1}
Grothendieck, Alexandre; and Raynaud, Mich\`{e}le. ``Rev\^{e}tements \'{e}tales et groupe fondamental," Lecture Notes in Math., 224, Springer-Verlag,
Berlin-New York, 1971.

\bibitem[Hen98]{He:dc}
Henrio, Yannick. ``Disques et Couronnes Ultram\'{e}triques," Courbes
Semi-Stables et Groupe Fondamental en
G\'{e}om\'{e}trie Alg\'{e}brique, 21--32, Progr. Math., 187, Birkh\"{a}user
Verlag, Basel, 1998.

\bibitem[Hen00]{He:ht}
Henrio, Yannick.  ``Arbres de Hurwitz d'ordre $p$ des disques et des couronnes $p$-adic formels," Th\`{e}se, Universit\'{e} Bordeaux,
available at http://www.math.u-bordeaux1.fr/$\sim$matignon/preprints.html. 

\bibitem[Hum75]{Hu:la}
Humphreys, James E.  ``Linear Algebraic Groups," Springer-Verlag, New York, 1975.

\bibitem[Liu06]{Li:sr}
Liu, Qing.  ``Stable reduction of finite covers of curves," Compos. Math. 142 (2006), 101--118.

\bibitem[LM06]{LM:wm}
Lehr, Claus; Matignon, Michel.  ``Wild monodromy and automorphisms of curves," Duke Math. J. 135 (2006), no. 3, 569--586. 

\bibitem[Mat03]{Ma:va}
Matignon, Michel.  ``Vers un algorithme pour la r\'{e}duction stable des rev\^{e}tements $p$-cycliques de la droite projective sur un corps $p$-adique,"
Math. Ann. 325 (2003), no. 2, 323--354

\bibitem[Obu10]{Ob:fm2}
Obus, Andrew. ``Fields of moduli of three-point $G$-covers with cyclic $p$-Sylow, II," preprint.  Available at http://arxiv.org/abs/1001.3723v5.


\bibitem[Obu11a]{Ob:vc}
Obus, Andrew. ``Vanishing cycles and wild monodromy," to appear in Int. Math. Res. Notices. doi:10.1093/imrn/rnr018

\bibitem[Obu11b]{Ob:pf}
Obus, Andrew. ``On Colmez's product formula for periods of CM-abelian varieties," preprint.  Available at http://arxiv.org/abs/1107.0684v1.

\bibitem[Obu11c]{Ob:cw}
Obus, Andrew.  ``Conductors of wild extensions of local fields, especially in mixed characteristic $(0, 2),$" preprint.  Available at
http://arxiv.org/abs/1109.4776v1
 
\bibitem[Pri02]{Pr:fc}
Pries, Rachel J. ``Families of Wildly Ramified Covers of Curves," Amer. J. Math. 124 (2002), no. 4, 737--768.

\bibitem[Pri06]{Pr:lg}
Pries, Rachel J. ``Wildly Ramified Covers with Large Genus," J. Number Theory,
119 (2006), 194--209.

\bibitem[Ray90]{Ra:pg}
Raynaud, Michel. ``$p$-Groupes et R\'{e}duction Semi-Stable des Courbes," the
Grothendieck Festschrift, Vol. III,
179--197, Progr. Math., 88, Birkh\"{a}user Boston, Boston, MA, 1990.

\bibitem[Ray94]{Ra:ab}
Raynaud, Michel. ``Rev\^{e}tements de la Droite Affine en Caract\'{e}ristique
$p>0$ et Conjecture d'Abhyankar,'' Invent.
Math. 116 (1994), 425--462. 

\bibitem[Ray99]{Ra:sp}
Raynaud, Michel. ``Specialization des Rev\^{e}tements en Caract\'{e}ristique
$p>0$,'' Ann. Sci. \'{E}cole Norm. Sup.,
32 (1999), 87--126.

\bibitem[Sa\"{i}97]{Sa:rm}
Sa\"{i}di, Mohamed.  ``Rev\^{e}tements Mod\'{e}r\'{e}s et Groupe Fondamental de
Graphe de Groupes," Compositio Math. 107
(1997), no. 3, 319--338.

\bibitem[Sa\"{i}98a]{Sa:pr1}
Sa\"{i}di, Mohamed.  ``$p$-rank and semi-stable reduction of curves," C. R. Acad. Sci. Paris S\'{e}r. I 326 (1998), no. 1, 63--68.

\bibitem[Sa\"{i}98b]{Sa:pr2}
Sa\"{i}di, Mohamed.  ``$p$-rank and semi-stable reduction of curves," Math. Ann. 312 (1998), no. 4, 625--639.

\bibitem[Sa\"{i}07]{Sa:gc}
Sa\"{i}di, Mohamed.  ``Galois covers of degree $p$ and semi-stable reduction of curves in mixed characteristics," Publ. Res. Inst. Math. Sci. 43 (2007), no. 3,
661--684.

\bibitem[GAGA]{gaga}
Serre, Jean-Pierre. ``G\'{e}om\'{e}trie Alg\'{e}brique et G\'{e}om\'{e}trie
Analytique," Ann. Inst. Fourier, Grenoble 6
(1955--1956), 1--42.

\bibitem[Ser79]{Se:lf}
Serre, Jean-Pierre.  ``Local Fields," Springer-Verlag, New York, 1979.

\bibitem[Sti09]{St:af}
Stichtenoth, Henning. ``Algebraic function fields and codes," 2nd ed. Springer-Verlag, Berlin, 2009.

\bibitem[Viv04]{Vi:ra}
Viviani, Filippo.  ``Ramification Groups and Artin Conductors of Radical
Extensions of $\rats$," J. Th\'{e}or. Nombres
Bordeaux, 16 (2004), 779-816.

\bibitem[Wew03a]{We:mc}
Wewers, Stefan. ``Reduction and Lifting of Special Metacyclic Covers," Ann. Sci.
\'{E}cole Norm. Sup. (4) 36 (2003),
113--138.

\bibitem[Wew03b]{We:br}
Wewers, Stefan.  ``Three Point Covers with Bad Reduction," J. Amer. Math. Soc.
16 (2003), 991--1032.

\bibitem[Zas56]{Za:tg}
Zassenhaus, Hans J. ``The theory of groups," 2nd ed. Chelsea Publishing Company,
New York, 1956. 


\end{thebibliography}
\end{document}